\numberwithin{equation}{section}
\renewcommand{\email}[2][]{%
  \ifx\emails\@empty\relax\else{\g@addto@macro\emails{,\space}}\fi%
  \@ifnotempty{#1}{\g@addto@macro\emails{\textrm{(#1)}\space}}%
  \g@addto@macro\emails{#2}%
}
\newtheorem{theorem}{Theorem}[section]
\newtheorem{lemma}[theorem]{Lemma}
\newtheorem{proposition}[theorem]{Proposition}
\theoremstyle{definition}
\newtheorem{definition}[theorem]{Definition}}
\theoremstyle{remark}
\newtheorem{remark}[theorem]{Remark}}
\newcommand{\ex}{\mathbb{E}}
\newcommand{\weyl}{W^\circ}
\newcommand{\im}{\mathsf{i}}
\newcommand{\Real}{\mathsf{Re}\hspace{0.5mm}}
\newcommand{\Imag}{\mathsf{Im}\hspace{0.5mm}}
\newcommand{\Arg}{\mathsf{Arg}}
\newcommand{\cev}[1]{\reflectbox{\ensuremath{\vec{\reflectbox{\ensuremath{#1}}}}}}
\newcommand{\kcr}{K^{\mathrm{cross}}}
\newcommand{\icr}{I^{\mathrm{cross}}}
\newcommand{\rcr}{R^{\mathrm{cross}}}
\newcommand{\hsa}{\mathcal{A}^{\mathrm{hs}}}
\newcommand{\ap}{\mathsf{a}}
\newcommand{\kgeo}{K^{\mathrm{geo}}}
\newcommand{\pfbm}{\mathbb{P}_{\operatorname{free}}}
\newcommand{\efbm}{\mathbb{E}_{\operatorname{free}}}
\newcommand{\pabm}{\mathbb{P}_{\operatorname{avoid}}}
\newcommand{\eabm}{\mathbb{E}_{\operatorname{avoid}}}
\newcommand{\ice}{\mathsf{Inter}}
\newcommand{\cb}{\mathsf{C}}
\title{Half-space Airy line ensembles}
\date{\today}
\author{Evgeni Dimitrov} 
\email[Evgeni Dimitrov]{edimitro@usc.edu}
\author{Zongrui Yang} 
\email[Zongrui Yang]{zy2417@columbia.edu}
\begin{document}

\begin{abstract}
We construct a one-parameter family of infinite line ensembles on $[0, \infty)$ that are natural half-space analogues of the Airy line ensemble. Away from the origin these ensembles are locally described by avoiding Brownian bridges, and near the origin they are described by a sequence of avoiding reverse Brownian motions with alternating drifts, that depend on the parameter of the model. In addition, the restrictions of our ensembles to finitely many vertical lines form Pfaffian point processes with the crossover kernels obtained by Baik-Barraquand-Corwin-Suidan \cite{BBCS}.
\end{abstract}

\maketitle

\tableofcontents

%
%
\section{Introduction and main results}\label{Section1} Over the last two decades there has been a marked interest in the asymptotic analysis of {\em half-space} models in the {\em Kardar-Parisi-Zhang (KPZ)} universality class. The earliest works in this direction are due to Baik and Rains, who studied the asymptotics of the longest increasing subsequence of random involutions and symmetrized last passage percolation (LPP) with geometric weights \cite{BR01a, BR01b, BR01c}. Other half-space models which have been substantially investigated include: the polynuclear growth model \cite{SI04}, Schur processes \cite{BR05, BBNV}, LPP with exponential weights \cite{BBCS}, the facilitated (totally) asymmetric simple exclusion process or (T)ASEP \cite{BBCS2}, the KPZ equation through ASEP \cite{CS18, Par19} and through directed polymers \cite{Wu20}, the stochastic six-vertex model \cite{BBCW18}, Macdonald processes \cite{BBC20}, and the log-gamma polymer \cite{IMS22, BCD24}.

The local description of the half-space models in the KPZ class is similar to the full-space ones, except that the origin (which should be interpreted as a left boundary of the interval $[0, \infty)$ where the models are defined) creates a boundary effect that non-trivially affects their asymptotic behavior. For example, in interacting particle models the origin is a reservoir of particles that stochastically injects or absorbs particles from the system, and in LPP models the weights corresponding to the origin are different from those away from it. There is a general goal of understanding the boundary effect in various settings, and at least in some instances, showing that the asymptotic behavior remains the same across models. In other words, we are interested in showing that these models exhibit universal behavior and belong to a half-space KPZ universality class. In order to formally define such a class, one needs to find suitable analogues that replace the universal scaling limits in the full-space KPZ class, and the {\bf main goal of the present paper is to give a formal construction of the {\em half-space Airy line ensembles}}. Our construction is based on taking the weak scaling limit of a sequence of discrete line ensembles arising from Pfaffian Schur processes, or equivalently from symmetrized geometric LPP. In the next section we give an informal description of our new ensembles, and compare them to the usual Airy line ensemble.

%
%
\subsection{The Airy line ensemble and its half-space analogues}\label{Section1.1} The Airy line ensemble $\mathcal{A}= \{\mathcal{A}_{i}\}_{i \geq 1}$ is a sequence of real-valued random continuous functions, which are defined on $\mathbb{R}$, and are strictly ordered in the sense that $\mathcal{A}_{i}(t) > \mathcal{A}_{i+1}(t)$ for all $i \geq 1$ and $t \in \mathbb{R}$. It arises as the edge scaling limit of Wigner matrices \cite{Sod15}, lozenge tilings \cite{AH21}, avoiding Brownian bridges (also known as {\em Brownian watermelons}) \cite{CorHamA}, as well as various integrable models of non-intersecting random walkers and last passage percolation \cite{DNV19}. The list of models becomes quite vast if one further includes those that converge to the various projections of $\mathcal{A}$ such as $(\mathcal{A}_{1}(t): t \in \mathbb{R})$ (called the {\em Airy process}), $(\mathcal{A}_{i}(t_0): i \geq 1)$ for a fixed $t_0 \in \mathbb{R}$ (called the {\em Airy point process}) and $\mathcal{A}_{1}(t_0)$ for a fixed $t_0$ (called the {\em Tracy-Widom distribution}). Due to its appearance as a universal scaling limit, and its role in the construction of the {\em Airy sheet} in \cite{DOV22}, the Airy line ensemble is regarded as a central object in the KPZ universality class \cite{CU2}.

One of the salient features of the Airy line ensemble is that it has the structure of a {\em determinantal point process}, see Definition \ref{def: DefDPP}. More specifically, if one fixes a finite set $\mathsf{S} = \{s_1, \dots, s_m\} \subset \mathbb{R}$ with $s_1 < \cdots < s_m$, then the random measure on $\mathbb{R}^2$, defined by
\begin{equation}\label{RMS1}
M(A) = \sum_{i \geq 1} \sum_{j = 1}^m {\bf 1} \left\{\left(s_j, \mathcal{A}_{i}(s_j) \right) \in A \right\},
\end{equation}
is a determinantal point process on $\mathbb{R}^2$ with reference measure $\mu_{\mathsf{S}} \times \lambda$, where $\mu_{\mathsf{S}}$ is the counting measure on $\mathsf{S}$, and $\lambda$ is the usual Lebesgue measure on $\mathbb{R}$, and whose correlation kernel is given by the {\em extended Airy kernel}, defined for $x_1, x_2 \in \mathbb{R}$ and $t_1, t_2 \in \mathsf{S}$ by 
\begin{equation}\label{S1AiryKer}
\begin{split}
K^{\mathrm{Airy}}(t_1,x_1; t_2,x_2) = & -  \frac{{\bf 1}\{ t_2 > t_1\} }{\sqrt{4\pi (t_2 - t_1)}} \cdot e^{ - \frac{(x_2 - x_1)^2}{4(t_2 - t_1)} - \frac{(t_2 - t_1)(x_2 + x_1)}{2} + \frac{(t_2 - t_1)^3}{12} } \\
& + \frac{1}{(2\pi \im)^2} \int_{\mathcal{C}_{\alpha}^{\pi/3}} d z \int_{\mathcal{C}_{\beta}^{2\pi/3}} dw \frac{e^{z^3/3 -x_1z - w^3/3 + x_2w}}{z + t_1 - w - t_2}.
\end{split}
\end{equation}
In (\ref{S1AiryKer}) we have that $\alpha, \beta \in \mathbb{R}$ are arbitrary subject to $\alpha + t_1 > \beta + t_2$, and $\mathcal{C}_{z}^{\varphi}=\{z+|s|e^{\mathrm{sgn}(s)\im\varphi}, s\in \mathbb{R}\}$ is the infinite curve oriented from $z+\infty e^{-\im\varphi}$ to $z+\infty e^{\im\varphi}$. We mention that there are various formulas for the extended Airy kernel, and the one in (\ref{S1AiryKer}) comes from \cite[Proposition 4.7 and (11)]{BK08} under the change of variables $u \rightarrow z + t_1$ and $w \rightarrow w + t_2$. 

The determinantal structure of the Airy line ensemble allows one to express its finite-dimensional distributions as {\em Fredholm determinants} involving $K^{\mathrm{Airy}}$. For example, if $t_1 = t_2 = t_0$ in (\ref{S1AiryKer}), then the extended Airy kernel reduces to the ordinary Airy kernel \cite{TWPaper}, and one obtains the classical Fredholm determinant formula for the Tracy-Widom distribution $F_2(t) = \mathbb{P}(\mathcal{A}_1(t_0) \leq t)$. The finite-dimensional distribution of $\mathcal{A}$ was obtained in \cite{Spohn} as a limit of the polynuclear growth model, although it appeared earlier in the random matrix literature, see \cite{Mac94} and \cite{FNH99}. The construction of $\mathcal{A}$ as a sequence of random continuous functions was achieved in \cite{CorHamA} by taking a weak limit of Brownian watermelons, and in the same paper the authors showed that $\mathcal{A}$ has a local description in terms of avoiding Brownian bridges. Specifically, define the {\em parabolic Airy line ensemble} $\mathcal{L}^{\mathrm{pAiry}}$ via
\begin{equation}\label{PALE}
\mathcal{L}^{\mathrm{pAiry}}_i(t) = 2^{-1/2} \cdot \mathcal{A}_i(t) - 2^{-1/2} \cdot t^2 \mbox{ for $i \geq 1$ and $t \in \mathbb{R}$}.
\end{equation}
Then, for any $k \in \mathbb{N}$, and real $a < b$, one has that the conditional law of $( \mathcal{L}^{\mathrm{pAiry}}_i(t): i = 1, \dots, k, t \in [a,b] )$, given that $\mathcal{L}^{\mathrm{pAiry}}_1(a) = x_1, \dots, \mathcal{L}^{\mathrm{pAiry}}_k(a) = x_k$, $\mathcal{L}^{\mathrm{pAiry}}_1(b) = y_1, \dots, \mathcal{L}^{\mathrm{pAiry}}_k(b) = y_k$, and $\mathcal{L}^{\mathrm{pAiry}}_{k+1}[a,b] = g$ (for a fixed continuous $g$ on $[a,b]$), is the same as that of $k$ independent Brownian bridges $B_i$ from $(a,x_i)$ to $(b,y_i)$ that have been conditioned to avoid each other and also the graph of the function $g$, see the left side of Figure \ref{S11}. This property is now referred to as the {\em Brownian Gibbs property}. It was recently established in \cite{AH23} that $\mathcal{L}^{\mathrm{pAiry}}$ is (up to an independent affine shift) the unique line ensemble that possesses this property, and whose top curve globally looks like $-2^{-1/2} t^2$. In this sense, the Brownian Gibbs property is not just an exotic feature of the Airy line ensemble, but essentially a defining one.\\

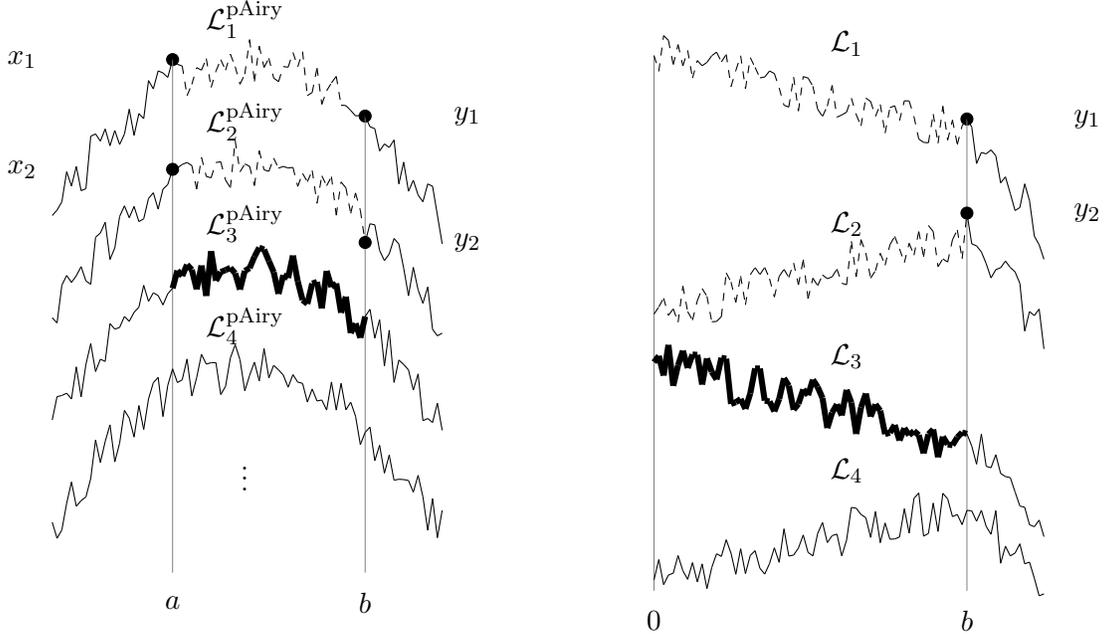
\begin{figure}[h] 
    \centering
    \begin{tikzpicture}[scale=0.8]

  \def\r{0}
  \def\s{0.08}
  \def\v{1.7}
  \def\h{10}

    \foreach \x [remember=\r as \rlast (initially 0)] in {-40,...,40}
    {
        \tikzmath{\r = 5*rand*\s; \y = \x*\s; \z = (\x+1)*\s;}
        \ifthenelse{\x < -15}
        {
            \draw[-,thin][black] (\y, -0.25*\y*\y + \rlast ) -- (\z, -0.25*\z*\z + \r );
        }
        {

            \ifthenelse{\x < 25}{
                \ifthenelse{\x = -15}{
                    \draw[black,fill=black] (\y, -0.25*\y*\y + \rlast ) circle (0.6ex); 
                    \draw (\y -2.5, -0.25*\y*\y + \rlast  ) node{$x_1$};
                    \draw[-,very thin][gray] (\y, -0.25*\y*\y + \rlast ) -- (\y, -5*\v );
                    \draw (\y, -5*\v  - 0.5 ) node{$a$};
                }{}
                \draw[-,dashed][black] (\y, -0.25*\y*\y + \rlast ) -- (\z, -0.25*\z*\z + \r );
            }
            {
                \ifthenelse{\x = 25}{
                    \draw[black,fill=black] (\y, -0.25*\y*\y + \rlast ) circle (0.6ex); 
                    \draw (\y + 1.7, -0.25*\y*\y + \rlast ) node{$y_1$};
                    \draw[-,very thin][gray] (\y, -0.25*\y*\y + \rlast ) -- (\y, -5*\v );
                    \draw (\y, -5*\v  - 0.5 ) node{$b$};
                }{}
                \draw[-,thin][black] (\y, -0.25*\y*\y + \rlast ) -- (\z, -0.25*\z*\z + \r );
            }
        }
    }

    \foreach \x [remember=\r as \rlast (initially 0)] in {-40,...,40}
    {
        \tikzmath{\r = 5*rand*\s; \y = \x*\s; \z = (\x+1)*\s;}
        \ifthenelse{\x < -15}
        {
            \draw[-,thin][black] (\y, -0.25*\y*\y + \rlast -\v) -- (\z, -0.25*\z*\z + \r -\v);
        }
        {
                \ifthenelse{\x = -15}{
                    \draw[black,fill=black] (\y, -0.25*\y*\y + \rlast -\v) circle (0.6ex); 
                    \draw (\y - 2.5, -0.25*\y*\y + \rlast -\v ) node{$x_2$};
                }{}
        
            \ifthenelse{\x < 25}{
                \draw[-,dashed][black] (\y, -0.25*\y*\y + \rlast -\v) -- (\z, -0.25*\z*\z + \r -\v);
            }
            {
                \ifthenelse{\x = 25}{
                    \draw[black,fill=black] (\y, -0.25*\y*\y + \rlast - \v) circle (0.6ex); 
                    \draw (\y + 1.7, -0.25*\y*\y + \rlast -\v) node{$y_2$};
                }{}
                \draw[-,thin][black] (\y, -0.25*\y*\y + \rlast -\v) -- (\z, -0.25*\z*\z + \r -\v);
            }
        }
    }

    \foreach \x [remember=\r as \rlast (initially 0)] in {-40,...,40}
    {
        \tikzmath{\r = 5*rand*\s; \y = \x*\s; \z = (\x+1)*\s;}
        \ifthenelse{\x < -15}
        {
            \draw[-,thin][black] (\y, -0.25*\y*\y + \rlast -2*\v) -- (\z, -0.25*\z*\z + \r - 2*\v);
        }
        {
            \ifthenelse{\x < 25}{
                \draw[line width = 0.8mm, black] (\y, -0.25*\y*\y + \rlast - 2*\v) -- (\z, -0.25*\z*\z + \r - 2*\v);
            }
            {
                \draw[-,thin][black] (\y, -0.25*\y*\y + \rlast - 2*\v) -- (\z, -0.25*\z*\z + \r - 2*\v);
            }
        }
    }

    \foreach \x [remember=\r as \rlast (initially 0)] in {-40,...,40}
    {
        \tikzmath{\r = 5*rand*\s; \y = \x*\s; \z = (\x+1)*\s;}
        \ifthenelse{\x < 81}
        {
            \draw[-,thin][black] (\y, -0.25*\y*\y + \rlast -3*\v ) -- (\z, -0.25*\z*\z + \r -3*\v );
        }
    }

    \draw (0, 0.7) node{$\mathcal{L}^{\mathrm{pAiry}}_1$};
    \draw (0, 0.7 - \v) node{$\mathcal{L}^{\mathrm{pAiry}}_2$};
    \draw (0, 0.7 - 2*\v) node{$\mathcal{L}^{\mathrm{pAiry}}_3$};
    \draw (0, 0.7 - 3*\v) node{$\mathcal{L}^{\mathrm{pAiry}}_4$};

    \draw (0, -4*\v ) node{$\vdots$};

    \foreach \x [remember=\r as \rlast (initially 0)] in {-40,...,40}
    {
        \tikzmath{\r = 5*rand*\s; \y = \x*\s; \z = (\x+1)*\s;}
        {
            \ifthenelse{\x = -40}{
                    \draw[-,very thin][gray]  (\y + \h, 1-0.25*\y + \rlast -\v )  -- (\y + \h, -5*\v -0.3);
                    \draw (\y + \h, -5*\v  - 0.8 ) node{$0$};
            }{}
            \ifthenelse{\x < 25}{
                \draw[-,dashed][black] (\y + \h, 1-0.25*\y + \rlast -\v ) -- (\z + \h, 1-0.25*\z + \r -\v );
            }
            {
                \ifthenelse{\x = 25}{
                    \draw[black,fill=black] (\y + \h, 1.5-0.25*\y*\y + \rlast -\v) circle (0.6ex); 
                    \draw (\y + 2 + \h, 1.5 -0.25*\y*\y + \rlast -\v ) node{$y_1$};
                    \draw[-,very thin][gray] (\y + \h,  1.5-0.25*\y*\y + \rlast - \v) -- (\y + \h, -5*\v -0.3 );
                    \draw (\y + \h, -5*\v  - 0.8 ) node{$b$};
                }{}
                \draw[-,thin][black] (\y + \h, 1.5-0.25*\y*\y + \rlast -\v ) -- (\z + \h, 1.5-0.25*\z*\z + \r -\v);
            }
        }
    }

    \foreach \x [remember=\r as \rlast (initially 0)] in {-40,...,40}
    {
        \tikzmath{\r = 5*rand*\s; \y = \x*\s; \z = (\x+1)*\s;}
        {
            \ifthenelse{\x < 25}{
                \draw[-,dashed][black] (\y + \h, 0.25*\y + \rlast -2*\v ) -- (\z + \h, 0.25*\z + \r -2*\v );
            }
            {
                \ifthenelse{\x = 25}{
                    \draw[black,fill=black] (\y + \h, 1.5-0.25*\y*\y + \rlast -2*\v) circle (0.6ex); 
                    \draw (\y + 2 + \h, 1.5 -0.25*\y*\y + \rlast -2*\v ) node{$y_2$};
                }{}
                \draw[-,thin][black] (\y + \h, 1.5-0.25*\y*\y + \rlast -2*\v ) -- (\z + \h, 1.5-0.25*\z*\z + \r -2*\v);
            }
        }
    }

    \foreach \x [remember=\r as \rlast (initially 0)] in {-40,...,40}
    {
        \tikzmath{\r = 5*rand*\s; \y = \x*\s; \z = (\x+1)*\s;}
        {
            \ifthenelse{\x < 25}{
                \draw[line width = 0.8mm, black](\y + \h, 1-0.25*\y + \rlast -4*\v ) -- (\z + \h, 1-0.25*\z + \r -4*\v );
            }
            {
                \draw[-,thin][black] (\y + \h, 1.5-0.25*\y*\y + \rlast -4*\v ) -- (\z + \h, 1.5-0.25*\z*\z + \r -4*\v);
            }
        }
    }

    \foreach \x [remember=\r as \rlast (initially 0)] in {-40,...,40}
    {
        \tikzmath{\r = 5*rand*\s; \y = \x*\s; \z = (\x+1)*\s;}
        {
            \ifthenelse{\x < 25}{
                \draw[-,thin][black] (\y + \h, 0.25*\y + \rlast -4.6*\v ) -- (\z + \h, 0.25*\z + \r -4.6*\v );
            }
            {
                \draw[-,thin][black] (\y + \h, 1.5-0.25*\y*\y + \rlast -4.6*\v ) -- (\z + \h, 1.5-0.25*\z*\z + \r -4.6*\v);
            }
        }
    }

    \draw (\h, 0.3) node{$\mathcal{L}_1$};
    \draw (\h, 0.7 - 2*\v) node{$\mathcal{L}_2$};
    \draw (\h, 0.2 -3*\v ) node{$\mathcal{L}_3$};
    \draw (\h,  - 4*\v) node{$\mathcal{L}_4$};

\end{tikzpicture}

\caption{The left side depicts the parabolic Airy line ensemble $\mathcal{L}^{\mathrm{pAiry}}$. The Brownian Gibbs property implies that the conditional law of the dashed lines is that of independent Brownian bridges connecting $(a,x_i)$ to $(b,y_i)$ that are conditioned to avoid each other and the thick curve. The right side depicts the line ensemble $\mathcal{L} = \{\mathcal{L}_i\}_{i \geq 1}$, where $\mathcal{L}_i(t) = 2^{-1/2} \cdot \hsa_i(t) - 2^{-1/2} \cdot t^2$, and $\hsa$ is the half-space Airy line ensemble. The half-space Brownian Gibbs property implies that the conditional law of the dashed lines is that of two independent reverse Brownian motions -- one started from $(b,y_1)$ with drift $-\sqrt{2} \varpi$, and the other started from $(b,y_2)$ with drift $\sqrt{2} \varpi$, which are conditioned to avoid each other and the thick curve.}
    \label{S11}
\end{figure}

We next turn to an informal description of our new models, and refer to Section \ref{Section1.2} for the precise statements. We let $\hsa = \{\hsa_i\}_{i \geq 1}$ denote the half-space Airy line ensemble, which is a sequence of strictly ordered real-valued random continuous functions on $[0, \infty)$. The distribution of $\hsa$ depends on a parameter $\varpi \in \mathbb{R}$, which reflects the boundary effect of the origin in the model. To simplify our exposition we will suppress the parameter $\varpi$ from our notation. If one considers the random measures as in (\ref{RMS1}) with $\mathcal{A}$ replaced with $\hsa$ and $\mathsf{S} \subset [0,\infty)$ (as opposed to $\mathsf{S} \subset \mathbb{R}$), then they are no longer determinantal, but rather {\em Pfaffian point processes}, see Definition \ref{def:def of Pfaffian point process}. They have the same reference measure $\mu_{\mathsf{S}} \times \lambda$ as above, and a correlation kernel which we denote by $\kcr$. The exact form of $\kcr$ is given in Definition \ref{def:kcr}, but we mention that similarly to $K^{\mathrm{Airy}}$ in (\ref{S1AiryKer}), the kernel is given by a double-contour integral of cubic exponential functions. Our formula for $\kcr$ agrees with the one obtained in \cite[Section 2.5]{BBCS} up to a few minor corrections, see Remark \ref{S1Correction}. The Pfaffian point process structure of $\hsa$ allows one to express its finite-dimensional distributions as {\em Fredholm Pfaffians}, and in particular we have that $F_{\mathrm{cross}}(t; \varpi, t_0) = \mathbb{P}(\hsa_1(t_0) \leq t)$, where $F_{\mathrm{cross}}$ is the two-parameter family of {\em crossover distributions} from \cite[Definition 2.9]{BBCS}. We mention that due to stationarity we have that $\mathbb{P}(\mathcal{A}_1(t_0) \leq t)$ does not depend on $t_0$. On the other hand, $\mathbb{P}(\hsa_1(t_0) \leq t)$ depends both on $\varpi$ (the boundary parameter) and $t_0$ (the spatial location).

Suppose now that $\mathcal{L} = \{\mathcal{L}_i\}_{i \geq 1}$ is the line ensemble as in the right side of (\ref{PALE}) with $\mathcal{A}_i$ replaced with $\hsa_i$. Then, this new ensemble satisfies the same Brownian Gibbs property as $\mathcal{L}^{\mathrm{pAiry}}$, whenever $[a,b] \subset [0,\infty)$. However, near the origin the ensemble $\mathcal{L}$ satisfies a {\em new} kind of Gibbs property, which we refer to as the {\em half-space Brownian Gibbs property}. We explain this property informally in the next several lines, and refer the interested reader to Definition \ref{def:BGP} for the precise formulation. For any $k \in \mathbb{N}$ and real $b > 0$, one has that the conditional law of $\left( \mathcal{L}_i(t): i = 1, \dots, k, t \in [0,b] \right),$ given that $\mathcal{L}_1(b) = y_1, \dots, \mathcal{L}_k(b) = y_k$, and $\mathcal{L}_{k+1}[0,b] = g$ (for a fixed continuous $g$ on $[0,b]$), is the same as that of $k$ independent reverse Brownian motions $\cev{B}_i$ from $(b,y_i)$ with drifts $\mu_i = (-1)^{i} \sqrt{2} \cdot \varpi$, which have been conditioned to avoid each other and also the graph of the function $g$, see the right side of Figure \ref{S11}. Here, if $W_t$ is a standard Brownian motion, we have that $B(t) = y + W_t + \mu t$ is a Brownian motion with drift $\mu$ that is started from $y$, and the reverse Brownian motion from $(b,y)$ with drift $\mu$ is $\cev{B}(t) = B(b-t)$. If $\varpi = 0$, we note that all the drifts are equal to zero, and so consecutive curves do not interact, except for avoiding each other. In general, we have that $\mu_1 = \mu_3 = \mu_5 = \cdots = -\sqrt{2} \varpi$ and $\mu_2 = \mu_4 = \mu_6 = \cdots = \sqrt{2} \varpi$, so that curves with odd index have one drift, and curves with even index have the opposite drift. If $\varpi > 0$, then odd curves have a negative drift, while even ones have a positive drift causing an attraction between curves $2i - 1$ and $2i$. Conversely, if $\varpi < 0$, then odd curves have a positive drift, while even ones have a negative drift causing a repulsion between curves $2i - 1$ and $2i$.\\

In view of the preceding discussion, the half-space Airy line ensemble naturally fits into the broader framework of half-space Gibbsian line ensembles. Roughly speaking, these are collections of random walks or Brownian motions with model-dependent local interactions, influenced by boundary effects. In terms of previous works on half-space Gibbsian line ensembles, our work is closest to \cite{BCD24}, and here we explain how our results compare to the ones in that paper. Firstly, in \cite{BCD24} the authors consider a sequence, indexed by $N$, of discrete Gibbsian line ensembles that arise in the half-space log-gamma polymer model. The weights of their model depend on a real parameter $\theta > 0$ away from the origin, and a parameter $\alpha > -\theta$ at the origin. The authors then proceed to show that the lowest-indexed curves of their ensembles are tight when $\alpha > 0$ is fixed and when $\alpha = \mu N^{-1/3}$ is critically scaled to converge to zero. Here, the parameter $\mu \in \mathbb{R}$ is fixed and corresponds to our parameter $\varpi$. The results in \cite{BCD24} are the first ones to obtain the transversal $2/3$ critical exponent for a positive temperature half-space model in the KPZ universality class, and the $1/3$ vertical critical exponent away from the origin. We mention also that \cite{DS24} goes beyond \cite{BCD24} in that the authors leverage the Gibbsian line ensemble structure to study the behavior of the lowest-indexed three curves of the log-gamma line ensemble, but the limit they consider is very different from that in \cite{BCD24} and the present paper, and the comparison to their work less direct. 

In the present paper we consider instead the half-space LPP with geometric weights. This model has the distinct advantage of being exactly solvable. In particular, it has an interpretation as a {\em Pfaffian (half-space) Schur process} from \cite{BR05}, which in turn is a Pfaffian analogue of the determinantal Schur processes in \cite{OR03}. This interpretation gives us access to exact formulas, which are suitable for asymptotic analysis, and allow us to show that the finite-dimensional distributions of our model converge to those of the limiting ensemble. In addition, the structure of skew Schur polynomials naturally give an interpretation of our model as a discrete Gibbsian line ensemble, whose curves behave as {\em interlacing} geometric random walkers. The latter allows us to control the modulus of continuity of our ensembles by comparing them to avoiding Brownian motions, which ultimately yields a functional limit for the full ensembles. Our results go substantially beyond \cite{BCD24} in that we show that all the curves (as opposed to just the lowest-indexed one) have subsequential limits. In addition, from the finite-dimensional convergence we can conclude convergence (as opposed to tightness) of the full ensembles.

The formulas available for Pfaffian Schur processes provide much more detailed information than what is currently available for the half-space log-gamma polymer, which is why our results are considerably sharper than those in \cite{BCD24}. Despite having access to exact formulas there are serious challenges that we needed to overcome to establish our results. We will discuss our methods in more detail in Section \ref{Section1.3} after we have stated our main theorem in the next section, but here we point out one important challenge that substantially altered our approach compared to what is usually done in the full-space setting. When showing finite-dimensional convergence for a sequence of determinantal full-space models to the Airy line ensemble, one usually shows that the joint cumulative distribution functions (cdfs) converge. The latter is possible as the joint cdfs can be written as Fredholm determinants, whose convergence is established by showing that the correlation kernels of the processes converge, together with various technical estimates. In our setup, the joint cdfs are expressible in terms of Fredholm Pfaffians, which one can also show converge, with substantial care. One immediate issue we are facing is that such a convergence {\em a priori} only establishes {\em vague} as opposed to {\em weak convergence}. In the full space setting this issue is resolved by directly showing that the limits are actually probability distributions (i.e. have total mass one). For the Tracy-Widom $F_2$ distribution this is shown by finding an alternative expression of $F_2$ in terms of the solution to the Painlev{\'e} equation of type II, see \cite{TWPaper}. In our context we do not have such an alternative expression for the limiting Fredholm Pfaffians and so we need to proceed differently.

The approach to establishing the finite-dimensional convergence in our setup boils down to showing three key statements. The first is that our correlation kernels converge -- this ensures our models converge {\em on the level of point processes}. The second is to show that our kernels have well-behaved upper-tail behavior -- this ensures tightness from above. Both of these statements are established using detailed asymptotic analysis and the {\em method of steepest descent}. The third (and trickiest) statement we require is that our limiting point processes have almost surely infinitely many atoms. Having infinitely many atoms in the limit can be used to ensure tightness from below, which together with tightness from above improves the point process convergence to a finite-dimensional one. This third statement (i.e. that the Airy point process is almost surely infinite) is well-known in the full-space setting, but not in our setup. To show this statement for the half-space setting, we utilize the Gibbsian line ensemble structure, and effectively couple our ensembles at finite time with time ``infinity''. As we increase time, we move away from the boundary $0$, and our half-space models start looking like full-space ones, which allows us to conclude the presence of infinitely many atoms. The Gibbs property then allows us to transfer that information from time ``infinity'' to any finite time, and conclude presence of infinitely many atoms for each fixed time. The use of the Gibbsian line ensemble structure to transfer information from one time to another is not new; however, to our knowledge this is the first time where one can actually recover information all the way from infinity. From a technical perspective, this argument helps us circumvent the vague vs weak convergence issue mentioned earlier, and avoid doing lengthy computations involving Fredholm Pfaffians.

We finally mention that the existence of the half-space Airy line ensembles was discussed near the end of \cite[Section 1.2]{BCD24}, and our construction is very much in the spirit of that section. The ensembles $\hsa$ should correspond to the subsequential limits in \cite[Theorem 1.1(2)]{BCD24}, i.e. when $\alpha = \mu N^{-1/3}$ with the parameter $\mu$ corresponding to $\varpi$ in our setting. The subsequential limit in \cite[Theorem 1.1(1)]{BCD24}, i.e. when $\alpha > 0$ is fixed, should correspond to a limiting version of our ensembles when $\varpi \rightarrow \infty$. Recalling that in our model odd curves have drifts $-\sqrt{2} \varpi$ and even ones have drifts $\sqrt{2} \varpi$ near the origin, we expect that as $\varpi \rightarrow \infty$ the first and second curve will collide at the origin, as will the third and fourth etc. This should lead to a Gibbsian line ensemble whose curves of index $2i-1$ and $2i$ are pinched at the origin, see \cite[Figure 2(B)]{BCD24}. We seek to formally construct this ensemble in a subsequent work and describe its finite-dimensional distribution as well as the nature of its half-space Gibbs property.

%
%
\subsection{Main result}\label{Section1.2} We start by formally introducing the crossover kernels $\kcr$ that are the half-space analogues of the extended Airy kernel from (\ref{S1AiryKer}).

\begin{definition}\label{S1Contours} For a fixed $z \in \mathbb{C}$ and $\varphi \in (0, \pi)$, we denote by $\mathcal{C}_{z}^{\varphi}=\{z+|s|e^{\mathrm{sgn}(s)\im\varphi}, s\in \mathbb{R}\}$ the infinite contour oriented from $z+\infty e^{-\im\varphi}$ to $z+\infty e^{\im\varphi}$. 
\end{definition}

\begin{definition}\label{def:kcr}
We fix $\varpi \in \mathbb{R}$, $m \in \mathbb{N}$ and $\mathcal{T} = \{t_1, \dots, t_m\}$, where $0 \leq t_1 < t_2 < \cdots < t_m$, and set $\ap_i = |\varpi| + 3i$ for $i = 1,2,3$.

For $s,t \in \mathcal{T}$ and $x, y \in \mathbb{R}$ we define the {\em crossover kernel} with parameter $\varpi$ via
\begin{equation}\label{eq:S1DefKcross}
\begin{split}
&\kcr(s,x; t,y) = \begin{bmatrix}
    \kcr_{11}(s,x;t,y) & \kcr_{12}(s,x;t,y)\\
    \kcr_{21}(s,x;t,y) & \kcr_{22}(s,x;t,y) 
\end{bmatrix} \\
&= \begin{bmatrix}
    \icr_{11}(s,x;t,y) & \icr_{12}(s,x;t,y) + \rcr_{12}(s,x;t,y) \\
    -\icr_{12}(t,y;s,x) - \rcr_{12}(t,y;s,x) & \icr_{22}(s,x;t,y) + \rcr_{22}(s,x;t,y) 
\end{bmatrix} ,
\end{split}
\end{equation}
where the kernels $\icr_{ij}, \rcr_{ij}$ are defined as follows. We have
\begin{equation}\label{S1DefIcross}
\begin{split}
\icr_{11}(s,x;t,y) = &\frac{1}{(2\pi \im)^2} \int_{\mathcal{C}_{\ap_1 - s}^{\pi/3}}dz \int_{\mathcal{C}_{\ap_3 - t}^{\pi/3}} dw \frac{z + s - w - t}{(z + s + w + t)(z+s)(w+ t)} \\
& \times (z + s+ \varpi )(w + t + \varpi) \cdot e^{z^3/3 + w^3/3 - xz - y w}, \\
\icr_{12}(s,x;t,y) = &\frac{1}{(2\pi \im)^2} \int_{\mathcal{C}_{\ap_3 - s}^{\pi/3}}dz \int_{\mathcal{C}_{\ap_1 - t}^{2\pi/3}} dw \frac{z + s + w + t}{2(z+ s)(z + s - w -t)} \\
& \times \frac{z + \varpi + s}{w + \varpi + t} \cdot e^{z^3/3 - w^3/3 - xz + y w},\\
\icr_{22}(s,x;t,y) = &\frac{1}{(2\pi \im)^2} \int_{\mathcal{C}_{-\ap_2 - s}^{2\pi/3}}dz \int_{\mathcal{C}_{-\ap_2 - t}^{2\pi/3}} dw \frac{z + s - w - t}{4 (z + s + w + t)} \\
& \times \frac{1}{(z + s  + \varpi )(w + t + \varpi)} \cdot e^{-z^3/3 - w^3/3 + xz + y w}.
\end{split}
\end{equation}
We also have
\begin{equation}\label{S1DefRcross}
\begin{split}
\rcr_{12}(s,x;t,y) = & - \frac{{\bf 1}\{s < t\} }{\sqrt{4\pi (t-s)}} \cdot \exp \left(\frac{- (s-t)^4 + 6 (x+ y)(s-t)^2 + 3 (x-y)^2}{12 (s-t)} \right),
\end{split}
\end{equation}
$\rcr_{22}(s,x; t,y) = - \rcr_{22}(t,y; s,x) $, and when $x- s^2 > y -t^2$ we have
\begin{equation}\label{S1DefRcross2}
\begin{split}
\rcr_{22}(s,x;t,y) = & \frac{1}{2\pi \im} \int_{\mathcal{C}_{\ap_1 }^{2\pi/3}}dz \frac{e^{(-z + s)^3/3 + (\varpi + t)^3/3 - x (-z+s) - y (\varpi + t)}}{4(z - \varpi)}\\
&- \frac{1}{2\pi \im}\int_{\mathcal{C}_{-\ap_2 }^{2\pi/3}}dw \frac{e^{(-w + t)^3/3 + (\varpi + s)^3/3 - y (-w+t) - x (\varpi + s)}}{4(w - \varpi)}\\
& + \frac{{\bf 1}\{s + t > 0\} }{2\pi \im} \int_{\mathcal{C}^{2\pi/3}_{-\ap_{2}}} dw \frac{we^{ (-w + t)^3/3 + (w + s)^3/3   - y (-w + t) - x (w+s)}   }{2(w- \varpi)(w + \varpi) }.
\end{split}
\end{equation}
\end{definition}
\begin{remark}\label{S1Correction} The formula for $\kcr$ in Definition \ref{def:kcr} was previously obtained in \cite[Section 2.5]{BBCS} and \cite[Section 5.1]{BBCS2}. As we discovered in the process of writing the present paper, the formulas in \cite{BBCS,BBCS2} have a few typos. These have since been corrected in the arXiv versions of these papers -- see \cite[Section 2.5]{BBCSArxiv} and \cite[Section 5.1]{BBCS2Arxiv}. We mention that our formulas for $I_{11}, I_{12}$ and $R_{12}$ completely agree (after a simple change of variables) with the corrected ones in \cite[Section 2.5]{BBCSArxiv}. As the authors chose slightly different contours in the definitions of $I_{22}$ and $R_{22}$ than ours, these terms do not match; however, their sum (which is precisely $K_{22}$) is the same as ours. In other words, our kernel $\kcr$ matches that in \cite[Section 2.5]{BBCSArxiv}. We also mention that in \cite[Equation (4.10)]{BBNV} the authors introduce a kernel $K^v$, which also matches with our kernel after several (tedious) changes of variables, including setting $\varpi = 2 v$, $s = u_a$, $t = u_b$, $x = \xi - u_a^2$, $y = \xi' - u_b^2$, $z = Z -s$, $w = W-t$, and conjugating the kernel.
\end{remark}

The main result of the paper is as follows.
\begin{theorem}\label{thm:MainThm1} Fix $\varpi \in \mathbb{R}$. There exists a line ensemble $\hsa = \{\hsa_i\}_{i \geq 1}$ on $[0, \infty)$ that satisfies the following properties. Firstly, the ensemble is non-intersecting, meaning that almost surely
\begin{equation}\label{Eq.OrdHSA}
\hsa_i(t) > \hsa_{i+1}(t) \mbox{ for all } i \in \mathbb{N}, t \in [0,\infty).
\end{equation}
In addition, for each $m \in \mathbb{N}$, $s_1, \dots, s_m \in [0,\infty)$ with $s_1 < s_2 < \cdots < s_m$, we have that the random measure
\begin{equation}\label{eq:HSA point process}
M(\omega, A) = \sum_{i \geq 1} \sum_{j = 1}^m {\bf 1}\{(s_j, \hsa_i(s_j,\omega) ) \in A\}
\end{equation}
is a Pfaffian point process on $\mathbb{R}^2$, with correlation kernel $\kcr$ as in (\ref{eq:S1DefKcross}) and reference measure $\mu_{\mathsf{S}} \times \lambda$, where $\mu_{\mathsf{S}}$ is the counting measure on $\mathsf{S} = \{s_1, \dots, s_m\}$, and $\lambda$ is the Lebesgue measure on $\mathbb{R}$. In addition, if $\mathcal{L} = \{\mathcal{L}_i\}_{i \geq 1}$ is the line ensemble defined through 
\begin{equation}\label{eq:Parabolic HSA}
\sqrt{2} \cdot \mathcal{L}_i(t) + t^2 = \hsa_i(t) \mbox{ for } i \geq 1, t \in [0,\infty),
\end{equation}
then $\mathcal{L}$ satisfies the half-space Brownian Gibbs property from Definition \ref{def:BGP} with parameters $\mu_i = (-1)^{i} \sqrt{2} \cdot \varpi$.
\end{theorem}
\begin{remark}\label{S1Unique} Theorem \ref{thm:MainThm1} states that there {\em exists} a non-intersecting line ensemble $\hsa$, whose associated point processes $M$ are Pfaffian with correlation kernel $\kcr$ and reference measures $\mu_{\mathsf{S}} \times \lambda$. However, there is at most one non-intersecting line ensemble that satisfies this property as we explain next, i.e. the line ensemble $\hsa$ in Theorem \ref{thm:MainThm1} is {\em unique}. Indeed, suppose $\tilde{\mathcal{A}}$ is another such ensemble and let $\tilde{M}$ be as in (\ref{eq:HSA point process}) with $\hsa$ replaced with $\tilde{\mathcal{A}}$. Then both $M$ and $\tilde{M}$ are Pfaffian point processes with the same kernel and reference measure; hence $M \overset{d}{=} \tilde{M}$ (as random locally bounded measures on $\mathbb{R}^2$) by Proposition \ref{prop:basic properties Pfaffian point process}(3). From \cite[Corollary 2.20]{dimitrov2024airy}, we conclude $(\hsa_i(s_j): i \geq 1, j = 1, \dots, m) \overset{d}{=} (\tilde{\mathcal{A}}_i(s_j): i \geq 1, j = 1, \dots, m)$ (as random vectors in $\mathbb{R}^{\infty}$). As finite-dimensional sets form a separating class, see \cite[Lemma 3.1]{DimMat} with $\Lambda=[0,\infty)$, we conclude $\hsa \overset{d}{=} \tilde{\mathcal{A}}$ (as line ensembles or equivalently random elements in $C(\mathbb{N}\times[0,\infty)$).
\end{remark}

\begin{remark}\label{S1BGP} It follows from Lemma \ref{LemmaConsistent} that $\mathcal{L}$ also satisfies the Brownian Gibbs property as in \cite[Definition 2.2]{CorHamA} on the interval $[0,\infty)$, see also \cite[Definition 2.7]{DimMat}.
\end{remark}

\begin{remark}\label{S1MainRes} While we view Theorem \ref{thm:MainThm1} as the main result of the paper, it is worth mentioning that in its proof we construct $\hsa$ as the weak limit of a sequence of discrete line ensembles that arise in the Pfaffian Schur processes introduced in \cite{BR05}, which are in turn intimately related to geometric LPP in a half-space. Consequently, an important contribution of the paper is establishing not just the existence of the limiting object but showing that half-space geometric LPP converges to it, which can be thought as a strengthening of the results in \cite{BBNV}. We believe that the arguments of the present paper can also be adapted to exponential LPP in a half-space, corresponding to a strengthening of \cite{BBCS}, but we leave this for future work.
\end{remark}

\begin{remark}\label{S1Characterize} As mentioned in Section \ref{Section1.1}, \cite{AH23} shows that $\mathcal{L}^{\mathrm{pAiry}}$ is (up to an independent affine shift) the unique line ensemble that possesses the Brownian Gibbs property, and whose top curve globally looks like $-2^{-1/2} t^2$. Although we do not show this directly, Lemma \ref{lem:ConvToAiryKernel} in the text can be used to show that the curves $\mathcal{L}_i(t)$ from Theorem \ref{thm:MainThm1} for each fixed index $i$ look like $-2^{-1/2} t^2$ as $t \rightarrow \infty$. It would be interesting to see if one can establish an analogous characterization of $\mathcal{L}$ as the unique ensemble which is globally parabolic and satisfies the half-space Brownian Gibbs property.
\end{remark}

%
%
\subsection{Ideas behind the proof and paper outline}\label{Section1.3} The way we construct the half-space Airy line ensembles $\hsa$ from Theorem \ref{thm:MainThm1} is as weak limits of certain discrete line ensembles that arise in the Pfaffian Schur processes from \cite{BR05}. Recall that a {\em partition} $\lambda = (\lambda_1, \lambda_2, \dots)$ is a sequence of decreasing non-negative integers that are eventually zero. The Pfaffian Schur processes we consider are measures $\mathbb{P}_N$ on sequences of partitions $\lambda^0, \lambda^1, \lambda^2, \dots $. By defining $L_i(t) = \lambda_i^t$ and linearly interpolating these points for non-integer $t \geq 0$, one obtains a discrete line ensemble $\mathfrak{L} = \{L_i\}_{i = 1}^{\infty}$, see Definition \ref{DefDLE}, where $L_i: \mathbb{Z}_{\geq 0} \rightarrow \mathbb{Z}$ satisfy
\begin{equation}\label{S1E1}
L_i(s) \leq L_{i} (s+1) \mbox{ for all } s \in \mathbb{Z}_{\geq 0}.
\end{equation}
In words, equation (\ref{S1E1}) says that one can think of $L_i$ as the trajectories of geometric random walkers. In addition, our ensemble $\mathfrak{L}$ satisfies the property that for each $s \in \mathbb{Z}_{\geq 0}$ the vectors $\mathfrak{L}(s) = (L_1(s), L_2(s), \dots)$ and $\mathfrak{L}(s+1) = (L_1(s+1), L_2(s+1), \dots)$ {\em interlace} in the sense that 
\begin{equation}\label{S1E2}
L_1(s+1) \geq L_1(s) \geq L_2(s+1) \geq L_2(s) \geq L_3(s+1) \geq L_3(s) \geq \cdots.
\end{equation}
A visual depiction of $\mathfrak{L}$ is given in Figure \ref{S1_3}.
\begin{figure}[ht]
	\begin{center}
		\begin{tikzpicture}[scale=0.7]
		\begin{scope}
        \def\r{0.1}
		\draw[dotted, gray] (0,0) grid (8,6);

        \draw[fill = black] (0,1) circle [radius=\r];
        \draw[fill = black] (1,4) circle [radius=\r];
        \draw[fill = black] (2,5) circle [radius=\r];
        \draw[fill = black] (3,5) circle [radius=\r];
        \draw[fill = black] (4,5) circle [radius=\r];
        \draw[fill = black] (5,5) circle [radius=\r];
        \draw[fill = black] (6,6) circle [radius=\r];
        \draw[fill = black] (7,6) circle [radius=\r];
        \draw[fill = black] (8,6) circle [radius=\r];
        \draw[-][black] (0,1) -- (1,4);
        \draw[-][black] (1,4) -- (2,5);
        \draw[-][black] (2,5) -- (3,5);
        \draw[-][black] (3,5) -- (4,5);
        \draw[-][black] (4,5) -- (5,5);
        \draw[-][black] (5,5) -- (6,6);
        \draw[-][black] (6,6) -- (7,6);
        \draw[-][black] (7,6) -- (8,6);

        \draw[fill = black] (0,1) circle [radius=\r];
        \draw[fill = black] (1,1) circle [radius=\r];
        \draw[fill = black] (2,2) circle [radius=\r];
        \draw[fill = black] (3,3) circle [radius=\r];
        \draw[fill = black] (4,3) circle [radius=\r];
        \draw[fill = black] (5,3) circle [radius=\r];
        \draw[fill = black] (6,3) circle [radius=\r];
        \draw[fill = black] (7,4) circle [radius=\r];
        \draw[fill = black] (8,4) circle [radius=\r];
        \draw[-][black] (0,1) -- (1,1);
        \draw[-][black] (1,1) -- (2,2);
        \draw[-][black] (2,2) -- (3,3);
        \draw[-][black] (3,3) -- (4,3);
        \draw[-][black] (4,3) -- (5,3);
        \draw[-][black] (5,3) -- (6,3);
        \draw[-][black] (6,3) -- (7,4);
        \draw[-][black] (7,4) -- (8,4);

        \draw[fill = black] (0,0) circle [radius=\r];
        \draw[fill = black] (1,0) circle [radius=\r];
        \draw[fill = black] (2,1) circle [radius=\r];
        \draw[fill = black] (3,1) circle [radius=\r];
        \draw[fill = black] (4,3) circle [radius=\r];
        \draw[fill = black] (5,3) circle [radius=\r];
        \draw[fill = black] (6,3) circle [radius=\r];
        \draw[fill = black] (7,3) circle [radius=\r];
        \draw[fill = black] (8,3) circle [radius=\r];
        \draw[-][black] (0,0) -- (1,0);
        \draw[-][black] (1,0) -- (2,1);
        \draw[-][black] (2,1) -- (3,1);
        \draw[-][black] (3,1) -- (4,3);
        \draw[-][black] (4,3) -- (5,3);
        \draw[-][black] (5,3) -- (6,3);
        \draw[-][black] (6,3) -- (7,3);
        \draw[-][black] (7,3) -- (8,3);

        \draw (8.5, 6) node{$L_1$};
        \draw (8.5, 4) node{$L_2$};
        \draw (8.5, 3) node{$L_3$};

		\end{scope}

		\end{tikzpicture}
	\end{center}
	\caption{The figure depicts the top three curves in $\mathfrak{L}$, satisfying (\ref{S1E1}) and (\ref{S1E2}).}
	\label{S1_3}
\end{figure}
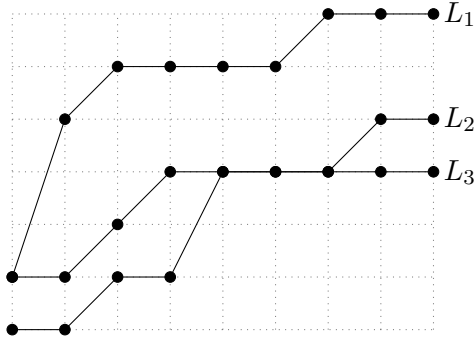

The measures $\mathbb{P}_N$ we consider depend on two parameters $q$ (which can be thought of as the weight of the underlying geometric walkers), and $c$ (which reflects the boundary interaction of the curves in $\mathfrak{L}$ at the origin). The scaling we consider involves sending $N \rightarrow \infty$, while keeping $q \in (0,1)$ fixed and $c = 1 - \mu N^{-1/3} + o(N^{-1/3})$ for some $\mu$ that is proportional to the boundary parameter $\varpi$ in the definition of $\hsa$ in Theorem \ref{thm:MainThm1}. If one appropriately vertically shifts the curves in $\mathfrak{L}$, and performs a horizontal scaling by $N^{2/3}$ and a vertical one by $N^{1/3}$, one obtains a sequence of line ensembles $\mathcal{L}^N$ that weakly converge to the ensemble $\mathcal{L}$ from Theorem \ref{thm:MainThm1}. In order to establish this convergence and deduce all the properties in the theorem, we need to show that:
\begin{enumerate}
    \item[I.] the ensembles $\mathcal{L}^N$ converge in the finite-dimensional sense;
    \item[II.] the ensembles $\mathcal{L}^N$ have tight curves in the space of continuous functions, and all subsequential limits satisfy the half-space Brownian Gibbs property.
\end{enumerate}

Let us first focus on point II above. As mentioned earlier, the $L_i(s)$ for increasing $s$ can be thought of as geometric random walkers that are conditioned to interlace. If instead one looks at $L_i(T-s)$ for $s = 0, 1, \dots, T$, then these curves have an interpretation as reverse geometric walkers with weights $q_i = c^{(-1)^i} \cdot q$. In other words, the interaction at the origin for the different curves can be absorbed into the weights of the different walkers with all even walkers having weight $cq$ and all odd ones having weight $c^{-1} q$. This interpretation of the boundary interaction follows directly from the formula for Pfaffian Schur processes in terms of Schur symmetric functions, see Lemma \ref{lem:SchurGibbs}. In Section \ref{Section2} we make a formal definition of this structural property, and refer to it as the {\em half-space interlacing Gibbs property}. We mention that our definition is made in considerable generality allowing arbitrary weights $\{q_i\}_{i \geq 1}$ (as opposed to $q_i = c^{(-1)^i} \cdot q$). 

In Section \ref{Section2} we formally define half-space continuous and discrete line ensembles, and their Gibbs properties. In Section \ref{Section3} we derive a few structural properties of our discrete ensembles that are based on a monotone coupling of these ensembles in their boundary data from Lemma \ref{lem:monotone coupling} and on a strong comparison between geometric random walks and Brownian motions from Lemma \ref{prop:ThmA Shao} (the latter is a version of the celebrated Koml{\' o}s-Major-Tusn{\' a}dy (KMT) coupling, \cite{KMT1,KMT2}). The main technical result we establish about our discrete line ensembles is contained in Theorem \ref{thm:main thm tightness} in Section \ref{Section4}, which shows that a sequence of discrete ensembles is tight (in the space of continuous functions) under the assumption of one-point tightness of its curves. Theorem \ref{thm:main thm tightness} further shows that, under $N^{2/3}$ horizontal and $N^{1/3}$ vertical scalings of the ensembles, together with our scalings of the parameters $q$ and $c$, the half-space interlacing Gibbs property becomes the half-space Brownian Gibbs property. We mention that our proof of Theorem \ref{thm:main thm tightness} relies considerably on the results from \cite{dimitrov2024tightness}, which allow us to deal with our ensemble away from the origin where they behave exactly as full-space models. The main contribution of the present work is to demonstrate how to deal with the boundary effect in the half-space setting.

If we can establish the finite-dimensional convergence of $\mathcal{L}^N$, then that in particular implies the one-point tightness required to apply Theorem \ref{thm:main thm tightness}, and so we can conclude point II above from point I. It is worth mentioning that the formulation of Theorem \ref{thm:main thm tightness} is quite general -- it allows for considerable parameter freedom and can be applied to finite/infinite ensembles on finite/semi-infinite intervals. There are a few reasons for this. First of all, this theorem should directly be applicable to the setting of Schur processes with two open boundaries, see \cite{BBNV}, or to the construction of half-space analogues of the {\em Airy wanderer line ensembles}, see \cite{dimitrov2024airy}. Secondly, one can try to analyze the limits of generic half-space discrete Gibbsian line ensembles with arbitrary parameters $\{q_i\}_{i \geq 1}$ (and not just $q_i = c^{(-1)^i} \cdot q$), and construct large families of half-space continuous ensembles that away from the origin satisfy the Brownian Gibbs property, but whose behavior at the origin is much more complex than that of $\mathcal{L}$ from Theorem \ref{thm:MainThm1}. Lastly, we hope that the results in Sections \ref{Section2}, \ref{Section3} and \ref{Section4} can serve as a template for the asymptotic analysis of various other models in the half-space KPZ class such as the half-space log-gamma polymer and the half-space exponential LPP.\\

In the remainder of this section we discuss how we prove the finite-dimensional convergence of $\mathcal{L}^N$ in point I above. The key ingredient here is the Pfaffian point process structure of the Pfaffian Schur process, which was shown in \cite{BR05}, see Lemma \ref{lem:PSP as PPP} for a precise statement. In words, we have that for appropriately chosen $s_1, \dots, s_m \in [0,\infty)$ with $s_1 < s_2 < \cdots < s_m$ the random measure
\begin{equation*}
M^N(\omega, A) = \sum_{i \geq 1} \sum_{j = 1}^m {\bf 1}\{(s_j, \mathcal{L}^N_i(s_j,\omega) ) \in A\}
\end{equation*}
is a Pfaffian point process, and its correlation kernel $K^N$ is given by a double contour integral. The finite-dimensional convergence then goes through showing the following statements. 
\begin{enumerate}
    \item[IA.] The kernels $K^N$ converge uniformly to a limiting kernel $K^{\infty}$. This step ensures that the $M^N$ converge weakly to a point process $M^{\infty}$, which is also Pfaffian with kernel $K^{\infty}$.
    \item[IB.] For a fixed $t \geq 0$ the sequence $(\mathcal{L}^N_1(t))^+ = \max (0, \mathcal{L}_1^N(t))$ is tight.
    \item[IC.] The measures $M^{\infty}$ from IA above satisfies for each $i =1, \dots, m$
\begin{equation}\label{S1InfMass}
M^{\infty}(\{ s_i \} \times \mathbb{R}) = \infty \mbox{ a.s.}.
\end{equation}
\end{enumerate}
The fact that the above three statements suffice for establishing finite-dimensional convergence follows from \cite[Proposition 2.21]{dimitrov2024airy}, see Section \ref{Section6.4} for the details. Here, we mention that statements IB and IC improve the weak convergence for the point processes from statement IA to a finite-dimensional one by together implying tightness.

We rewrite the formula for $K^N$ in a way that is suitable for asymptotic analysis in Lemma \ref{lem:PrelimitKernel} and show that the latter converges uniformly in Lemma \ref{lem:kernelLimits} using the method of steepest descent. This establishes point IA above. We mention that for a slightly different choice of specializations of the Schur process a similar analysis was carried out in \cite{BBNV} although our choice of contours is considerably simpler. Point IB above is established in Lemma \ref{lem:TightFromAbove} and is based on detailed upper-tail estimates for the correlation kernels $K^N$.

In order to show IC above we utilize the half-space interlacing Gibbs property of our discrete ensembles. Specifically, we demonstrate that the number of atoms in any subsequential limit, i.e. $M^{\infty}(\{s\} \times \mathbb{R})$ as a function of $s$, needs to stochastically decrease. The latter is accomplished in Steps 2 and 3 of the proof of Lemma \ref{lem:InfiniteAtoms}. On the other hand, in the first step of the proof of Lemma \ref{lem:InfiniteAtoms} we show that $M^{\infty}(\{s\} \times \mathbb{R})$ converge to the Airy point process as $s \rightarrow \infty$. As the Airy point process contains almost surely infinitely many atoms, we conclude the same for $M^{\infty}(\{s\} \times \mathbb{R})$ for each fixed $s \in [0, \infty)$. At a high level, we utilize the half-space interlacing Gibbs property to couple our {\em finite-time} ensemble with its value at ``infinity'', and leverage some of the known properties of the Airy point process at this ``infinite'' time to deduce some finite-time properties for our ensembles. The precise monotonicity statements we establish to ensure (\ref{S1InfMass}) are Lemmas \ref{lem:technical lemma fdd 1} and \ref{lem:technical lemma fdd 2}.

We mention that point IC above is somewhat special to our present setup as we are constructing the half-space Airy line ensemble for the first time. Future works can use this statement freely, as it is merely a statement about the limiting object rather than the Pfaffian Schur processes that we study. In general, we hope that the framework we develop in this paper can be adapted to large classes of half-space models or other Pfaffian point processes to study their asymptotic behavior.

%
%
\subsection*{Acknowledgments}\label{Section1.4} The authors would like to thank Ivan Corwin and Sayan Das for many useful remarks on earlier drafts of the paper. ED was partially supported by the NSF grant DMS:2230262.  ZY was partially supported by Ivan Corwin's NSF grants DMS:1811143, DMS:2246576, Simons Foundation Grant 929852, and the Fernholz Foundation's `Summer Minerva Fellows' program.

%
%
\section{Half-space line ensembles}\label{Section2} In this section we introduce several core definitions related to line ensembles and their Gibbs properties. In Section \ref{Section2.1} we introduce continuous line ensembles and the {\em half-space Brownian Gibbs property}. In Section \ref{Section2.2} we introduce geometric line ensembles and their corresponding {\em half-space interlacing Gibbs property}. Finally, in Section \ref{Section2.3} we list several properties for geometric line ensembles, including a monotone coupling lemma, see Lemma \ref{lem:monotone coupling}, and a strong coupling lemma, see Lemma \ref{prop:ThmA Shao}. Some of our notation in this section is based on \cite[Section 2]{dimitrov2024tightness} and \cite[Section 2]{DEA21}, which in turn goes back to \cite{CorHamA}.

%
%
\subsection{The half-space Brownian Gibbs property}\label{Section2.1} Given two integers $a \leq b$, we let $\llbracket a, b \rrbracket$ denote the set $\{a, a+1, \dots, b\}$. We also set $\llbracket a,b \rrbracket = \emptyset$ when $a > b$, $\llbracket a, \infty \rrbracket = \{a, a+1, a+2 , \dots \}$, $\llbracket - \infty, b\rrbracket = \{b, b-1, b-2, \dots\}$ and $\llbracket - \infty, \infty \rrbracket = \mathbb{Z}$. Given an interval $\Lambda \subseteq \mathbb{R}$, we endow it with the subspace topology of the usual topology on $\mathbb{R}$. We let $(C(\Lambda), \mathcal{C})$ denote the space of continuous functions $f: \Lambda \rightarrow \mathbb{R}$ with the topology of uniform convergence over compact sets, see \cite[Chapter 7, Section 46]{Munkres}, and Borel $\sigma$-algebra $\mathcal{C}$. Given a set $\Sigma \subseteq \mathbb{Z}$, we endow it with the discrete topology and denote by $\Sigma \times \Lambda$ the set of all pairs $(i,x)$ with $i \in \Sigma$ and $x \in \Lambda$ with the product topology. We also denote by $\left(C (\Sigma \times \Lambda), \mathcal{C}_{\Sigma}\right)$ the space of real-valued continuous functions on $\Sigma \times \Lambda$ with the topology of uniform convergence over compact sets and Borel $\sigma$-algebra $\mathcal{C}_{\Sigma}$. We typically take $\Sigma = \llbracket 1, N \rrbracket$ with $N \in \mathbb{N} \cup \{\infty\}$. The following defines the notion of a line ensemble.
\begin{definition}\label{CLEDef}
Let $\Sigma \subseteq \mathbb{Z}$ and $\Lambda \subseteq \mathbb{R}$ be an interval. A {\em $\Sigma$-indexed line ensemble $\mathcal{L}$} is a random variable defined on a probability space $(\Omega, \mathcal{F}, \mathbb{P})$ that takes values in $\left(C (\Sigma \times \Lambda), \mathcal{C}_{\Sigma}\right)$. Intuitively, $\mathcal{L}$ is a collection of random continuous curves (sometimes referred to as {\em lines}), indexed by $\Sigma$,  each of which maps $\Lambda$ in $\mathbb{R}$. We will often slightly abuse notation and write $\mathcal{L}: \Sigma \times \Lambda \rightarrow \mathbb{R}$, even though it is not $\mathcal{L}$ which is such a function, but $\mathcal{L}(\omega)$ for every $\omega \in \Omega$. For $i \in \Sigma$ we write $\mathcal{L}_i(\omega) = (\mathcal{L}(\omega))(i, \cdot)$ for the curve of index $i$ and note that the latter is a map $\mathcal{L}_i: \Omega \rightarrow C(\Lambda)$, which is $\mathcal{F}/\mathcal{C}$ measurable. If $a,b \in \Lambda$ satisfy $a \leq b$, we let $\mathcal{L}_i[a,b]$ denote the restriction of $\mathcal{L}_i$ to $[a,b]$. We call a line ensemble {\em non-intersecting} if $\mathbb{P}$-almost surely $\mathcal{L}_i(r) > \mathcal{L}_j(r)$  for all $i < j$ and $r \in \Lambda$.
\end{definition}
\begin{remark}\label{RemPolish} As shown in \cite[Lemma 2.2]{DEA21}, we have that $C(\Sigma \times \Lambda)$ is a Polish space, and so a line ensemble $\mathcal{L}$ is just a random element in $C(\Sigma \times \Lambda)$ in the sense of \cite[Section 3]{Billing}.
\end{remark}

We let $W_t$ denote a standard one-dimensional Brownian motion, and if $y, \mu \in \mathbb{R}$, we define the {\em Brownian motion with drift $\mu$ from $B(0) = y$} by
\begin{equation}\label{eq:DefBrownianMotionDrift}
B(t) = y + W_t + \mu t.
\end{equation}
If $b \in (0,\infty)$ we also define the {\em reverse Brownian motion with drift $\mu$ from $\cev{B}(b) = y$} by
\begin{equation}\label{eq:RevDefBrownianMotionDrift}
\cev{B}(t) = B(b-t) \mbox{ for } 0 \leq t \leq b.
\end{equation}
We next state two lemmas about reverse Brownian motions, whose proofs follow directly from those of \cite[Corollaries 2.9 and 2.10]{CorHamA}, and are given in Sections \ref{SectionA2} and \ref{SectionA3}. 
\begin{lemma}\label{lem: BB touch intersect} 
Suppose $\mu, y \in \mathbb{R}, b > 0$ and $\cev{B}(t)$ is as in (\ref{eq:RevDefBrownianMotionDrift}). Suppose that $f \in C([0,b])$ is such that $f(b) < y$ and set $C=\{ \cev{B}(t)<f(t) \mbox{ for some } t\in[0,b] \}$ and $D=\{\cev{B}(t)=f(t) \mbox{ for some } t\in [0,b]\}$. Then, $\mathbb{P}(D\cap C^c)=0$.
\end{lemma}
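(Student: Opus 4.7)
The plan is to reduce the statement to one about forward Brownian motion with drift and then apply the strong Markov property at the first touching time, combining it with the small-time oscillations of Brownian motion. Setting $\tilde f(u) := f(b-u)$ and $B(u) := \cev B(b-u)$ for $u \in [0,b]$, by (\ref{eq:DefBrownianMotionDrift}) and (\ref{eq:RevDefBrownianMotionDrift}) the process $B$ is a Brownian motion with drift $\mu$ starting from $B(0) = y$. The events translate to $\tilde C = \{B(u) < \tilde f(u) \text{ for some } u \in [0,b]\}$ and $\tilde D = \{B(u) = \tilde f(u) \text{ for some } u \in [0,b]\}$, and the hypothesis $f(b) < y$ becomes $\tilde f(0) < B(0)$. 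It therefore suffices to show $\pr(\tilde D \cap \tilde C^c) = 0$, which is the standard forward-time version of the claim proved as \cite[Corollary 2.9]{CorHamA}; the remainder of the plan essentially repeats that argument in the present setting.

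I would then introduce the stopping time $\tau := \inf\{u \in [0,b] : B(u) = \tilde f(u)\}$ (with $\inf \emptyset := \infty$), which is a stopping time by continuity of $B$ and $\tilde f$. The strict inequality $B(0) > \tilde f(0)$ and continuity force $\tau > 0$, while on $\tilde D \cap \tilde C^c$ we have $B(u) \geq \tilde f(u)$ for every $u \in [0,b]$ and $B(\tau) = \tilde f(\tau)$. Splitting into the cases $\tau < b$ and $\tau = b$, the latter event is contained in $\{B(b) = \tilde f(b)\}$, which has probability zero since $B(b) - \tilde f(b)$ is a non-degenerate Gaussian random variable.

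The main step, and the one I expect to be the most delicate, is to rule out the event $\tilde D \cap \tilde C^c \cap \{\tau < b\}$. Conditionally on $\mathcal{F}_\tau$, the strong Markov property makes $s \mapsto B(\tau+s) - B(\tau)$ a Brownian motion with drift $\mu$ started from zero, so that
\[
B(\tau + s) - \tilde f(\tau + s) \;=\; \bigl(B(\tau+s) - B(\tau)\bigr) \;-\; \bigl(\tilde f(\tau + s) - \tilde f(\tau)\bigr).
\]
The second term is $o(1)$ as $s \to 0^+$ by continuity of $\tilde f$, whereas the law of the iterated logarithm yields $\liminf_{s \to 0^+} s^{-1/2}(B(\tau+s) - B(\tau)) = -\infty$ almost surely. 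Hence on $\{\tau < b\}$ one finds almost surely arbitrarily small $s > 0$ with $B(\tau + s) < \tilde f(\tau + s)$, contradicting membership in $\tilde C^c$. The main technical care lies in verifying that $\tau$ is a genuine stopping time and that the Brownian oscillation is strong enough to produce strict descent below the moving continuous barrier $\tilde f$ despite the drift $\mu$; both points are handled as in the proof of \cite[Corollary 2.9]{CorHamA}.
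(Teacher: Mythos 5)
Your time reversal and the treatment of the boundary cases ($\tau>0$ by continuity, and $\{\tau=b\}\subseteq\{B(b)=\tilde f(b)\}$ being null) are fine, but the central step --- that on $\{\tau<b\}$ the process almost surely drops strictly below the barrier immediately after the first touching time --- has a genuine gap, and the justification you give cannot be repaired for a general continuous $f$. The law of the iterated logarithm produces downward oscillations of $B(\tau+s)-B(\tau)$ of order $\sqrt{2s\log\log(1/s)}$, whereas the increment $\tilde f(\tau+s)-\tilde f(\tau)$ is only $o(1)$: it may tend to $0$ much more slowly than $\sqrt{s}$, and with the unfavorable sign. Concretely, if $h(s):=\tilde f(\tau+s)-\tilde f(\tau)=-s^{1/3}$ near $s=0$, then almost surely $W_s+\mu s\ge -2\sqrt{2s\log\log(1/s)}>-s^{1/3}=h(s)$ for all sufficiently small $s>0$, so a fresh Brownian motion started exactly at the barrier stays \emph{strictly above} it on a right neighbourhood of $0$ with probability one. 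Hence ``touch at $\tau$ implies cross immediately after $\tau$'' is false for barriers with downward cusps of H\"older exponent below $1/2$, and your argument would additionally need to show that the random time $\tau$ almost surely avoids all such points of $\tilde f$ --- a point you do not address and which is not obviously true. Deferring this to \cite[Corollary 2.9]{CorHamA} does not close the gap, since that corollary is not proved by a strong Markov/LIL argument.

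The paper's proof is a short reduction rather than a local analysis: it writes $\cev{B}(t)=y+\hat{B}(b-t)+X(b-t)$ with $\hat{B}$ a standard Brownian bridge on $[0,b]$ and $X\sim N(\mu,b^{-1})$ independent, conditions on $X=x$, and applies \cite[Corollary 2.9]{CorHamA} to the resulting Brownian bridge, for which the hypothesis $f(b)<y$ supplies the required strict inequality at the endpoint. If you want a self-contained argument, the mechanism to reproduce is global: for a fixed bridge path, $\inf_{t\in[0,b]}(\cev{B}(t)-f(t))$ is a concave function of the slope variable $X$ (an infimum of affine functions of $X$), and using $f(b)<y$ one checks it can vanish for at most one value of $X$; integrating over the Gaussian law of $X$ then kills the touching-without-crossing event. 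That shift-in-the-endpoint argument works for arbitrary continuous $f$, which the strong Markov/LIL route does not.
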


\begin{lemma}\label{lem: BB positive measure}
Suppose $\mu, y \in \mathbb{R}, b > 0$ and $\cev{B}(t)$ is as in (\ref{eq:RevDefBrownianMotionDrift}). Suppose $U$ is an open subset of $C([0,b])$ which contains a function $f$ such that $f(b) = y$. Then, $\mathbb{P}(\cev{B} \in U) > 0$.
\end{lemma}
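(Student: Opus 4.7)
Since $U$ is open in $C([0,b])$ with the uniform topology and $f \in U$ with $f(b)=y$, there exists $\epsilon > 0$ such that the uniform $\epsilon$-ball $B_\epsilon(f) = \{g \in C([0,b]) : \|g-f\|_\infty < \epsilon\}$ is contained in $U$. Consequently, it suffices to prove that $\mathbb{P}(\|\cev{B} - f\|_\infty < \epsilon) > 0$. I will first reduce this to a statement about standard Brownian motion and then invoke the Wiener support theorem.

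For the reduction, observe that the time-reversal map $g \mapsto g(b-\cdot)$ is a uniform isometry of $C([0,b])$. Setting $\tilde f(s) := f(b-s)$, we have $\tilde f(0) = f(b) = y$ and
\begin{equation*}
\|\cev{B} - f\|_\infty = \sup_{t\in[0,b]} |B(b-t) - f(t)| = \sup_{s\in[0,b]} |B(s) - \tilde f(s)| = \|B - \tilde f\|_\infty.
\end{equation*}
Writing $B(t) = y + W_t + \mu t$ from (\ref{eq:DefBrownianMotionDrift}) and defining $h(t) := \tilde f(t) - y - \mu t$, we see $h \in C([0,b])$ with $h(0)=0$, and the desired inequality becomes $\mathbb{P}(\|W - h\|_\infty < \epsilon) > 0$.

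It remains to establish this small-ball statement for any continuous $h$ on $[0,b]$ with $h(0)=0$. The argument goes in two steps. First, by uniform density of smooth functions, I can choose an absolutely continuous $\tilde h$ with $\tilde h(0)=0$, $\int_0^b \tilde h'(s)^2\, ds < \infty$, and $\|h - \tilde h\|_\infty < \epsilon/2$. Second, by the Cameron-Martin theorem, the law of $(W_t + \tilde h(t))_{t\in[0,b]}$ is mutually absolutely continuous with respect to the Wiener measure, so
\begin{equation*}
\mathbb{P}\bigl(\|W - \tilde h\|_\infty < \epsilon/2\bigr) > 0 \iff \mathbb{P}\bigl(\|(W+\tilde h) - \tilde h\|_\infty < \epsilon/2\bigr) = \mathbb{P}\bigl(\|W\|_\infty < \epsilon/2\bigr) > 0,
\end{equation*}
and the right-hand probability is strictly positive by the standard small-ball estimate for Brownian motion. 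The triangle inequality combined with $\|h - \tilde h\|_\infty < \epsilon/2$ then yields $\mathbb{P}(\|W - h\|_\infty < \epsilon) > 0$, completing the reduction and hence the proof.

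The argument is essentially bookkeeping around classical facts (time reversal, Cameron-Martin, the positivity of Brownian small-ball probabilities), so there is no genuine obstacle. The only point requiring mild care is the time-reversal step, which must be set up so that the drift $\mu$ is correctly absorbed into the shifted curve $h$; once this is done, the Wiener support theorem takes over.
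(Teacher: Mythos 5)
Your proof is correct, but it takes a different route from the paper's. You reduce the claim, via time reversal and absorbing the drift into the target curve, to the support theorem for Wiener measure on $[0,b]$, which you then prove from scratch by approximating $h$ with a Cameron--Martin path and invoking the equivalence of the shifted and unshifted Wiener measures together with the standard small-ball estimate $\mathbb{P}(\|W\|_\infty < \epsilon/2) > 0$. The paper instead decomposes the reverse Brownian motion through the homeomorphism $H_y$ of (\ref{eq:def of homeomorphism F}) as a standard Brownian bridge plus an independent Gaussian slope, observes that $H_y^{-1}(U \cap C_y([0,b]))$ contains a product open set $U_0 \times I_0$, and quotes \cite[Corollary 2.10]{CorHamA} for the positivity of $\mathbb{P}(\hat{B} \in U_0)$; this is deliberately parallel to the paper's proof of Lemma \ref{lem: BB touch intersect} and outsources the analytic content to the known bridge result. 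What your argument buys is self-containedness (you never need the Corwin--Hammond corollary, which is itself essentially the support theorem for bridges) at the cost of invoking Cameron--Martin; what the paper's buys is uniformity of method across the two appendix lemmas and a shorter proof given the cited reference. Both are complete; the one point you correctly flagged as needing care --- that the time reversal sends $f(b)=y$ to $\tilde f(0)=y$ so that $h(0)=0$ and the Cameron--Martin approximant can be chosen to vanish at $0$ --- is handled properly.
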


For $k\in\mathbb{N}$ we denote by $\weyl_k$ the open Weyl chamber in $\mathbb{R}^{k}$, i.e.
\begin{equation}\label{DefWeyl}
\weyl_k=\{\vec{x}=(x_1,\dots,x_k)\in\mathbb{R}^k: x_1>x_2>\cdots>x_k\}.
\end{equation}
We next define the $g$-avoiding reverse Brownian line ensembles.
\begin{definition}\label{def: avoidBLE} 
Suppose $\vec{y},\vec{\mu} \in \mathbb{R}^k$, $b > 0$ and $g:[0,b]\rightarrow[-\infty,\infty)$ is a continuous function (i.e., either $g \in C([0,b])$  or $g= -\infty$ everywhere) such that $g(b) < y_k$. We denote the law of $k$ independent reverse Brownian motions $\{\cev{B}_i\}_{i = 1}^k$ on $[0,b]$, such that $\cev{B}_i$ has drift $\mu_i$ and $\cev{B}_i(b) = y_i$ by $\pfbm^{b, \vec{y}, \vec{\mu}}$, and write $\efbm^{b, \vec{y}, \vec{\mu}}$ for the expectation with respect to this measure. If $\vec{y} \in \weyl_k$, we also let $\pabm^{b, \vec{y}, \vec{\mu}, g}$ be the law $\pfbm^{b, \vec{y}, \vec{\mu}}$, conditioned on the event 
$$E_{\operatorname{avoid}}=\left\{\cev{B}_1(r)>\cev{B}_2(r)>\cdots>\cev{B}_k(r)>g(r)\mbox{ for all } r\in[0,b] \right\},$$
and write $\eabm^{b, \vec{y}, \vec{\mu}, g}$ for the expectation with respect to this measure.
\end{definition}
\begin{remark}\label{rem:WellDAvoid} We mention that Lemma \ref{lem: BB positive measure} shows that $\pfbm^{b, \vec{y}, \vec{\mu}}(E_{\operatorname{avoid}}) > 0$, and hence $\pabm^{b, \vec{y}, \vec{\mu}, g}$ is well-defined, see \cite[Definition 2.4]{DimMat} for a similar argument.
\end{remark}

We next introduce the main definition of the section -- the half-space Brownian Gibbs property.
\begin{definition}\label{def:BGP}
Fix a set $\Sigma = \llbracket 1, N \rrbracket$ with $N \in \mathbb{N} \cup \{\infty\}$, an interval $\Lambda= [0,T]$ or $\Lambda = [0,T)$ with $T \in (0, \infty]$, and $\mu_i \in \mathbb{R}$ for $i \in \llbracket 1, N-1 \rrbracket$. For $k\in\llbracket 1,N-1\rrbracket$ and $b \in \Lambda \cap (0,\infty)$ we write $\vec{\mu}_k = (\mu_1, \dots, \mu_k)$, $D_{\llbracket 1,k\rrbracket,b} = \llbracket 1,k\rrbracket \times [0,b)$ and $D_{\llbracket 1,k\rrbracket,b}^c = (\Sigma \times \Lambda) \setminus D_{\llbracket 1,k\rrbracket,b}$. 
 
A $\Sigma$-indexed line ensemble $\mathcal{L}$ on $\Lambda$ satisfies the {\em half-space Brownian Gibbs property with parameters $\{\mu_i\}_{i \in \llbracket 1, N- 1 \rrbracket}$} if it is non-intersecting and for any $b \in \Lambda \cap (0,\infty)$, any $k\in\llbracket 1,N-1\rrbracket$, and any bounded Borel-measurable function $F: C(\llbracket 1,k\rrbracket \times [0,b]) \rightarrow \mathbb{R}$
\begin{equation}\label{eq:HSBGP}
\mathbb{E} \left[ F\left(\mathcal{L}|_{\llbracket 1,k\rrbracket \times [0,b]} \right) \,\big|\, \mathcal{F}_{\operatorname{ext}} (\llbracket 1,k\rrbracket \times [0,b))  \right] =\eabm^{b, \vec{y}, \vec{\mu}_k,g} \bigl[ F( \mathcal{Q} ) \bigr],\quad \mathbb{P}\text{-almost surely,} 
\end{equation}
where $\mathcal{L}|_{\llbracket 1,k\rrbracket \times [0,b]} $ is the restriction of $\mathcal{L}$ to $\llbracket 1, k \rrbracket \times [0,b]$, $g = \mathcal{L}_{k + 1}[0,b]$, $\vec y=(\mathcal{L}_1(b), \dots, \mathcal{L}_k(b))$, and
\[\mathcal{F}_{\operatorname{ext}} (\llbracket 1,k\rrbracket \times [0,b)) := \sigma \left ( \mathcal{L}_i(s): (i,s) \in D_{\llbracket 1,k\rrbracket, b}^c \right).
\] 
\end{definition}
\begin{remark}\label{RemMeas} It is perhaps worth explaining why equation (\ref{eq:HSBGP}) makes sense. Firstly, since $\Sigma \times \Lambda$ is locally compact, we know by \cite[Lemma 46.4]{Munkres} that $\mathcal{L} \rightarrow \mathcal{L}|_{\llbracket 1, k \rrbracket \times [0,b]}$ is a continuous map from $C(\Sigma \times \Lambda)$ to $C(\llbracket 1, k \rrbracket \times [0,b])$, so that the left side of (\ref{eq:HSBGP}) is the conditional expectation of a bounded measurable function, and is thus well-defined. A more subtle question is why the right side of (\ref{eq:HSBGP}) is $\mathcal{F}_{\operatorname{ext}} (\llbracket 1, k \rrbracket \times [0,b))$-measurable. In fact, we will show in Lemma \ref{LemmaMeasExp} in Section \ref{SectionA35} that the right side is a measurable function of $\vec{y}$ and $g$, i.e., it is measurable with respect to the $\sigma$-algebra 
$$ \sigma \left\{ \mathcal{L}_i(s) : \mbox{  $i \in \llbracket 1, k \rrbracket$ and $s = b$, or $i = k + 1$ and $s \in [0,b]$} \right\}.$$
\end{remark}

We end this section with the following lemma, which shows that the half-space Brownian Gibbs property is consistent with the {\em partial} Brownian Gibbs property from \cite[Definition 2.7]{DimMat}, which is a modification of the Brownian Gibbs property in \cite{CorHamA}, and is recalled later as Definition \ref{DefPBGP}. Specifically, the Brownian Gibbs property is slightly stronger than the partial version: when $\Sigma=\llbracket1,N\rrbracket$ and $N\in\mathbb{N}$ in Definition \ref{DefPBGP}, the Brownian Gibbs property also allows the possibility $k_2=N$, in which case the convention is $g=-\infty$. These two definitions are equivalent when $\Sigma=\mathbb{N}$. See also \cite[Remark 2.9]{DimMat} for a more detailed explanation. The lemma is proved in Section \ref{SectionA4}.

\begin{lemma}\label{LemmaConsistent} Suppose that $\mathcal{L}$ is a line ensemble that satisfies the half-space Brownian Gibbs property from Definition \ref{def:BGP} for some choice of $\Lambda, N, \{\mu_i\}_{i \in \llbracket 1, N-1 \rrbracket}$. Then, $\mathcal{L}$ satisfies the partial Brownian Gibbs property from Definition \ref{DefPBGP} for the same choice of $\Lambda$ and $N$.
\end{lemma}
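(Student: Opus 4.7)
The strategy is to derive the partial Brownian Gibbs property on any interval $[a,b] \subset \Lambda$ with $0 \leq a < b$ from the half-space Brownian Gibbs property applied to the interval $[0,b]$, by further conditioning on the traces $\mathcal{L}_i|_{[0,a]}$ for $i \in \llbracket 1, k \rrbracket$, and invoking the Markov property of Brownian motion with drift.

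Fix $k \in \llbracket 1, N-1 \rrbracket$, an interval $[a,b] \subset \Lambda$, a bounded Borel measurable $F$ on $C(\llbracket 1, k \rrbracket \times [a,b])$, and set $\vec{x} = (\mathcal{L}_1(a), \dots, \mathcal{L}_k(a))$, $\vec{y} = (\mathcal{L}_1(b), \dots, \mathcal{L}_k(b))$, $g = \mathcal{L}_{k+1}[a,b]$, and $g_0 = \mathcal{L}_{k+1}[0,b]$. Let $\mathcal{G}_1 = \mathcal{F}_{\operatorname{ext}}(\llbracket 1,k \rrbracket \times (a,b))$ and $\mathcal{G}_2 = \mathcal{F}_{\operatorname{ext}}(\llbracket 1,k \rrbracket \times [0,b))$ denote the $\sigma$-algebras appearing in the partial and half-space Brownian Gibbs properties, respectively. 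Since $\Lambda \setminus [0,b) \subseteq \Lambda \setminus (a,b)$, one has $\mathcal{G}_2 \subseteq \mathcal{G}_1$, and the additional information carried by $\mathcal{G}_1$ is precisely the curves $\mathcal{L}_i|_{[0,a]}$ for $i \leq k$ (in particular, the vector $\vec{x}$). Applying (\ref{eq:HSBGP}) to the lifted functional $\tilde{F}(\mathcal{Q}) := F(\mathcal{Q}|_{\llbracket 1, k \rrbracket \times [a,b]})$ gives
\[
\mathbb{E}\bigl[F(\mathcal{L}|_{\llbracket 1,k \rrbracket \times [a,b]}) \,\big|\, \mathcal{G}_2\bigr] = \eabm^{b,\vec{y}, \vec{\mu}_k, g_0}\bigl[F(\mathcal{Q}|_{\llbracket 1,k \rrbracket \times [a,b]})\bigr] \quad \mathbb{P}\text{-a.s.}
\]

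The key probabilistic input is a disintegration of the avoiding measure $\pabm^{b, \vec{y}, \vec{\mu}_k, g_0}$ at the intermediate time $a$: the regular conditional distribution of $\mathcal{Q}|_{[a,b]}$ given the entire past $\mathcal{Q}_i|_{[0,a]}$ is, conditionally only on the intermediate values $\mathcal{Q}_i(a) = x_i$, the law of $k$ independent Brownian bridges from $(a, x_i)$ to $(b, y_i)$ conditioned to avoid each other and $g$. Notably, the drift parameters $\vec{\mu}_k$ disappear, because fixing both endpoints of a Brownian motion with drift absorbs the drift into a deterministic affine shift, leaving a standard bridge law. Granted this disintegration and applying the tower property, one concludes
\[
\mathbb{E}\bigl[F(\mathcal{L}|_{\llbracket 1,k \rrbracket \times [a,b]}) \,\big|\, \mathcal{G}_1\bigr] = \eabm^{a,b,\vec{x}, \vec{y}, g}\bigl[F(\mathcal{Q})\bigr] \quad \mathbb{P}\text{-a.s.},
\]
which is precisely the partial Brownian Gibbs identity of Definition \ref{DefPBGP}.

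The main obstacle is the rigorous verification of the disintegration step above, since the avoidance event couples all the curves across the entire interval $[0,b]$. I would handle this by representing $\pabm^{b, \vec{y}, \vec{\mu}_k, g_0}$ as an absolutely continuous tilt of the free measure $\pfbm^{b, \vec{y}, \vec{\mu}_k}$, with normalizing constant the avoidance probability (positive by Lemma \ref{lem: BB positive measure}), factoring the avoidance indicator as the product of its restrictions to $[0,a]$ and $[a,b]$, and then invoking the standard Markov property of the independent Brownian motions with drift on the free measure coordinate by coordinate. This is a direct adaptation to the reverse/drift setting of the analogous full-space factorization underlying the Brownian Gibbs property in \cite{CorHamA, DimMat}, and the measurability of the right side (as needed to make sense of the identity) is handled as in Remark \ref{RemMeas} and Lemma \ref{LemmaMeasExp}.
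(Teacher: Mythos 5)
Your route --- applying the half-space Gibbs identity on $[0,b]$ and then disintegrating $\pabm^{b,\vec y,\vec\mu_k,g_0}$ at the intermediate time $a$ via the Markov property of the free reverse Brownian motions --- is genuinely different from the paper's. The paper first proves (Lemma \ref{LGPAll}) that the measures $\pabm^{b,\vec y,\vec\mu,g}$ themselves satisfy \emph{both} the half-space and the partial Brownian Gibbs properties, by realizing them as diffusive limits of reverse interlacing geometric walks (Lemma \ref{lem:RW}) and passing the discrete Gibbs property to the limit; Lemma \ref{LemmaConsistent} then follows by two applications of the resulting identities. Your continuum disintegration is correct in substance: once both endpoints are pinned the drift is absorbed into an affine shift, the avoidance indicator factors over $[0,a]$ and $[a,b]$, and conditional independence given the time-$a$ values yields the avoiding-bridge law on $[a,b]$. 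One caveat: the upgrade from $\mathcal{G}_2$ to the finer $\sigma$-algebra $\mathcal{G}_1$ is not literally ``the tower property'' (which goes from finer to coarser); it requires testing against products of a $\mathcal{G}_2$-measurable variable with a functional of $\mathcal{L}|_{\llbracket 1,k\rrbracket\times[0,a]}$ and a monotone class argument, which your disintegration does make possible.

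The genuine gap is that you only treat index sets of the form $K=\llbracket 1,k\rrbracket$, whereas Definition \ref{DefPBGP} requires the identity for arbitrary $K=\llbracket k_1,k_2\rrbracket$ with $k_2\le N-1$; when $k_1>1$ the conditioning also involves the curve $f=\mathcal{L}_{k_1-1}[a,b]$ from above. Your disintegration at time $a$ produces the full $k_2$-curve avoiding bridge ensemble on $[a,b]$ with $f=\infty$; to conclude, you must further condition that ensemble on its top $k_1-1$ curves (which lie in $\mathcal{F}_{\operatorname{ext}}(K\times(a,b))$) and show that the result is the $(f,g)$-avoiding bridge ensemble on curves $k_1,\dots,k_2$. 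This is precisely the partial Brownian Gibbs property of the avoiding Brownian bridge ensemble --- true, but a nontrivial ingredient that must be proved or cited, and it is exactly what the paper's Lemma \ref{LGPAll} supplies. As written, your argument establishes only the $k_1=1$ case of Definition \ref{DefPBGP}.
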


%
%
\subsection{The half-space interlacing Gibbs property}\label{Section2.2} In this section we introduce discrete analogues of the concepts in Section \ref{Section2.1}. Some of our notation and exposition comes from \cite[Section 2]{dimitrov2024tightness}.

\begin{definition}\label{DefDLE}
Let $\Sigma \subseteq \mathbb{Z}$ and $\llbracket T_0, T_1 \rrbracket$ be a non-empty integer interval in $\mathbb{Z}$. Consider the set $Y$ of functions $f: \Sigma \times \llbracket T_0, T_1 \rrbracket \rightarrow \mathbb{Z}$ such that $f(i, j+1) - f(i,j) \in \mathbb{Z}_{\geq 0}$ when $i \in \Sigma$ and $j \in\llbracket T_0, T_1 -1 \rrbracket$ and let $\mathcal{D}$ denote the discrete $\sigma$-algebra on $Y$. We call a function $g: \llbracket T_0, T_1 \rrbracket \rightarrow \mathbb{Z}$ such that $g( j+1) - g(j) \in \mathbb{Z}_{\geq 0}$ when $j \in\llbracket T_0, T_1 -1 \rrbracket$  an {\em increasing path} and elements in $Y$ {\em collections of increasing paths}. A $\Sigma$-{\em indexed geometric line ensemble $\mathfrak{L}$ on $\llbracket T_0, T_1 \rrbracket$}  is a random variable defined on a probability space $(\Omega, \mathcal{B}, \mathbb{P})$, taking values in $Y$ such that $\mathfrak{L}$ is a $\mathcal{B}/\mathcal{D}$ measurable function. Unless otherwise specified, we will assume that $T_0 \leq T_1$ are both integers, although the above definition makes sense if $T_0 = -\infty$, or $T_1 = \infty$, or both.
\end{definition}

\begin{remark} The condition $f(i, j+1) - f(i,j) \in \mathbb{Z}_{\geq 0}$ when $i \in \Sigma$ and $j \in\llbracket T_0, T_1 -1 \rrbracket$ essentially means that for each $i \in \Sigma$ the function $f(i, \cdot)$ can be thought of as a sample trajectory of a geometric random walk from time $T_0$ to time $T_1$. Here, and throughout the paper a geometric random variable $X$ with parameter $q \in [0,1)$ has probability mass function $\mathbb{P}(X = k) = (1-q)q^k$ for $k \in \mathbb{Z}_{\geq 0}$.
\end{remark}

We think of geometric line ensembles as collections of random increasing paths on the integer lattice, indexed by $\Sigma$. Observe that one can view an increasing path $L$ on $\llbracket T_0, T_1 \rrbracket$ as a continuous curve by linearly interpolating the points $(i, L(i))$, see Figure \ref{S1_3}. This allows us to define $ (\mathfrak{L}(\omega)) (i, s)$ for non-integer $s \in [T_0,T_1]$ and to view geometric line ensembles as line ensembles in the sense of Definition \ref{CLEDef}. In particular, we can think of $\mathfrak{L}$ as a random element in $C (\Sigma \times \Lambda)$ with $\Lambda = [T_0, T_1]$. We write $L_i = (\mathfrak{L}(\omega)) (i, \cdot)$ for the index $i \in \Sigma$ path. If $L$ is an increasing path on $\llbracket T_0, T_1 \rrbracket$ and $a, b \in \llbracket T_0, T_1 \rrbracket$ satisfy $a \leq b$, we let $L\llbracket a, b \rrbracket$ denote the restriction of $L$ to $\llbracket a,b\rrbracket$. \\

Let $t_i, z_i \in \mathbb{Z}$ for $i = 1,2$ be given such that $t_1 \leq t_2$ and $z_1 \leq z_2$. We denote by $\Omega(t_1,t_2,z_1,z_2)$ the collection of increasing paths that start from $(t_1,z_1)$ and end at $(t_2,z_2)$, by $\mathbb{P}_{\operatorname{Geom}}^{t_1,t_2, z_1, z_2}$ the uniform distribution on $\Omega(t_1,t_2,z_1,z_2)$ and write $\mathbb{E}^{t_1,t_2,z_1,z_2}_{\operatorname{Geom}}$ for the expectation with respect to this measure. One thinks of the distribution $\mathbb{P}_{\operatorname{Geom}}^{t_1,t_2, z_1, z_2}$ as the law of a random walk with i.i.d. geometric increments with parameter $q \in (0,1)$ that starts from $z_1$ at time $t_1$ and is conditioned to end in $z_2$ at time $t_2$ -- this interpretation does not depend on the choice of $q \in (0,1)$. Notice that by our assumptions on the parameters the state space $\Omega(t_1,t_2,z_1,z_2)$ is non-empty.  

If $T_1 \in \mathbb{N}$ and $y \in \mathbb{Z}$, we let $\Omega(T_1,y) = \cup_{x \leq y} \Omega(0, T_1, x ,y)$. Given $q \in [0,1)$, we define $\mathbb{P}_{\operatorname{Geom}}^{T_1, y, q}$ to be the measure on increasing paths such that 
\begin{equation}\label{eq:BackGeomRW}
\mathbb{P}_{\operatorname{Geom}}^{T_1, y, q}(L)  \propto q^{y - L(0)} \cdot {\bf 1}\{L \in \Omega(T_1,y)\}.
\end{equation}
Note that since $q \in [0,1)$, we have that the sum of the terms on the right side of (\ref{eq:BackGeomRW}) is finite and hence $\mathbb{P}_{\operatorname{Geom}}^{T_1, y, q}$ is an honest probability measure. In words, $\mathbb{P}_{\operatorname{Geom}}^{T_1, y, q}$ is the law of a random walk that starts from $y$ at time $T_1$, is being run backwards, and at each step makes a {\em negative} geometric jump with parameter $q$. We refer to this law as a {\em reverse geometric random walk with parameter $q$}. We write $\mathbb{E}_{\operatorname{Geom}}^{T_1, y, q}$ for the expectation with respect to $\mathbb{P}_{\operatorname{Geom}}^{T_1, y, q}$.

Given $k \in \mathbb{N}$, $\vec{q} = (q_1, \dots, q_k) \in [0,1)^k$, $T_1 \in \mathbb{N}$ and $ \vec{y} \in \mathbb{Z}^k$, we let $\mathbb{P}^{T_1,\vec{y},\vec{q}}_{\operatorname{Geom}}$ denote the law of $k$ independent reverse geometric random walks $\{B_i: \llbracket 0, T_1 \rrbracket  \rightarrow \mathbb{Z} \}_{i = 1}^k$ with $B_i(T_1) = y_i$. We also write $\mathbb{E}_{\operatorname{Geom}}^{T_1, \vec{y}, \vec{q}}$ for the expectation with respect to $\mathbb{P}^{T_1,\vec{y},\vec{q}}_{\operatorname{Geom}}$.\\

The following definition introduces the notion of a reverse $g$-interlacing geometric line ensemble, which in simple terms is a collection of $k$ independent reverse geometric random walks, conditioned on interlacing with each other and the graph of $g$.
\begin{definition}\label{def:interlaceGeom}
Let $k \in \mathbb{N}$ and $\mathfrak{W}_k$ denote the set of signatures of length $k$, i.e.
\begin{equation}\label{DefSig}
\mathfrak{W}_k = \{ \vec{x} = (x_1, \dots, x_k) \in \mathbb{Z}^k: x_1 \geq  x_2 \geq  \cdots \geq  x_k \}.
\end{equation}
Let $\vec{q} = (q_1, \dots, q_k) \in [0,1)^k$, $ \vec{y} \in \mathfrak{W}_k$, $T_1 \in \mathbb{Z}_{\geq 0}$, and $g: \llbracket 0, T_1 \rrbracket \rightarrow [-\infty, \infty)$ be any function. With the above data we define the {\em reverse $g$-interlacing geometric line ensemble on the interval $\llbracket 0, T_1 \rrbracket$ with jump parameters $\vec{q}$ and exit data $\vec{y}$} to be the $\Sigma$-indexed geometric line ensemble $\mathfrak{Q} = \{Q_i\}_{i \in \Sigma}$ with $\Sigma = \llbracket 1, k\rrbracket$ on $\llbracket 0, T_1 \rrbracket$ and with the law of $\mathfrak{Q}$ equal to $\mathbb{P}^{T_1,\vec{y} ,\vec{q}}_{\operatorname{Geom}}$ (the law of $k$ independent reverse geometric random walks $\{B_i: \llbracket 0, T_1 \rrbracket \rightarrow \mathbb{Z} \}_{i = 1}^k$ from $B_i(T_1) = y_i$), conditioned on 
\begin{equation}\label{EventInter}
\begin{split}
\ice  = &\left\{ B_i(r-1) \geq B_{i+1}(r)  \mbox{ for all $r \in \llbracket  1, T_1 \rrbracket$ and $i \in \llbracket 0 , k \rrbracket$} \right\},
\end{split}
\end{equation}
with the convention that $B_0(x) = \infty$ and $B_{k+1}(x) = g(x)$.

The above definition is well-posed if there exist $B_i \in \Omega(T_1,y_i)$ for $i \in \llbracket 1, k \rrbracket$ that satisfy the conditions in $\ice$. We denote by $\Omega_{\ice}(T_1, \vec{y},g)$ the set of collections of $k$ increasing paths that satisfy the conditions in $\ice$, the probability distribution of $\mathfrak{Q}$ by $\mathbb{P}_{\ice, \operatorname{Geom}}^{T_1, \vec{y}, \vec{q}, g}$ and write $\mathbb{E}_{\ice, \operatorname{Geom}}^{T_1,\vec{y}, \vec{q}, g}$ for the expectation with respect to this measure. 
\end{definition}
\begin{remark}\label{rem:well-posed} We mention that if $g:\llbracket 0, T_1 \rrbracket \rightarrow [-\infty, \infty)$ is increasing and $g(T_1) \leq y_k$, then $\Omega_{\ice}(T_1, \vec{y},g) \neq \emptyset$ as it contains the element $\{B_i\}_{i = 1}^k$ with $B_i(r) = y_i$ for $i \in \llbracket 1, k \rrbracket$ and $r \in \llbracket 0, T_1 \rrbracket$. 
\end{remark}

\begin{remark}\label{rem:accept} From Definition \ref{def:interlaceGeom}, we see that for any set $E$ we have
\begin{equation}\label{eq:accept}
\mathbb{P}_{\ice, \operatorname{Geom}}^{T_1, \vec{y}, \vec{q}, g}(E) = \frac{\mathbb{P}_{ \operatorname{Geom}}^{T_1, \vec{y}, \vec{q}}(E \cap \Omega_{\ice}(T_1, \vec{y},g))}{\mathbb{P}_{ \operatorname{Geom}}^{T_1, \vec{y}, \vec{q}}(\Omega_{\ice}(T_1, \vec{y},g))} ,
\end{equation}
and we refer to $\mathbb{P}_{ \operatorname{Geom}}^{T_1, \vec{y}, \vec{q}}(\Omega_{\ice}(T_1, \vec{y},g))$ as an {\em acceptance probability}.
\end{remark}

The following definition introduces the notion of the half-space interlacing Gibbs property.
\begin{definition}\label{def:HSIGP}
Fix a set $\Sigma = \llbracket 1, N \rrbracket$ with $N \in \mathbb{N}$ or $N = \infty$, $q_i \in [0,1)$ for $i \in \llbracket 1, N-1 \rrbracket$, and $ T_1\in \mathbb{Z}_{\geq 0}$. A $\Sigma$-indexed geometric line ensemble $\mathfrak{L} : \Sigma \times \llbracket 0, T_1 \rrbracket \rightarrow \mathbb{Z}$ is said to satisfy the {\em half-space interlacing Gibbs property with parameters $\{q_i\}_{i \in \llbracket 1, N-1\rrbracket}$} if it is interlacing, meaning that 
$$ L_i(j-1) \geq L_{i+1}(j) \mbox{ for all $i \in \llbracket 1, N - 1 \rrbracket$ and $j \in \llbracket 1, T_1 \rrbracket$},$$
and for any $k \in \llbracket 1, N - 1 \rrbracket$ and $b \in \llbracket 0, T_1 \rrbracket$ the following holds.  Suppose that $ g$  (an increasing path drawn in $\{ (r,z) \in \mathbb{Z}^2 : 0 \leq r \leq b\}$) and $ \vec{y} \in \mathfrak{W}_k$ satisfy $\mathbb{P}(A) > 0$, where $A$ denotes the event
$$A =\{ \vec{y} = ({L}_{1}(b), \dots, {L}_{k}(b)), L_{k+1} \llbracket 0,b \rrbracket = g \}.$$
Then, setting $\vec{q}_k = (q_1, \dots, q_k)$ we have for any $\{ B_i \in \Omega(b, y_i) \}_{i = 1}^k$ that
\begin{equation}\label{eq:HSIGP}
\mathbb{P}\left( L_{i}\llbracket 0,b \rrbracket = B_{i} \mbox{ for $i \in \llbracket 1, k \rrbracket$} \, \vert  A \, \right) = \mathbb{P}_{\ice, \operatorname{Geom}}^{b, \vec{y}, \vec{q}_k, g} \left( \cap_{i = 1}^k\{ Q_i = B_i \} \right).
\end{equation}
\end{definition}
\begin{remark}\label{rem:HSIGP} In simple words, a geometric line ensemble is said to satisfy the half-space interlacing Gibbs property if the distribution of the top $k$ paths, conditioned on their end-points and the $(k+1)$-st path, is simply that of $k$ independent reverse random walks conditioned on interlacing with each other and the $(k+1)$-st path. 
\end{remark}
\begin{remark}\label{rem:HSIGP2} Observe that in Definition \ref{def:HSIGP} the index $k$ is assumed to be less than or equal to $N-1$, so that if $N < \infty$ the $N$-th path is special and is not conditionally a reverse geometric random walk. We mention that since $\mathfrak{L}$ is interlacing we automatically have for each $j \in \llbracket 0, T_1\rrbracket$ that $L_1(j) \geq L_2(j) \geq  \cdots$. In addition, the well-posedness of $\mathbb{P}_{\ice, \operatorname{Geom}}^{b, \vec{y}, \vec{q}_k, g}$ in (\ref{eq:HSIGP}) is a consequence of our assumption that $\mathbb{P}(A) > 0$.
\end{remark}
\begin{remark}\label{rem:HSIGP3} Note that if $g$ is a fixed increasing path, and $\mathfrak{L}: \llbracket 1, k+1 \rrbracket \times \llbracket 0, T_1 \rrbracket \rightarrow \mathbb{Z}$ is such that $L_{k+1} = g$, and the law of $\{L_i\}_{i = 1}^k$ is $\mathbb{P}_{\ice, \operatorname{Geom}}^{T_1, \vec{y}, \vec{q}, g}$ as in Definition \ref{def:interlaceGeom}, then $\mathfrak{L}$ is a geometric line ensemble that satisfies the half-space interlacing Gibbs property from Definition \ref{def:HSIGP} with $N = k+1$.
\end{remark}

In \cite{dimitrov2024tightness} one of the authors introduced a similar definition to Definition \ref{def:HSIGP}, where reverse geometric random walks are replaced with geometric random walk {\em bridges}. The following lemma shows that the two notions are consistent, and should be viewed as a discrete analogue of Lemma \ref{LemmaConsistent}.

\begin{lemma}\label{LemmaConsistentGeom} Fix a set $\Sigma = \llbracket 1, N \rrbracket$ with $N \in \mathbb{N}$ or $N = \infty$, $q_i \in [0,1)$ for $i \in \llbracket 1, N-1 \rrbracket$, and $ T_1\in \mathbb{Z}_{\geq 0}$. Suppose that $\mathfrak{L}: \Sigma \times \llbracket 0, T_1 \rrbracket \rightarrow \mathbb{Z}$ is a geometric line ensemble that satisfies the half-space interlacing Gibbs property as in Definition \ref{def:HSIGP}. Then, $\mathfrak{L}$ satisfies the interlacing Gibbs property from \cite[Definition 2.6]{dimitrov2024tightness}. Specifically, for any finite $K = \llbracket k_1, k_2 \rrbracket \subseteq \llbracket 1, N - 1 \rrbracket$ and $a,b \in \llbracket 0, T_1 \rrbracket$ with $a < b$ the following holds.  Suppose that $f, g$ are two increasing paths drawn in $\{ (r,z) \in \mathbb{Z}^2 : a \leq r \leq b\}$ and $\vec{x}, \vec{y} \in \mathfrak{W}_k$ with $k=k_2-k_1+1$ altogether satisfy $\mathbb{P}(A) > 0$ with  
$$A =\{ \vec{x} = ({L}_{k_1}(a), \dots, {L}_{k_2}(a)), \vec{y} = ({L}_{k_1}(b), \dots, {L}_{k_2}(b)), L_{k_1-1} \llbracket a,b \rrbracket = f, L_{k_2+1} \llbracket a,b \rrbracket = g \},$$
where if $k_1 = 1$ we adopt the convention $f = \infty = L_0$. Then, we have for any $\{ B_i \in \Omega(a, b, x_i , y_i) \}_{i = 1}^k$
\begin{equation}\label{eq:IGP}
\mathbb{P}\left( L_{i + k_1-1}\llbracket a,b \rrbracket = B_{i} \mbox{ for $i \in \llbracket 1, k \rrbracket$} \, \vert  A \, \right) =  \mathbb{P}^{a, b, \vec{x}, \vec{y},f,g}_{\ice,\operatorname{Geom}} \left(\cap_{i = 1}^k\{ Q_i = B_i \} \right),
\end{equation}
where $\mathbb{P}^{a, b, \vec{x}, \vec{y},f,g}_{\ice,\operatorname{Geom}}$ is the uniform measure on
\begin{equation*}
\begin{split}
&\ice(a,b,\vec{x}, \vec{y},f,g) = \{ Q_i \in \Omega(a,b,x_i,y_i) \mbox{ for } i \in \llbracket 1, k \rrbracket : Q_i(a) = x_i, Q_i(b) = y_i \mbox{ for } i \in \llbracket 1, k \rrbracket  \mbox{ and }\\
&Q_i(r-1) \geq Q_{i+1}(r)  \mbox{ for all $r \in \llbracket  a+1, b \rrbracket$ and $i \in \llbracket 0 , k \rrbracket$} \},
\end{split}
\end{equation*}
with the convention that $Q_{0}(x) = f(x)$ and $Q_{k+1}(x) = g(x)$ for $x \in \llbracket a,b \rrbracket$.
\end{lemma}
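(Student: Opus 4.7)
My plan is to invoke the half-space interlacing Gibbs property for $\mathfrak{L}$ at index $k_2$ on the interval $\llbracket 0, b\rrbracket$, and then extract the uniform measure on interlacing bridges on $\llbracket a, b\rrbracket$ by exploiting the following crucial fact: the reverse geometric walk weight $q^{y - L(0)}$ depends only on the left endpoint $L(0)$, so after conditioning on both endpoints the walk becomes uniform on the set of increasing paths joining them. First I would refine the event $A$ to a finer event $A^{\star} \subseteq A$ that additionally specifies (when $k_1 > 1$) the restrictions $L_i\llbracket 0, a\rrbracket$ for $i \in \llbracket 1, k_1 - 1\rrbracket$ and the values $(L_i(b))_{i=1}^{k_1-1}$, and in all cases specifies $L_{k_2+1}\llbracket 0, a\rrbracket$. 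Because all relevant random variables are integer-valued, $A$ is a countable disjoint union of such refinements, so one can always choose $A^{\star}$ with $\mathbb{P}(A^{\star}) > 0$.

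On the refined event $A^{\star}$ the full vector $\vec{y}^{\star} = (L_i(b))_{i=1}^{k_2} \in \mathfrak{W}_{k_2}$ and the full barrier $g^{\star} = L_{k_2+1}\llbracket 0, b\rrbracket$ are determined. Applying Definition \ref{def:HSIGP} with $k = k_2$ and time horizon $b$, the conditional law of $(L_i\llbracket 0, b\rrbracket)_{i=1}^{k_2}$ given $A^{\star}$ equals $\mathbb{P}^{b, \vec{y}^{\star}, \vec{q}_{k_2}, g^{\star}}_{\ice, \operatorname{Geom}}$, which assigns to a configuration $\vec{Q} = (Q_1, \dots, Q_{k_2})$ weight proportional to $\prod_{i=1}^{k_2} q_i^{y^{\star}_i - Q_i(0)}$ times the indicator that $\vec{Q}$ interlaces with $g^{\star}$. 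I would then further condition this measure on $Q_i\llbracket 0, a\rrbracket = L_i\llbracket 0, a\rrbracket$ for $i \in \llbracket 1, k_1 - 1\rrbracket$ and on $Q_{k_1-1}\llbracket a, b\rrbracket = f$, both $A^{\star}$-measurable (when $k_1 = 1$ use the convention $Q_0 \equiv \infty$, so no upper barrier appears). Since interlacing is a nearest-neighbor-in-index constraint and since the upper barrier $L_{k_1-1}$ is now fully fixed on $\llbracket 0, b\rrbracket$, the remaining interlacing event factors into three independent blocks: the top-right block $(Q_i\llbracket a, b\rrbracket)_{i=1}^{k_1-2}$ (coupled only to $f$ from below), the middle-left block $(Q_i\llbracket 0, a\rrbracket)_{i=k_1}^{k_2}$ (coupled only to fixed endpoints and barriers), and the middle-right block $(Q_i\llbracket a, b\rrbracket)_{i=k_1}^{k_2}$ (coupled only to $f$ above and $g$ below, with fixed endpoints $x_{i-k_1+1}, y_{i-k_1+1}$). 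The key observation is that the weight factor $\prod_{i=1}^{k_2} q_i^{y^{\star}_i - Q_i(0)}$ depends only on the left endpoints, which are either fixed for $i \leq k_1 - 1$ or contained in the middle-left block for $i \geq k_1$; thus integrating out the top-right and middle-left blocks leaves a constant factor, and the marginal conditional law of the middle-right block is uniform on $\ice(a, b, \vec{x}, \vec{y}, f, g)$, i.e.\ exactly $\mathbb{P}^{a, b, \vec{x}, \vec{y}, f, g}_{\ice, \operatorname{Geom}}$.

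To conclude, this latter conditional law depends on $A^{\star}$ only through the $A$-measurable data $(\vec{x}, \vec{y}, f, g)$, so by the tower property averaging over the refinement $A^{\star}$ yields the same conditional law given the coarser event $A$, which is precisely (\ref{eq:IGP}). I expect the main obstacle to be the decoupling step of the second paragraph: one must carefully verify that after fixing the upper barrier $L_{k_1-1}$ and the lower barrier $L_{k_2+1}$ on the relevant time windows, the middle paths $(L_i)_{i=k_1}^{k_2}$ on $[a,b]$ become conditionally independent both of the top paths on $[a,b]$ and of the middle paths on $[0,a]$. This hinges crucially on the time-locality of the interlacing constraint and on the product form of the reverse geometric walk density, which routes all $q$-dependence through the left endpoints at time $0$ (fixed for $i \leq k_1-1$ and integrated out for $i \geq k_1$).
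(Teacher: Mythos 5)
Your argument is correct and follows essentially the same route as the paper's proof: both apply the half-space interlacing Gibbs property with $k = k_2$ on $\llbracket 0, b\rrbracket$ and then exploit the fact that the reverse-geometric weight $\prod_i q_i^{y_i - Q_i(0)}$ depends only on the time-$0$ values, so that summing out all degrees of freedom outside the rectangle $\llbracket k_1,k_2\rrbracket \times \llbracket a,b\rrbracket$ contributes the same factor for every configuration inside it. The paper organizes this as an explicit weight-preserving bijection between the sets $\ice(D,\vec{z},\tilde g)$ and $\ice(B,\vec{z},\tilde g)$ inside a double sum over the refinement data $(\vec z, \tilde g)$, whereas you phrase it as a conditional-independence factorization after refining the conditioning event; these are the same computation presented differently.
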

\begin{proof} We define for any increasing path $h$ on $[a,b]$ and $\vec{y} \in \mathfrak{W}_{k}$
\begin{equation*}
\begin{split}
&\mathfrak{W}_{k_2}(\vec{y}) = \{\vec{z} \in \mathfrak{W}_{k_2}: z_{i + k_1-1} = y_i \mbox{ for } i \in \llbracket 1, k \rrbracket \} \mbox{,  }\\
& \Omega(a,b, h) = \{\tilde{h} \in \Omega(b,h(b)): \tilde{h}(s) = h(s) \mbox{ for } s \in \llbracket a, b \rrbracket \}.
\end{split}
\end{equation*}
If $D = (D_1, \dots, D_k) \in \ice(a,b,\vec{x}, \vec{y},f,g)$, $\vec{z} \in \mathfrak{W}_{k_2}(\vec{y})$, and $\tilde{g} \in \Omega(a,b, g)$, we define 
\begin{equation*}
\begin{split}
&\ice(D, \vec{z}, \tilde{g}) = \big{\{} Q_i \in \cup_{y \in \mathbb{Z}} \Omega(b,y) \mbox{ for } i \in \llbracket 1, k_2 + 1 \rrbracket : Q_{k_1 + i - 1}\llbracket a, b \rrbracket = D_i \mbox{ for $i \in \llbracket 0, k\rrbracket$ },  Q_i(b) = z_i  \\
&\mbox{ for } i \in \llbracket 1, k_2 \rrbracket, Q_{k_2+1} \llbracket 0, b \rrbracket = \tilde{g},\mbox{ and } Q_i(r-1) \geq Q_{i+1}(r)  \mbox{ for all $r \in \llbracket  1, b \rrbracket$ and $i \in \llbracket 0 , k_2 \rrbracket$} \big{\}},
\end{split}
\end{equation*}
where as usual we adopt the convention that $D_0 = f$, $Q_0 = \infty$. In words, $\ice(D, \vec{z}, \tilde{g})$ is the set of $(k_2 + 1)$-tuples of up-right paths drawn in $\{ (r,z) \in \mathbb{Z}^2 : 0 \leq r \leq b\}$, which interlace, and match $\vec{z}$ on the right, $\tilde{g}$ on the bottom and $D$ in the rectangle $\llbracket k_1-1, k_2 \rrbracket \times \llbracket a, b \rrbracket$.

From (\ref{eq:HSIGP}) we have that if $\tilde{A}(\vec{z}, \tilde{g}) = \{ \vec{z} = ({L}_{1}(b), \dots, {L}_{k_2}(b)), L_{k_2+1} \llbracket 0,b \rrbracket = \tilde{g} \}$, then for any $\{ \tilde{B}_i \in \Omega(b, z_i) \}_{i = 1}^{k_2}$
\begin{equation}\label{YU1}
\mathbb{P}\left( \{ L_{i}\llbracket 0,b \rrbracket = \tilde{B}_{i} \mbox{ for $i \in \llbracket 1, k_2 \rrbracket$} \} \cap   \tilde{A}(\vec{z}, \tilde{g})\right) = \mathbb{P}_{\ice, \operatorname{Geom}}^{b, \vec{z}, \vec{q}_{k_2}, \tilde{g}} \left( \cap_{i = 1}^k\{ Q_i = \tilde{B}_i \} \right) \cdot \mathbb{P} (\tilde{A}(\vec{z}, \tilde{g}) ).
\end{equation}
The latter shows that for any $B = (B_1, \dots, B_k) \in \ice(a,b,\vec{x}, \vec{y},f,g)$  
\begin{equation}\label{YU2}
\begin{split}
&\mathbb{P}\left( \{ L_{i + k_1-1}\llbracket a,b \rrbracket = B_{i} \mbox{ for $i \in \llbracket 1, k \rrbracket$} \} \cap  A \right)  =  \sum_{\vec{z} \in \mathfrak{W}_{k_2}(\vec{y})} \sum_{\tilde{g} \in  \Omega(a,b, g) }   \mathbb{P} (\tilde{A}(\vec{z}, \tilde{g}) )     \\
& \sum_{(\tilde{B}_1, \dots, \tilde{B}_{k_2+1}) \in\ice(B, \vec{z}, \tilde{g}) }  \mathbb{P}_{\ice, \operatorname{Geom}}^{b, \vec{z}, \vec{q}_{k_2}, \tilde{g}}  \left( \cap_{i = 1}^{k_2}\{ Q_i = \tilde{B}_i \} \right), 
\end{split}
\end{equation}
and also
\begin{equation}\label{YU3}
\begin{split}
&\mathbb{P}\left( A \right)  = \sum_{\vec{z} \in \mathfrak{W}_{k_2}(\vec{y})} \sum_{\tilde{g} \in  \Omega(a,b, g) }   \mathbb{P} (\tilde{A}(\vec{z}, \tilde{g}) ) \sum_{D \in \ice(a,b,\vec{x}, \vec{y},f,g)} \\ 
& \sum_{(\tilde{B}_1, \dots, \tilde{B}_{k_2+1}) \in\ice(D, \vec{z}, \tilde{g}) }  \mathbb{P}_{\ice, \operatorname{Geom}}^{b, \vec{z}, \vec{q}_{k_2}, \tilde{g}} \left( \cap_{i = 1}^{k_2}\{ Q_i = \tilde{B}_i \} \right).
\end{split}
\end{equation}
Observe that for any $D = (D_1, \dots, D_k) \in \ice(a,b,\vec{x}, \vec{y},f,g)$ the second lines in (\ref{YU2}) and (\ref{YU3}) are the same. Indeed, the sets $\ice(D, \vec{z}, \tilde{g})$ and $\ice(B, \vec{z}, \tilde{g})$ are in a clear bijection, where to go from the first to the second we modify $\tilde{B}_i(s)$ for $(i,s) \in \llbracket k_1, k_2 \rrbracket \times \llbracket a, b \rrbracket$ from matching $D$ to matching $B$. In addition, from (\ref{eq:BackGeomRW}) and (\ref{eq:accept}) we have for some $C > 0$ that depends on $\vec{q}_{k_2}$, $\vec{z}$ and $b$
$$\mathbb{P}_{\ice, \operatorname{Geom}}^{b, \vec{z}, \vec{q}_{k_2}, \tilde{g}} \left( \cap_{i = 1}^{k_2}\{ Q_i = \tilde{B}_i \} \right) = C \cdot \prod_{i = 1}^{k_2} q_i^{z_i - \tilde{B}_i(0)},$$
which shows that the corresponding summands on the second lines of (\ref{YU2}) and (\ref{YU3}) agree.

From the last observation, we conclude that for $B = (B_1, \dots, B_k) \in \ice(a,b,\vec{x}, \vec{y},f,g)$, we have
\begin{equation*}
\begin{split}
&\mathbb{P}\left( \{ L_{i + k_1-1}\llbracket a,b \rrbracket = B_{i} \mbox{ for $i \in \llbracket 1, k \rrbracket$} \} \vert  A \right)  =  \frac{1}{|\ice(a,b,\vec{x}, \vec{y},f,g)|},
\end{split}
\end{equation*}
which is precisely (\ref{eq:IGP}). When $B = (B_1, \dots, B_k) \not\in \ice(a,b,\vec{x}, \vec{y},f,g)$, then both sides of (\ref{eq:IGP}) are equal to zero, and so the equation holds in this case as well.
\end{proof}

%
%
\subsection{Some properties}\label{Section2.3} In this section we state some of the properties of line ensembles with law $\mathbb{P}_{\ice, \operatorname{Geom}}^{T_1, \vec{y}, \vec{q}, g}$ as in Definition \ref{def:interlaceGeom}. The first result is a lemma that shows that these ensembles can be monotonically coupled in their boundary data. Its proof is in Section \ref{SectionA5}. 

\begin{lemma}\label{lem:monotone coupling}
Fix $k, T \in \mathbb{N}$, $\vec{q}=(q_1,\dots,q_k)\in[0,1)^k$ and two increasing functions $g^{\mathrm{b}}, g^{\mathrm{t}}: \llbracket 0, T \rrbracket  \rightarrow \mathbb{Z}\cup\{-\infty\}$ with $g^{\mathrm{b}}\leq g^{\mathrm{t}}$ on $\llbracket0,T\rrbracket$. Let $\vec{y}\,^{\mathrm{b}},\vec{y}\,^{\mathrm{t}} \in \mathfrak{W}_k$ with $y_i^{\mathrm{b}}\leq y_i^{\mathrm{t}}$ for $i\in\llbracket1,k\rrbracket$ and $g^{\mathrm{b}}(T)\leq y_k^{\mathrm{b}}$ and  $g^{\mathrm{t}}(T)\leq y_k^{\mathrm{t}}$. Then, there exists a probability space $(\Omega, \mathcal{F}, \mathbb{P})$, which supports two $\llbracket 1, k \rrbracket$-indexed geometric line ensembles $\mathfrak{L}^{\mathrm{b}}$ and $\mathfrak{L}^{\mathrm{t}}$ on $\llbracket 0,T \rrbracket$ such that the law of $\mathfrak{L}^{\mathrm{b}}$ {\big (}resp. $\mathfrak{L}^{\mathrm{t}}${\big )} under $\mathbb{P}$ is given by $\mathbb{P}_{\ice,\operatorname{Geom} }^{ T, \vec{y}\,^{\mathrm{b}},\vec{q}, g^{\mathrm{b}}}$ {\big (}resp. $\mathbb{P}_{\ice,\operatorname{Geom} }^{ T, \vec{y}\,^{\mathrm{t}},\vec{q} ,g^{\mathrm{t}}}${\big )} and such that $\mathbb{P}$-a.s. ${L}_i^{\mathrm{b}}(r) \leq {L}^{\mathrm{t}}_i(r)$ for $i \in \llbracket 1, k \rrbracket$, $r \in \llbracket 0,T \rrbracket$.
\end{lemma}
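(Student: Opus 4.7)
The plan is to realise both $\mathbb{P}_{\ice,\operatorname{Geom}}^{T,\vec{y}^{\mathrm{b}}, \vec{q}, g^{\mathrm{b}}}$ and $\mathbb{P}_{\ice,\operatorname{Geom}}^{T, \vec{y}^{\mathrm{t}}, \vec{q}, g^{\mathrm{t}}}$ as the stationary laws of a Glauber-type Markov chain on interlacing configurations, and then monotonically couple two copies of the chain using shared randomness, so that pointwise domination is preserved along trajectories and is inherited by the stationary laws.

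For fixed boundary data $(\vec{y}, g)$ satisfying the hypotheses of the lemma, I would define a chain $\{\mathfrak{L}^{(n)}\}_{n \geq 0}$ on $\Omega_{\ice}(T, \vec{y}, g)$ whose one-step transition, independent of the past, picks a site $(i,t) \in \llbracket 1, k \rrbracket \times \llbracket 0, T-1 \rrbracket$ uniformly and an independent uniform $U \in [0,1]$, and resamples $L_i(t)$ from its conditional distribution under $\mathbb{P}_{\ice,\operatorname{Geom}}^{T, \vec{y}, \vec{q}, g}$ given all other coordinates, using the inverse CDF driven by $U$. Setting
\[
a_i(t) := \max\bigl\{L_i(t-1),\, L_{i+1}(t+1)\bigr\}, \qquad b_i(t) := \min\bigl\{L_i(t+1),\, L_{i-1}(t-1)\bigr\},
\]
with the conventions $L_0(s) = +\infty$, $L_{k+1}(s) = g(s)$, $L_i(-1) = -\infty$, and $L_i(T) = y_i$, this conditional law is supported on the integer interval $[a_i(t), b_i(t)]$; it is uniform there when $t > 0$ (since $L_i(0)$ does not change, so the weight $q_i^{y_i - L_i(0)}$ is inert), and truncated geometric with mass $\propto q_i^{y_i - x}$ when $t = 0$. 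Ergodicity of the chain with stationary law $\mathbb{P}_{\ice,\operatorname{Geom}}^{T, \vec{y}, \vec{q}, g}$ follows by standard arguments: any two elements of $\Omega_{\ice}(T, \vec{y}, g)$ can be connected by a finite sequence of single-coordinate moves, and a small holding probability gives aperiodicity.

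The key monotonicity observation is that for uniform or truncated geometric laws on integer intervals (with a fixed weight parameter $q_i$), the inverse CDF $F_{a,b}^{-1}(U)$ is nondecreasing in each of $a$ and $b$: whenever $a_1 \leq a_2$ and $b_1 \leq b_2$, one has $F_{a_1,b_1}^{-1}(U) \leq F_{a_2,b_2}^{-1}(U)$ for every $U \in [0,1]$. Combined with the fact that both $a_i(t)$ and $b_i(t)$ are monotone nondecreasing in every neighbouring path value, in $g$ (through $L_{k+1}(s)=g(s)$), and in $y_i$ (through $L_i(T)=y_i$), this shows that if two chains are run in parallel, one with data $(\vec{y}^{\mathrm{b}}, g^{\mathrm{b}})$ and the other with $(\vec{y}^{\mathrm{t}}, g^{\mathrm{t}})$, and they share both the site-choice and the uniform $U$ at each step, then the pointwise domination $\mathfrak{L}^{\mathrm{b}, (n)} \leq \mathfrak{L}^{\mathrm{t}, (n)}$ is preserved across steps. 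I would initialise the two chains with the constant-in-time paths $L_i^{\mathrm{b},(0)}(t) \equiv y_i^{\mathrm{b}}$ and $L_i^{\mathrm{t},(0)}(t) \equiv y_i^{\mathrm{t}}$, which lie in their respective state spaces (since each $g$ is increasing with $g(T)\leq y_k$ and each $\vec{y} \in \mathfrak{W}_k$) and satisfy the initial domination by hypothesis.

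By ergodicity the marginals of $\mathfrak{L}^{\mathrm{b},(n)}$ and $\mathfrak{L}^{\mathrm{t},(n)}$ converge to the two stationary laws, and tightness of the joint laws (using the exponential decay provided by $q_i^{y_i - L_i(0)}$ to control the a priori unbounded coordinate $L_i(0)$ in the case $g = -\infty$) lets me extract a weak subsequential limit of the joint law, which is a coupling of the two stationary measures on which the pointwise domination holds almost surely. The trickiest step to carry out carefully will be verifying the inverse-CDF monotonicity for the truncated geometric law at $t=0$, together with the bookkeeping that $a_i(t), b_i(t)$ incorporate $g$ and $\vec{y}$ in a monotone way across the edge cases $i \in \{1, k\}$ and $t \in \{0, T-1\}$; once these are established, the conclusion is a standard application of the monotone coupling paradigm.
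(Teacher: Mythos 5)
Your proposal is correct and follows the same overall architecture as the paper's proof: realise both measures as stationary laws of an irreducible, aperiodic Markov chain on $\Omega_{\ice}$, run two copies with shared randomness so that pointwise domination is preserved, and pass to a (subsequential) weak limit via Skorohod to obtain the coupling. The difference is in the choice of dynamics and hence in the key monotonicity lemma. The paper uses a Metropolis chain with single-site $\pm1$ proposals and shared acceptance uniforms $U^{(i,t,\zeta)}$; order preservation there reduces to the observation that a $\pm1$ move can only break the ordering at a site where the two chains agree, in which case the acceptance ratios (equal to $1$ except for the downward move at $t=0$, where both equal $q_i$) force the same decision. You use a heat-bath update and couple via the quantile function of the full conditional, which is uniform on $[a_i(t),b_i(t)]$ for $t>0$ and $\propto q_i^{-x}$ at $t=0$; the lemma you need is joint stochastic monotonicity of these truncated weighted measures in $(a,b)$, which is true but requires a slightly careful argument when the two intervals $[a_1,b_1]$ and $[a_2,b_2]$ are disjoint (one cannot simply chain through $[a_2,b_1]$, which may be empty; instead compare the CDFs directly, using that $\mu_{a,b+1}(\cdot\mid\cdot\le b)=\mu_{a,b}$ and $\mu_{a+1,b}=\mu_{a,b}(\cdot\mid\cdot\ge a+1)$). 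What your route buys is a cleaner treatment of the case $g^{\mathrm{b}}(0)=-\infty$: the Gibbs conditional at $(k,0)$ is then a proper geometric law on a half-infinite interval, the target is still an invariant probability measure of an irreducible chain (hence positive recurrence and ergodicity hold on the countable state space), and tightness of the joint law follows from convergence of the two marginals — so no separate truncation $g^{\mathrm{b},M}=\max(g^{\mathrm{b}},-M)$ and second limit, as in Step~2 of the paper, is needed. One bookkeeping slip to fix: with your stated convention $L_{i}(-1)=-\infty$ the formula $b_i(0)=\min\bigl(L_i(1),L_{i-1}(-1)\bigr)$ evaluates to $-\infty$; the out-of-range term in the \emph{upper} bound must be $+\infty$ (absence of a constraint), while only the out-of-range terms in $a_i(0)$ should be $-\infty$.
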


The next statement shows that we can strongly couple a (centered) geometric random walk and a Brownian motion. The latter is an immediate consequence of \cite[Theorem A]{shao1995strong}, which is a version of the celebrated Koml{\' o}s-Major-Tusn{\' a}dy (KMT) coupling \cite{KMT1,KMT2}.
\begin{lemma}\label{prop:ThmA Shao}
Fix $\varepsilon \in (0,1/2), q \in [\varepsilon,1-\varepsilon]$ and let $\{G_j\}_{j=1}^{\infty}$ be a sequence of i.i.d. geometric random variables with parameter $q$. Then, there is a probability space $(\Omega, \mathcal{F}, \mathbb{P})$ that supports $\{G_j\}_{j=1}^{\infty}$ and a sequence of i.i.d. normal random variables $\{H_j\}_{j=1}^{\infty}$ with 
$$\mathbb{E}\left[H_1\right]=0,\quad
    \operatorname{Var}(H_1)=\operatorname{Var}(G_1)=\frac{q}{(1-q)^2},
$$
such that for any $m\in\mathbb{N}$,
\begin{equation}\label{Eq.ShaoRestate}
\mathbb{E}\left[\exp\left(\lambda A\max_{i\leq m}\left| \sum_{j=1}^i\mathring{G}_j-\sum_{j=1}^iH_j\right| \right)\right]\leq 1+ \frac{\lambda mq}{(1-q)^2}, 
\end{equation}
where $\mathring{G}_j=G_j-\mathbb{E}\left[G_j\right]$, $A > 0$ is an absolute constant and $\lambda>0$ depends only on $\varepsilon$.  
\end{lemma} 
\begin{proof} We can find $\lambda > 0$ sufficiently small, depending on $\varepsilon$, such that if $q \in [\varepsilon,1-\varepsilon]$ we have
$$\lambda \mathbb{E}[e^{\lambda |\mathring{G}_1|} |\mathring{G}_1|^3] \leq \frac{\varepsilon}{(1-\varepsilon)^2} \leq \frac{q}{(1-q)^2} = \mathbb{E}[\mathring{G}_1^2].$$
The latter satisfies \cite[(1.1)]{shao1995strong} and then the lemma follows from \cite[Theorem A]{shao1995strong}. Specifically, (\ref{Eq.ShaoRestate}) is precisely \cite[(1.2)]{shao1995strong} with $X_j = \mathring{G}_j$, and $Y_j = H_j$.
\end{proof}

The final result of the section shows that under an appropriate scaling the reverse $g$-interlacing geometric walks from Definition \ref{def:interlaceGeom} converge to the reverse $g$-avoiding Brownian motions from Definition \ref{def: avoidBLE}. Its proof is given in Section \ref{SectionA6}. 
\begin{lemma}\label{lem:RW} Let $k, \vec{y}, \vec{\mu}, b, g$ be as in Definition \ref{def: avoidBLE}. Suppose that $d_n$ is a sequence of positive reals such that $d_n \rightarrow \infty$ as $n \rightarrow \infty$, and set $B_n = \lceil b d_n \rceil$. Let $g_n: [0, B_n/d_n] \rightarrow [-\infty, \infty)$ be continuous functions such that $g_n \rightarrow g$ uniformly on $[0,b]$. In the case when $g = -\infty$, the last statement means that $g_n = -\infty$ for all large $n$. We also suppose that 
\begin{equation}\label{EdgeLim}
\lim_{n \rightarrow \infty} |g_n(B_n/d_n) - g_n(b)| = 0 \mbox{ if } g \neq -\infty.
\end{equation}

Fix $p \in (0,1)$, $u = p/(1-p)$, $\sigma = \sqrt{p}/(1-p)$, define $G_n: [0, B_n] \rightarrow [-\infty, \infty)$ through
$$G_n(s) = \sigma d_n^{1/2} \cdot g_n(s/d_n) + u s,$$
and suppose that $\vec{Y}^n \in \mathfrak{W}_k$ is a sequence, such that 
\begin{equation}\label{SideLim}
 \lim_n \sigma^{-1} d_n^{-1/2} \cdot (Y_i^n - u B_n)   = y_i \mbox{ for } i \in \llbracket 1, k \rrbracket.
\end{equation}
We further suppose that $\vec{q}^{\,n} = (q_1^n, \dots, q_k^n) \in (0,1)^k$ is a sequence that satisfies 
\begin{equation}\label{eq:asym of q}
q_i^n=p-\mu_i\sqrt{p}(1-p) \cdot d_n^{-1/2}+o\left(d_n^{-1/2}\right) \mbox{ for } i \in \llbracket 1, k \rrbracket.
\end{equation}

Then, we have the following statements.
\begin{enumerate}
\item There exists $N_0 \in \mathbb{N}$ such that for $n \geq N_0$ the laws $\mathbb{P}_{\ice, \operatorname{Geom}}^{B_n, \vec{Y}^n, \vec{q}^{\,n}, G_n}$ are well-defined, i.e. the sets $\Omega_{\ice}( B_n, \vec{Y}^n, G_n)$ are non-empty.
\item If $\mathfrak{Q}^n$ has law $\mathbb{P}_{\ice, \operatorname{Geom}}^{B_n, \vec{Y}^n, \vec{q}^{\,n}, G_n}$, then the sequence of $\llbracket 1,k \rrbracket$-indexed line ensembles $\mathcal{Q}^n$ on $[0,b]$ defined by
\begin{equation}\label{S22E5}
\mathcal{Q}_i^n(t) = \sigma^{-1} d_n^{-1/2} \cdot \left( Q_i(t d_n) - utd_n\right) \mbox{ for } n \geq N_0, \hspace{2mm} t \in [0,b] \mbox{ and } i \in \llbracket 1, k \rrbracket,
\end{equation}
converges weakly to $\pabm^{b, \vec{y}, \vec{\mu}, g}$ from Definition \ref{def: avoidBLE} as $n \rightarrow \infty$.
\end{enumerate}
\end{lemma}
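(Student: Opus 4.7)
The plan is to establish part (1) by an explicit construction, and part (2) by combining a KMT-type strong coupling (via Lemma \ref{prop:ThmA Shao}) with a Portmanteau-type transfer from the unconditional to the conditional law, in the spirit of the proof of \cite[Corollary 2.9]{CorHamA}.

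For part (1), I would exhibit the constant paths $B_i(r) \equiv Y_i^n$ for $i \in \llbracket 1, k\rrbracket$ as elements of $\Omega_{\ice}(B_n, \vec{Y}^n, G_n)$. Mutual interlacing is automatic from $\vec{Y}^n \in \mathfrak{W}_k$, so the issue reduces to verifying $Y_k^n \geq G_n(r)$ for every $r \in \llbracket 1, B_n \rrbracket$, which on substituting (\ref{SideLim}) and the definition of $G_n$ becomes $y_k + o(1) + u(B_n-r)/(\sigma d_n^{1/2}) \geq g_n(r/d_n)$. For $r \leq B_n - \varepsilon d_n$, the middle term grows like $d_n^{1/2}$ and dominates the uniformly bounded $g_n$, while for $r$ close to $B_n$ I would use the inequality $g(b) < y_k$, the continuity of $g$, the uniform convergence $g_n \to g$, and condition (\ref{EdgeLim}).

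For part (2), I would first apply Lemma \ref{prop:ThmA Shao} independently in $i \in \llbracket 1, k\rrbracket$ to place on a single probability space i.i.d.\ geometric sequences $\{G_j^{(i,n)}\}_{j \geq 1}$ of parameter $q_i^n$ and i.i.d.\ centered Gaussians $\{H_j^{(i,n)}\}_{j \geq 1}$ of matching variance, so that $\max_{m \leq B_n}\bigl|\sum_{j\leq m}(\mathring{G}_j^{(i,n)} - H_j^{(i,n)})\bigr|$ is $O(\log d_n)$ with overwhelming probability. From these I would build the unconditioned reverse walks $\tilde{B}_i^n(r) = Y_i^n - \sum_{j=r+1}^{B_n} G_j^{(i,n)}$ and note that the expansion $q_i^n/(1-q_i^n) = u - \mu_i \sigma d_n^{-1/2} + o(d_n^{-1/2})$, combined with (\ref{SideLim}), shows that after the rescaling (\ref{S22E5}) the unconditioned ensemble $\tilde{\mathcal{B}}^n$ converges uniformly on $[0,b]$, almost surely in the coupling, to $k$ independent reverse Brownian motions with drifts $\mu_i$ from $(b, y_i)$, i.e., to a sample of $\pfbm^{b,\vec{y},\vec{\mu}}$.

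To transfer to the conditioned law $\mathbb{P}_{\ice, \operatorname{Geom}}^{B_n, \vec{Y}^n, \vec{q}^{\,n}, G_n}$, the interlacing event $E_n^{\ice}$ translates after rescaling to
\[
\tilde{\mathcal{B}}_i^n(t-1/d_n) - \tilde{\mathcal{B}}_{i+1}^n(t) \geq \delta_n, \qquad \tilde{\mathcal{B}}_k^n(t-1/d_n) \geq g_n(t) + \delta_n,
\]
on the grid $t \in d_n^{-1}\llbracket 1, B_n\rrbracket$, where $\delta_n = u/(\sigma d_n^{1/2}) \to 0$. By the uniform a.s.\ convergence from the previous step, the uniform convergence $g_n \to g$, and (\ref{EdgeLim}), the indicator $\mathbf{1}_{E_n^{\ice}}(\tilde{\mathcal{B}}^n)$ tends to $\mathbf{1}_{E_{\operatorname{avoid}}}(\cev{B})$ on the interior and exterior of the avoidance set, while Lemma \ref{lem: BB touch intersect}, applied successively to the pairs $(\cev{B}_i, \cev{B}_{i+1})$ and to $(\cev{B}_k, g)$, ensures the ``tangency'' boundary has $\pfbm^{b,\vec{y},\vec{\mu}}$-mass zero. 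Bounded convergence then gives, for every bounded continuous $F$, $\mathbb{E}\bigl[F(\tilde{\mathcal{B}}^n)\mathbf{1}_{E_n^{\ice}}\bigr] \to \efbm^{b,\vec{y},\vec{\mu}}\bigl[F(\cev{B})\mathbf{1}_{E_{\operatorname{avoid}}}\bigr]$; specializing to $F\equiv 1$ and using Lemma \ref{lem: BB positive measure} to conclude $\pfbm^{b,\vec{y},\vec{\mu}}(E_{\operatorname{avoid}}) > 0$, the ratio yields the desired weak convergence $\mathcal{Q}^n \to \pabm^{b,\vec{y},\vec{\mu},g}$. The \textbf{main obstacle} is precisely this last step: matching the discrete, weakly interlacing, grid-mesh-$d_n^{-1}$ event (with a small positive shift $\delta_n$) to a strict continuous avoidance condition while ruling out tangent configurations requires both the uniform (not merely finite-dimensional) control supplied by the KMT coupling and the non-tangency statement of Lemma \ref{lem: BB touch intersect}; secondarily, condition (\ref{EdgeLim}) is essential to prevent a spurious endpoint drift $\sigma d_n^{1/2}[g_n(B_n/d_n) - g_n(b)]$ in the bottom barrier.
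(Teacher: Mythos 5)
Your proposal follows essentially the same route as the paper's proof in Section \ref{SectionA6}: constant paths for well-posedness of part (1), the KMT coupling of Lemma \ref{prop:ThmA Shao} for the unconditioned reverse walks, the ratio representation of the conditional law, Lemma \ref{lem: BB touch intersect} to show the tangency boundary is null, bounded convergence, and Lemma \ref{lem: BB positive measure} for positivity of the denominator. The one adjustment: since the coupling of Lemma \ref{prop:ThmA Shao} is constructed separately for each $n$ (the geometric parameter $q_i^n$ changes with $n$), there is no single probability space on which ``almost sure uniform convergence in the coupling'' is meaningful as $n\to\infty$; the paper instead first establishes weak convergence of the unconditioned rescaled ensembles (using Chebyshev on the coupling error and the convergence-together theorem) and only then invokes Skorohod's representation theorem to obtain the almost sure uniform convergence needed for the indicator argument.
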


%
%
\section{Estimates for half-space line ensembles}\label{Section3} In this section we derive several statements about the reverse geometric random walks from (\ref{eq:BackGeomRW}) and the reverse $g$-interlacing geometric walks from Definition \ref{def:interlaceGeom}. The main tools we use in deriving the results of this section are the monotone coupling from Lemma \ref{lem:monotone coupling} and the strong coupling from Lemma \ref{prop:ThmA Shao}. We continue with the same notation as in Section \ref{Section2}.

%
%
\subsection{Modulus of continuity estimates}\label{Section3.1}
Fix $a,b \in \mathbb{R}$ with $a < b$. For a function $f\in C([a,b])$ we define its {\em modulus of continuity} for $\delta>0$ by
\begin{equation}\label{eq:def of modulus of continuity}
w(f,\delta)=\sup_{\substack{x,y\in[a,b] \\ |x-y|\leq\delta}}|f(x)-f(y)|.
\end{equation}
Our next result states that if $Q_1$ has law $\mathbb{P}_{ \operatorname{Geom}}^{T, y, q}$, then $Q_1$ has a well-behaved modulus of continuity. The latter is expected, since by Lemma \ref{prop:ThmA Shao} the curve $Q_1$ behaves like a Brownian motion, which has a well-behaved modulus of continuity.
\begin{lemma}\label{lem:modulus of continuity bound}
Fix $q_0\in(0,1)$ and $M,\varepsilon,\eta>0$. Then, there exist $\mathsf{T}_1\in\mathbb{N}$ and $\delta>0$, depending on $q_0,M,\varepsilon,\eta$, such that for any $T\geq \mathsf{T}_1$, $y\in\mathbb{Z}$ and $q\in(0,1)$ satisfying $|q-q_0|\leq MT^{-1/2}$, we have 
\begin{equation}\label{eq:modulus of continuity estimate}
\mathbb{P}_{\operatorname{Geom}}^{T,y,q}\left(w(\mathcal{Q}_1,\delta)> \eta\right)<\varepsilon,
\end{equation}
where $u =q_0/(1-q_0)$, $\sigma=\sqrt{q_0}/(1-q_0)$ and $\mathcal{Q}_1\in C([0,1])$ is $\mathcal{Q}_1(t)=\sigma^{-1}T^{-1/2}(Q_1(tT)- utT)$.
\end{lemma}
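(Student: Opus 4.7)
The plan is to transfer the modulus-of-continuity estimate from Brownian motion to $\mathcal{Q}_1$ via the strong coupling in Lemma \ref{prop:ThmA Shao}. Writing the increments of the reverse walk as $G_j = Q_1(T-j+1) - Q_1(T-j)$, which by (\ref{eq:BackGeomRW}) are i.i.d.\ geometric with parameter $q$, we have $Q_1(T-s) = y - \sum_{j=1}^s G_j$ for $s \in \llbracket 0, T \rrbracket$ (with linear interpolation in between). Because $|q - q_0| \leq MT^{-1/2}$ and $q_0 \in (0,1)$ is fixed, for all sufficiently large $T$ we have $q \in [q_0/2, (1+q_0)/2] \subset [\varepsilon, 1-\varepsilon]$ for some $\varepsilon > 0$, so Lemma \ref{prop:ThmA Shao} applies. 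This lets us enlarge the probability space to carry i.i.d.\ centered Gaussians $\{H_j\}$ with variance $q/(1-q)^2$ such that, by Markov's inequality applied to the exponential-moment bound,
\begin{equation*}
\mathbb{P}\Bigl( \max_{i \leq T}\Bigl| \sum_{j=1}^{i} \mathring{G}_j - \sum_{j=1}^{i} H_j \Bigr| > C_1 \log T \Bigr) \leq \frac{1 + \lambda T q/(1-q)^2}{e^{\lambda A C_1 \log T}},
\end{equation*}
which for $C_1$ large enough (depending on $q_0, \eta$) is at most $\eta/3$ for all large $T$.

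Next I would embed $\tilde{S}_n := \sum_{j=1}^n H_j$ into a Brownian motion: there is a standard Brownian motion $W$ such that $\tilde{S}_n = \sigma_q W(n)$ for all $n$, where $\sigma_q := \sqrt{q}/(1-q)$. For $0 \leq s_1 < s_2 \leq 1$ with $s_2 - s_1 \leq \delta$, set $n_i = \lfloor (1-s_i)T \rfloor$; then $|n_1 - n_2| \leq \delta T + 1$. Decompose
\begin{equation*}
\mathcal{Q}_1(s_2) - \mathcal{Q}_1(s_1) = \sigma^{-1} T^{-1/2} \Bigl[(n_1 - n_2) \tfrac{q}{1-q} - uT(s_2 - s_1)\Bigr] + \sigma^{-1} T^{-1/2}\bigl(S_{n_1} - S_{n_2}\bigr) + R_T,
\end{equation*}
where $S_n = \sum_{j=1}^n \mathring{G}_j$ and $R_T$ collects the linear-interpolation corrections (bounded by $\sigma^{-1} T^{-1/2}(G_{n_1+1} + G_{n_2+1} + \tfrac{q}{1-q})$, which is of order $T^{-1/2}\log T$ with probability at least $1 - \eta/3$ by a union bound on a tail estimate for a geometric). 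The deterministic drift term is bounded by $|(\tfrac{q}{1-q} - u)(s_2 - s_1)\sqrt{T}/\sigma| + O(T^{-1/2})$, and since $|q - q_0| \leq MT^{-1/2}$ implies $|\tfrac{q}{1-q} - u| \leq M'T^{-1/2}$, this term is $O(\delta + T^{-1/2})$ uniformly in $s_1, s_2$.

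For the Gaussian piece, on the event that the coupling error is at most $C_1 \log T$ we have
\begin{equation*}
\sigma^{-1} T^{-1/2} |S_{n_1} - S_{n_2}| \leq \sigma^{-1} T^{-1/2} \cdot \sigma_q |W(n_1) - W(n_2)| + 2 \sigma^{-1} T^{-1/2} C_1 \log T.
\end{equation*}
Noting $\sigma_q/\sigma \to 1$ and that $(T^{-1/2} W(tT))_{t \in [0,1]}$ is a standard Brownian motion, a standard bound on the Brownian modulus of continuity lets us choose $\delta > 0$ small enough (depending on $\varepsilon, \eta, q_0$) so that $w(T^{-1/2}W(\cdot T), \delta + 1/T) > \varepsilon/2$ has probability less than $\eta/3$ for all $T$ large. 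Combining the three bounds (coupling error, interpolation $R_T$, Brownian modulus) via a union bound yields (\ref{eq:modulus of continuity estimate}) for $T \geq \mathsf{T}_1$ with $\mathsf{T}_1$ and $\delta$ depending only on $q_0, M, \varepsilon, \eta$.

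The main obstacle I anticipate is bookkeeping the nonstandardness of the rescaling constants: the walk has variance $q/(1-q)^2$ but we normalize by $\sigma^2 = q_0/(1-q_0)^2$, and the walk has drift $q/(1-q)$ per step but we recenter by $u = q_0/(1-q_0)$. The hypothesis $|q - q_0| \leq MT^{-1/2}$ is exactly sharp enough to make the discrepancy $|u - q/(1-q)| \cdot T \cdot T^{-1/2}$ finite, so the argument works, but some care is needed to verify that all the mismatched constants give only $O(1)$ contributions after the $T^{1/2}$ scaling.
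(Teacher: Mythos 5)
Your proposal is correct and follows essentially the same route as the paper: represent $Q_1$ via i.i.d.\ geometric increments, apply the strong coupling of Lemma \ref{prop:ThmA Shao} to pass to Gaussian partial sums embedded in a Brownian motion, absorb the drift mismatch using $|q-q_0|\leq MT^{-1/2}$ (which makes $\sigma^{-1}T^{1/2}|u-\tilde u|$ bounded, so the drift contribution is $O(\delta)$), and conclude with the Brownian modulus of continuity. The only cosmetic differences are that the paper controls the coupling error at the much cruder threshold $\varepsilon\sigma T^{1/2}/4$ rather than $C_1\log T$, and handles linear interpolation by restricting to lattice times and enlarging $\delta$ to $2\delta$ rather than via an explicit remainder term.
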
 
\begin{proof} All constants in the proof depend on $q_0,M,\varepsilon,\eta$, and we will not mention this further. Let $G_1, \dots, G_T$ be i.i.d. geometric random variables with parameter $q$, i.e. $\mathbb{P}(G_1 = k) = (1-q)q^k$ for $k \in \mathbb{Z}_{\geq 0}$, and put
\begin{equation*}  
Q_1(j)=y-\sum_{\ell=j+1}^TG_{\ell}, \quad j\in\llbracket0,T\rrbracket.
\end{equation*}   
Then, $Q_1$ has law $\mathbb{P}_{\operatorname{Geom}}^{T,y,q}$. We denote $\tilde{u} = q/(1-q)$ and $\tilde{\sigma} = \sqrt{q}/(1-q)$. 

Let $T_1 \in \mathbb{N}$ be large enough and $\delta_0 > 0$ small enough so that for $T \geq T_1$ we have $q \in [\delta_0, 1 - \delta_0]$. By the strong coupling Lemma \ref{prop:ThmA Shao}, we can couple $Q_1$ with $H_1,\dots,H_T\sim N(0,\tilde{\sigma}^2)$, so that if $\mathring{G}_j = G_j - \mathbb{E}[G_j] = G_j - q/(1-q)$, we have for some $A, \lambda > 0$ 
$$\mathbb{E}\left[\exp\left(\lambda A \Delta_T \right)\right]\leq1+ \frac{\lambda Tq}{(1-q)^2}, \mbox{ where } 
\Delta_T = \max_{i\in\llbracket0,T\rrbracket}\left| \sum_{j=i+1}^T \mathring{G}_j-\sum_{j=i+1}^T H_j\right|.$$
In addition, by possibly enlarging the space, we may assume that there is a standard Brownian motion $B$ on $[0,1]$ and $H_i = \tilde{\sigma} T^{1/2}  (B(i/T) - B((i-1)/T))$ for $i \in \llbracket 1, T\rrbracket$. By the exponential Chebyshev's inequality there exists $T_2\in\mathbb{N}$, $T_2 \geq T_1$, such that for $T\geq T_2$
\begin{equation}\label{eq:intermediate bound modulus continuity 1}
\mathbb{P}\left(\sigma^{-1}T^{-1/2} \cdot \Delta_T >\frac{\eta}{4}\right) \leq \exp\left(- \lambda A \eta \sigma T^{1/2}/4 \right) \cdot \left(1+ \frac{\lambda Tq}{(1-q)^2} \right)<\frac{\varepsilon}{4}.
\end{equation}

For any $0\leq t_2\leq t_1\leq1$ such that $t_1T,t_2T\in\mathbb{Z}$, we have
\begin{equation}\label{eq:intermediate bound modulus continuity 2}
\begin{split}
&|\mathcal{Q}_1(t_1)-\mathcal{Q}_1(t_2)|-\sigma^{-1}T^{1/2}|u-\tilde{u}||t_1-t_2|\\ 
&=\sigma^{-1}T^{-1/2}|Q_1(t_1T)-Q_1(t_2T)- u(t_1T-t_2T)|-\sigma^{-1}T^{1/2}|u-\tilde{u}||t_1-t_2|\\
&\leq\sigma^{-1}T^{-1/2}|Q_1(t_1T)-Q_1(t_2T)-\tilde{u}(t_1T-t_2T)|  
=\sigma^{-1}T^{-1/2}\left|\sum_{j=t_2T+1}^{t_1T}\mathring{G}_j\right| \\
&\leq\sigma^{-1}T^{-1/2} \left|\sum_{j=t_2T+1}^{t_1T}H_j\right| +  \sum_{r\in\{1,2\}} \sigma^{-1}T^{-1/2} \left|\sum_{j=t_rT+1}^T \mathring{G}_j- \sum_{j=t_rT+1}^T H_j \right| \\
& \leq \tilde{\sigma} \sigma^{-1}|B(t_1)-B(t_2)|+ 2\sigma^{-1}T^{-1/2} \Delta_T.
\end{split}
\end{equation}
Since $u=q_0/(1-q_0)$, $\tilde{u}=q/(1-q)$ and $|q-q_0|\leq MT^{-1/2}$, there exist $T_3\in\mathbb{N}$ and $M_1>0$, such that for $T\geq T_3$ we have $\sigma^{-1}T^{1/2}|u-\tilde{u}|\leq M_1$ and $\tilde{\sigma} \sigma^{-1} \in [1/2,2]$.
From the continuity of $B$, we can find $\delta>0$, such that $8M_1\delta<\eta$ and
\begin{equation}\label{eq:intermediate bound modulus continuity 3}
\mathbb{P}\left(w(B,2\delta)>\frac{\eta}{8}\right)<\frac{\varepsilon}{4}.
\end{equation}
We finally choose $T_4\in\mathbb{N}$ such that for $T\geq T_4$ we have $2/T<\delta$, and set $\mathsf{T}_1 =\max(T_1,T_2,T_3,T_4)$. 

We observe that for $T \geq \mathsf{T}_1$ we have
\begin{equation*}
\begin{split}
&\mathbb{P}\left(w(\mathcal{Q}_1,\delta)> \eta\right)\leq\mathbb{P}\left(\sup_{\substack{0\leq t_2\leq t_1\leq1, \mbox{ } t_1-t_2\leq2\delta, \mbox{ } t_1T,t_2T\in\mathbb{Z}}} |\mathcal{Q}_1(t_1)-\mathcal{Q}_1(t_2)|>\eta\right)\\
&\leq\mathbb{P}\left(w(B,2\delta)> \eta/8\right)+
\mathbb{P}\left(2\sigma^{-1}T^{-1/2} \cdot \Delta_T > \eta/2 \right)<2 \cdot ( \varepsilon/4)<\varepsilon,
\end{split}
\end{equation*}
where in the first inequality we used $2/T<\delta$ and the fact that $\mathcal{Q}_1$ is a piece-wise linear function interpolating the points $\mathcal{Q}_1(i/T)$ for $i\in\llbracket0,T\rrbracket$; in the second inequality we used \eqref{eq:intermediate bound modulus continuity 2}, the triangle inequality, $\sigma^{-1}T^{1/2}|u-\tilde{u}||t_1-t_2|\leq 2M_1\delta< \eta/4$ and $\tilde{\sigma} \sigma^{-1}\leq 2$;
in the third inequality we used \eqref{eq:intermediate bound modulus continuity 1} and \eqref{eq:intermediate bound modulus continuity 3}. The last equation implies (\ref{eq:modulus of continuity estimate}).
\end{proof}

%
%
\subsection{Staying in the corridor}\label{Section3.2}
Let $A,B\in\mathbb{R}$, $y\in\mathbb{R}$ and $\delta\in(0,1)$. We define $f(t|A,B,y,\delta)$ in $C([0,1])$ to be the function such that
\begin{equation}\label{eq:defining the corridor}
 f(0|A,B,y,\delta)=f(1-\delta|A,B,y,\delta)=B+A,\quad f(1|A,B,y,\delta)=y+A,
\end{equation}
and $f(t|A,B,y,\delta)$ is linear on the intervals $[0,1-\delta]$ and $[1-\delta,1]$, see Figure \ref{S3_1}.
\begin{figure}[ht]
    \centering
     \begin{tikzpicture}[scale=2.7]

        \def\tra{3} 
        \draw[->, thick, gray] (-1.4,0)--(1.4,0);
        \draw[->, thick, gray] (-1,-1)--(-1,1.2);
        
        \draw[black, fill = black] (-1,0) circle (0.02);
        \draw[black, fill = black] (1,0) circle (0.02);        
        \draw (-1.1,-0.1) node{$0$};
        \draw (1, -0.1) node{$1$};

        \draw[black, fill = black] (1,-0.8) circle (0.02);
        \draw (1.2,-0.8) node{$(1,y)$};

        \draw[black, fill = black] (1,-0.6) circle (0.02);
        \draw (1.34,-0.6) node{$(1,y + A)$};

        \draw[black, fill = black] (-1,0.8) circle (0.02);
        \draw (-1.34,0.8) node{$(0, B + A)$};

        \draw[-] (-1,0.8)--(0.5, 0.8);
        \draw[-] (0.5,0.8)--(1, -0.6);

        \draw[black, fill = black] (0.5,0.8) circle (0.02);
        \draw (0.5,0.9) node{$(1-\delta, B+A)$};


        \draw[black, fill = black] (0.5,0.4) circle (0.02);

        \draw[->] (0,0.8)--(0, 0.4);
        \draw[->] (0,0.4)--(0, 0.8);
        \draw (0.1,0.6) node{$2A$};

        \draw[black, fill = black] (1,-1) circle (0.02);
        \draw (1.34,-1) node{$(1,y - A)$};

        \draw[black, fill = black] (-1,0.4) circle (0.02);
        \draw (-1.34,0.4) node{$(0,B - A)$};

        \draw[-] (-1,0.4)--(0.5, 0.4);
        \draw[-] (0.5,0.4)--(1, -1);

        \draw[black, fill = black] (0.5,0) circle (0.02);
        \draw (0.48, -0.1) node{$1 -\delta$};

    \end{tikzpicture} 

    \caption{The figure depicts $f(\cdot |A,B,y, \delta)$, $f(\cdot |-A,B,y, \delta)$ from (\ref{eq:defining the corridor}) for $A > 0$.}
    \label{S3_1}
\end{figure}
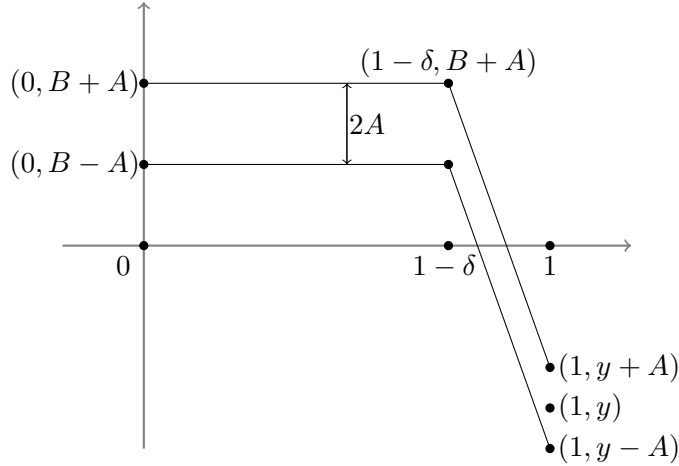

The main result in this section, Lemma \ref{lem:staying in corridor} below, shows that if $Q_1$ has law $\mathbb{P}_{ \operatorname{Geom}}^{T,y,q}$, then with some non-vanishing probability its scaled version $\mathcal{Q}_1$ stays in the corridor determined by two functions of the kind in (\ref{eq:defining the corridor}).

\begin{lemma} \label{lem:staying in corridor}
Fix $q_0\in(0,1)$, $M,M_1,M_2,A>0$ and $\delta_1\in(0,1)$. Denote $u=q_0/(1-q_0)$ and $\sigma=\sqrt{q_0}/(1-q_0)$. Then, there exist $\mathsf{T}_2\in\mathbb{N}$ and $\varepsilon>0$, depending on $q_0,M,M_1,M_2,A$ and $\delta_1$, such that the following holds for all $T\geq \mathsf{T}_2$
and $q\in(0,1)$ satisfying $|q-q_0|\leq MT^{-1/2}$.
Suppose $y\in\mathbb{Z}$ is such that $|y-u T|\leq M_1T^{1/2}$ and $B\in[-M_2,M_2]$. Let $f^{t}, f^{b}\in C([0,1])$ be defined by
\begin{equation}
\label{eq:define ft fb statement}
\begin{split}
& f^{t}(t)=f(t|A,B,\sigma^{-1}T^{-1/2}(y-u T),\delta_1), \hspace{2mm} f^{b}(t)=f(t|-A,B,\sigma^{-1}T^{-1/2}(y-u T),\delta_1).
\end{split}
\end{equation}
If $\mathcal{Q}_1\in C([0,1])$ is given by $\mathcal{Q}_1(t)=\sigma^{-1}T^{-1/2} (Q_1(tT)-u tT)$, then 
\begin{equation}
\mathbb{P}_{\operatorname{Geom}}^{T,y,q}\left(f^{t}(t)\geq\mathcal{Q}_1(t)\geq f^{b}(t)\mbox{ for all }t\in[0,1]\right)>\varepsilon.
\end{equation}
\end{lemma}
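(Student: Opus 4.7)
The plan is to couple $Q_1$ to a Brownian motion via the strong coupling of Lemma \ref{prop:ThmA Shao} and reduce the desired lower bound to a Brownian small-tube estimate. Write $\tilde u=q/(1-q)$, $\tilde\sigma=\sqrt q/(1-q)$, and $Q_1(j)=y-(T-j)\tilde u-\sum_{\ell=j+1}^T \mathring G_\ell$ with $\mathring G_\ell=G_\ell-\tilde u$. I would apply Lemma \ref{prop:ThmA Shao} to couple the $\mathring G$-walk with i.i.d.\ $H_\ell\sim N(0,\tilde\sigma^2)$, realized as $H_i=\tilde\sigma T^{1/2}(W(i/T)-W((i-1)/T))$ for a standard Brownian motion $W$ on $[0,1]$ (valid once $T$ is large enough that $q$ lies in a fixed compact subset of $(0,1)$). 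A rearrangement gives
\[
\mathcal Q_1(t)\;=\;a_T+d_T(t-1)-\sigma^{-1}\tilde\sigma\bigl(W(1)-W(t)\bigr)+E_T(t),
\]
where $a_T:=\sigma^{-1}T^{-1/2}(y-uT)\in[-\sigma^{-1}M_1,\sigma^{-1}M_1]$, $d_T:=\sigma^{-1}T^{1/2}(\tilde u-u)$ is bounded by a constant depending only on $q_0,M$, and $\sigma^{-1}\tilde\sigma\to 1$, while $\|E_T\|_\infty$ is controlled by $\sigma^{-1}T^{-1/2}\Delta_T$ (with $\Delta_T$ as in Lemma \ref{prop:ThmA Shao}) plus the piecewise-linear interpolation error between grid points, which is $O(T^{-1/2}\log T)$. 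Markov's inequality applied to Lemma \ref{prop:ThmA Shao}, together with Lemma \ref{lem:modulus of continuity bound} for the interpolation piece, would yield $\mathbb P(\|E_T\|_\infty>A/4)<1/4$ for $T$ sufficiently large, uniformly in the admissible data.

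Substituting $s:=1-t$ and $\widetilde W(s):=W(1)-W(1-s)$ (a standard Brownian motion with $\widetilde W(0)=0$), on the good event $\{\|E_T\|_\infty\le A/4\}$ the corridor event $\{f^{b}(t)\le\mathcal Q_1(t)\le f^{t}(t)\text{ for all }t\in[0,1]\}$ is implied by the Brownian tube event
\[
F\;:=\;\Bigl\{\sup_{s\in[0,1]}\bigl|\sigma^{-1}\tilde\sigma\,\widetilde W(s)-\hat c(s)\bigr|\le 3A/4\Bigr\},
\]
where a direct computation shows $\hat c$ is continuous and piecewise linear on $[0,1]$ with $\hat c(0)=0$, slope $(a_T-B)/\delta_1-d_T$ on $[0,\delta_1]$, and slope $-d_T$ on $[\delta_1,1]$. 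Crucially, as $(T,q,y,B)$ range over the admissible set the tuple $(a_T,B,d_T,\sigma^{-1}\tilde\sigma)$ lies in a compact subset of $\mathbb R^4$, and so the family $\{\hat c\}$ is uniformly equicontinuous and uniformly bounded.

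It then suffices to show $\inf\mathbb P(F)\ge 2\varepsilon>0$ over this compact parameter region. Since the tube has constant radius $3A/4>0$, the endpoint condition $\widetilde W(0)=\hat c(0)=0$ is satisfied automatically, and the support theorem for Brownian motion (or an explicit construction of a smooth curve strictly inside the tube combined with Cameron--Martin) gives $\mathbb P(F)>0$ pointwise in the parameters. Continuity of $(a_T,B,d_T,\sigma^{-1}\tilde\sigma)\mapsto\mathbb P(F)$ under uniform perturbations of $\hat c$ and the variance factor, combined with compactness of the parameter region, upgrades this pointwise bound to a uniform one, and combining with the KMT error estimate from the first paragraph completes the proof. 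The principal obstacle is precisely this \emph{uniformity} over the boundary data $(q,y,B)$ rather than the pointwise positivity of $\mathbb P(F)$; it is resolved by the compactness reduction together with continuity of the Wiener measure under uniform perturbations of the tube's centerline.
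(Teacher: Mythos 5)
Your proof is correct, but it takes a genuinely more explicit route than the paper's. The paper argues by contradiction: it assumes the lemma fails along a sequence of data $(T_k,q_k,y_k,B_k)$, passes to a convergent subsequence, invokes Lemma \ref{lem:RW} (a black-box weak convergence result, itself built on Lemma \ref{prop:ThmA Shao}) to get weak convergence of $\mathcal{Q}_1^k$ to a reverse Brownian motion with limiting drift and endpoint, and then applies Lemma \ref{lem: BB positive measure} to the open tube around $f(\cdot|0,B,y,\delta_1)$. You instead unpack the KMT coupling directly, writing $\mathcal{Q}_1$ as a time-reversed Brownian motion plus a linear drift plus an error $E_T$ that you bound in sup norm with high probability via the exponential moment estimate of Lemma \ref{prop:ThmA Shao} and Markov's inequality, and then reduce the corridor event to an explicit Brownian tube event $F$. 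The uniformity over $(q,y,B)$ that the paper obtains implicitly through subsequence extraction you obtain explicitly by noting that the tuple $(a_T,B,d_T,\sigma^{-1}\tilde\sigma)$ ranges over a fixed compact set and that $\theta\mapsto\mathbb P(F_\theta)$ is lower semicontinuous. Both proofs rest on the same two pillars (KMT coupling, positive probability of open tubes for Brownian motion); yours yields in-principle quantitative error control at the cost of more bookkeeping, while the paper's is shorter by delegating the convergence to Lemma \ref{lem:RW}.

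Two minor technical remarks. First, the interpolation error of the Brownian motion over a mesh cell is $O(T^{-1/2}\sqrt{\log T})$, not $O(T^{-1/2}\log T)$; more to the point, Lemma \ref{lem:modulus of continuity bound} concerns the modulus of continuity of $\mathcal{Q}_1$ at fixed scale $\delta$ rather than at scale $1/T$, so the cleaner route is a direct bound on $w(W,1/T)$ from the Brownian modulus of continuity. Second, to pass from pointwise positivity of $\mathbb P(F_\theta)$ to a uniform lower bound, the cleanest argument uses lower semicontinuity of $\theta\mapsto\mathbb P(\|\sigma^{-1}\tilde\sigma\widetilde W-\hat c_\theta\|_\infty<3A/4)$ (which follows from Fatou, since the strict tube event is open in the parameter) together with the fact that an lsc function on a compact set attains its infimum; full continuity is not needed.
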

\begin{remark} Note that in the lemma $y = uT + O(T^{1/2})$. Consequently, the scaling inside (\ref{eq:define ft fb statement}) is made so that the third argument of $f^{t}$ and $f^b$ is of unit order. 
\end{remark}
\begin{proof} Suppose for the sake of contradiction that no such $\mathsf{T}_2$ and $\varepsilon$ exist. Then, we can find sequences $T_k$ increasing to infinity, 
$q_k\in(0,1)$ with $|q_k-q_0|\leq MT_k^{-1/2}$,
$y_k\in\mathbb{Z}$ with $|y_k-u T_k|\leq M_1T_k^{1/2}$ and $B_k\in[-M_2,M_2]$, such that
\begin{equation}\label{eq:assume contradition in the proof staying in corridor}
\lim_{k\rightarrow\infty}\mathbb{P}_{\operatorname{Geom}}^{T_k,y_k,q_k}\left(f_k^{t}(t)\geq\mathcal{Q}_1(t)\geq f_k^{b}(t)\mbox{ for all }t\in[0,1]\right)=0, 
\end{equation}
where $f_k^{t}$ and $f_k^{b}$ are defined as \eqref{eq:define ft fb statement} but with $B,y,T$ replaced by $B_k,y_k,T_k$, respectively. By possibly passing to a subsequence, which we also refer to as $k$, we may assume that
\begin{equation}\label{eq:scaling constants in proof stay in corridor}
\lim_{k\rightarrow\infty}\sigma^{-1}T_k^{-1/2}(y_k-u T_k)=y,\quad \lim_{k\rightarrow\infty}B_k=B,
\end{equation}
and that for a fixed $u\in\mathbb{R}$,
\begin{equation}\label{eq:scaling constants in proof stay in corridor 2}
q_k=q_0-u\sqrt{q_0}(1-q_0)T_k^{-1/2}+o(T_k^{-1/2}).
\end{equation}

Using Lemma \ref{lem:RW}, we know that if $Q_1^k$ have laws $\mathbb{P}_{\operatorname{Geom}}^{T_k,y_k,q_k}$, then $\mathcal{Q}_1^k(t)=\sigma^{-1}T_k^{-1/2}(Q_1^k(tT_k)-u tT_k)$ converge weakly to a reverse Brownian motion $\cev{B}$ on $[0,1]$ with drift $u$ from $\cev{B}(1)=y$. Consider 
$$U:=\left\{g\in C([0,1]): \sup_{t\in[0,1]} |g(t)-f(t|0,B,y,\delta_1)|<A/2 \right\}.$$
Note that $U\subset C([0,1])$ is an open subset that contains the function $f(s|0,B,y,\delta_1)$. By Lemma \ref{lem: BB positive measure} we have $\mathbb{P}(\cev{B}\in U)>0$. On the other hand, using \eqref{eq:scaling constants in proof stay in corridor} we have that for all large enough $k$,
$$U\subset\left\{g\in C([0,1]): f_k^{t}(t)\geq g(t)\geq f_k^{b}(t)\mbox{ for }t\in[0,1] \right\}.$$
Therefore, we have
    \begin{equation*}
        \begin{split}
            \liminf_{k\rightarrow\infty}\mathbb{P}_{\operatorname{Geom}}^{T_k,y_k,q_k}\left(f_k^{t}(t)\geq\mathcal{Q}_1(t)\geq f_k^{b}(t)\mbox{ for }t\in[0,1]\right) \geq \liminf_{k\rightarrow\infty}\mathbb{P}_{\operatorname{Geom}}^{T_k,y_k,q_k}\left(\mathcal{Q}_1\in U\right)\geq\mathbb{P}(\cev{B}\in U)>0,
        \end{split}
    \end{equation*}
    which contradicts \eqref{eq:assume contradition in the proof staying in corridor}. This concludes the proof.
\end{proof}

%
%
\subsection{No low minima}\label{Section3.3} The goal of this section is to establish the following lemma.

\begin{lemma}\label{lem:lower bound on curve canal}
Fix $\delta\in(0,1/2)$ and $\varepsilon\in(0,1)$. Let $k \in \mathbb{N}$, $\vec{q}=(q_1,\dots,q_k)\in [\delta,1-\delta]^k$, $\vec{y}\in\mathfrak{W}_k$ and $g:\llbracket0,T\rrbracket\rightarrow\mathbb{Z}\cup\{-\infty\}$ be an increasing function satisfying $g(T)\leq y_k$. Then, there exist $\mathsf{T}_3\in\mathbb{N}$ and  $M>0$, depending on $\delta$, $\varepsilon$ and $k$, such that for $T\geq \mathsf{T}_3$ and $t\in \llbracket 0,T \rrbracket$, we have
\begin{equation}\label{eq:ververgre}
\mathbb{P}_{\ice, \operatorname{Geom}}^{T, \vec{y}, \vec{q}, g}\left(Q_k(t)\geq y_k + \frac{q_k (t-T)}{1 - q_k} -T\sum_{i=1}^{k-1}\left|\frac{q_i}{1 - q_i} -\frac{q_{i+1}}{1 - q_{i+1}}\right|-M T^{1/2}\right)\geq 1-\varepsilon.
\end{equation}
\end{lemma}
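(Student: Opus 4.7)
The plan is to first reduce the problem via monotone coupling to the case $g = -\infty$, and then induct on $k$ using the Gibbs structure of $\mathbb{P}_{\ice, \operatorname{Geom}}$ and Chebyshev-type concentration. Throughout I write $\mu_i := q_i/(1-q_i)$, so the target bound reads $Q_k(t) \geq y_k - \mu_k(T-t) - T\sum_{i=1}^{k-1} |\mu_i - \mu_{i+1}| - MT^{1/2}$.

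\emph{Reduction and base case.} The first step will be to apply Lemma \ref{lem:monotone coupling} with $g^{\mathrm{b}} = -\infty$, $g^{\mathrm{t}} = g$, and $\vec y^{\mathrm{b}} = \vec y^{\mathrm{t}} = \vec y$; the resulting coupling yields $L^{\mathrm{b}}_k \leq L^{\mathrm{t}}_k$ a.s., so henceforth I assume $g = -\infty$. For the base case $k = 1$, $Q_1$ is then a free reverse geometric walk with parameter $q_1$ from $y_1$, so $y_1 - Q_1(t)$ is a sum of i.i.d.\ $\mathrm{Geom}(q_1)$ variables with mean $(T-t)\mu_1$ and variance at most $C(\delta)\,T$; Chebyshev's inequality will give the bound for $M$ large enough depending on $\delta$ and $\varepsilon$ (the sum in the statement is empty in this case).

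\emph{FKG-type stochastic domination.} For the inductive step with $k \geq 2$, I will integrate out $Q_k$ in the joint density $\prod_i q_i^{y_i - Q_i(0)} \mathbf{1}_{\ice}$ to exhibit the marginal of $(Q_1, \dots, Q_{k-1})$ in the $k$-curve ensemble as the $(k-1)$-curve measure reweighted by the acceptance $Z(Q_{k-1}) := \sum_{Q_k} q_k^{y_k - Q_k(0)} \mathbf{1}_{\{Q_k \text{ interlaces with } Q_{k-1}\}}$. Because $Z$ is a non-decreasing functional of $Q_{k-1}$ (raising the barrier admits strictly more walks), a standard Radon--Nikodym/FKG argument will yield $\mathbb{P}^{(k)}(Q_{k-1}(s) \geq L) \geq \mathbb{P}^{(k-1)}(Q_{k-1}(s) \geq L)$ for every $L\in\mathbb{R}$. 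The inductive hypothesis at level $k-1$, combined with a union-bound promotion to uniformity in $s$ (aided by the modulus-of-continuity control of Lemma \ref{lem:modulus of continuity bound}), will then give, with probability at least $1 - \varepsilon/2$ under the $k$-ensemble, the bound $Q_{k-1}(s) \geq \ell(s) := y_{k-1} - \mu_{k-1}(T-s) - T\sum_{i=1}^{k-2} |\mu_i - \mu_{i+1}| - M' T^{1/2}$ simultaneously for all $s\in\llbracket 0,T\rrbracket$.

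\emph{Conditional lower bound and main obstacle.} Conditioning on the top $k - 1$ curves, the density factorization implies that $Q_k$ becomes a free reverse geometric walk with parameter $q_k$ from $y_k$ subject to the upper-barrier constraint $Q_k(s) \leq Q_{k-1}(s-1)$ for $s\in\llbracket 1,T\rrbracket$. On the good event $\{Q_{k-1}(s-1) \geq \ell(s-1) \text{ for all } s\}$, I plan to couple $Q_k$ with a Brownian motion of drift $-\mu_k$ via Lemma \ref{prop:ThmA Shao} and employ a reflection-type comparison to conclude that $Q_k(t) \geq \min\bigl(y_k - \mu_k(T-t),\, \ell(t-1)\bigr) - O(T^{1/2})$ with probability at least $1 - \varepsilon/2$. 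The algebraic inequality $\ell(t-1) \geq y_k - \mu_k(T-t) - T|\mu_{k-1} - \mu_k| - T\sum_{i=1}^{k-2}|\mu_i - \mu_{i+1}| - O(T^{1/2})$ (which uses only $y_{k-1} \geq y_k$ and the definition of $\mu_i$) will then ensure that both arguments of the minimum exceed $y_k - \mu_k(T-t) - T\sum_{i=1}^{k-1}|\mu_i - \mu_{i+1}| - MT^{1/2}$, closing the induction. The hard part will be this last conditional analysis: quantifying precisely how far a geometric walk conditioned to stay below a (possibly binding) lower-bounded barrier can dip below its free trajectory. I expect to handle this via the Brownian approximation of Lemma \ref{prop:ThmA Shao} together with a reflection estimate showing that the walk tracks the barrier within $O(T^{1/2})$ when the barrier is binding and is essentially unconditioned otherwise.
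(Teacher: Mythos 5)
Your reduction to $g=-\infty$ via Lemma \ref{lem:monotone coupling} and your use of the KMT coupling are both in the spirit of the paper, but the architecture of your induction has two gaps that I do not think you can close as written. First, the inductive hypothesis is a statement at a single time $t$, while the inductive step requires $Q_{k-1}(s)\geq \ell(s)$ \emph{simultaneously} for all $s\in\llbracket 0,T\rrbracket$; a union bound over $T+1$ times destroys the constant, and Lemma \ref{lem:modulus of continuity bound} controls the modulus of continuity of a \emph{free} reverse geometric walk, not of a curve inside the interlacing ensemble (controlling the latter is essentially the content of Section \ref{Section4} and cannot be invoked here). Second, and more seriously, the step you yourself flag as "the hard part" -- showing that a reverse geometric walk conditioned to stay \emph{below} a random upper barrier dips at most $O(T^{1/2})$ below $\min(\text{free trajectory},\text{barrier})$ -- is the entire analytic content of the lemma in your scheme, and no reflection argument is supplied. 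This is a genuine entropic-repulsion-type estimate for a walk under a binding ceiling, and it is not a consequence of Lemma \ref{prop:ThmA Shao} alone.

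The paper's proof avoids both issues with one trick you are missing: in addition to pushing $g$ down to $-\infty$, it uses Lemma \ref{lem:monotone coupling} to push the exit data $\vec{y}$ down to $y_j^{\operatorname{new}}=\lceil -T\sum_{i<j}|\mu_i-\mu_{i+1}|-5jCT^{1/2}\rceil$, creating gaps of order $T|\mu_j-\mu_{j+1}|+CT^{1/2}$ between consecutive endpoints. On the event $E$ that each \emph{independent} walk stays within $2CT^{1/2}$ of its linear trajectory $y_i^{\operatorname{new}}-\mu_i(T-t)$, these gaps force the interlacing constraint to hold automatically, so by (\ref{eq:accept}) the conditioned probability is bounded below by $\mathbb{P}_{\operatorname{Geom}}(E)=\prod_i\mathbb{P}(E_i)$, a product of free one-walk tube probabilities handled by the KMT coupling and Chebyshev. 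No conditional analysis of constrained walks, no FKG step, and no induction on $k$ is needed. If you want to salvage your approach you would need to prove the ceiling-repulsion estimate separately; the paper's route is strictly simpler.
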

\begin{remark} Notice that by the monotone coupling Lemma \ref{lem:monotone coupling}, the probability on the left side of (\ref{eq:ververgre}) is minimized when $g = -\infty$, and so it suffices to show (\ref{eq:ververgre}) for this choice of $g$. When $g = -\infty$, the lemma roughly states that $k$ interlacing reverse geometric walkers are unlikely to go much lower than their starting point.
\end{remark}

\begin{proof} For brevity we write $\mu_i = \frac{q_i}{1-q_i}$ for $i \in \llbracket 1, k \rrbracket$. By translating the ensemble, we may assume that $y_k = 0$, which we do in the sequel. Let $C > 0$ be large enough, depending on $\delta, \varepsilon, k$, so that 
\begin{equation}\label{S3DefC}
C > 1 + \delta^{-1} \mbox{ and } \mathbb{P}\left(\max_{t\in[0,1]} \left|B(t)\right|\geq \delta C\right) < \varepsilon \cdot (2k)^{-1},
\end{equation}
where $B(t)$ is a standard Brownian motion on $[0,1]$. In addition, let $\lambda, A$ be as in the strong coupling Lemma \ref{prop:ThmA Shao} for $\varepsilon = \delta$, and let $\mathsf{T}_3$ be sufficiently large, depending on $\delta$ and $C$, so that for $T \geq \mathsf{T}_3$
\begin{equation}\label{S3DefT}
\exp\left(-\lambda ACT^{1/2} \right)\left(1+\frac{\lambda T}{\delta^2}\right) < \varepsilon \cdot (2k)^{-1}.
\end{equation}
We proceed to prove the lemma with the above choice of $\mathsf{T}_3$ and $M = (5k+2)C$.\\

We define $\vec{y}\,^{\operatorname{new}}=(y_1^{\operatorname{new}},\dots,y_k^{\operatorname{new}})\in\mathfrak{W}_k$ by
\begin{equation}\label{eq:choice of yi pushdown}
y_j^{\operatorname{new}}:=\left\lceil-T\sum_{i =1}^{j-1}\left| \mu_i - \mu_{i+1} \right|-5jC T^{1/2}\right\rceil,\quad j\in\llbracket1,k\rrbracket.\end{equation}
Using the monotone coupling Lemma \ref{lem:monotone coupling}, by pushing $g$ to $-\infty$ and $\vec{y}$ to $\vec{y}\,^{\operatorname{new}}$ (note that $y_1 \geq \cdots \geq y_k = 0$), the left side of \eqref{eq:ververgre} is lower bounded by the probability of the same event under  $\mathbb{P}_{\ice,\operatorname{Geom}}^{T,\vec{y}\,^{\operatorname{new}},\vec{q}, -\infty}$, which together with (\ref{eq:accept}) implies
\begin{equation}\label{eq:ergrg}
\begin{split}
&[\mbox{left side of (\ref{eq:ververgre})}] \geq 
\mathbb{P}_{\operatorname{Geom}}^{T,\vec{y}\,^{\operatorname{new}},\vec{q}}\Big{(}\Omega_{\ice}(T, \vec{y}\,^{\operatorname{new}},-\infty)\\
&\cap\Big{\{} Q_k(t) \geq \mu_k (t-T) -T\sum_{i=1}^{k-1}\left|\mu_i -\mu_{i+1}\right|-(5k+2)C T^{1/2}\Big{\}}\Big{)}.
\end{split}
\end{equation}

We next consider the events
$$E_i:=\left\{\max_{t\in\llbracket0,T\rrbracket}\left|Q_i(t)-y_i^{\operatorname{new}}+\mu_i(T-t)\right|\leq 2C T^{1/2} \right\},\quad i\in\llbracket1,k\rrbracket, $$
and let $E = E_1 \cap \cdots \cap E_k$. Note that on the event $E$ we have for any $t\in\llbracket0,T\rrbracket$
\begin{equation}\label{eq:btetg}
Q_k(t)\geq y_k^{\operatorname{new}}-\mu_k(T-t)-2CT^{1/2}\geq \mu_k(t-T)-T\sum_{i=1}^{k-1}\left|\mu_i -\mu_{i+1}\right|-(5k+2)CT^{1/2},
\end{equation}
where the last inequality uses \eqref{eq:choice of yi pushdown}. Note also that on $E$, for $i\in\llbracket 1,k-1\rrbracket$ and $t\in\llbracket0,T-1\rrbracket$, 
\begin{equation}\label{eq:brtbrtbr}
\begin{split}
Q_i(t)-Q_{i+1}(t+1) \,&\geq y_i^{\operatorname{new}}-y_{i+1}^{\operatorname{new}}-(T-t)(\mu_i-\mu_{i+1})-\mu_{i+1}-4C T^{1/2}\\
&\geq T|\mu_i-\mu_{i+1}|+5C T^{1/2}-1-(T-t)\left(\mu_i - \mu_{i+1}\right)-\mu_{i+1}-4C T^{1/2}\\
&\geq  C T^{1/2}-1-\mu_{i+1}\geq C-1-1/\delta>0,
\end{split}
\end{equation}
where we used (\ref{S3DefC}) and (\ref{eq:choice of yi pushdown}). Combining (\ref{eq:ergrg}), \eqref{eq:btetg} and \eqref{eq:brtbrtbr}, we conclude
\begin{equation}\label{eq:ergrg2}
\begin{split}
&[\mbox{left side of (\ref{eq:ververgre})}] \geq \prod_{i = 1}^k \mathbb{P}_{\operatorname{Geom}}^{T,y_i^{\operatorname{new}},q_i}\left(\max_{t\in\llbracket0,T\rrbracket}\left|Q_i(t)-y_i^{\operatorname{new}}  +\mu_i(T-t)\right|\leq 2C T^{1/2} \right).
\end{split}
\end{equation}

In the remainder of this proof we show that if $T \geq \mathsf{T}_3$, $y \in \mathbb{Z}$, $q \in [\delta, 1-\delta]$, $\mu = \frac{q}{1-q}$ and $\sigma = \sqrt{q}/(1-q)$, then
\begin{equation}\label{eq:ergrg3}
\begin{split}
&\mathbb{P}_{\operatorname{Geom}}^{T,y,q}\left(\max_{t\in\llbracket0,T\rrbracket}\left|Q_1(t) - y+ \mu(T-t)\right|\leq 2C T^{1/2} \right) \geq 1- \varepsilon/k.
\end{split}
\end{equation}
If true, then (\ref{eq:ergrg2}) and (\ref{eq:ergrg3}) would imply the statement of the lemma with $M = (5k+2)C$. \\

Let $G_j$ be i.i.d. geometric random variables with parameter $q$. Setting 
\begin{equation*}
Q_1(j) = y - \sum_{\ell = j+1}^T G_j, \quad j \in \llbracket 0, T\rrbracket,
\end{equation*}
we have that $Q_1$ has law $\mathbb{P}_{\operatorname{Geom}}^{T,y,q}$. By the strong coupling Lemma \ref{prop:ThmA Shao} with $\varepsilon = \delta$, we can couple $Q_1$ with $H_1,\dots,H_T\sim N(0,\sigma^2)$, so that
$$\mathbb{E}\left[\exp\left(\lambda A \Delta_T \right)\right]\leq1+ \frac{\lambda Tq}{(1-q)^2} \leq 1 + \frac{\lambda T}{\delta^2}, \mbox{ where } 
\Delta_T = \max_{i\in\llbracket0,T\rrbracket}\left| \sum_{j=i+1}^T {G}_j - \mu(T-i)-\sum_{j=i+1}^T H_j\right|.$$
In addition, by enlarging the space, we may assume that there is a standard Brownian motion $B$ on $[0,1]$ and $H_{T-i+1} = \sigma T^{1/2}  (B(i/T) - B((i-1)/T))$ for $i \in \llbracket 1, T\rrbracket$. 

From the above displayed equation and Chebyshev's inequality we obtain
\begin{equation}\label{eq:rtbsbrgbsr}
\mathbb{P}\left(\Delta_T \geq C T^{1/2} \right)\leq\exp\left(-\lambda ACT^{1/2} \right)\left(1+\frac{\lambda T}{\delta^2}\right).
\end{equation}
On the other hand, notice that since $\sigma\leq \delta^{-1}$, we have
\begin{equation}\label{eq:vsvtsrtbsrtb}
\mathbb{P}\left(\max_{i\in\llbracket0,T\rrbracket}\left|\sum_{j = i+1}^TH_{j}\right|\geq CT^{1/2}\right)
\leq \mathbb{P}\left(\max_{t\in[0,1]} \left|B(t)\right|\geq \sigma^{-1} C\right) \leq \mathbb{P}\left(\max_{t\in[0,1]} \left|B(t)\right|\geq \delta C\right).
\end{equation}
Combining \eqref{eq:rtbsbrgbsr} and \eqref{eq:vsvtsrtbsrtb}, we conclude
\begin{equation*}
    \begin{split}
        &\mathbb{P}_{\operatorname{Geom}}^{T,y,q}\left(\max_{t\in\llbracket0,T\rrbracket}\left|Q_1(t) - y+ \mu(T-t)\right|\leq 2C T^{1/2} \right)
        \geq\\
        &1-\exp\left(-\lambda ACT^{1/2} \right)\left(1+\frac{\lambda T}{\delta^2}\right) - \mathbb{P}\left(\max_{t\in[0,1]} \left|B(t)\right|\geq \delta C\right).
    \end{split}
\end{equation*}
The last inequality, combined with (\ref{S3DefC}) and (\ref{S3DefT}), implies (\ref{eq:ergrg3}). 
\end{proof}

%
%
\section{General conditions for tightness}\label{Section4} In this section we state the general tightness criterion for geometric line ensembles that satisfy the half-space interlacing Gibbs property -- this is Theorem \ref{thm:main thm tightness} below. We present the proof of this result in Sections \ref{Section4.2} and \ref{Section4.3} after we deduce a few preliminary results from \cite{dimitrov2024tightness} in Section \ref{Section4.1}. We continue with the same notation as in Section \ref{Section2}.

\begin{theorem}\label{thm:main thm tightness}
Let $p\in(0,1)$, $u=p/(1-p)$,  $\sigma=\sqrt{p}/(1-p)$, $K\in\mathbb{N}\cup\{\infty\}$, $\Sigma=\llbracket1,K+1\rrbracket$, $\mu_i \in \mathbb{R}$ for $i \in \llbracket 1, K \rrbracket$, and $\Lambda=[0,\beta)\subset\mathbb{R}$ be any non-empty interval. Let $d_n\in(0,\infty)$, $T_n\in\mathbb{Z}_{\geq 0}$ and $q_i^n\in(0,1)$ for $i\in \llbracket 1, K \rrbracket$ be sequences (in $n$) that satisfy the following conditions:
\begin{equation}\label{conditions}
d_n\rightarrow\infty, \hspace{2mM} T_n/d_n\rightarrow\beta, \hspace{2mm} q_i^n=p-\mu_i\sqrt{p}(1-p)\cdot d_n^{-1/2}+o\left(d_n^{-1/2}\right) \mbox{ for $i \in \llbracket 1, K \rrbracket$ as } n \rightarrow \infty.
\end{equation}
We further suppose that we have a sequence of $\Sigma$-indexed geometric line ensembles $\mathfrak{L}^n = \{L^n_i\}_{i \in \Sigma}$ on $\mathbb{Z}$ that satisfy the following conditions:
\begin{enumerate}
\item [$\bullet$] for each $t\in\Lambda$ and $i\in\llbracket1,K\rrbracket$, the random variables $\sigma^{-1}d_n^{-1/2} (L_i^n(\lfloor td_n\rfloor)-utd_n)$ are tight;
\item [$\bullet$] the restriction $\{L_i^n(s):i\in\Sigma \mbox{ and } s\in\llbracket0,T_n\rrbracket\}$, satisfies the half-space interlacing Gibbs property with parameters $\{q_i^n\}_{i \in \llbracket 1, K \rrbracket}$ from Definition \ref{def:HSIGP}.
\end{enumerate}
Then, the sequence of line ensembles $\mathcal{L}^n=\{\mathcal{L}_i^n\}_{i=1}^K\in C(\llbracket1,K\rrbracket\times\Lambda)$, defined through $\mathcal{L}_i^n(t)=\sigma^{-1}d_n^{-1/2}(L_i^n(td_n)-utd_n)$, is tight. Moreover, any subsequential limit satisfies the half-space Brownian Gibbs property with parameters $\{\mu_i\}_{i \in \llbracket 1, K-1 \rrbracket}$ from Definition \ref{def:BGP}.
\end{theorem}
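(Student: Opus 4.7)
The plan is to split the tightness argument into two regimes: away from the origin, where the results of \cite{dimitrov2024tightness} apply directly, and near the origin, where we must invoke the half-space interlacing Gibbs property together with the Section \ref{Section3} estimates. By a standard projective-limit argument (cf.\ \cite[Lemma 3.1]{DimMat}), it suffices to establish, for every finite $k\leq K$ and every $b\in\Lambda\cap(0,\infty)$, the tightness of $\{\mathcal{L}_i^n|_{[0,b]}\}_{i=1}^k$ in $C(\llbracket 1,k\rrbracket\times[0,b])$, together with the half-space Brownian Gibbs property for every subsequential limit.

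By Lemma \ref{LemmaConsistentGeom}, the discrete half-space interlacing Gibbs property implies the bridge-type interlacing Gibbs property of \cite[Definition 2.6]{dimitrov2024tightness}. Hence, on any compact subinterval $[a,b]\subset(0,\beta)$ with $a>0$, the tightness theorems of \cite{dimitrov2024tightness} apply to $\mathcal{L}^n|_{[a,b]}$ and yield both tightness and the partial Brownian Gibbs property with zero drifts (equivalently, the full-space Brownian Gibbs property on $[a,b]$) for every subsequential limit. This reduces the remaining task to handling the behavior on intervals of the form $[0,b]$.

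For tightness on $[0,b]$ I would proceed as follows. Using the tightness already established on $[b/2,b]$, the scaled top-$k$ endpoints $\vec{Y}^n := (L_1^n(\lfloor bd_n\rfloor),\dots,L_k^n(\lfloor bd_n\rfloor))$ and the scaled $(k+1)$-st curve restricted to $[b/2,b]$ are tight, and combined with the assumed one-point tightness on $[0,b/2]$ plus Lemma \ref{lem:lower bound on curve canal} (to preclude low minima of $\mathcal{L}^n_{k+1}$ near the origin), a Prokhorov argument allows us to assume, after restriction to an event of probability at least $1-\eta$, that all the scaled boundary data lie in a fixed compact set $\mathsf{K}$. On this event, the half-space interlacing Gibbs property from Definition \ref{def:HSIGP} identifies the conditional law of $\{L_i^n\llbracket 0,\lfloor bd_n\rfloor\rrbracket\}_{i=1}^k$ with $\mathbb{P}_{\ice,\operatorname{Geom}}^{\lfloor bd_n\rfloor,\vec{Y}^n,\vec{q}^{\,n}_k,G^n}$, where $G^n := L_{k+1}^n\llbracket 0,\lfloor bd_n\rfloor\rrbracket$. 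I would then use the monotone coupling of Lemma \ref{lem:monotone coupling} to stochastically bound this conditional law by $k$ independent reverse geometric random walks (obtained by pushing $G^n$ to $-\infty$ and suitably separating the endpoints), apply Lemma \ref{lem:modulus of continuity bound} to each independent walk to control the modulus of continuity, and invoke Lemma \ref{lem:staying in corridor} to ensure the acceptance probability in (\ref{eq:accept}) is bounded below uniformly in $n$, so that the free-walk estimates transfer to the conditioned measure. Lemma \ref{lem:lower bound on curve canal} additionally prevents the $k$-th curve from crashing through the lower barrier. Together these estimates yield the Arzel\`a--Ascoli tightness of $\mathcal{L}^n|_{[0,b]}$.

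The hardest part will be verifying the half-space Brownian Gibbs property for a subsequential limit $\mathcal{L}^\infty$. Fixing $k \in \llbracket 1,K-1\rrbracket$, $b \in \Lambda \cap (0,\infty)$, a bounded continuous $F$ on $C(\llbracket 1,k\rrbracket\times[0,b])$, and a bounded continuous functional $H$ of the external data, the pre-limit half-space interlacing Gibbs identity gives
\begin{equation*}
\mathbb{E}\bigl[F(\mathcal{L}^n|_{[0,b]})\,H\bigr] \;=\; \mathbb{E}\Bigl[H\cdot \mathbb{E}_{\ice,\operatorname{Geom}}^{\lfloor bd_n\rfloor,\vec{Y}^n,\vec{q}^{\,n}_k,G^n}\bigl[F(\mathcal{Q}^n)\bigr]\Bigr].
\end{equation*}
Working on a Skorokhod space on which $\mathcal{L}^n \to \mathcal{L}^\infty$ almost surely, Lemma \ref{lem:RW} gives pointwise convergence of the inner expectation to $\eabm^{b,\vec{y},\vec{\mu}_k,g}[F(\mathcal{Q})]$ on the full-probability event where the hypotheses (\ref{EdgeLim})--(\ref{SideLim}) of that lemma hold, and bounded convergence then passes the identity to $\mathcal{L}^\infty$, yielding (\ref{eq:HSBGP}). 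The delicate point is the continuity of the map $(\vec{y},g)\mapsto\eabm^{b,\vec{y},\vec{\mu}_k,g}[F(\mathcal{Q})]$ at degenerate configurations where $y_k = g(b)$; I would handle this by using the pre-limit strict interlacing, together with Lemma \ref{lem:lower bound on curve canal}, to show that such configurations occur with probability zero in the limit.
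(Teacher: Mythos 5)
Your architecture matches the paper's: reduce the interior of $\Lambda$ to the full-space results of \cite{dimitrov2024tightness} via Lemma \ref{LemmaConsistentGeom}, handle $[0,b]$ using the half-space interlacing Gibbs property together with the Section \ref{Section3} estimates, and pass the Gibbs property to the limit via Skorohod's representation, Lemma \ref{lem:RW} and a monotone class argument. Two steps, however, do not work as written.

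First, in the tightness argument on $[0,b]$ you cannot control the modulus of continuity by ``stochastically bounding the conditional law by $k$ independent reverse walks'' via Lemma \ref{lem:monotone coupling}: the modulus of continuity is not a monotone functional of the paths, so pointwise stochastic domination gives nothing here. The correct mechanism---which you also mention---is the acceptance-probability bound: Lemma \ref{lem:staying in corridor} produces disjoint corridors whose simultaneous occurrence (an event of free probability at least $\epsilon_0^k$) forces interlacing, so $\mathbb{P}^{T,\vec{y},\vec{q}}_{\operatorname{Geom}}(\Omega_{\ice}) \geq \epsilon_0^k$, whence $\mathbb{P}_{\ice,\operatorname{Geom}}(w>\epsilon) \leq \epsilon_0^{-k}\,\mathbb{P}_{\operatorname{Geom}}(w>\epsilon)$ and Lemma \ref{lem:modulus of continuity bound} applies under the free measure. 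Relatedly, you should not (and cannot, without circularity) place the whole curve $G^n=L^n_{k+1}\llbracket 0, \lfloor bd_n\rfloor\rrbracket$ in a compact subset of $C([0,b])$---its tightness there is part of what is being proven. All that is needed is an upper bound on $G^n$, which follows from $L^n_{k+1}\leq L^n_1$ and Lemma \ref{lem:no big max}; no lower bound, and hence no appeal to Lemma \ref{lem:lower bound on curve canal}, is required, since the corridors are placed above the upper envelope of $G^n$.

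Second, for the degenerate configurations $y_k=g(b)$ in the limit, neither of your proposed tools suffices: the pre-limit interlacing is only weak ($\geq$), and weak inequalities can collapse to equalities under weak limits, while Lemma \ref{lem:lower bound on curve canal} bounds how low a single curve can fall and says nothing about the gap between consecutive curves. What is needed is a separation estimate uniform in $n$, namely Lemma \ref{S52L} (i.e.\ \cite[Lemma 5.3]{dimitrov2024tightness}), which yields $\mathcal{L}^{\infty}_i(b)>\mathcal{L}^{\infty}_{i+1}(b)$ almost surely and thereby validates the hypotheses of Lemma \ref{lem:RW}; equivalently, one can extract this strict ordering from the non-intersection built into the partial Brownian Gibbs property you already established on interior intervals. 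With these two repairs the argument coincides with the paper's.
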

\begin{remark}\label{rem:Tight1} In plain words Theorem \ref{thm:main thm tightness} says that in the presence of the half-space interlacing Gibbs property, one point tightness for a sequence of line ensembles implies tightness of the sequence as random elements in $C(\llbracket1,K\rrbracket\times\Lambda)$. Moreover, the half-space interlacing Gibbs property in the limit becomes the half-space Brownian Gibbs property.
\end{remark}
\begin{remark}\label{rem:Tight2} From Lemma \ref{LemmaConsistentGeom} we know that the restriction $\{L_i^n(s):i\in\Sigma \mbox{ and } s\in\llbracket0,T_n\rrbracket\}$, satisfies the interlacing Gibbs property from \cite[Definition 2.6]{dimitrov2024tightness}. In particular, we see that $\mathfrak{L}^n$ satisfy the conditions of \cite[Theorem 5.1]{dimitrov2024tightness} with $N = n$, $\alpha = \hat{A}_N = 0$, $\hat{B}_N = T_n$, $p = u$, and $\beta, d_n$ as in the present setup. From \cite[Theorem 5.1]{dimitrov2024tightness} we conclude that the restriction of $\mathcal{L}^n =\{\mathcal{L}_i^n\}_{i=1}^K$ to $(0, \beta)$, i.e. the interior of $\Lambda$, is a tight sequence in $C(\llbracket1,K\rrbracket\times (0, \beta))$, and any subsequential limit satisfies the partial Brownian Gibbs property from \cite[Definition 2.7]{DimMat}, recalled later as Definition \ref{DefPBGP}. The additional information contained in Theorem \ref{thm:main thm tightness} is that the tightness of the ensembles can be extended to the full interval $[0,\beta)$, and also that the asymptotic interaction of the curves at the origin is governed by the half-space Brownian Gibbs property.
\end{remark}

The proof of Theorem \ref{thm:main thm tightness} is the content of the remainder of this section, and in all statements we make the same assumptions as in the statement of the theorem. We mention that the tightness assumption in Theorem \ref{thm:main thm tightness} ensures the existence of functions $\psi_1(\cdot |i, t): (0,\infty) \rightarrow (0, \infty)$ and (\ref{conditions}) ensures the existence of functions $\psi_2(\cdot|i): \mathbb{N} \rightarrow (0,\infty)$, such that for each $i \in \Sigma$, $t \in \Lambda$, $\epsilon \in (0, \infty)$ we have for all $n \in \mathbb{N}$
\begin{equation}\label{S5E1}
\begin{split}
&\mathbb{P}\left( \left| L_i^n(\lfloor t d_n \rfloor) - utd_n \right| > d_n^{1/2} \cdot \psi_1(\epsilon| i, t) \right) \leq \epsilon.\\
& \left|q_i^n - p + \mu_i\sqrt{p}(1-p) \cdot d_n^{-1/2} \right| \leq d_n^{-1/2} \cdot \psi_2(n|i), \mbox{ and } \lim_n\psi_2(n|i) = 0.
\end{split}
\end{equation}
Throughout our proofs in the following sections we will encounter various constants that depend on $p$, the sequences $d_n, T_n$ in the statement of Theorem \ref{thm:main thm tightness} and also the functions $\psi_1(\cdot|i,t), \psi_2(\cdot|i)$. We will not list this dependence explicitly. 

%
%
\subsection{Results from \cite{dimitrov2024tightness}}\label{Section4.1} As explained in Remark \ref{rem:Tight2}, we have that $\mathfrak{L}^n$ satisfies the conditions of \cite[Theorem 5.1]{dimitrov2024tightness}. In particular, the results from \cite[Section 5]{dimitrov2024tightness} all hold in our setup, and in this section we recall those that will be used in the proof of Theorem \ref{thm:main thm tightness} in the next section. 

\begin{lemma}\label{lem:no big max} For any $b\in\Lambda$ with $b>0$ and $\varepsilon>0$, we can find $W_1\in\mathbb{N}$ and $M^{\mathrm{top}}>0$, depending on $b$ and $\varepsilon$, such that for $n\geq W_1$ we have
\begin{equation}
\mathbb{P}\left(\max_{t\in[0,b]}(L_1^n(td_n)-utd_n)\geq d_n^{1/2}\cdot M^{\mathrm{top}}\right)<\varepsilon.
\end{equation}
\end{lemma}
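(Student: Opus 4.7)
The plan is to leverage the full (two-sided) interlacing Gibbs property, which is available in our half-space setup via Lemma \ref{LemmaConsistentGeom}, to resample $L_1^n$ on $\llbracket 0,\lfloor bd_n\rfloor\rrbracket$ as a geometric bridge conditioned to stay above $g:=L_2^n\llbracket 0,\lfloor bd_n\rfloor\rrbracket$, and then reduce to a free-bridge calculation via the acceptance-probability identity of Remark \ref{rem:accept}. The free-bridge piece will be handled by the strong coupling of Lemma \ref{prop:ThmA Shao} together with standard Brownian-bridge maximal estimates. Structurally, Remark \ref{rem:Tight2} shows that $\mathfrak L^n$ satisfies all the hypotheses of \cite[Theorem 5.1]{dimitrov2024tightness}, and the present lemma is (modulo renaming) the top-curve upper-tail estimate that appears along the way in that theorem's proof.

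First I would use one-point tightness at $t=0$ and $t=b$ to build an event $\mathcal G_1$ with $\mathbb P(\mathcal G_1)\ge 1-\varepsilon/4$ on which $\vec x:=(L_1^n(0),L_1^n(\lfloor bd_n\rfloor))$ satisfies $|L_1^n(0)|\le M_1 d_n^{1/2}$ and $|L_1^n(\lfloor bd_n\rfloor)-ubd_n|\le M_1 d_n^{1/2}$; on $\mathcal G_1$ the linear interpolation of $\vec x$ lies within $O(d_n^{1/2})$ of $s\mapsto us$ uniformly in $s\in\llbracket 0,\lfloor bd_n\rfloor\rrbracket$. Conditioning on $\vec x$ and $g$, the Gibbs property identifies $L_1^n$ with a geometric bridge conditioned on interlacing, so writing $E=\{\max_{t\in[0,b]}(L_1^n(td_n)-utd_n)\ge M^{\mathrm{top}}d_n^{1/2}\}$, Remark \ref{rem:accept} gives
\[
\mathbb P(E\mid \vec x,g)\le p_{\mathrm{accept}}(\vec x,g)^{-1}\,\mathbb P_{\mathrm{free}}(E),
\]
where $\mathbb P_{\mathrm{free}}$ is the law of the free geometric bridge from $L_1^n(0)$ to $L_1^n(\lfloor bd_n\rfloor)$. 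For the numerator, Lemma \ref{prop:ThmA Shao} couples the centered partial sums of the bridge with a Brownian bridge on scale $\sigma d_n^{1/2}$; since the Brownian bridge's supremum over $[0,b]$ is of order $\sqrt{b}$, choosing $M^{\mathrm{top}}$ large enough (in terms of $M_1$, $b$, and the constant $c$ produced below) yields $\mathbb P_{\mathrm{free}}(E)\le \varepsilon c/2$.

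The main technical obstacle I expect is the uniform lower bound $p_{\mathrm{accept}}(\vec x,g)\ge c>0$ on a good event $\mathcal G_2$ with $\mathbb P(\mathcal G_2)\ge 1-\varepsilon/4$. My plan here is to use one-point tightness for $L_2^n$ at a finite mesh $0=s_0<s_1<\cdots<s_J=b$, combined with monotonicity of $L_2^n$, to sandwich $g$ between two explicit piecewise-constant envelopes, and then to exhibit (via Lemma \ref{prop:ThmA Shao} lifted from a set of Brownian-bridge paths) a positive-probability subset of free-bridge trajectories that lie above the upper envelope on all of $[0,b]$. The delicate point, and where this differs from the full-space case of \cite{dimitrov2024tightness}, is that in our half-space setting the left endpoint $L_1^n(0)$ is genuinely random rather than pinned near a deterministic value, so the Brownian-bridge comparison must crucially use the one-point tightness at $t=0$ supplied by the hypotheses of Theorem \ref{thm:main thm tightness}. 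Combining the numerator bound, the denominator bound, and $\mathbb P((\mathcal G_1\cap\mathcal G_2)^c)\le\varepsilon/2$, and integrating out $(\vec x,g)$, yields the lemma.
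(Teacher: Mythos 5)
Your overall framing is right---this is the top-curve upper-tail estimate from the machinery of \cite{dimitrov2024tightness}, and the paper's proof is literally ``repeat the proof of \cite[Lemma 5.2]{dimitrov2024tightness} with $a=0$''---but the argument you propose does not close. The fatal step is the uniform lower bound $p_{\mathrm{accept}}(\vec{x},g)\ge c$. You plan to control $g=L_2^n$ via one-point tightness at a \emph{finite} mesh $s_0<\cdots<s_J$ together with monotonicity of $L_2^n$, and then exhibit free-bridge paths above the resulting upper envelope. But between consecutive mesh points monotonicity only gives $L_2^n(sd_n)\le L_2^n(s_{j+1}d_n)\le us_{j+1}d_n+M_1d_n^{1/2}=usd_n+u(s_{j+1}-s)d_n+M_1d_n^{1/2}$, so the envelope overshoots the line $s\mapsto usd_n$ by $\Theta(\delta d_n)$ with $\delta$ the mesh width---an order of magnitude above the diffusive window. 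A free bridge whose endpoints are within $O(d_n^{1/2})$ of the line stays within $O(d_n^{1/2})$ of it, hence lies \emph{below} such an envelope with probability tending to one; there is no positive-probability set of trajectories above it, and refining the mesh to width $O(d_n^{-1/2})$ destroys the union bound over one-point tightness. More structurally, any honest lower bound on this acceptance probability requires a uniform $O(d_n^{1/2})$ upper bound on $\max_s(L_2^n(s)-us)$, and since $L_2^n\le L_1^n$ the only source of that is the very statement being proven. In the paper, the acceptance-probability estimates (the events $E^{\operatorname{top}},E^{\operatorname{sep}}$ and Lemma \ref{lem:staying in corridor} in Section \ref{Section4.2}) sit strictly \emph{downstream} of Lemma \ref{lem:no big max}; your argument inverts this order.

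The correct route avoids acceptance probabilities altogether and uses the monotone coupling in the favorable direction. On the event that the recentered maximum exceeds $Md_n^{1/2}$ at some (near-)maximizer $s^*$, one of the two subintervals $[0,s^*]$, $[s^*,bd_n]$ has macroscopic length and contains one of finitely many \emph{fixed} interior points; conditioning on the relevant boundary data and pushing the floor $g$ down to $-\infty$ via Lemma \ref{lem:monotone coupling}, the top curve on that subinterval stochastically \emph{dominates} a free bridge joining a point $Md_n^{1/2}$ above the line to a point within $M_1d_n^{1/2}$ of it (one-point tightness at $0$ and $b$), and such a bridge exceeds its linear interpolation---hence sits roughly $cMd_n^{1/2}$ above the line---at the fixed point with probability at least $1/2$. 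For large $M$ this contradicts one-point tightness at that fixed point. Your numerator bound and the observation that $t=0$ is admissible (the only content of ``$a=0$'') are fine, but they are not where the difficulty lies.
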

\begin{proof}
One can repeat verbatim the proof of \cite[Lemma 5.2]{dimitrov2024tightness} with $a= 0$.
\end{proof}

\begin{lemma}\label{S52L} For any $k \in \llbracket 1, K \rrbracket$, $a, b \in \Lambda$ with $0 < a < b$, and $\varepsilon \in (0,1)$ we can find $W_2 \in \mathbb{N}$, $\delta^{\mathsf{sep}}, \Delta^{\mathsf{sep}} > 0$, depending on $a,b, k$ and $\varepsilon$, such that for $n \geq W_2$, and $ s_0 \in \mathbb{Z} \cap [a \cdot d_n,b \cdot d_n]$ we have 
\begin{equation}\label{S52E1}
\begin{split}
&\mathbb{P}\left( \cap_{m = 1}^k E^{\mathsf{sep}}_m  \right) > 1- \varepsilon, \mbox{ where } E^{\mathsf{sep}}_m = \Big\{ L_m^n(s_0) - ps_0 \geq L_{m+1}^n(s) - ps + \delta^{\mathsf{sep}} \cdot d_n^{1/2} \\
& \mbox{ for all } m \in \llbracket 1, k \rrbracket \mbox{, } s \in \mathbb{Z} \cap [ s_0 - \Delta^{\mathsf{sep}} \cdot d_n,s_0 + \Delta^{\mathsf{sep}} \cdot d_n] \Big\}.
\end{split}
\end{equation}
\end{lemma}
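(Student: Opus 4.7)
The plan is to reduce the statement to the analogous separation bound from \cite[Section 5]{dimitrov2024tightness} (which was already used to deliver Lemma \ref{lem:no big max}). The key is that $a>0$, so once we fix $\Delta^{\mathsf{sep}}<a/2$, every window $\mathbb{Z}\cap[s_0-\Delta^{\mathsf{sep}}d_n,\,s_0+\Delta^{\mathsf{sep}}d_n]$ with $s_0\in\mathbb{Z}\cap[ad_n,bd_n]$ is contained in $\llbracket\lceil(a/2)d_n\rceil,\lfloor(b+\Delta^{\mathsf{sep}})d_n\rfloor\rrbracket\subset\llbracket 0,T_n\rrbracket$ for all large $n$ (using $T_n/d_n\to\beta$ together with $b<\beta$). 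Thus the resampling used in the proof never feels the half-space boundary at $0$, and the ensembles $\mathfrak{L}^n$ are locally indistinguishable from the bulk full-space ensembles treated in \cite{dimitrov2024tightness}.

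With this reduction in hand, I would invoke Lemma \ref{LemmaConsistentGeom} together with Remark \ref{rem:Tight2} to conclude that the restriction of $\mathfrak{L}^n$ to $\llbracket 0,T_n\rrbracket$ satisfies the full-space interlacing Gibbs property of \cite[Definition 2.6]{dimitrov2024tightness}, while (\ref{S5E1}) supplies the one-point tightness hypothesis required by \cite[Theorem 5.1]{dimitrov2024tightness}. The separation statement (\ref{S52E1}) is then an instance of a standard no-touch estimate proved in \cite[Section 5]{dimitrov2024tightness}, whose proof uses only the interlacing Gibbs property plus one-point tightness inside a compact subinterval of $(0,\beta)$; one can essentially repeat that proof verbatim, as was done for Lemma \ref{lem:no big max}.

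If one prefers to reprove the estimate from scratch, the route would be as follows. First, apply the interlacing Gibbs property of Definition \ref{def:HSIGP} on the rectangle $\llbracket 1,k+1\rrbracket\times \llbracket s_0-\Delta^{\mathsf{sep}}d_n-1,\,s_0+\Delta^{\mathsf{sep}}d_n+1\rrbracket$ to rewrite the conditional law of the top $k+1$ curves as $k+1$ independent reverse geometric walks conditioned to interlace with each other and with $L_{k+2}^n$ (or with $-\infty$ if $k=K$). Next, use one-point tightness at the endpoints together with the modulus of continuity estimate Lemma \ref{lem:modulus of continuity bound} to pin the endpoints at distinct, well-separated locations on scale $d_n^{1/2}$. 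Finally, use the KMT coupling Lemma \ref{prop:ThmA Shao} together with the monotone coupling Lemma \ref{lem:monotone coupling} to compare with $k+1$ independent Brownian motions conditioned to avoid each other; for such Brownian data the probability of a $\delta^{\mathsf{sep}}d_n^{1/2}$-near-collision between consecutive indices on a window of size $\Delta^{\mathsf{sep}}d_n$ can be made arbitrarily small by taking $\delta^{\mathsf{sep}}$ small.

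The main obstacle, and the reason I would prefer direct citation, is uniformity in $s_0\in\mathbb{Z}\cap[ad_n,bd_n]$: the constants $\delta^{\mathsf{sep}},\Delta^{\mathsf{sep}},W_2$ must be independent of the position of the window. This is handled by a standard compactness argument using that the functions $\psi_1(\cdot\,|i,t)$ in (\ref{S5E1}) are bounded as $t$ ranges over the compact set $[a-\Delta^{\mathsf{sep}},b+\Delta^{\mathsf{sep}}]$, so the one-point tail estimates at the window endpoints, and the subsequent modulus-of-continuity and Gibbs-resampling steps, all go through with $s_0$-independent constants. All this bookkeeping is already carried out cleanly in \cite[Section 5]{dimitrov2024tightness}, which is why citing that reference is the quickest route.
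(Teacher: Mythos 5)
Your proposal matches the paper's proof, which simply observes (via Lemma \ref{LemmaConsistentGeom} and Remark \ref{rem:Tight2}) that $\mathfrak{L}^n$ satisfies the hypotheses of \cite[Theorem 5.1]{dimitrov2024tightness} and then cites the separation estimate \cite[Lemma 5.3]{dimitrov2024tightness} directly. Your additional remarks on why the window stays away from the boundary and the sketch of a from-scratch argument are fine but not needed.
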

\begin{remark} Equation (\ref{S52E1}) implies that $L_m^n(s_0) - ps_0$ is likely well-separated (on scale $d_n^{1/2}$) from $L_{m+1}^n(s_0) - ps_0$ for all $m \in \llbracket 1, k \rrbracket$, but in fact says something stronger. Namely, it shows that $L_m^n(s_0) - ps_0$ is likely well-separated from the whole curve $L_{m+1}^n(s) - ps$ on a small interval (on scale $d_n$) around $s_0$. 
\end{remark}
\begin{proof}
The statement follows directly from \cite[Lemma 5.3]{dimitrov2024tightness}.
\end{proof}

%
%
\subsection{Tightness of ensembles}\label{Section4.2} The goal of this section is to prove that $\mathcal{L}^n$ in Theorem \ref{thm:main thm tightness} is a tight sequence in $C(\llbracket 1,K \rrbracket \times \Lambda)$. By \cite[Lemma 2.4]{DEA21} it suffices to show that for each $d \in \Lambda \cap (0,\infty)$ and $k\in\llbracket1, K\rrbracket$, we have
\begin{equation}\label{eq:only need to show tightness}
\begin{split}
&\lim_{a\rightarrow\infty}\limsup_{n\rightarrow\infty}\mathbb{P}\left(|\mathcal{L}_k^n(d_n^{-1}D_n)|\geq a\right)=0,\mbox{ and for each } \eta>0,\\
&\lim_{\delta\rightarrow0}\limsup_{n\rightarrow\infty}\mathbb{P}\left(\sup_{x,y\in[0, d_n^{-1}D_n], |x-y|\leq\delta}\left|\mathcal{L}^n_k(x)-\mathcal{L}^n_k(y) \right|\geq \eta\right)=0,
\end{split}
\end{equation}
where we have set $D_n = \lfloor d \cdot d_n \rfloor$. The first line in \eqref{eq:only need to show tightness} follows from (\ref{S5E1}). To show the second line in \eqref{eq:only need to show tightness} it suffices to show that for any $\eta,\varepsilon>0$, there exist $W_3\in\mathbb{N}$ and $\delta>0$, such that for $n\geq W_3$, we have
\begin{equation}\label{eq:only need to show tightness 2}
\begin{split}
&\mathbb{P}\left(w_n(L^n_k, \delta)> \eta\right)<\varepsilon, \mbox{ where } \\
&w_n(L^n_k,\delta)=\sup_{x,y\in[0, D_n], |x-y|\leq\delta D_n}\left|\sigma^{-1} D_n^{-1/2}(L^n_k(x)-ux)-\sigma^{-1}D_n^{-1/2}(L^n_k(y)-uy)\right|. 
\end{split}
\end{equation} 
For clarity, we split the remainder of the proof into three steps.\\

{\bf \raggedleft Step 1.} We first utilize (\ref{S5E1}) and the results of Section \ref{Section4.1}, which all hold for $L^n$ as in the statement of the theorem. From Lemma \ref{lem:no big max} we can find $W_{3,1} \in \mathbb{N}$, $M^{\operatorname{top}} > 0$, such that for $n \geq W_{3,1}$ we have
\begin{equation*} 
\begin{split}
&\mathbb{P}\left(\max_{s\in[0, D_n] }\left(L_1^n(s)-us\right)\leq M^{\operatorname{top}}\cdot D_n^{1/2}\right)>1-\varepsilon/4.
\end{split}
\end{equation*}
In view of $L^n_i(s)\geq L^{n}_{i+1}(s)$ for all $s\in [0, D_n]$ and $i\in\llbracket1, k\rrbracket$, we have for $n \geq W_{3,1}$
\begin{equation}\label{eq:top in proof}
\mathbb{P}\left( E^{\operatorname{top}}\right)>1- \varepsilon/4 \mbox{, where }
E^{\operatorname{top}}=\left\{ 
\max_{s\in[0, D_n] }\left(L_{k+1}^n(s)-us\right)\leq M^{\operatorname{top}}\cdot D_n^{1/2} 
\right\}.
\end{equation} 
From (\ref{S5E1}) we can find $W_{3,2} \in \mathbb{N}$, $M^{\operatorname{side}} >0$, depending on $k$, such that for $n\geq W_{3,2}$
\begin{equation}\label{eq:side in proof}
\begin{split}
&\mathbb{P}\left(E^{\operatorname{side}}\right)\geq1-\varepsilon/4, \mbox{ where } E^{\operatorname{side}}=\left\{\left|L_i^{n}(D_n)- uD_n \right|\leq M^{\operatorname{side}}\cdot D_n^{1/2} \mbox{ for } i\in\llbracket1, k\rrbracket\right\}.  
\end{split}
\end{equation}
From Lemma \ref{S52L} we can find $W_{3,3} \in \mathbb{N}$, $\delta^{\operatorname{sep}}>0$, $\Delta^{\operatorname{sep}} \in(0,1/2)$, such that for $n\geq W_{3,3}$
\begin{equation}\label{eq:separation in proof}
\begin{split}
&\mathbb{P}\left(E^{\operatorname{sep}}\right)>1-\varepsilon/4, \mbox{ where } E^{\operatorname{sep}} =\Big\{L_m^n(D_n)-uD_n\geq L_{m+1}^n(s)-us+\delta^{\operatorname{sep}}\cdot D_n^{1/2}  \\
& \mbox{ for } m\in\llbracket1, k\rrbracket \mbox{, } s\in\mathbb{Z}\cap[(1-\Delta^{\operatorname{sep}})D_n, D_n]\Big\}. 
\end{split}
\end{equation}

We claim that there exist $W_4 \in \mathbb{N}$, $\delta>0$, such that for $n\geq W_4$
\begin{equation}\label{S4Claim1}
{\bf 1}\{E^{\operatorname{top}}\cap E^{\operatorname{side}}\cap E^{\operatorname{sep}} \} \cdot \mathbb{P}_{\ice,\operatorname{Geom}}^{D_n,\vec{y},\vec{q}^{\,n}_k, g}(w_n(Q_k^n,\delta)> \eta)<{\bf 1}\{E^{\operatorname{top}}\cap E^{\operatorname{side}}\cap E^{\operatorname{sep}} \} \cdot \varepsilon/4,
\end{equation}
where $\vec{q}^{\,n}_k = (q_1^n, \dots, q_k^n)$, $\vec{y}=\left(L^n_1(D_n), \dots, L^n_k(D_n)\right)$ and $g(s)= L^{n}_{k+1}(s)$ for $s\in\llbracket0,D_n\rrbracket$. 
We prove (\ref{S4Claim1}) in the next step. Here, we assume its validity and show (\ref{eq:only need to show tightness 2}).\\

We choose $W_3=\max(W_{3,1},W_{3,2}, W_{3,3},W_{4})$. By the half-space interlacing Gibbs property
\begin{equation*}\label{eq:revfagvear}
\mathbb{P}\left(E^{\operatorname{top}}\cap E^{\operatorname{side}}\cap E^{\operatorname{sep}}\cap\{w_n(L_k^n,\delta)> \eta \}\right)=
\mathbb{E}\left[\mathbf{1}_{E^{\operatorname{top}}}\cdot\mathbf{1}_{E^{\operatorname{side}}}\cdot\mathbf{1}_{E^{\operatorname{sep}}}\cdot\mathbb{P}_{\ice,\operatorname{Geom}}^{D_n,\vec{y},\vec{q}^{\,n}_k, g}(w_n(Q_k^n,\delta)> \eta)\right],
\end{equation*}
where $\vec{q}^{\,n}_k,\vec{y}, g$ are as above. From (\ref{S4Claim1}) we conclude
\[
\mathbb{P}\left(E^{\operatorname{top}}\cap E^{\operatorname{side}}\cap E^{\operatorname{sep}}\cap\{w_n(L_k^n,\delta)> \eta\}\right)
\leq\mathbb{P}\left(E^{\operatorname{top}}\cap E^{\operatorname{side}}\cap E^{\operatorname{sep}} \right)\cdot\varepsilon/4.
\]
Using the last equation, \eqref{eq:top in proof}, \eqref{eq:side in proof} and \eqref{eq:separation in proof}, we get for $n \geq W_3$
\[
\mathbb{P}\left(w_n(L_k^n,\delta)> \eta\right)< \mathbb{P}\left(E^{\operatorname{top}}\cap E^{\operatorname{side}}\cap E^{\operatorname{sep}}\cap\{w_n(L_k^n,\delta)>\eta\}\right)+3\varepsilon/4\leq\varepsilon,
\]
which proves \eqref{eq:only need to show tightness 2} and hence the tightness part in Theorem \ref{thm:main thm tightness}.\\

{\bf \raggedleft Step 2.} In this step we prove (\ref{S4Claim1}). Note that on the event $E^{\operatorname{top}}\cap E^{\operatorname{side}}\cap E^{\operatorname{sep}}$ we have:
\begin{enumerate}
    \item [$\bullet$] $\vec{y}\in\mathfrak{W}_k$, and $\left|y_i- uD_n \right|\leq M^{\operatorname{side}}\cdot D_n^{1/2}$ for $i\in\llbracket1,k\rrbracket$;
    \item [$\bullet$] $y_i-y_{i+1}\geq\delta^{\operatorname{sep}}\cdot D_n^{1/2}$ for $i\in\llbracket1,k-1\rrbracket$;
    \item [$\bullet$] $g:\llbracket0,D_n\rrbracket\rightarrow[-\infty,\infty)$ satisfies $g(s)-us\leq M^{\operatorname{top}}\cdot D_n^{1/2}$ for $s\in\llbracket0,D_n\rrbracket$;
    \item [$\bullet$] $y_k-uD_n\geq g(s)-us+\delta^{\operatorname{sep}}\cdot D_n^{1/2}$ for all $s\in\llbracket0,D_n\rrbracket$ with $s\geq D_n-\Delta^{\operatorname{sep}}\cdot D_n$;
\end{enumerate}

We set $\tilde{A}=\delta^{\operatorname{sep}}/4$ and for $i\in\llbracket1,k\rrbracket$ define the functions
\begin{align*}
& f^{t}_i(s)=f(s|\sigma^{-1}\tilde{A},\sigma^{-1}(M^{\operatorname{top}}+M^{\operatorname{side}}+4(k-i+1)\tilde{A}),\sigma^{-1}D_n^{-1/2}(y_i- uD_n), \Delta^{\operatorname{sep}}/2),\\
& f^{b}_i(s)=f(s|-\sigma^{-1}\tilde{A},\sigma^{-1}(M^{\operatorname{top}}+M^{\operatorname{side}}+4(k-i+1)\tilde{A}),\sigma^{-1}D_n^{-1/2}(y_i- uD_n), \Delta^{\operatorname{sep}}/2),
\end{align*} 
where we recall $f(s|A,B,y,\delta)$ is defined by \eqref{eq:defining the corridor}.
Using Lemma \ref{lem:staying in corridor} for $q_0=p$, $\delta_1=\Delta^{\operatorname{sep}}/2$, $A=\sigma^{-1}\tilde{A}$, $M_1=M^{\operatorname{side}}$, $M_2=\sigma^{-1}(M^{\operatorname{top}}+M^{\mathsf{side}}+4k\tilde{A})$ and
$M=1 + \sqrt{dp}(1-p) \cdot \max_{i \in \llbracket 1, k \rrbracket}|\mu_i| $,
we conclude there exists $W_{4,1}\in\mathbb{N}$ and $\epsilon_0>0$ such that for $n\geq W_{4,1}$ and $i\in\llbracket1, k\rrbracket$
\begin{equation} \label{eq:eargarerw}
\mathbb{P}_{\operatorname{Geom}}^{D_n,y_i,q_i^n}\left(f^{t}_i(s)\geq\mathcal{Q}_1(s) \geq f^{b}_i(s) \mbox{ for all } s\in [0,1] \right)>\epsilon_0.
\end{equation} 
We let $W_{4,2}\in\mathbb{N}$ be sufficiently large so that for $n\geq W_{4,2}$, we have $n\geq W_{4,1}$, $\tilde{A}D_n^{1/2}\geq u$, $\Delta^{\operatorname{sep}}/2\geq1/D_n$, and 
\begin{equation} \label{eq:begvgver}
f_i^{b}\left(\frac{s}{D_n}\right)\geq f_{i+1}^{t}\left(\frac{s+1}{D_n}\right)+\frac{u}{\sigma D_n^{1/2}}
\quad\mbox{ for }s\in\llbracket0,D_n-1\rrbracket\mbox{, }i\in\llbracket1,k-1\rrbracket.
\end{equation}
We mention that our choice of $W_{4,2}$ is possible since the functions $f_{i+1}^{t}$ and $f_i^{b}$ are Lipschitz continuous with constant $(\Delta^{\operatorname{sep}}/2)^{-1}\cdot\sigma^{-1}\left(M^{\operatorname{side}} + M^{\operatorname{top}} + 4k \tilde{A} \right)$, while by construction
$$\inf_{s \in [0,1]} \left(f_i^{b}(s) - f^{t}_{i+1}(s) \right) = 2\sigma^{-1} \tilde{A}.$$

We claim that  
\begin{equation}\label{S4Claim2}
F \subseteq \Omega_{\ice}(D_n, \vec{y}, g), \mbox{ where }F = \left\{  \{Q_i\}_{i = 1}^k: Q_i \in \Omega(D_n, y_i) \mbox{ and }   f^{t}_i \geq\mathcal{Q}_i \geq f^{b}_i \mbox{ on }[0,1]\right\},
\end{equation}
where we recall that $\Omega_{\ice}(D_n, \vec{y}, g)$ was defined below (\ref{EventInter}) and we set $\mathcal{Q}_i(t)=\sigma^{-1}D_n^{-1/2}(Q_k^n(tD_n)-utD_n)$ for $t\in[0,1]$. We prove (\ref{S4Claim2}) in the next step. Here, we assume its validity and show (\ref{S4Claim1}).\\

From (\ref{eq:eargarerw}) and \eqref{S4Claim2} we have
$$ 
\mathbb{P}_{\operatorname{Geom}}^{D_n,\vec{y},\vec{q}^{\,n}_k}\left(\Omega_{\ice}(D_n, \vec{y}, g)\right)
 \geq \mathbb{P}_{\operatorname{Geom}}^{D_n,\vec{y},\vec{q}^{\,n}_k}(F) =\prod_{i=1}^k\mathbb{P}_{ \operatorname{Geom}}^{D_n,y_i,q_i^n}\left(f^{t}_i\geq\mathcal{Q}_1\geq f^{b}_i \mbox{ on }[0,1] \right)
 >\epsilon_0^k.$$
Using Lemma \ref{lem:modulus of continuity bound} (with $\varepsilon = \epsilon_0^k \cdot \varepsilon/4$), there exist $W_{4,3}\in\mathbb{N}$ and $\delta>0$ such that for $n\geq W_{4,3}$
$$\mathbb{P}_{\operatorname{Geom}}^{D_n,\vec{y},\vec{q}_k^{\,n}}\left(w(\mathcal{Q}_k,\delta)> \eta\right) =\mathbb{P}_{\operatorname{Geom}}^{D_n,y_k,q_k^n}\left(w(\mathcal{Q}_1,\delta)> \eta\right) <\epsilon_0^k\cdot \varepsilon/4,$$
where $w(f,\delta)$ is the modulus of continuity on $[0,1]$ from \eqref{eq:def of modulus of continuity}. 

By combining the last two displayed equations and (\ref{eq:accept}), we conclude for $n \geq \max(W_{4,2},W_{4,3})$
$$\mathbb{P}_{\ice,\operatorname{Geom}}^{D_n,\vec{y},\vec{q}_k^{\,n},g}(w(\mathcal{Q}_k,\delta)> \eta)
=\frac{ \mathbb{P}_{\operatorname{Geom}}^{D_n,\vec{y},\vec{q}_k^{\,n}}\left(\{w(\mathcal{Q}_k,\delta)> \eta\}\cap\Omega_{\ice}(D_n, \vec{y}, g)\right)}{\mathbb{P}_{\operatorname{Geom}}^{D_n,\vec{y},\vec{q}^{\,n}_k}\left(\Omega_{\ice}(D_n, \vec{y}, g) \right)}\leq\epsilon_0^k\cdot \varepsilon/4\cdot\epsilon_0^{-k}=\varepsilon/4.
$$
The last equation and the identity $w_n(Q_k^n,\delta)=w(\mathcal{Q}_k,\delta)$ give (\ref{S4Claim1}) with $W_4 = \max(W_{4,2},W_{4,3})$.\\

{\bf \raggedleft Step 3.} In this final step we verify (\ref{S4Claim2}). We only need to check that for $\{Q_i\}_{i=1}^k \in F$
\begin{equation} \label{eq:verify interlace}
\begin{split}
&Q_i(s)\geq Q_{i+1}(s+1)\mbox{ for } i \in \llbracket 1, k-1 \rrbracket \mbox{ and } s \in \llbracket 0, D_n -1\rrbracket\mbox{, and } \\
& Q_k(s) \geq g(s+1) \mbox{ for } s \in \llbracket 0, D_n -1\rrbracket.
\end{split}
\end{equation}
The first line in \eqref{eq:verify interlace} is equivalent to 
$$\mathcal{Q}_i\left( \frac{s}{D_n} \right) \geq \mathcal{Q}_{i+1}\left( \frac{s+1}{D_n} \right) + \frac{u}{\sigma D_n^{1/2}},$$
which follows from \eqref{eq:begvgver} and the definition of $F$. We next verify the second line of \eqref{eq:verify interlace}.

Suppose that $s \in [(1-\Delta^{\operatorname{sep}}/2) \cdot D_n, D_n-1]$. We have 
\begin{equation*}
\begin{split}
&Q_k(s) = \sigma D_n^{1/2} \cdot \mathcal{Q}_k(s/D_n)+us \geq \sigma D_n^{1/2} \cdot f_k^{b}(s/D_n)+us\geq \sigma D_n^{1/2} \cdot f_k^{b}(1)+us \\
&=y_k-uD_n-\tilde{A}\cdot D_n^{1/2}+us\geq[g(s+1)-u(s+1)+\delta^{\operatorname{sep}}\cdot D_n^{1/2}]-\tilde{A}\cdot D_n^{1/2}+us \geq g(s + 1),
\end{split}
\end{equation*}
where in the first inequality we used $\mathcal{Q}_k\geq f^{b}_k$, in the second one and in going from the first to the second line we used the definition of $f_k^{b}$, in the first inequality on the second line we used the lower bound of $y_k$, and in the last inequality we used that $\delta^{\operatorname{sep}} = 4 \tilde{A}$.

Suppose that $s \in [0, (1-\Delta^{\operatorname{sep}}/2)\cdot D_n]$. We have
\begin{equation*}
\begin{split}
&Q_k(s)=\sigma D_n^{1/2}\cdot\mathcal{Q}_k(s/D_n)+us\geq\sigma D_n^{1/2}\cdot f_k^{b}(s/D_n)+us=D_n^{1/2}\cdot (M^{\operatorname{top}} + M^{\operatorname{side}} +3\tilde{A})+us \\
& \geq [g(s+1)-u(s+1)] +D_n^{1/2} \cdot(M^{\operatorname{side}}+3\tilde{A})+us\geq g(s + 1),
\end{split}
\end{equation*}
where in the first inequality we used $\mathcal{Q}_k\geq f^{b}_k$, in the following equality we used the definition of $f_k^{b}$, and in going from the first to the second line we used the upper bound for $g$. 
The last two displayed equations imply the second line of \eqref{eq:verify interlace}, which completes the proof of (\ref{S4Claim2}).

%
%
\subsection{Brownian Gibbs property in the limit}\label{Section4.3} In this section we complete the proof of Theorem \ref{thm:main thm tightness} by showing that any subsequential limit of $\mathcal{L}^n$ satisfies the half-space Brownian Gibbs property with parameters $\{\mu_i\}_{i \in \llbracket 1, K-1 \rrbracket}$ from Definition \ref{def:BGP}. Our arguments are quite similar to those in \cite[Section 5.4]{dimitrov2024tightness}, and so we will be brief.

We assume that $\mathcal{L}^{\infty}$ is any subsequential limit and by possibly passing to a subsequence, which we continue to call $\mathcal{L}^n$, we assume that $\mathcal{L}^n \Rightarrow \mathcal{L}^{\infty}$. By Skorohod's Representation Theorem, see \cite[Theorem 6.7]{Billing}, we may assume that the sequence $\{\mathcal{L}^n\}_{n \geq 1}$ and $\mathcal{L}^{\infty}$ are all defined on the same probability space $(\Omega, \mathcal{F}, \mathbb{P})$ and $\lim_n \mathcal{L}^n(\omega) = \mathcal{L}^{\infty}(\omega)$ for each $\omega \in \Omega$. We mention that in applying \cite[Theorem 6.7]{Billing} we implicitly used that $C(\llbracket 1, K \rrbracket \times \Lambda)$ is a Polish space, cf. Remark \ref{RemPolish}. Note that by Lemma \ref{S52L} we have for each fixed $t \in (0, \beta)$ that $\mathbb{P}$-a.s.
\begin{equation}\label{S54E1}
\mathcal{L}^{\infty}_i(t) > \mathcal{L}^{\infty}_{i+1}(t) \mbox{ for all } i \in \llbracket 1, K -1 \rrbracket.
\end{equation}

When $K = 1$ there is nothing to check in Definition \ref{def:BGP}, and so we assume that $K \geq 2$. Fix $k \in \llbracket 1, K - 1 \rrbracket$, $m \in \mathbb{N}$, $n_1, \dots, n_m \in \llbracket 1 , K \rrbracket$, $b, t_1, \dots, t_m \in \Lambda$ and bounded continuous $h_1, \dots, h_m : \mathbb{R} \rightarrow \mathbb{R}$. Define $R = \{i \in \llbracket 1, m \rrbracket: n_i \in \llbracket 1 , k \rrbracket, t_i \in [0,b]\}$. We claim that 
\begin{equation}\label{S54E2}
\mathbb{E}\left[ \prod_{i = 1}^m h_i(\mathcal{L}^{\infty}_{n_i}(t_i)) \right] = \mathbb{E}\left[ \prod_{i \not \in R} h_i(\mathcal{L}^{\infty}_{n_i}(t_i))  \cdot \mathbb{E}_{\operatorname{avoid}}^{b,\vec{y}, \vec{\mu}_{k},g} \left[ \prod_{i  \in R} h_i(\mathcal{Q}_{n_i}(t_i))   \right] \right], 
\end{equation}
where $\vec{y} = (\mathcal{L}^{\infty}_{1} (b), \dots, \mathcal{L}^{\infty}_{k} (b))$, $g = \mathcal{L}^{\infty}_{k+1}[0,b]$. Assuming (\ref{S54E2}) for the moment, we have by a standard monotone class argument, see Lemma \ref{MCA}, that for any bounded Borel-measurable $F: C(\llbracket 1, k \rrbracket \times [0,b]) \rightarrow \mathbb{R}$ and any bounded $\mathcal{F}_{\operatorname{ext}} (\llbracket 1,k\rrbracket \times [0,b))$-measurable $Y$ (recall that $\mathcal{F}_{\operatorname{ext}} (\llbracket 1,k\rrbracket \times [0,b))$ was given in Definition \ref{def:BGP}), we have that
\begin{equation}\label{S54E3}
\mathbb{E}\left[ F\left( \mathcal{L}^{\infty} \vert_{\llbracket 1, k \rrbracket \times [0,b]} \right) \cdot Y \right] = \mathbb{E}\left[ \mathbb{E}_{\operatorname{avoid}}^{b,\vec{y}, \vec{\mu}_{k}, g} \left[ F(\mathcal{Q}) \right]\cdot Y \right],
\end{equation}
and for any bounded Borel-measurable $G: C(\llbracket 1, k +1 \rrbracket \times [0,b]) \rightarrow \mathbb{R}$
\begin{equation}\label{S54E4}
\mathbb{E}\left[ G\left( \mathcal{L}^{\infty}_1[0,b], \dots, \mathcal{L}_{k+1}^{\infty}[0,b] \right)\right] = \mathbb{E}\left[ \mathbb{E}_{\operatorname{avoid}}^{b,\vec{y}, \vec{\mu}_{k}, g} \left[ G(\mathcal{Q}_1, \dots, \mathcal{Q}_k, g) \right] \right],
\end{equation}
where $\mathcal{L}^{\infty} \vert_{\llbracket 1, k \rrbracket \times [0,b]}$ is the restriction of $\mathcal{L}^{\infty}$ to $\llbracket 1, k \rrbracket \times [0,b]$ and $\vec{y}, g$ are as above.

If we set in (\ref{S54E4})
$$G(f_1, f_2, \dots,f_{k+1}) = {\bf 1}\{f_1(s) > f_2(s) > \cdots > f_{k+1}(s) \mbox{ for all } s \in [0,b]\},$$
and use that by Definition \ref{def: avoidBLE} we have $\mathbb{E}_{\operatorname{avoid}}^{b,\vec{y}, \vec{\mu}_{k}, g} \left[G(\mathcal{Q}_1, \dots, \mathcal{Q}_k, g) \right] = 1$, we conclude 
\begin{equation}\label{S54E5}
\mathbb{P}\left(\mathcal{L}^{\infty}_i(t) >\mathcal{L}^{\infty}_{i+1}(t) \mbox{ for all }  t\in [0,b] , i \in \llbracket 1, k \rrbracket \right) = 1.
\end{equation}
Taking a countable sequence of intervals $[0,b]$ that exhausts $\Lambda$ and an increasing sequence of $k$'s converging to $K-1$, and taking intersections of the events in (\ref{S54E5}) shows that $\mathcal{L}^{\infty}$ is non-intersecting. In addition,  using the defining properties of conditional expectation, we see that (\ref{S54E3}) implies (\ref{eq:HSBGP}) and so we have reduced our proof to establishing (\ref{S54E2}).\\

From the convergence of $\mathcal{L}^n$ to $\mathcal{L}^{\infty}$ we get
\begin{equation}\label{S54E8}
\lim_{n \rightarrow \infty} h_i(\mathcal{L}^{n}_{n_i}(t_i)) = h_i(\mathcal{L}^{\infty}_{n_i}(t_i))
\end{equation}
for each $i \in \llbracket 1, m \rrbracket$, and so by the bounded convergence theorem
\begin{equation}\label{S54E9}
\lim_{n \rightarrow \infty} \mathbb{E}\left[ \prod_{i = 1}^m h_i(\mathcal{L}^{n}_{n_i}(t_i)) \right] = \mathbb{E}\left[ \prod_{i = 1}^m h_i(\mathcal{L}^{\infty}_{n_i}(t_i)) \right].
\end{equation}
In addition, if we set $B_n = \lceil b \cdot d_n \rceil$, $\vec{Y}^n = (L_{1}^n(B_n), \dots, L_{k}^n(B_n))$, $G_n(t) = L_{k+1}^n(t)$ for $t \in [0,B_n]$, then from the convergence of $\mathcal{L}^n$ to $\mathcal{L}^{\infty}$ and (\ref{S54E1}), we know that $\mathbb{P}$-a.s. the sequences $B_n, d_n,\vec{Y}^n, G_n, \vec{q}^{\,n}_k = (q_1^n, \dots, q_k^n)$ satisfy the conditions of Lemma \ref{lem:RW} and so $\mathbb{P}$-a.s.
\begin{equation}\label{S54E10}
\lim_{n \rightarrow \infty} \mathbb{E}^{B_n,\vec{Y}^n,\vec{q}^{\,n}_k, G_n}_{\ice, \operatorname{Geom}} \left[ \prod_{i  \in R} h_i(\mathcal{Q}^n_{n_i}(t_i))   \right] = \mathbb{E}_{\operatorname{avoid}}^{b,\vec{y},\vec{\mu}_k,g} \left[ \prod_{i  \in R} h_i(\mathcal{Q}_{n_i}(t_i))   \right],
\end{equation}
where on the left side of (\ref{S54E10}) we have $ \mathcal{Q}^n_{i}(t) = \sigma^{-1} d_n^{-1/2}  (Q^n_{i}(td_n) - utd_n)$ for $i \in \llbracket 1, k \rrbracket$ with $\mathfrak{Q}^n = \{Q_{i}^n\}_{i = 1}^{k}$ having law $\mathbb{P}^{ B_n,\vec{Y}^n,\vec{q}_k^{\,n},G_n}_{\ice, \operatorname{Geom}}$. Combining (\ref{S54E8}) with (\ref{S54E10}) and the bounded convergence theorem gives
\begin{equation}\label{S54E11}
\begin{split}
&\lim_{n \rightarrow \infty} \mathbb{E}\left[ \prod_{i \not \in R} h_i(\mathcal{L}^{n}_{n_i}(t_i))  \cdot \mathbb{E}^{B_n,\vec{Y}^n,\vec{q}^{\,n}_k,G_n}_{\ice, \operatorname{Geom}} \left[ \prod_{i  \in R} h_i(\mathcal{Q}^n_{n_i}(t_i))   \right] \right]  \\
&= \mathbb{E}\left[ \prod_{i \not \in R} h_i(\mathcal{L}^{\infty}_{n_i}(t_i))  \cdot \mathbb{E}_{\operatorname{avoid}}^{b,\vec{y},\vec{\mu}_k,g} \left[ \prod_{i  \in R} h_i(\mathcal{Q}_{n_i}(t_i))   \right] \right].
\end{split}
\end{equation}
Finally, if $n$ is large enough so that $[0, B_n] \subseteq [0, T_n]$, we have by the half-space interlacing Gibbs property that the terms on the first line in (\ref{S54E11}) agree with those on the left in (\ref{S54E9}), and so the limits agree, which is precisely (\ref{S54E2}).

%
%
\section{Pfaffian Schur processes}\label{Section5} The goal of this section is to define the Pfaffian Schur processes that we study. We do this in Section \ref{Section5.3} below, after we introduce some of the general theory for (Pfaffian) point processes in Sections \ref{Section5.1} and \ref{Section5.2}. In Section \ref{Section5.4} we state the main result we establish about the convergence of Pfaffian Schur processes, see Proposition \ref{prop: FinDimConv}, and then use the latter in conjunction with our general tightness result, see Theorem \ref{thm:main thm tightness}, to prove Theorem \ref{thm:MainThm1}.

%
%
\subsection{Point processes and correlation functions}\label{Section5.1}
In this section we give some background on point processes and correlation functions. Our exposition largely follows \cite[Section 2]{dimitrov2024airy}, which in turn goes back to \cite{johansson2006random}. Throughout this section, we fix $k\in\mathbb{N}$ and denote the space $\mathbb{R}^k$ with its Borel $\sigma$-algebra by $(E,\mathcal{E}) = (\mathbb{R}^k, \mathcal{R}^k)$. We also fix a probability space $(\Omega,\mathcal{F},\mathbb{P})$.

 We say that $M:\Omega\times\mathcal{E} \rightarrow [0,\infty]$ is a {\em random measure} on $(E,\mathcal{E})$ if $M_{\omega}=M(\omega,\cdot)$ is a measure on $(E,\mathcal{E})$ for each $\omega \in \Omega$ and $M(\cdot,A) = M(A)$ is an extended random variable for each $A\in\mathcal{E}$. We define the {\em mean} of $M$ to be the measure $\mu$ on $(E,\mathcal{E})$ given by $\mu(A)=\mathbb{E}[M(A)]$ for $A\in\mathcal{E}$. We say that $M$ is {\em locally finite} if for each bounded Borel set $B\in\mathcal{E}$ and $\omega\in\Omega$, we have $M(\omega,B) < \infty$. We denote by $\mathcal{M}_{E}$ the space of locally bounded measures on $E$, and equip it with the vague topology, see \cite[Chapter 4]{kallenberg2017random} for background on the latter. For brevity we denote $\mathcal{M}_E$ by $S$ and its Borel $\sigma$-algebra by $\mathcal{S}$. From  \cite[Chapter 4]{kallenberg2017random} we have that $S$ is a Polish space and so a locally finite random measure $M$ can be viewed as a random element in $(S,\mathcal{S})$ in the sense of \cite[Section 3]{Billing}.

We say that a random element $M$ in $(S, \mathcal{S})$ is a {\em point process} if $M_{\omega}(B)\in\mathbb{Z}_{\geq 0}$ for each $\omega\in\Omega$ and each bounded Borel set $B\in\mathcal{E}$. We call $M$ a {\em simple point process} if it is a point process and $M(\omega,\{x\})\in\{0,1\}$ for all $\omega\in\Omega$ and $x\in E$. 

Suppose that $(\bar{E}, \bar{\mathcal{E}})$ is a measurable space such that $E \subseteq \bar{E}$ and $\mathcal{E} \subseteq \bar{\mathcal{E}}$. We say that a sequence of random elements $\{X_n \}_{n \geq 1}$ in $(\bar{E}, \bar{\mathcal{E}})$, all defined on $(\Omega, \mathcal{F}, \mathbb{P})$, {\em forms} the random measure $M$ if 
\begin{equation}\label{FormX}
M(\omega, A) = \sum_{n \geq 1} {\bf 1} \{X_n(\omega) \in A\}
\end{equation}
for each $\omega \in \Omega$ and $A \in \mathcal{E}$. Unless otherwise specified, we will assume that $\bar{E} = E \cup \{\partial\}$ and $\bar{\mathcal{E}} = \sigma(\mathcal{E}, \{\partial\})$ where $\partial$ is an extra point we add to the space $E$. We mention that the extra point $\partial$ is sometimes called a {\em cemetery}, and its inclusion is standard in the theory of random measures, see \cite[Chapter 6]{Cinlar}. As shown in \cite[Lemma 2.1]{dimitrov2024airy}, if $M$ is a point process on $(E,\mathcal{E})$, then there exists a sequence of random elements $\{X_n \}_{n \geq 1}$ in $(\bar{E},\bar{\mathcal{E}})$, all defined on $(\Omega,\mathcal{F},\mathbb{P})$, which satisfy (\ref{FormX}).

Suppose that $M$ is a point process on $(E, \mathcal{E})$ that is formed by $\{X_n \}_{n \geq 1}$ as in (\ref{FormX}). With this data we can construct for any $n \in \mathbb{N}$ a new point process $M_n$ on $(E^n, \mathcal{E}^{\otimes n})$ through
\begin{equation}\label{eq: Mn}
M_n(\omega, A) = \sum_{\substack{i_1, \dots, i_n = 1 \\ |\{i_1, \dots, i_n\}| = n} }^\infty {\bf 1} \{ (X_{i_1}(\omega), \dots, X_{i_n} (\omega)) \in A) \} \mbox{ for } \omega \in \Omega \mbox{ and } A \in \mathcal{E}^{\otimes n}.
\end{equation}
We also denote the mean of $M_n$ by $\mu_n$ -- this is a measure on $(E^n, \mathcal{E}^{\otimes n})$. From (\ref{eq: Mn}) we have that $M_n$ depends only on the measure $M$ (and not the particular definition and order of the variables $X_n$) and for each $\omega \in \Omega$ the measure $M_n(\omega, \cdot)$ is symmetric in the sense that 
$$M_n(\omega, B_1 \times \cdots \times B_n) = M_n(\omega, B_{\sigma(1)} \times \cdots \times B_{\sigma(n)})$$
for any permutation $\sigma \in S_n$ and Borel sets $B_1, \dots, B_n \in \mathcal{E}$. The mean $\mu_n$ is also symmetric. \\

Suppose that $\lambda$ is a locally finite measure on $(E, \mathcal{E})$, called a {\em reference measure}, and denote by $\lambda^n$ the product measure of $n$ copies of $\lambda$ on $E^n$. Suppose that $M$ is a point process on $(E, \mathcal{E})$ and let $\mu_n$ be the mean measures of $M_n$ as in (\ref{eq: Mn}). If $\mu_n$ is absolutely continuous with respect to $\lambda^n$, then we call its density $\rho_n$ the {\em $n$-th correlation function} of $M$ (with respect to the reference measure $\lambda$). Specifically, $\rho_n : E^n \rightarrow [0, \infty]$ is $\mathcal{E}^{\otimes n}$-measurable and satisfies  
\begin{equation}\label{eq:RND determinantal}
\mu_n(A)=\int_A\rho_n(x_1,\dots,x_n)\lambda^n(dx)\quad\mbox{ for all }A\in\mathcal{E}^{\otimes n}.
\end{equation}
We mention that given a point process $M$ it is not necessary that functions $\rho_n$ satisfying \eqref{eq:RND determinantal} exist. However, if they exist, then they are unique $\lambda^n$-a.e. and are also symmetric $\lambda^n$-a.e.\\

We recall the following result for future use.
\begin{proposition}\label{prop:Last Particle Cdf}\cite[Proposition 2.4]{johansson2006random} Assume that $M$ is a point process on $E=\mathbb{R}$ that has correlation functions $\rho_n$ for each $n \geq 1$. We define 
\[ 
X_{\mathsf{max}}(\omega) = \inf\{ t \in \mathbb{R} : M(\omega, (t,\infty)) = 0 \},
\]
and note $X_{\mathsf{max}}$ is an extended random variable in $[-\infty, \infty]$.
Suppose that for each $t \in \mathbb{R}$
\begin{equation}\label{eq: finite series correlation}
1+\sum_{n=1}^{\infty}\frac{1}{n!}\int_{(t,\infty)^n}\rho_n(x_1,\dots,x_n)\lambda^n(dx)<\infty.
\end{equation}
Then, for each $t \in \mathbb{R}$ we have 
\begin{equation}\label{eq: last particle cdf}
\mathbb{P}\left(X_{\mathsf{max}}\leq t\right)=1+\sum_{n=1}^{\infty}\frac{(-1)^n}{n!}\int_{(t,\infty)^n}\rho_n(x_1, \dots, x_n)\lambda^n(dx). 
\end{equation}
\end{proposition}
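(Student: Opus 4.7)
The plan is to give the standard factorial-moment inclusion-exclusion derivation: identify $\{X_{\mathsf{max}} \leq t\}$ with the event $\{N = 0\}$ where $N(\omega) \coloneqq M(\omega, (t,\infty))$, apply a binomial identity to $\mathbf{1}\{N=0\}$, and then interchange sum and expectation using the absolute convergence furnished by \eqref{eq: finite series correlation}. The identification $\{X_{\mathsf{max}} \leq t\} = \{N=0\}$ is a short measure-theoretic check: if $X_{\mathsf{max}}(\omega) \leq t$, then by definition of infimum, for each $\varepsilon > 0$ there exists $s < t + \varepsilon$ with $M(\omega,(s,\infty)) = 0$, so $M(\omega,(t+\varepsilon,\infty)) = 0$; letting $\varepsilon \downarrow 0$ through a countable sequence and invoking continuity of $M(\omega,\cdot)$ from below on $(t+\varepsilon, \infty) \uparrow (t,\infty)$ gives $N(\omega)=0$. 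The reverse inclusion is immediate since $N(\omega)=0$ places $t$ itself in the set whose infimum is $X_{\mathsf{max}}(\omega)$.

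Next I would verify finiteness and write down the key binomial identity. The $n = 1$ case of \eqref{eq: finite series correlation} yields $\mathbb{E}[N] = \int_{(t,\infty)} \rho_1 \, d\lambda < \infty$, so $N$ is almost surely a finite non-negative integer. On $\{N < \infty\}$ the elementary identity $(1-1)^N = \sum_{k=0}^{N}(-1)^k \binom{N}{k}$ reads
\begin{equation*}
\mathbf{1}\{N = 0\} = \sum_{k=0}^{\infty}(-1)^k \binom{N}{k},
\end{equation*}
under the convention $\binom{N}{k} = 0$ for $k > N$, so for each $\omega$ the series has only finitely many non-zero terms.

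The key input from the hypotheses is then the expression of the factorial moments of $N$ as integrals against $\rho_k$. Writing $M$ via a sequence $\{X_n\}_{n \geq 1}$ as in \eqref{FormX}, the definition \eqref{eq: Mn} of $M_k$ gives pointwise
\begin{equation*}
k! \binom{N(\omega)}{k} = N(N-1)\cdots(N-k+1) = M_k\bigl(\omega, (t,\infty)^k\bigr),
\end{equation*}
so taking expectations and applying \eqref{eq:RND determinantal} yields
\begin{equation*}
\mathbb{E}\!\left[\binom{N}{k}\right] = \frac{1}{k!}\, \mu_k\bigl((t,\infty)^k\bigr) = \frac{1}{k!} \int_{(t,\infty)^k} \rho_k(x_1,\dots,x_k)\, \lambda^k(dx).
\end{equation*}

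To finish, I would sum these identities over $k$: hypothesis \eqref{eq: finite series correlation} precisely states that $\mathbb{E}[2^N] = \sum_{k \geq 0} \mathbb{E}\bigl[\binom{N}{k}\bigr] < \infty$, so $\sum_k \bigl|(-1)^k \binom{N(\omega)}{k}\bigr|$ is integrable on $\Omega$ with respect to $\mathbb{P}$ times counting measure, and Fubini permits exchanging expectation with the infinite sum, giving
\begin{equation*}
\mathbb{P}(X_{\mathsf{max}} \leq t) = \mathbb{E}[\mathbf{1}\{N=0\}] = \sum_{k=0}^{\infty}(-1)^k \mathbb{E}\!\left[\binom{N}{k}\right],
\end{equation*}
which is exactly \eqref{eq: last particle cdf}. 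I do not anticipate any real obstacle: the only care needed is the passage $\{X_{\mathsf{max}} \leq t\} = \{N=0\}$ (which would fail with $[t,\infty)$ in place of $(t,\infty)$) and the Fubini step, but the latter reduces cleanly to the finiteness of $\mathbb{E}[2^N]$, which is precisely what \eqref{eq: finite series correlation} supplies.
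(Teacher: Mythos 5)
The paper itself does not prove this proposition—it is stated as a citation to \cite[Proposition 2.4]{johansson2006random}—so there is no in-text proof to compare against. Your argument is correct and is precisely the standard factorial-moment inclusion-exclusion derivation that the cited reference uses: identify $\{X_{\mathsf{max}}\leq t\}$ with $\{M((t,\infty))=0\}$ via continuity from below, expand $\mathbf{1}\{N=0\}=(1-1)^N$, express $\mathbb{E}\bigl[\binom{N}{k}\bigr]$ through $\mu_k$ and $\rho_k$ using (\ref{eq: Mn}) and (\ref{eq:RND determinantal}), and justify the term-by-term exchange via $\mathbb{E}[2^N]<\infty$, which is exactly hypothesis (\ref{eq: finite series correlation}).
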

\begin{remark}\label{rem: last particle} We mention that equation (\ref{eq: finite series correlation}) ensures that $\mathbb{P}(X_{\mathsf{max}} < \infty) = 1$; however, it is possible to have $\mathbb{P}(X_{\mathsf{max}} = - \infty) > 0$, see \cite[Remark 2.10]{dimitrov2024airy}.
\end{remark}

We end this section with two technical statements about limits of point processes. The first, Lemma \ref{lem:technical lemma fdd 1} below, provides sufficient conditions that ensure that the random variables $M^N([t, \infty))$ converge weakly to $M^{\infty}([t, \infty))$ when $M^N$ converge weakly to $M^{\infty}$ (as random elements in $\mathcal{M}_{\mathbb{R}}$). The second, Lemma \ref{lem:technical lemma fdd 2} below, roughly states that if $M^N$ converge weakly to $M^{\infty}$, the number of atoms of $M^N$ stochastically decreases, while $M^{\infty}$ has infinitely many atoms, then $M^N$ all have infinitely many atoms. The two lemmas are proved in Sections \ref{SectionB2} and \ref{SectionB3}.

\begin{lemma}\label{lem:technical lemma fdd 1} Suppose that $X^N = \left(X_i^{N}: i \geq 1\right)$ is a sequence of random elements in $(\mathbb{R}^{\infty}, \mathcal{R}^{\infty})$, see \cite[Example 1.2]{Billing}, such that 
\begin{equation}\label{eq:OrderedX}
X_i^{N}(\omega) \geq X_{i+1}^{N}(\omega) \mbox{ for each } \omega \in \Omega,  i \geq 1.
\end{equation}
Let $M^N$ be the random measures formed by $\{X_i^N\}_{i \geq 1}$ as in (\ref{FormX}). We assume the following.
\begin{enumerate}
\item The sequence $(X_1^N)^+ = \max(0, X_1^N)$ is tight.
\item $M^N$ is a point process for each $N \in \mathbb{N}$, and the sequence $M^N$ converges weakly to $M^{\infty}$. 
\item  For each $t\in\mathbb{R}$ we have $\mathbb{P}(M^{\infty}(\{t\})=0)  =1$.
\end{enumerate}
Then, for all $t\in\mathbb{R}$ and $a \in\mathbb{Z}_{\geq0}$, we have 
\begin{equation}\label{eq:conv lem1 fdd}
\lim_{N\rightarrow\infty}\mathbb{P}\left(M^N([t,\infty))\leq a \right)=\mathbb{P}\left(M^{\infty}([t,\infty))\leq a\right).
\end{equation}
\end{lemma}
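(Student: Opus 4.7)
The plan is to approximate the unbounded interval $[t, \infty)$ by a bounded interval $[t, L]$, where $L$ is chosen large enough (using the tightness of $(X_1^N)^+$) that $X_1^N \leq L$ holds with probability close to one. Since $X_1^N$ is the largest atom of $M^N$ by the ordering \eqref{eq:OrderedX}, this controls $\mathbb{P}(M^N((L, \infty)) > 0)$ uniformly in $N$. On $M^\infty$ no such ordering is available a priori, so I will bound $\mathbb{P}(M^\infty((L, \infty)) > 0)$ separately by applying the continuous mapping theorem on bounded intervals $(L, L+K)$ and letting $K \to \infty$.

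Concretely, fix $\epsilon > 0$ and use assumption (1) to pick $L > t$ with $\sup_N \mathbb{P}(X_1^N > L) < \epsilon$. By the ordering one has $M^N([t, \infty)) = M^N([t, L])$ on $\{X_1^N \leq L\}$, hence
\[
|\mathbb{P}(M^N([t, \infty)) \leq a) - \mathbb{P}(M^N([t, L]) \leq a)| \leq \epsilon.
\]
By assumption (3), $\mathbb{P}(M^\infty(\{t\}) = M^\infty(\{L\}) = 0) = 1$, so the map $\mu \mapsto \mu([t, L])$ from $\mathcal{M}_{\mathbb{R}}$ (with the vague topology) to $\mathbb{Z}_{\geq 0} \cup \{\infty\}$ is continuous on a set of full $M^\infty$-measure. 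The continuous mapping theorem therefore yields $M^N([t, L]) \Rightarrow M^\infty([t, L])$. Since both sides are integer-valued, $a + 1/2$ is a continuity point of the CDF of $M^\infty([t, L])$ and $\mathbb{P}(M^N([t, L]) \leq a) = \mathbb{P}(M^N([t, L]) \leq a + 1/2)$; this gives $\lim_N \mathbb{P}(M^N([t, L]) \leq a) = \mathbb{P}(M^\infty([t, L]) \leq a)$.

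The main obstacle is transferring the uniform tail control from $M^N$ to $M^\infty$, since the vague topology is insensitive to the unbounded tail $(L, \infty)$. To handle this I fix $K \in \mathbb{N}$ and apply the same continuous mapping argument to the bounded interval $(L, L+K)$ (whose endpoints are $M^\infty$-null by assumption (3)), obtaining
\[
\mathbb{P}(M^\infty((L, L+K)) \geq 1) \;=\; \lim_N \mathbb{P}(M^N((L, L+K)) \geq 1) \;\leq\; \limsup_N \mathbb{P}(X_1^N > L) \;\leq\; \epsilon.
\]
Monotone continuity of measure from below gives $\mathbb{P}(M^\infty((L, \infty)) > 0) = \lim_K \mathbb{P}(M^\infty((L, L+K)) > 0) \leq \epsilon$, and hence $|\mathbb{P}(M^\infty([t, \infty)) \leq a) - \mathbb{P}(M^\infty([t, L]) \leq a)| \leq \epsilon$. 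Combining the three estimates yields $\limsup_N |\mathbb{P}(M^N([t, \infty)) \leq a) - \mathbb{P}(M^\infty([t, \infty)) \leq a)| \leq 2\epsilon$, and letting $\epsilon \downarrow 0$ gives \eqref{eq:conv lem1 fdd}.
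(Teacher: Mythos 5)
Your proof is correct, and it takes a genuinely different route from the paper's. The paper extracts a subsequence along which $(M^N,(X_1^N)^+)$ converges jointly, invokes Skorohod's representation theorem to get an almost-surely convergent coupling, and then proves pointwise convergence of the events $\{M^N([t,\infty))\leq a\}$ by sandwiching $\mathbf{1}_{[t,\infty)}$ between compactly supported test functions (one monotone increasing family to rule out undercounting, one decreasing family supported up to $X^\infty+1$ to rule out overcounting). You instead truncate at a level $L$ supplied by the tightness of $(X_1^N)^+$, apply the mapping/portmanteau theorem to the evaluation $\mu\mapsto\mu(B)$ on bounded intervals $B$ whose endpoints are $M^\infty$-null by assumption (3), and then transfer the uniform tail bound $\sup_N\mathbb{P}(X_1^N>L)<\epsilon$ to the limit via the bounded windows $(L,L+K)$ and continuity from below -- a step the paper handles instead through the a.s. convergence of $(X_1^N)^+$ in the Skorohod coupling. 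Your argument is shorter and avoids both the subsequence extraction and Skorohod's theorem; its only additional ingredients are the standard facts that $\mu_n(B)\to\mu(B)$ under vague convergence when $\mu(\partial B)=0$ and that a weak limit of integer-valued random variables is integer-valued (so that $a+\tfrac12$ is a continuity point), both of which are unobjectionable. You do use assumption (3) at the auxiliary points $L$ and $L+K$ rather than only at $t$, but since (3) is assumed for every real point this costs nothing.
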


\begin{lemma}\label{lem:technical lemma fdd 2}
Suppose that $\{M^N\}_{N\geq 1}$ is a sequence of point processes that converge weakly to a point process $M^{\infty}$. Assume that for any fixed $a \in\mathbb{Z}_{\geq 0}$, the sequence $p_N^a:=\mathbb{P}\left(M^N(\mathbb{R})\geq a \right)$ is decreasing in $N$, and $\mathbb{P}(M^{\infty}(\mathbb{R}) = \infty) = 1$. Then, $\mathbb{P}(M^{N}(\mathbb{R}) = \infty) = 1$ for all $N \in \mathbb{N}$.
\end{lemma}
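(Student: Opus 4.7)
The plan is to show that $\lim_{N\to\infty} p_N^a = 1$ for every $a\in\mathbb{Z}_{\geq 0}$. Combined with the assumption that $p_N^a$ is decreasing in $N$ and bounded above by $1$, this forces $p_N^a = 1$ for every $N$ and every $a$. Since $\{M^N(\mathbb{R}) = \infty\} = \bigcap_{a \geq 1} \{M^N(\mathbb{R}) \geq a\}$ is a countable intersection of probability-one events, it has probability one, which is the lemma's conclusion.

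To prove $\lim_N p_N^a = 1$, I would fix $\epsilon > 0$ and $a\in\mathbb{Z}_{\geq 0}$, and approximate $\mathbb{R}$ by a well-chosen compact interval $[-K_0, K_0]$. Since $M^{\infty}$ is locally finite and $\mathbb{P}(M^{\infty}(\mathbb{R}) = \infty)=1$, monotone convergence gives $M^{\infty}([-K,K]) \uparrow \infty$ a.s. as $K\to\infty$, so for $K$ sufficiently large $\mathbb{P}(M^{\infty}([-K,K]) \geq a) \geq 1 - \epsilon$. A Fubini argument then shows $\mathbb{P}(M^{\infty}(\{-K, K\}) > 0) = 0$ for Lebesgue-a.e. $K > 0$: for each $\omega$ the set $S(\omega):=\{K > 0: M^{\infty}(\omega, \{-K, K\}) > 0\}$ is at most countable because $M^{\infty}(\omega, \cdot)$ has countably many atoms in $\mathbb{R}$, so
\begin{equation*}
\int_0^{\infty} \mathbb{P}(M^{\infty}(\{-K,K\})>0)\, dK \;=\; \mathbb{E}[\mathrm{Leb}(S)] \;=\; 0.
\end{equation*}
Consequently I can pick $K_0 > 0$ that simultaneously satisfies $\mathbb{P}(M^{\infty}([-K_0, K_0]) \geq a) \geq 1 - \epsilon$ and $\mathbb{P}(M^{\infty}(\{-K_0, K_0\}) = 0) = 1$.

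The central step is then to pass the lower bound on $\mathbb{P}(M^{\infty}([-K_0, K_0]) \geq a)$ to the prelimit. The map $T: \mathcal{M}_E \to [0, \infty]$ given by $T(\mu) = \mu([-K_0, K_0])$ is upper semicontinuous in the vague topology (a standard property for $\mu \mapsto \mu(K)$ with $K$ compact) and is continuous at every $\mu$ with $\mu(\{-K_0, K_0\}) = 0$. By the choice of $K_0$, $T$ is continuous on a set of full $M^{\infty}$-measure, so the continuous mapping theorem gives $T(M^N) \Rightarrow T(M^{\infty})$ in distribution on $[0,\infty]$. Because $T(M^{\infty})$ is $\mathbb{Z}_{\geq 0}$-valued (as $M^{\infty}$ is a point process), its distribution function is continuous at $a - 1/2$, and the reformulation of weak convergence via distribution functions yields $\mathbb{P}(T(M^N) \geq a) \to \mathbb{P}(T(M^{\infty}) \geq a)$.

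Combining these pieces with the trivial inequality $p_N^a \geq \mathbb{P}(M^N([-K_0,K_0]) \geq a)$ gives
\begin{equation*}
\liminf_{N\to\infty} p_N^a \;\geq\; \lim_{N\to\infty}\mathbb{P}\bigl(M^N([-K_0,K_0])\geq a\bigr) \;=\; \mathbb{P}\bigl(M^{\infty}([-K_0,K_0])\geq a\bigr) \;\geq\; 1-\epsilon,
\end{equation*}
so letting $\epsilon\downarrow 0$ yields $\lim_N p_N^a = 1$. I expect the main technical point to be the precise continuity analysis of $T$ together with the selection of $K_0$ in the continuity set of $M^{\infty}$; once these are in place the argument is a clean application of the continuous mapping theorem to integer-valued random variables.
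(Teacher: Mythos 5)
Your proof is correct, and its architecture is the same as the paper's: lower-bound $p_N^a$ by the probability that a compactly localized functional of $M^N$ is at least $a$, pass $N\to\infty$ using weak convergence and the continuous mapping theorem, and then use the monotonicity of $p_N^a$ to push the bound $1-\epsilon$ back to \emph{all} $N$ before letting $\epsilon\downarrow 0$. The only difference is the localization device. The paper takes $f_n\in C_c(\mathbb{R})$ with $f_n\uparrow\mathbf{1}_{\mathbb{R}}$, for which $\mu\mapsto\mu f_n$ is continuous everywhere on $\mathcal{M}_{\mathbb{R}}$ in the vague topology, so no continuity-set analysis is needed; the possible atom of the law of $M^{\infty}f_{n_0}$ at the level $a$ is absorbed by working with the threshold $a+1$ in the limit and $a$ in the prelimit. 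You instead use the sharp indicator of $[-K_0,K_0]$, which forces you to (i) select $K_0$ so that $M^{\infty}$ a.s.\ does not charge $\{-K_0,K_0\}$ (your Fubini argument over the countable atom set is fine, and the selection is compatible with making $\mathbb{P}(M^{\infty}([-K_0,K_0])\geq a)\geq 1-\epsilon$ since that probability is monotone in $K_0$), and (ii) invoke integer-valuedness to get continuity of the limiting distribution function at $a-1/2$. Both routes are standard and complete; the paper's is marginally lighter on bookkeeping, while yours makes the role of the boundary-continuity condition in vague convergence explicit.
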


%
%
\subsection{Pfaffian point processes}\label{Section5.2} In this section we introduce Pfaffian point processes and state some of their properties. We refer the interested reader to \cite[Appendix B]{OQR17} and \cite{R00} for more background on Pfaffian point processes. We continue with the notation from Section \ref{Section5.1}. Throughout this section we denote by $\mathrm{Mat}_n(\mathbb{C})$ the space of $n \times n$ matrices with complex entries, and $\mathrm{Skew}_n(\mathbb{C})$ the subset of skew-symmetric matrices. 

\begin{definition}\label{def: DefDPP} Suppose that $M$ is a point process on $(E,\mathcal{E})$ and $\lambda$ is a locally finite measure on $(E,\mathcal{E})$. Suppose that $K: E^2\rightarrow \mathbb{C}$ is a locally bounded measurable function, called the {\em correlation kernel}. We say that $M$ is a {\em determinantal point process with correlation kernel $K$ and reference measure $\lambda$} if the $n$-th correlation function $\rho_n$ exists for each $n \geq 1$ and 
\begin{equation}\label{eq:def of DPP}
\rho_n(x_1,\dots,x_n)=\det\left[K(x_i, x_j) \right]_{i,j = 1}^n\quad\mbox{ for all }x_1,\dots,x_n\in E.
\end{equation}
\end{definition}

If $A \in \mathrm{Skew}_{2n}(\mathbb{C})$, we define its {\em Pfaffian} by
\begin{equation}\label{eq: def of Pf}
\mathrm{Pf}(A) =\frac{1}{2^{n}n!}\sum_{\sigma \in S_{2n}}\mathrm{sgn}(\sigma)A_{\sigma(1)\sigma(2)}A_{\sigma(3)\sigma(4)}\cdots A_{\sigma(2n-1)\sigma(2n)}.
\end{equation}
From \cite[Equation (B.2)]{OQR17} we have  
\begin{equation}\label{eq: det to Pfaf}
\mathrm{Pf}(A)^2 = \det(A).
\end{equation}

With the above notation in place we can define Pfaffian point processes.
\begin{definition}\label{def:def of Pfaffian point process} Suppose that $K:E^2\rightarrow\mathrm{Mat}_2(\mathbb{C})$ is a skew-symmetric locally bounded measurable function. The latter means that
\begin{equation}\label{eq:in def of Pfaffian point process}
K(x,y)=\begin{bmatrix}
    K_{11}(x,y) & K_{12}(x,y)\\
    K_{21}(x,y) & K_{22}(x,y) 
\end{bmatrix}
\quad\mbox{ for }x,y\in E,
\end{equation}
where $K_{ij}:E^2\rightarrow\mathbb{C}$ are locally bounded measurable functions such that  $K_{ij}(x,y)=-K_{ji}(y,x)$ for $i,j \in \{1,2\}$. We refer to $K$ as the {\em (Pfaffian) correlation kernel}.

Suppose that $M$ is a point process on $(E,\mathcal{E})$ and $\lambda$ is a locally finite measure on $(E,\mathcal{E})$. We say that $M$ is a {\em Pfaffian point process with correlation kernel $K$ and reference measure $\lambda$} if the $n$-th correlation function $\rho_n$ exists for each $n \geq 1$ and 
\begin{equation}\label{eq:def of PfaffianPP}
\rho_n(x_1, \dots, x_n) = \operatorname{Pf} \left[ K(x_i, x_j) \right]_{i, j = 1}^n\quad\mbox{ for all }x_1,\dots,x_n\in E,
\end{equation}
where $\left[ K(x_i, x_j) \right]_{i, j = 1}^n$ on the right hand side above is the $2n\times 2n$ skew-symmetric matrix formed by the $2\times2$ blocks $K(x_i,x_j)$ for $1\leq i,j\leq n$.
\end{definition}
\begin{remark}\label{def: Pfaffian PP}
We note that the equations (\ref{eq:RND determinantal}) define the correlation functions $\rho_n$ on the support of $\lambda^n$. In particular, the value of $K$ outside of the support of $\lambda$ is immaterial. In some of our applications, we have that $\lambda$ is supported on some lattice in $E$ and we only define $K$ on that set.
\end{remark}

The following proposition summarizes some of the basic properties of Pfaffian point processes. Its proof is given in Section \ref{SectionB4}.
\begin{proposition}\label{prop:basic properties Pfaffian point process}
Suppose that $M$ is a Pfaffian point process on $(E,\mathcal{E})$ with correlation kernel $K$ and reference measure $\lambda$. We have the following statements.
\begin{enumerate}
\item $M$ is a simple point process $\mathbb{P}$-almost surely.
\item Suppose $D \in \mathcal{E}$ is any Borel set and define 
\begin{equation}\label{eq:RestrictEqn}
N(\omega, A)= M(\omega, A \cap D) \mbox{ for each $A \in \mathcal{E}$ and $\omega \in \Omega$}.
\end{equation}
Then, $N$ is a Pfaffian point process with kernel $\tilde{K}(x,y) = {\bf 1}\{x \in D\} \cdot K(x,y) \cdot {\bf 1}\{y \in D\}$ and reference measure $\lambda$. 
\item The law of $M$ (as a random element in the space $(S, \mathcal{S})$ of locally bounded measures) is uniquely determined by the correlation kernel $K$ and the reference measure $\lambda$.
\item Suppose that $f:E\rightarrow\mathbb{C}\setminus \{0\}$ is such that $f(x), 1/f(x)$ are locally bounded measurable functions. Then, $M$ is also a Pfaffian point process with the correlation kernel $\tilde{K}$, given by
\[
\tilde{K}(x,y)=\begin{bmatrix}
        f(x) f(y) \cdot K_{11}(x,y) & \frac{f(x)}{f(y)} \cdot K_{12}(x,y)\\
        \frac{f(y)}{f(x)} \cdot K_{21}(x,y) & \frac{1}{f(x)f(y)} \cdot K_{22}(x,y) 
    \end{bmatrix},
    \]
and the same reference measure $\lambda$.
\item Let $\phi: E \rightarrow E$ be a measurable bijection with a measurable inverse such that $\phi$ and $\phi^{-1}$ are locally bounded. Then, the pushforward measure $M \phi^{-1}$ is a Pfaffian point process with correlation kernel $\tilde{K}(x,y) = K(\phi^{-1}(x), \phi^{-1}(y))$ and reference measure $\lambda \phi^{-1}$. 
\item Fix $c_1,c_2 > 0$ and denote $\tilde{\lambda} = (c_1c_2)^{-1} \cdot \lambda$. Then, $M$ is also a Pfaffian point process with reference measure $\tilde{\lambda}$ and  with correlation kernel given by
\[
\tilde{K}(x,y)=\begin{bmatrix}
        c_1^2 \cdot  K_{11}(x,y) & c_1c_2 \cdot K_{12}(x,y)\\
        c_1c_2 \cdot K_{21}(x,y) & c_2^2 \cdot K_{22}(x,y) 
\end{bmatrix}. 
\]  
\end{enumerate}
\end{proposition}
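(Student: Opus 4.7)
My plan is to handle the six claims in turn, relying on two standard facts about Pfaffians: the identity $\mathrm{Pf}(C^\top A C) = \det(C) \cdot \mathrm{Pf}(A)$ for $A \in \mathrm{Skew}_{2n}(\mathbb{C})$ and $C \in \mathrm{Mat}_{2n}(\mathbb{C})$, and the multilinearity of the Pfaffian in the $2 \times 2$ blocks along rows and columns.

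For part (1), I would use the characterization that a point process is simple if and only if its second factorial moment measure $\mu_2$ gives zero mass to the diagonal $\Delta$. The skew-symmetry of $K$ forces $K_{11}(x,x) = K_{22}(x,x) = 0$, so the $4 \times 4$ matrix $[K(x_i,x_j)]_{i,j=1}^{2}$ evaluated at $x_1 = x_2 = x$ has coinciding first and third columns (and coinciding second and fourth), giving determinant zero and hence, by (\ref{eq: det to Pfaf}), Pfaffian zero. Thus $\rho_2$ vanishes on $\Delta$ pointwise, so $\mu_2(\Delta \cap (B \times B)) = 0$ for every bounded Borel $B$ regardless of whether $\lambda$ has atoms, and a countable covering of $E$ by bounded Borel sets combined with Markov's inequality concludes. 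Part (2) is then immediate: the $n$-th correlation function of the restricted process $N$ with respect to $\lambda$ is $\rho_n \cdot \prod_{i=1}^n {\bf 1}\{x_i \in D\}$, and these indicator factors can be absorbed into each $2 \times 2$ block of $K$ by block multilinearity of $\mathrm{Pf}$.

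For parts (4), (5), and (6), I would verify the claimed Pfaffian formula for the new correlation functions by direct computation with a suitable diagonal $C$. For (4), taking $C = \mathrm{diag}(f(x_1), 1/f(x_1), \dots, f(x_n), 1/f(x_n))$ yields $\det(C) = 1$ and $C^\top [K(x_i,x_j)] C = [\tilde K(x_i,x_j)]$, so $\mathrm{Pf}$ is unchanged and $\rho_n$ is unaltered. For (5), the change of variables $y_i = \phi(x_i)$ pushes $\lambda^n$ forward to $(\lambda \phi^{-1})^n$ and identifies the $n$-th correlation function of $M \phi^{-1}$ with $(x_1, \ldots, x_n) \mapsto \rho_n(\phi^{-1}(x_1), \ldots, \phi^{-1}(x_n))$, which is precisely $\mathrm{Pf}[K(\phi^{-1}(x_i), \phi^{-1}(x_j))]$ as claimed. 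For (6), taking $C = \mathrm{diag}(c_1, c_2, \dots, c_1, c_2)$ gives $\det(C) = (c_1 c_2)^n$, and rescaling $\lambda$ by $(c_1 c_2)^{-1}$ forces the new correlation functions to equal $(c_1 c_2)^n \rho_n$, which matches $\mathrm{Pf}(C^\top [K(x_i,x_j)] C)$.

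Part (3) is the most substantive, and I expect it to be the main obstacle. My plan is to show that the sequence of correlation functions $\{\rho_n\}$ uniquely determines the law of $M$ on $(S, \mathcal{S})$. Since $\mathcal{S}$ is generated by the functionals $M \mapsto (M(B_1), \dots, M(B_k))$ for finite collections of disjoint bounded Borel sets, it suffices to show each such joint distribution is determined. By part (2), after restricting to $B = B_1 \cup \cdots \cup B_k$ I may assume the kernel is globally bounded and $\lambda$ is finite. The joint factorial moments $\mathbb{E}\bigl[\prod_j M(B_j)^{[n_j]}\bigr] = \mu_{n_1 + \cdots + n_k}(B_1^{n_1} \times \cdots \times B_k^{n_k})$ are integrals of $\rho_n$ against $\lambda^n$ and hence depend only on $K$ and $\lambda$. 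Hadamard's inequality together with (\ref{eq: det to Pfaf}) yields the pointwise bound $|\rho_n(x_1, \dots, x_n)| \leq (2n)^n \cdot \lVert K\vert_{B \times B}\rVert_\infty^n$; Stirling then bounds the factorial moments by $C^n \cdot n!$, which satisfies Carleman's condition and ensures that the joint law of $(M(B_1), \dots, M(B_k))$ is uniquely recovered from its factorial moments, completing the argument.
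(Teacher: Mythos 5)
Your proposal is correct and follows essentially the same route as the paper: parts (4) and (6) use the identical conjugation identity $\mathrm{Pf}(RAR^{T})=\det(R)\,\mathrm{Pf}(A)$ with the same diagonal matrices, part (3) rests on the same Hadamard--Pfaffian bound followed by a factorial-moment determinacy argument (which the paper outsources to \cite[Corollary 2.4]{dimitrov2024airy}), and parts (1), (2), (5) are the standard arguments the paper omits by reference to the determinantal analogue. The only cosmetic difference is that your Hadamard bound $(2n)^{n}C^{n}$ is weaker than the paper's $(2n)^{n/2}C^{n}$, but it still gives factorial moments of order $C^{n}n!$, which suffices for determinacy.
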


The next result shows that any determinantal point process is also a Pfaffian point process.
\begin{lemma}\label{lem:determinantal point process as Pfaffian}
Let $M$ be a determinantal point process with correlation kernel $K: E^2\rightarrow\mathbb{C}$ and reference measure $\lambda$. Then, $M$ is also a Pfaffian point process with the same reference measure and correlation kernel 
$\tilde{K}$, given by
\[
    \tilde{K}(x,y)=\begin{bmatrix}
        0 & K(x,y)\\
        -K(y,x) & 0
    \end{bmatrix}\quad\mbox{ for }x,y\in E.
\] 
\end{lemma}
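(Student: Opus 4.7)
The plan is to reduce the lemma to proving the algebraic identity
\[
\operatorname{Pf}[\tilde{K}(x_i, x_j)]_{i,j=1}^n = \det[K(x_i, x_j)]_{i,j=1}^n
\]
for every $n \geq 1$ and every $x_1, \ldots, x_n \in E$. Once this identity is in hand, the $n$-th correlation function of $M$, which by assumption exists and equals $\det[K(x_i, x_j)]_{i,j=1}^n$ $\lambda^n$-a.e., agrees with the Pfaffian prescription in Definition~\ref{def:def of Pfaffian point process}, so $M$ is indeed a Pfaffian point process with kernel $\tilde{K}$ and reference measure $\lambda$. Skew-symmetry and local boundedness of $\tilde{K}$ in the sense required by Definition~\ref{def:def of Pfaffian point process} follow immediately from the same properties of $K$.

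For the identity, set $A := [\tilde{K}(x_i, x_j)]_{i,j=1}^n$, a $2n \times 2n$ skew-symmetric matrix. By the shape of $\tilde{K}$, one has $A_{2i-1, 2j-1} = A_{2i, 2j} = 0$, so the only nonzero entries link odd-indexed rows/columns to even-indexed ones. I would apply the symmetric permutation $\pi \in S_{2n}$ defined by $\pi(2i-1) = i$ and $\pi(2i) = n + i$ for $i \in \llbracket 1, n \rrbracket$, and check directly from the definitions that $B := P A P^T$, with $P$ the permutation matrix of $\pi$, has the block form
\[
B = \begin{pmatrix} 0 & \mathbf{K} \\ -\mathbf{K}^T & 0 \end{pmatrix}, \qquad \mathbf{K} := [K(x_i, x_j)]_{i,j=1}^n.
\]

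To finish, I combine two standard identities. First, the transformation rule $\operatorname{Pf}(PAP^T) = \det(P) \operatorname{Pf}(A) = \operatorname{sgn}(\pi) \operatorname{Pf}(A)$, which follows from the general identity $\operatorname{Pf}(CA C^T) = \det(C) \operatorname{Pf}(A)$ applied to the permutation matrix $C = P$. Second, the block evaluation
\[
\operatorname{Pf} \begin{pmatrix} 0 & \mathbf{K} \\ -\mathbf{K}^T & 0 \end{pmatrix} = (-1)^{n(n-1)/2} \det(\mathbf{K}),
\]
which can be verified by unrolling the definition (\ref{eq: def of Pf}): the only nonvanishing pairings in the sum match each upper-block index with a lower-block index, so they are parametrized by a single permutation of $\llbracket 1, n \rrbracket$, and after collecting signs the sum reduces to $(-1)^{n(n-1)/2} \det(\mathbf{K})$. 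Combining, $\operatorname{Pf}(A) = \operatorname{sgn}(\pi) \cdot (-1)^{n(n-1)/2} \det(\mathbf{K})$. The only mildly delicate point of the argument is tracking $\operatorname{sgn}(\pi)$: the permutation $\pi$ is the inverse perfect shuffle sending $(1, 2, \ldots, 2n) \mapsto (1, 3, 5, \ldots, 2n-1, 2, 4, \ldots, 2n)$, and a direct inversion count (each of the $\binom{n}{2}$ pairs of an even index followed by a strictly larger odd index contributes one inversion) gives $\operatorname{sgn}(\pi) = (-1)^{n(n-1)/2}$. The two sign factors cancel and one obtains $\operatorname{Pf}(A) = \det(\mathbf{K})$, completing the proof.
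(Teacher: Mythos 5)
Your proof is correct. Both you and the paper reduce the lemma to the single algebraic identity $\operatorname{Pf}[\tilde K(x_i,x_j)]_{i,j=1}^n=\det[K(x_i,x_j)]_{i,j=1}^n$ (this is exactly the paper's identity (\ref{eq:Pfaffian equal to determinant}) with $a_{ij}=K(x_i,x_j)$), so the overall approach is the same; the difference lies in how the identity is verified. The paper squares both sides: it permutes rows and columns to get $\det(B)=\det\bigl[\begin{smallmatrix}0 & A\\ -A^T & 0\end{smallmatrix}\bigr]=\det(A)^2$, invokes $\operatorname{Pf}(B)^2=\det(B)$ to conclude $\operatorname{Pf}(B)=\pm\det(A)$, and fixes the sign by evaluating at $A=\mathrm{Id}$ (implicitly using that $\operatorname{Pf}(B)$ and $\det(A)$ are polynomials in the entries, so the sign is uniform). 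You instead conjugate by the unshuffle permutation to reach the block form $\bigl[\begin{smallmatrix}0 & \mathbf{K}\\ -\mathbf{K}^T & 0\end{smallmatrix}\bigr]$ and track both signs explicitly -- $\operatorname{sgn}(\pi)=(-1)^{n(n-1)/2}$ from the inversion count and $(-1)^{n(n-1)/2}$ from the block Pfaffian evaluation -- which I checked are both right (e.g.\ $n=2$ gives $\operatorname{Pf}=-\det$ for the block matrix, matching $(-1)^{1}$), and the factors cancel as claimed. Your route involves more bookkeeping but makes the sign determination completely explicit and sidesteps the polynomial-identity step the paper leaves unstated; both arguments are sound.
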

\begin{proof}
The result follows from the Definitions \ref{def: DefDPP} and \ref{def:def of Pfaffian point process}, and the identity
\begin{equation}\label{eq:Pfaffian equal to determinant}
\operatorname{Pf}(B) = \det (A), \mbox{ where } B = \begin{bmatrix}
0 & a_{11} & \dots & 0 & a_{1n}\\
-a_{11} & 0 & \dots & -a_{n1} & 0\\
\vdots &  & \ddots & & \vdots\\
0 & a_{n1} & \dots & 0 & a_{nn}\\
-a_{1n} & 0 & \dots & -a_{nn} & 0\\
\end{bmatrix} \mbox{ and } A = \begin{bmatrix} 
   a_{11} & \dots  & a_{1n}\\
    \vdots & \ddots & \vdots\\
    a_{n1} & \dots  & a_{nn} 
    \end{bmatrix}.
\end{equation}
To see why (\ref{eq:Pfaffian equal to determinant}) holds, note that by swapping rows and columns we have
$$\det(B) = \det \begin{bmatrix} 0 & A \\ -A^T & 0 \end{bmatrix} = \det(A)^2.$$
The latter and (\ref{eq: det to Pfaf}) show $\operatorname{Pf}(B) = \pm \det (A)$, and comparing both sides when $A $ is the identity matrix gives the correct sign.
\end{proof}

The next result provides sufficient conditions for a sequence of Pfaffian point processes to converge weakly. The proof is given in Section \ref{SectionB5}.
\begin{proposition}\label{prop: conv of Pfaffian point processes 0} Let $\lambda_N$ be a sequence of locally finite measures on $E$ that converge vaguely to $\lambda$. Let $M^N$ be a sequence of Pfaffian point processes with correlation kernels $K^N$ and reference measures $\lambda_N$. Suppose that there is a continuous function $K:  E^2\rightarrow\mathrm{Mat}_2(\mathbb{C})$ such that for any compact set $\mathcal{V}\subset E$ and $i,j \in \{1,2\}$ we have
$$\lim_{N\rightarrow\infty}\sup_{x,y\in\mathcal{V}} | K^N_{ij}(x,y)-K_{ij}(x,y)|=0.$$
Then, there exists a Pfaffian point process $M$ on $E$ with correlation kernel $K$ and reference measure $\lambda$. Moreover, $M^N$ converge weakly to $M$ as random elements in $(S ,\mathcal{S})$.
\end{proposition}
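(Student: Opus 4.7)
The plan is to prove tightness of $\{M^N\}$ in the vague topology on $(S, \mathcal{S})$ and then identify every subsequential weak limit via its correlation functions.

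For tightness I would appeal to Kallenberg's criterion, which reduces tightness of a sequence of point processes on a locally compact Polish space to the uniform intensity bound $\sup_N \mathbb{E}[M^N(U)] < \infty$ for every relatively compact open set $U \subset E$. Since $K^N(x,x)$ is a $2 \times 2$ skew-symmetric matrix, the one-point correlation function of $M^N$ with respect to $\lambda_N$ is $\rho_1^N(x) = \operatorname{Pf} K^N(x,x) = K^N_{12}(x,x)$, so $\mathbb{E}[M^N(U)] = \int_U K^N_{12}(x,x)\, d\lambda_N(x)$. The vague convergence $\lambda_N \to \lambda$ forces $\sup_N \lambda_N(U) < \infty$ (pick a continuous, compactly supported $\phi$ with $\mathbf{1}_U \leq \phi$ and use $\lambda_N(U) \leq \int \phi\, d\lambda_N \to \int \phi\, d\lambda$), while uniform convergence of $K^N$ on $\overline{U}$ gives $\sup_N \sup_{x \in \overline{U}} |K^N_{12}(x,x)| < \infty$, yielding the required intensity bound.

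Next, for any subsequential weak limit $M^{N_j} \Rightarrow M'$, I would identify $M'$ through its mixed moments. For disjoint open sets $B_1, \dots, B_k$ with compact closures and $\lambda(\partial B_i) = 0$, the disjoint-product formula for correlation functions gives
\begin{equation*}
\mathbb{E}\bigl[M^{N_j}(B_1) \cdots M^{N_j}(B_k)\bigr] = \int_{B_1 \times \cdots \times B_k} \operatorname{Pf}[K^{N_j}(x_i, x_j)]_{i,j=1}^k \, d\lambda_{N_j}^k(\vec x).
\end{equation*}
The integrand converges uniformly to the Pfaffian built from $K$ on this closure, and $\lambda_{N_j}^k \to \lambda^k$ vaguely, so the right side converges to the analogous integral against $d\lambda^k$. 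Hadamard's inequality for Pfaffians, $|\operatorname{Pf}[K^N(x_i,x_j)]_{i,j=1}^n|^2 = |\det[K^N(x_i,x_j)]_{i,j=1}^n| \leq (2nC^2)^n$ for any uniform $L^\infty$ bound $C$ on the entries of $K^N$ over a fixed enlargement of $\bigcup_i B_i$, combined with $\sup_N \lambda_N(U) < \infty$, yields $\sup_N \mathbb{E}[M^N(U)^k] < \infty$ for every $k$. This uniform $L^{k+1}$ bound supplies the uniform integrability needed to upgrade the moment identity above from $M^{N_j}$ to $M'$. A standard symmetrization and monotone class argument extends the resulting identity from disjoint products to general bounded Borel subsets of $E^k$, showing that the $k$-th correlation function of $M'$ with respect to $\lambda$ equals $\operatorname{Pf}[K(x_i, x_j)]_{i,j=1}^k$. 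Hence $M'$ is a Pfaffian point process with kernel $K$ and reference measure $\lambda$.

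Part (3) of Proposition \ref{prop:basic properties Pfaffian point process} then implies every subsequential limit has the same law, so combined with tightness I obtain $M^N \Rightarrow M$ for a unique Pfaffian point process $M$ with the asserted kernel and reference measure, giving both existence of $M$ and the claimed convergence. The main technical obstacle is the uniform integrability step: the correlation functions $\operatorname{Pf}[K^N(x_i,x_j)]_{i,j=1}^n$ must be controlled uniformly in $N$ with summable-in-$n$ estimates across all $n$, and Hadamard's bound on Pfaffians together with the vague-convergence-driven mass bound on $\lambda_N$ is what ultimately makes the dominated-convergence passage to the limit work.
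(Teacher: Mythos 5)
Your overall strategy---uniform first-intensity bounds for tightness, then identification of subsequential limits by passing moment formulas to the limit using uniform kernel convergence on compacts, vague convergence of $\lambda_N$ (giving $\sup_N\lambda_N(U)<\infty$), and Hadamard-type bounds on Pfaffians for uniform integrability---is the same method-of-moments argument the paper uses. The paper simply packages the tightness-plus-identification machinery into the cited \cite[Proposition 2.2]{dimitrov2024airy} together with Lemma \ref{lem:correlation function}, after verifying precisely the moment convergence that you compute by hand.

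Two steps as written, however, have genuine gaps. First, to deduce $\mathbb{E}[M^{N_j}(B_1)\cdots M^{N_j}(B_k)]\to\mathbb{E}[M'(B_1)\cdots M'(B_k)]$ from $M^{N_j}\Rightarrow M'$ you need the $B_i$ to be continuity sets for the \emph{limit}, i.e. $M'(\partial B_i)=0$ almost surely; the hypothesis $\lambda(\partial B_i)=0$ does not give this a priori, because you do not yet know that the intensity of $M'$ is absolutely continuous with respect to $\lambda$. This is exactly why the paper's argument introduces a countable exceptional set $T^{\infty}$ depending on the subsequential limit itself and restricts to rectangles in $\mathcal{I}_{T^{\infty}}$. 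Second, the products $M(B_1)\cdots M(B_k)$ over pairwise \emph{disjoint} sets only determine the factorial moment measure $\mu_k$ off the grand diagonal of $E^k$, and no symmetrization or monotone class argument can reach the diagonal from such products. Since $\lambda$ is an arbitrary locally finite measure (possibly with atoms), $\lambda^k$ may charge the diagonal, so the identity $\rho_k=\operatorname{Pf}[K(x_i,x_j)]$ is not established $\lambda^k$-a.e. by your argument. One needs the full factorial moments $\mathbb{E}\bigl[\prod_j M(A_j)!/(M(A_j)-n_j)!\bigr]=\mu_n(A_1^{n_1}\times\cdots\times A_m^{n_m})$ with general $n_j$, which is what (\ref{eq:FactPfaff}) supplies for the prelimit processes and what Lemma \ref{lem:correlation function} requires as input. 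Both gaps are repairable along the lines just indicated, but as stated the identification of the limit is incomplete.
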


For future use we recall the following result, which provides a bound of the Pfaffian of a kernel in terms of entry-wise bounds.
\begin{lemma}\label{lem:HadamardPf}\cite[Lemma 2.5]{BBCS} Let $K: \mathbb{R}^2 \rightarrow \mathrm{Mat}_2(\mathbb{C})$ be a skew-symmetric kernel. Assume that there exist constants $C> 0$ and $a > b \geq 0$ such that 
$$|K_{11}(x,y)| \leq Ce^{-ax - ay}, \hspace{2mm} |K_{12}(x,y)| \leq Ce^{-ax +by}, \hspace{2mm} |K_{22}(x,y)| \leq C e^{bx + by}.$$
Then, for all $n \in \mathbb{N}$
$$\left|\operatorname{Pf} \left[ K(x_i, x_j) \right]_{i,j = 1}^n \right| \leq (2n)^{n/2} C^n \prod_{i = 1}^ne^{-(a-b)x_i}.$$
\end{lemma}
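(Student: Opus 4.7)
The plan is to peel off exponential factors via a diagonal conjugation, reducing the bound to a matrix with uniformly bounded entries on which Hadamard's inequality applies cleanly.

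First, I would write the $2n \times 2n$ matrix $M := [K(x_i,x_j)]_{i,j=1}^n$ in the factored form $M = D N D$, where $D$ is the diagonal matrix defined by $D_{2i-1,2i-1} = e^{-ax_i}$ and $D_{2i,2i} = e^{bx_i}$ for $i = 1,\dots,n$, and $N := D^{-1} M D^{-1}$. A direct calculation on each of the four entry types shows the hypotheses give $|N_{pq}| \leq C$ uniformly: e.g.\ $|N_{(2i-1),(2j)}| = |K_{12}(x_i,x_j)| \cdot e^{ax_i - bx_j} \leq C$, and similarly for the other three cases. Since $M$ is skew-symmetric and $D$ is symmetric and invertible, $N$ is skew-symmetric as well.

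Next, I would invoke the standard Pfaffian transformation identity $\operatorname{Pf}(B^T A B) = \det(B)\,\operatorname{Pf}(A)$, applied with $B = D$ (so $B^T = B$). This yields
\[
\operatorname{Pf}(M) \,=\, \det(D)\,\operatorname{Pf}(N),\qquad |\det(D)| \,=\, \prod_{i=1}^n e^{-ax_i} \cdot e^{bx_i} \,=\, \prod_{i=1}^n e^{-(a-b)x_i}.
\]

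Finally, I would bound $|\operatorname{Pf}(N)|$ via $|\operatorname{Pf}(N)|^2 = |\det(N)|$ and Hadamard's inequality: each of the $2n$ rows of $N$ consists of $2n$ entries bounded in absolute value by $C$, so has Euclidean norm at most $\sqrt{2n}\,C$, and Hadamard gives $|\det(N)| \leq (2n)^n C^{2n}$, hence $|\operatorname{Pf}(N)| \leq (2n)^{n/2} C^n$. Combining with the formula for $|\det(D)|$ above gives exactly the claimed estimate. There is no real obstacle here beyond correctly setting up the diagonal factorization and invoking the Pfaffian congruence identity; the skew-symmetry of $N$ comes for free from the symmetry of $D$, and the entrywise bound $|N_{pq}| \leq C$ is built precisely so that Hadamard applies in a dimension-only fashion.
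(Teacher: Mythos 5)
Your proof is correct. The paper does not supply its own argument for this lemma (it simply cites \cite[Lemma 2.5]{BBCS}), but what you wrote is exactly the standard proof and uses precisely the tools the paper records elsewhere: the congruence identity (\ref{eq:conjugation of Pfaffian}) applied with the diagonal matrix $D$, the relation $\mathrm{Pf}(N)^2=\det(N)$ from (\ref{eq: det to Pfaf}), and Hadamard's inequality (\ref{eq: Hadamard}); the entrywise bounds $|N_{pq}|\leq C$ (including the $K_{21}$ case via skew-symmetry) and the computation $\det(D)=\prod_{i=1}^n e^{-(a-b)x_i}$ all check out.
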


In the remainder of this section we consider a specific setup, which is suitable for the applications we have in mind, and turn to explaining that first.
\begin{definition} \label{def:setup slices}
Suppose $E =\mathbb{R}^2$. We fix $t_1 < \cdots < t_r$ and set $\mathcal{T} = \{t_1, \dots, t_r\}$. If $\nu = (\nu_{t_1}, \dots, \nu_{t_r})$ is an $r$-tuple of locally finite measures on $\mathbb{R}$, we define the (necessarily locally finite) measure $\mu_{\mathcal{T},\nu}$ on $\mathbb{R}^2$ by
\begin{equation}\label{eq: muTnu}
\mu_{\mathcal{T},\nu}(A) = \sum_{t \in \mathcal{T}} \nu_t(A_{t}), \mbox{ where } A_{t} = \{ y \in \mathbb{R}: (t,y) \in A\}.
\end{equation}
Below we consider Pfaffian point processes on $E$ with reference measures of the form $\mu_{\mathcal{T},\nu}$. Note that only the values of the kernels on $ (\mathcal{T} \times \mathbb{R})^2$ matter as $\mathcal{T} \times \mathbb{R}$ contains the support of $\mu_{\mathcal{T},\nu}$.
\end{definition}

The following result shows that the measures in Definition \ref{def:setup slices} behave well under projections to a fixed $t \in \mathcal{T}$. Its proof is given in Section \ref{SectionB6}.
\begin{lemma}\label{lem:LemmaSlice} Assume the same notation as in Definition \ref{def:setup slices}, and let $M$ be a Pfaffian point process with correlation kernel $K$ and reference measure $\mu_{\mathcal{T},\nu}$. Fix $t \in \mathcal{T}$ and consider the random measure on $\mathbb{R}$, given by $M^t(A) := M(\{t\} \times A)$. Then, $M^t$ is a Pfaffian point process on $\mathbb{R}$ with correlation kernel $K^t(x,y) = K(t,x; t,y)$ and reference measure $\nu_t$.
\end{lemma}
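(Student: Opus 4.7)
The plan is to verify Definition \ref{def:def of Pfaffian point process} for $M^t$ directly. First note that for any bounded Borel set $A \subseteq \mathbb{R}$ the set $\{t\} \times A$ is a bounded Borel subset of $\mathbb{R}^2$, so $M^t(A) = M(\{t\} \times A) \in \mathbb{Z}_{\geq 0}$ and $M^t$ is a point process on $\mathbb{R}$. The main task is then to compute its $n$-th correlation function with respect to $\nu_t$ and match it with $\operatorname{Pf}[K^t(x_i,x_j)]_{i,j=1}^n$.

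To do this I would pick a sequence $\{X_i\}_{i \geq 1}$ of random elements in $\overline{E}$ that forms $M$ via (\ref{FormX}) (whose existence is guaranteed by the discussion in Section \ref{Section5.1}), and then form $M^t$ by the sequence $\{X_i^{\mathrm{sec}}\}_{i \geq 1}$ defined by setting $X_i^{\mathrm{sec}}$ equal to the second coordinate of $X_i$ when $X_i \in \{t\} \times \mathbb{R}$, and $X_i^{\mathrm{sec}} = \partial$ otherwise. Then the identity
\[
M^t_n(B_1 \times \cdots \times B_n) = M_n\bigl((\{t\} \times B_1) \times \cdots \times (\{t\} \times B_n)\bigr)
\]
follows immediately from (\ref{eq: Mn}) applied to each side. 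Taking expectations and using the definition of correlation functions for $M$, the mean $\mu_n^t$ of $M_n^t$ satisfies
\[
\mu_n^t(B_1 \times \cdots \times B_n) = \int_{(\{t\}\times B_1)\times \cdots \times (\{t\}\times B_n)} \operatorname{Pf}\bigl[K((s_i,y_i),(s_j,y_j))\bigr]_{i,j=1}^n \, d\mu_{\mathcal{T},\nu}^n.
\]

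The final step is to unpack the product measure $\mu_{\mathcal{T},\nu}^n$ on $(\{t\} \times \mathbb{R})^n$. By (\ref{eq: muTnu}) the restriction of $\mu_{\mathcal{T},\nu}$ to $\{t\} \times \mathbb{R}$ is the pushforward of $\nu_t$ under $x \mapsto (t,x)$, so the above integral simplifies to
\[
\int_{B_1 \times \cdots \times B_n} \operatorname{Pf}\bigl[K((t,x_i),(t,x_j))\bigr]_{i,j=1}^n \, d\nu_t^n(x_1,\dots,x_n) = \int_{B_1 \times \cdots \times B_n} \operatorname{Pf}\bigl[K^t(x_i,x_j)\bigr]_{i,j=1}^n \, d\nu_t^n.
\]
Since measurable rectangles form a $\pi$-system generating the Borel $\sigma$-algebra on $\mathbb{R}^n$, the identity of the two $\sigma$-finite measures on product sets extends to all Borel sets by a standard $\pi$--$\lambda$ argument. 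This identifies $\operatorname{Pf}[K^t(x_i,x_j)]_{i,j=1}^n$ as the $n$-th correlation function of $M^t$ with respect to $\nu_t$, completing the verification that $M^t$ is a Pfaffian point process with kernel $K^t$ and reference measure $\nu_t$.

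There is no genuine mathematical obstacle here; the entire content is measure-theoretic bookkeeping, and the only point that requires a modicum of care is the identification of $\mu_{\mathcal{T},\nu}^n$ restricted to $(\{t\}\times\mathbb{R})^n$ with the pushforward of $\nu_t^n$ under $(x_1,\dots,x_n) \mapsto ((t,x_1),\dots,(t,x_n))$, which is immediate from the definition of $\mu_{\mathcal{T},\nu}$ in (\ref{eq: muTnu}).
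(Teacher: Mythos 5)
Your proof is correct, and it takes a slightly different (though closely related) route from the paper's. The paper computes the joint factorial moments of $M^t$ over pairwise disjoint bounded Borel sets: by (\ref{eq:FactPfaff}) applied to the sets $\{t\}\times A_j$ these equal the corresponding Pfaffian integrals against $\nu_t^n$, and the conclusion then follows from the converse statement, Lemma \ref{lem:correlation function}, applied with $\mathcal{I}$ the bounded Borel subsets of $\mathbb{R}$. You instead verify the defining identity (\ref{eq:RND determinantal}) for $M^t$ directly: you exhibit a forming sequence for $M^t$, identify $M_n^t$ on rectangles with the restriction of $M_n$ to the slice $(\{t\}\times\mathbb{R})^n$, unpack $\mu_{\mathcal{T},\nu}$ on that slice as the pushforward of $\nu_t$, and extend from rectangles to all Borel sets by a $\pi$--$\lambda$ argument. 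Your route is more self-contained in that it bypasses Lemma \ref{lem:correlation function}, but the extension step you wave at deserves one more sentence: to apply uniqueness of measures one should use \emph{bounded} rectangles as the exhausting $\pi$-system, and one needs that $A\mapsto \int_A \operatorname{Pf}[K^t(x_i,x_j)]\,\nu_t^n(dx)$ is a genuine $\sigma$-finite nonnegative measure. Both points are routine here: $\operatorname{Pf}[K(\cdot\,;\cdot)]$ is $\mu_{\mathcal{T},\nu}^n$-a.e. nonnegative, being the density of the nonnegative measure $\mu_n$, and local boundedness of $K$ together with local finiteness of $\nu_t$ gives finiteness on bounded rectangles. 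With that said, the argument is sound and proves the lemma.
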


The next result provides a sufficient condition for a sequence of point processes in the setup of Definition \ref{def:setup slices} to converge weakly. Its proof is given in Section \ref{SectionB7}.
\begin{proposition}\label{prop: conv of Pfaffian point processes 1}
Assume the same notation as in Definition \ref{def:setup slices}. Let $a_{t_i}(N) > 0$, $b_{t_i}(N) \in \mathbb{R}$ for $i = 1, \dots, r$ be sequences such that $\lim_N a_{t_i}(N) = 0$. Let $\nu_{t_i}(N)$ be the counting measure on $a_{t_i}(N) \cdot \mathbb{Z} + b_{t_i}(N)$ multiplied by $a_{t_i}(N)$, and set $\nu(N) = (\nu_{t_1}(N), \dots, \nu_{t_r}(N))$. In addition, let $M^N$ be a sequence of Pfaffian point processes with correlation kernels $K^N$ and reference measures $\mu_{\mathcal{T}, \nu(N)}$. Finally, let $U \subset \mathbb{R}^2$ be an open set such that $\mathbb{R}^2 \setminus U$ has zero Lebesgue measure.

Suppose that there exists a skew-symmetric locally bounded measurable function $K:(\mathcal{T}\times\mathbb{R})^2\rightarrow\mathrm{Mat}_2(\mathbb{C})$ such that for each $i,j \in \{1,2\}$, $(x,y) \in U$ and $s,t \in \mathcal{T}$, and any sequences $x_N \in a_s(N) \cdot \mathbb{Z} + b_s(N)$, $y_N \in a_t(N) \cdot \mathbb{Z} + b_t(N)$ with $\lim_N x_N = x$ and $\lim_N y_N = y$ we have
\begin{equation}\label{eq:limitProp}
\lim_{N \rightarrow \infty}K^N_{ij}(s,x_N;t,y_N) = K_{ij}(s,x;t,y).
\end{equation}
Suppose also that for each $A>0$, $i,j \in \{1,2\}$ and $s,t \in \mathcal{T}$
\begin{equation}\label{eq:limitProp2}
\limsup_{N \rightarrow \infty}\sup_{x\in[-A,A] \cap a_s(N) \cdot \mathbb{Z} + b_s(N)} \sup_{y\in[-A,A] \cap a_t(N) \cdot \mathbb{Z} + b_t(N)} \left|K^N_{ij}(s,x;t,y)\right| < \infty.
\end{equation}
Then, there exists a Pfaffian point process $M$ on $E$ with correlation kernel $K$ and reference measure $\mu_{\mathcal{T}} \times \lambda$ (here $\mu_{\mathcal{T}}$ is the counting measure on $\mathcal{T}$ and $\lambda$ is the Lebesgue measure on $\mathbb{R}$). Moreover, $M^N$ converge weakly to $M$ as random elements in $(S, \mathcal{S})$.
\end{proposition}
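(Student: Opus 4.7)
The overall strategy is to first establish tightness of $\{M^N\}$ in the vague topology on $(S, \mathcal{S})$, then identify any subsequential limit $M$ as a Pfaffian point process with correlation kernel $K$ and reference measure $\mu_{\mathcal{T}} \times \lambda$, and finally invoke part (3) of Proposition \ref{prop:basic properties Pfaffian point process} to conclude that all subsequential limits coincide -- giving weak convergence of the full sequence. Tightness reduces, by a standard criterion for random measures, to showing that $\{M^N(B)\}_{N \geq 1}$ is tight in $\mathbb{R}$ for every bounded Borel $B \subset E$. Since $\mathbb{E}[M^N(B)] = \int_B K^N_{12}(s,x; s,x)\, d\mu_{\mathcal{T},\nu(N)}$, the local uniform bound (\ref{eq:limitProp2}) together with uniform boundedness of $\mu_{\mathcal{T},\nu(N)}(B)$ (a consequence of $a_{t_i}(N) \to 0$) and Markov's inequality yield the required tightness.

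To identify a subsequential limit $M$, the plan is to prove that for any $n \in \mathbb{N}$ and disjoint bounded Borel sets $B_1, \dots, B_n \subset E$ whose boundaries have zero $\mu_{\mathcal{T}}\times\lambda$-measure,
\begin{equation*}
\int_{B_1 \times \cdots \times B_n} \operatorname{Pf}\left[K^N(s_i, x_i; s_j, x_j)\right]_{i,j=1}^n d\mu_{\mathcal{T},\nu(N)}^n \longrightarrow \int_{B_1 \times \cdots \times B_n} \operatorname{Pf}\left[K(s_i, x_i; s_j, x_j)\right]_{i,j=1}^n d(\mu_{\mathcal{T}}\times\lambda)^n.
\end{equation*}
The left side equals the factorial moment $\mathbb{E}[M^N(B_1) \cdots M^N(B_n)]$ (using simpleness of $M^N$ from Proposition \ref{prop:basic properties Pfaffian point process}(1) and disjointness of the $B_i$), while the right side is the corresponding quantity for a Pfaffian point process with kernel $K$ and reference measure $\mu_{\mathcal{T}} \times \lambda$. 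Combining this moment convergence with weak convergence of $M^N$ along a subsequence, and a uniform-integrability argument supplied by local $L^\infty$ control of Pfaffians, will identify the finite-dimensional distributions of $M$ and pin down $M$ as the desired Pfaffian point process.

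The main obstacle is establishing the above integral convergence, since hypothesis (\ref{eq:limitProp}) only gives pointwise convergence of $K^N$ to $K$ on the open set $U$ (not uniform on compacts), while the integration is against discrete measures whose atoms may lie near the measure-zero bad set $E \setminus U$. To overcome this, I would exploit that $\mathbb{R}^2 \setminus U$ has zero Lebesgue measure, so the set $V_n \subset E^n$ of tuples $((s_i, x_i))_{i=1}^n$ for which all pairs $(x_i, x_j)$ lie in $U$ has full $(\mu_{\mathcal{T}}\times\lambda)^n$-measure on any bounded domain. By continuity of the Pfaffian in its entries and (\ref{eq:limitProp}), one gets $\operatorname{Pf}[K^N] \to \operatorname{Pf}[K]$ pointwise on $V_n$ along lattice tuples, while (\ref{eq:limitProp2}) together with Hadamard-style Pfaffian bounds as in Lemma \ref{lem:HadamardPf} provides a uniform $L^\infty$ bound on any bounded domain. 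A Riemann-sum dominated convergence argument, using the vague convergence $\mu_{\mathcal{T},\nu(N)}^n \to (\mu_{\mathcal{T}}\times\lambda)^n$, then yields the integral convergence -- the contribution of lattice atoms in $B_1\times\cdots\times B_n$ falling outside $V_n$ being negligible since their relative $\mu_{\mathcal{T},\nu(N)}^n$-density vanishes as $a_{t_i}(N) \to 0$.
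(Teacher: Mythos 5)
Your proposal is correct in substance, and the heart of it -- the convergence of the integrated Pfaffians against the rescaled lattice measures -- is handled exactly as in the paper: rewrite the lattice sum as a Lebesgue integral of the kernel evaluated at the rounded point $y^N(x)$, use (\ref{eq:limitProp}) (which is deliberately phrased for lattice sequences converging to a point of $U$) to get pointwise convergence of the Pfaffian for a.e.\ $x$, and use (\ref{eq:limitProp2}) together with the Hadamard bound (\ref{eq: HadamardPfaffian}) to dominate. Where you diverge is in the surrounding architecture: the paper feeds the factorial-moment convergence and the $C^n n^{n/2}$ growth bound directly into the packaged result \cite[Proposition 2.2]{dimitrov2024airy}, which outputs weak convergence to a point process with the prescribed factorial moments, and then applies Lemma \ref{lem:correlation function} to recognize the limit as Pfaffian; you instead propose the classical route of tightness (via first-moment bounds and Markov), extraction of subsequential limits, and identification of the limit through its moments. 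The two are essentially equivalent -- the cited proposition is itself proved along your lines -- but your version must confront two technicalities that the black box absorbs: (i) to transfer $\mathbb{E}[\prod_i M^{N_k}(B_i)]$ to the subsequential limit you need the $B_i$ to be stochastic continuity sets for the as-yet-unidentified limit (the paper handles this with a countable exceptional set $T^{\infty}$ of coordinates), and (ii) identifying the limit's law from its factorial moments requires the Carleman-type determinacy supplied by the $C^n n^{n/2}$ bound, i.e.\ Lemma \ref{lem:correlation function}, not just Proposition \ref{prop:basic properties Pfaffian point process}(3). Finally, your closing remark that lattice tuples outside $V_n$ have ``vanishing relative density'' is not the right mechanism (and need not hold for an arbitrary measure-zero $U^c$): what saves the argument is that convergence of $f^N(y^N(x))$ is only needed for Lebesgue-a.e.\ \emph{limit} point $x$, irrespective of where the approximating lattice points sit.
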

\begin{remark}\label{rem: Conv Prop} Roughly, Proposition \ref{prop: conv of Pfaffian point processes 1} says that pointwise convergence of the correlation kernels, and vague convergence of the reference measures implies weak convergence of the point processes; however, under fairly strong assumptions. The most restrictive assumption of the proposition is that $\nu_t^N$ are rescaled counting measures on a scaled lattice, and consequently converge quite regularly to the Lebesgue measure $\lambda$. We believe that without any additional assumptions on $K$ one needs to impose some restrictions on $\nu_t^N$, and we have made the above choice as it meets our later needs. 

An analogue of the proposition for determinantal point processes was recently proved in \cite[Proposition 2.18]{dimitrov2024airy}. The argument there works under the assumption that the limiting kernel $K$ is {\em continuous} and the convergence of the kernels is {\em uniform over compacts}; however, does not assume any special structure for the $\nu_t^N$. Later in the paper we will analyze a certain sequence of kernels $K^N$, for which the limiting kernel $K^{\infty}$ is {\em not} continuous, and for which the convergence is {\em not} uniform over compact sets. One of the advantages of Proposition \ref{prop: conv of Pfaffian point processes 1} is that we only need to check (\ref{eq:limitProp}) on an open set $U$ of full Lebesgue measure, as opposed to all of $\mathbb{R}^2$, which allows us to avoid verifying the statement for certain pathological points $(x,y) \in \mathbb{R}^2$.
\end{remark}

%
%
\subsection{Definition and properties}\label{Section5.3} In this section we introduce a class of {\em Pfaffian Schur processes} and discuss some of their properties. Our models are special cases of the ones introduced in \cite{BR05}, which in turn are Pfaffian analogs of the determinantal Schur processes in \cite{OR03}. For more background on Pfaffian Schur processes we refer to \cite{BBCS} and \cite{BR05}.

A {\em partition} is a non-increasing sequence of non-negative integers $\lambda=(\lambda_1\geq\lambda_2\geq\dots\geq\lambda_k \geq \cdots)$ with finitely many non-zero elements. For any partition $\lambda$, we denote its {\em weight} by $\vert\lambda\vert=\sum_{i=1}^{\infty}\lambda_i$. There is a single partition of weight $0$, which we denote by $\emptyset$.  Given two partitions $\lambda$ and $\mu$, we write $\mu\preceq\lambda$ or $\lambda \succeq \mu$ and say that $\mu$ and $\lambda$ {\em interlace} if $\lambda_1\geq\mu_1\geq\lambda_2\geq\mu_2\geq\dots$.

Given finitely many variables $x_1, \dots, x_n$, we define the {\em skew Schur polynomials} via
\begin{equation}\label{def: skew Schur}
s_{\lambda/ \mu}(x_1, \dots, x_n) = \sum_{\mu = \lambda^{0} \preceq  \lambda^{1} \preceq \cdots \preceq \lambda^{n} = \lambda} \prod_{i = 1}^n x_i^{|\lambda^{i}| - |\lambda^{i-1}|}.
\end{equation}
When $\mu = \emptyset$ we drop it from the notation, and write $s_{\lambda}$, which is then the Schur polynomial indexed by $\lambda$. We refer the interested reader to \cite[Section 2]{BG16} for a friendly introduction to Schur symmetric polynomials and \cite[Chapter I]{Mac} for a comprehensive textbook treatment. We also define the {\em boundary monomial} in a single variable $c$ by
\begin{equation}\label{def:DefTau}
\tau_\lambda(c) =c^{\sum_{j=1}^{\infty}(-1)^{j-1}\lambda_j}=c^{\lambda_1-\lambda_2+\lambda_3-\lambda_4+\dots}.
\end{equation}

With the above notation in place, we can define our main object of interest.
\begin{definition}\label{def: Pfaffian Schur} Fix $M, N \in \mathbb{N}$, $c,a_1,\dots,a_M,b_1,\dots,b_N\in [0,\infty)$ such that $cb_j,a_ib_j,b_jb_k<1$ for all $i\in\llbracket1,M\rrbracket$ and $j,k\in\llbracket1,N\rrbracket$, $j\neq k$. With these parameters we define the {\em Pfaffian Schur process} to be the probability distribution on sequences of partitions $\lambda^0,\dots,\lambda^M$, given by
\begin{equation}\label{eq:def of Pfaffian Schur}
\begin{split}
&\mathbb{P}\left(\lambda^{0},\dots,\lambda^{M}\right)=\frac{1}{Z}\cdot \tau_{\lambda^{0}}(c) \cdot s_{\lambda^{1}/\lambda^{0}}(a_1)\cdots s_{\lambda^M/\lambda^{M-1}}(a_M) \cdot s_{\lambda^{M}}(b_1,\dots,b_N),
\end{split}
\end{equation}
where the normalization constant was computed explicitly in \cite[Proposition 2.1]{BR05}:
$$ Z =\prod_{i\in\llbracket1,N\rrbracket}\frac{1}{1-cb_i} \prod_{i\in\llbracket1,M\rrbracket,j\in\llbracket1,N\rrbracket}\frac{1}{1-a_ib_j} \prod_{1\leq i<j\leq N}\frac{1}{1-b_ib_j}.$$
In this paper we exclusively work with the above measures when
\begin{equation}\label{HomPar}
a_1=\dots=a_M=b_1=\dots=b_N=q \in (0,1) \mbox{ and } c \in (q, q^{-1}).
\end{equation}
\end{definition}

For fixed $m \in \mathbb{N}$ and $0\leq M_1<\dots<M_m\leq M$, we consider the point process $\mathfrak{S}(\lambda)$ on $\{1, \dots,m\} \times \mathbb{Z} \subset \mathbb{R}^2$ 
\begin{equation}\label{eq:point process}
\mathfrak{S}(\lambda)(A) =\sum_{i \geq 1} \sum_{j=1}^m{\bf 1} \{(j,\lambda^{M_j}_i-i) \in A \}.
\end{equation} 
Our first key result, a special case of \cite[Theorem 3.3]{BR05}, states that $\mathfrak{S}(\lambda)$ in (\ref{eq:point process}) is a Pfaffian point process on $\mathbb{R}^2$. Below we let $C_r$ be the positively oriented zero-centered circle of radius $r > 0$.
\begin{lemma}\label{lem:PSP as PPP}
Assume the same notation as in Definition \ref{def: Pfaffian Schur}, and let $\mathfrak{S}(\lambda)$ be as in (\ref{eq:point process}). Then, $\mathfrak{S}(\lambda)$ is a Pfaffian point process on $\mathbb{R}^2$ with reference measure given by the counting measure on $\llbracket1,m\rrbracket\times\mathbb{Z}$, and with correlation kernel 
$\kgeo:(\llbracket1,m\rrbracket\times\mathbb{Z})^2\rightarrow\operatorname{Mat}_2(\mathbb{C})$ given as follows. For each $u,v\in\llbracket1,m\rrbracket$ and $x,y \in \mathbb{Z}$
\begin{equation*}
\begin{split}
\kgeo_{11}(u,x ; v,y) =& \frac{1}{(2\pi \im)^{2}}\oint_{C_{r_{1}}} dz \oint_{C_{r_{1}}} d w\frac{z-w}{(z^{2}-1)(w^{2}-1)(zw-1)} \cdot 
(1-c/z)(1-c/w) \cdot z^{-x }w^{-y }  \\
& \times (1-q/z)^{M_u+N} (1-q/w)^{M_v+N}  (1-q z)^{-N}(1-q w)^{-N}
\end{split}
\end{equation*}
where $r_{1} \in (1, q^{-1})$. In addition,
\begin{equation*}
\begin{split}
\kgeo_{12}(u,x ; v,y) = - \kgeo_{21}(v,y; u,x) =& \frac{1}{(2\pi \im)^{2}}\oint_{C_{r^z_{12}}} d  z \oint_{C_{r^w_{12}}}  dw  \frac{zw-1}{z(z-w)(z^{2}-1)} \cdot \frac{z-c}{w-c} \cdot  z^{-x } w^{y }  \\
&  \times     (1-q/z)^{M_u+N} (1-q/w)^{-M_v-N}(1-qz)^{-N}   (1-qw)^{N},
\end{split}
\end{equation*}
where $r^z_{12} \in (1, q^{-1})$, $r^w_{12} > \max(c, q)$, and $r^w_{12} < r^z_{12}$ when $u \geq v$, while $r^z_{12} < r^w_{12}$ when $u < v$. Finally,
\begin{equation*}
\begin{split}
\kgeo_{22}(u,x ;v,y) =& \frac{1}{(2\pi \im)^{2}}\oint_{C_{r_{2}}}  dz \oint_{C_{r_{2}}} dw \frac{z-w}{zw-1} \cdot \frac{1}{(z-c)(w-c)} \cdot z^{x }w^{y} \\
& 
\times  (1-q/z)^{-M_u-N}(1-q/w)^{-M_v-N} (1-qz)^N (1-qw)^N,
\end{split}
\end{equation*}
where $r_{2} > \max(c,q,1)$.
\end{lemma}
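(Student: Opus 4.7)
The plan is to deduce the statement as a direct specialization of \cite[Theorem 3.3]{BR05}, which identifies the Pfaffian Schur process from Definition \ref{def: Pfaffian Schur} as a Pfaffian point process on $\llbracket 1, m \rrbracket \times \mathbb{Z}$ and expresses the three blocks of its correlation kernel as double contour integrals. In the notation of that theorem, the integrand carries (i) rational factors of the form $(z-w)/(zw-1)$, $(z-c)/(w-c)$, etc., encoding the Cauchy and boundary structure; (ii) monomials $z^{\pm x}, w^{\pm y}$ carrying the spatial variables; and (iii) generating functions in the specialization parameters $a_i$ and $b_j$, which appear as products like $\prod_{i \leq M_u}(1-a_i/z)$ and $\prod_{j}(1-q z)^{\pm 1}$ depending on the block.

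First I would reconcile conventions between \cite{BR05} and the present paper, paying attention to orientation of contours and the fact that we are using partitions $\lambda^{M_u}$ shifted by $-i$ (as in (\ref{eq:point process})), which corresponds precisely to the standard passage from partitions to particles and only modifies the $z^{-x}, w^{-y}$ monomials by an overall shift that is absorbed by residues at $z = \pm 1$. Next I would apply the homogeneous specialization (\ref{HomPar}): with $a_1 = \cdots = a_M = q$ each product $\prod_{i=1}^{M_u} (1 - a_i/z)$ collapses to $(1 - q/z)^{M_u}$, and with $b_1 = \cdots = b_N = q$ the $b$-specializations contribute a factor $(1 - q/z)^{N}(1-qz)^{-N}$ (for the $\kgeo_{11}$ block), with analogous collapses for the other two blocks. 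The combined exponents $M_u + N$ and $-M_v - N$ appearing in the statement match this collapse exactly, with the sign of the exponent determined by whether the $b$-variables appear in the Cauchy kernel of type $\prod_j (1-b_j/z)$ or $\prod_j(1-b_j z)^{-1}$.

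Second, I would verify the contour conditions. The radii in \cite[Theorem 3.3]{BR05} must be chosen so that the $(1-q/z)$ factors are integrated around $|z|>1$ (capturing the desired residues at the unit circle), the $(1-qz)^{-N}$ poles at $z = q^{-1}$ lie outside, and the boundary pole at $z = c$ is placed on the correct side as dictated by the Pfaffian structure at the origin. The requirement $r_1 \in (1, q^{-1})$ puts $C_{r_1}$ between the unit and the $q^{-1}$ circles, as needed for $\kgeo_{11}$; the condition $r_2 > \max(c,q,1)$ for $\kgeo_{22}$ reflects that the $w$-contour must encircle the pole at $c$ and that no $(1-qz)^{-1}$ poles are present in this block. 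The asymmetric condition on $r_{12}^z$ and $r_{12}^w$ in $\kgeo_{12}$, with the case distinction on $u$ versus $v$, is the translation of the ``time ordering'' in the extended kernel of \cite{BR05}, which forces one contour to enclose the other depending on the order of the slices $M_u, M_v$.

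The main obstacle is purely bookkeeping: carefully translating the general formulas of \cite[Theorem 3.3]{BR05} (which are phrased for arbitrary Schur specializations and involve slightly different normalizations) into the explicit form above, and double-checking the contour placement in the mixed block $\kgeo_{12}$ against the sign convention implicit in the Pfaffian. I do not anticipate any analytic difficulty beyond manipulation of rational kernels, since no residue or deformation arguments are performed at this stage; the statement is essentially an unpacking of the cited theorem in the homogeneous case.
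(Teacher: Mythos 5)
Your proposal matches the paper's own treatment: the paper disposes of this lemma with a short remark attributing it to \cite[Theorem 3.3]{BR05}, and you correctly identify that as the source and sketch the (routine but fiddly) specialization and bookkeeping required. The one concrete piece you did not anticipate is that the paper does not specialize \cite{BR05} directly; it instead starts from the already-specialized form in \cite[Section 4.2]{BBCS} and then performs the explicit changes of variables $w \mapsto 1/w$ in $K_{12}$ and $z \mapsto 1/z$, $w \mapsto 1/w$ in $K_{22}$ to arrive at the contour conditions stated in the lemma. That change of variables is the genuine content of the verification: it is precisely what produces the factors like $w^{y}$ (rather than $w^{-y}$) in $\kgeo_{12}$, the sign of the exponent on $(1-q/w)$, and the inequality $r^w_{12} > \max(c,q)$ on the $w$-radius; a direct attempt to read off the statement from \cite{BR05} without it would produce a superficially different-looking kernel. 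Your hand-wave about the shift $\lambda_i - i$ being ``absorbed by residues at $z=\pm 1$'' is also not quite the mechanism: the shift is already built into the particle description in \cite{BR05} and costs nothing. None of this is a gap in the approach, just in the detail; with the correct intermediate citation and the stated substitutions the verification goes through as you expect.
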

\begin{remark}
Lemma \ref{lem:PSP as PPP} is originally due to \cite[Theorem 3.3]{BR05}. The above formulation follows from \cite[Section 4.2]{BBCS} after the change of variables $w\mapsto1/w$ within $K_{12}$ and $z\mapsto1/z$, $w\mapsto1/w$ within $K_{22}$.
\end{remark}

The second key statement we require is that the line ensemble formed by the partitions of a Pfaffian Schur process satisfies the half-space interlacing Gibbs property.
\begin{lemma}\label{lem:SchurGibbs} Assume the same notation as in Definition \ref{def: Pfaffian Schur} and let $\mathfrak{L} = \{L_i\}_{i \in \mathbb{N}}$ be given by $L_i(j) = \lambda_i^j$ for $i \in \mathbb{N}, j \in \llbracket 0, M \rrbracket$. Then, $\mathfrak{L}$ is an $\mathbb{N}$-indexed geometric line ensemble on $\llbracket 0, M \rrbracket$ that satisfies the half-space interlacing Gibbs property with parameters $q_i = c^{(-1)^{i}} \cdot q$ for $i \in \mathbb{N}$ from Definition \ref{def:HSIGP}.
\end{lemma}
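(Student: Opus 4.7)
The plan is to verify \eqref{eq:HSIGP} by computing $\mathbb{P}(\{L_i\llbracket 0,b\rrbracket=B_i\}_{i=1}^k\cap A)$ directly from \eqref{eq:def of Pfaffian Schur} and exhibiting the $\{B_i\}$-dependence as the product of reverse geometric random walk weights. First, under the homogeneous specialization \eqref{HomPar} the single-variable skew Schur polynomial in \eqref{def: skew Schur} reduces to $s_{\lambda/\mu}(q)=q^{|\lambda|-|\mu|}\mathbf{1}\{\mu\preceq\lambda\}$, so the Pfaffian Schur measure is supported on interlacing chains $\lambda^0\preceq\lambda^1\preceq\cdots\preceq\lambda^M$. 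This immediately yields $L_i(j-1)\leq L_i(j)$ and $L_i(j-1)\geq L_{i+1}(j)$, confirming that $\mathfrak{L}$ is a geometric line ensemble on $\llbracket 0,M\rrbracket$ with the interlacing property required in Definition \ref{def:HSIGP}.

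Fix $k\in\mathbb{N}$, $b\in\llbracket 0,M\rrbracket$, an increasing path $g:\llbracket 0,b\rrbracket\to\mathbb{Z}$ and $\vec{y}\in\mathfrak{W}_k$ with $\mathbb{P}(A)>0$, together with paths $\{B_i\in\Omega(b,y_i)\}_{i=1}^k$. I would sum $\mathbb{P}(\lambda^0,\dots,\lambda^M)$ from \eqref{eq:def of Pfaffian Schur} over all partition sequences compatible with the conditioning (so $\lambda^j_i=B_i(j)$ for $i\leq k,\,j\leq b$ and $\lambda^j_{k+1}=g(j)$ for $j\leq b$, with the other entries free subject to interlacing). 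Two elementary identities drive the calculation: the telescoping $\prod_{j=1}^{b} q^{|\lambda^j|-|\lambda^{j-1}|}=q^{|\lambda^b|-|\lambda^0|}$ and the row-by-row factorization
\[
\tau_{\lambda^0}(c)\cdot q^{|\lambda^b|-|\lambda^0|}=\prod_{i\geq 1}c^{(-1)^{i-1}\lambda^0_i}q^{\lambda^b_i-\lambda^0_i}.
\]
Restricted to the top $k$ rows and using $B_i(b)=y_i$, the identity $c^{(-1)^{i-1}B_i(0)}=c^{(-1)^{i-1}y_i}\cdot c^{(-1)^i(y_i-B_i(0))}$ gives
\[
\prod_{i=1}^k c^{(-1)^{i-1}B_i(0)}\cdot q^{y_i-B_i(0)}\;=\;\prod_{i=1}^k c^{(-1)^{i-1}y_i}\cdot q_i^{y_i-B_i(0)},\qquad q_i=c^{(-1)^i}q.
\]

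Next I would argue that all remaining factors and summations depend on $(\vec{y},g)$ but not on the interior values of $\{B_i\}$. The only interlacing constraints coupling the top $k$ rows to the rest of the ensemble on $\llbracket 0,b\rrbracket$ are those from $\lambda^{j-1}_{k}\geq\lambda^{j}_{k+1}$, which read $B_k(j-1)\geq g(j)$ for $j\in\llbracket 1,b\rrbracket$; combined with $B_i(j-1)\geq B_{i+1}(j)$ for $i<k$ (interlacing among the top $k$ rows), these are exactly the conditions defining $\ice(B_1,\dots,B_k;g)$ of \eqref{EventInter}. The constraint $\lambda^{j-1}_{k+1}\geq\lambda^{j}_{k+2}$ reads $g(j-1)\geq\lambda^j_{k+2}$ and couples only $g$ to the summed rows $\geq k+2$. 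At the seam, the interlacing $\lambda^b\preceq\lambda^{b+1}$ involves only $\vec{y}=(\lambda^b_1,\dots,\lambda^b_k)$ and $g(b)=\lambda^b_{k+1}$, both pinned by $A$. Summing out the free variables — entries $\lambda^j_i$ with $i\geq k+2,\,j\leq b$ and the entire partitions $\lambda^{b+1},\dots,\lambda^M$ — thus yields
\[
\mathbb{P}(\{L_i\llbracket 0,b\rrbracket=B_i\}_{i=1}^k\cap A)=C(\vec{y},g)\cdot\prod_{i=1}^k q_i^{y_i-B_i(0)}\cdot\mathbf{1}\{\ice(B_1,\dots,B_k;g)\}
\]
for some constant $C(\vec{y},g)\in(0,\infty)$ (finiteness and positivity follow from $0<\mathbb{P}(A)<\infty$).

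Finally, dividing by $\mathbb{P}(A)=\sum_{B'}\mathbb{P}(\{L_i\llbracket 0,b\rrbracket=B_i'\}_{i=1}^k\cap A)$ cancels $C(\vec{y},g)$. By \eqref{eq:BackGeomRW}, a single reverse geometric walk with parameter $q_i$ and terminal value $y_i$ assigns probability proportional to $q_i^{y_i-B_i(0)}$ to the trajectory $B_i$, so by \eqref{eq:accept} the resulting ratio is precisely $\mathbb{P}^{b,\vec{y},\vec{q}_k,g}_{\ice,\operatorname{Geom}}(\cap_{i=1}^k\{Q_i=B_i\})$, proving \eqref{eq:HSIGP}. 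The main obstacle is the bookkeeping in the middle step: one has to confirm carefully that the single coupling $B_k(j-1)\geq g(j)$ is the \emph{only} way the top $k$ rows interact with rows $\geq k+1$ on $\llbracket 0,b\rrbracket$, and that the seam at $j=b$ does not covertly reintroduce a dependence on the interior of the $\{B_i\}$. Once this is checked, the factorization and the $q_i=c^{(-1)^i}q$ identification of the jump parameters follow immediately.
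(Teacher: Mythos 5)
Your proof is correct, and the core algebraic step is the same one the paper uses: rewriting $\tau_{\lambda^0}(c)\cdot q^{|\lambda^b|-|\lambda^0|}$ row by row as $\prod_i c^{(-1)^{i-1}\lambda^b_i}\cdot\prod_i q_i^{\lambda^b_i-\lambda^0_i}$ with $q_i=c^{(-1)^i}q$, so that the $c$-dependence at the origin is absorbed into alternating jump parameters. The logical route differs, though. The paper conditions only once, on the data $(L_1(M),\dots,L_k(M))$ at the terminal time with $k\geq N$; it uses that $s_{\lambda^M}(b_1,\dots,b_N)=0$ unless $\lambda^M_{N+1}=0$, so that all rows of index $>N$ vanish and the ensemble is effectively finite with constant bottom boundary $g\equiv 0$. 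The global conditional law is then read off as $\mathbb{P}^{M,\vec y,\vec q,0}_{\ice,\operatorname{Geom}}$ in one line, and the half-space interlacing Gibbs property for arbitrary $(k,b,g)$ is delegated to Remark \ref{rem:HSIGP3}. You instead verify equation \eqref{eq:HSIGP} of Definition \ref{def:HSIGP} directly for each $(k,b,g)$, marginalizing over the rows of index $\geq k+2$ on $\llbracket 0,b\rrbracket$ and over $\lambda^{b+1},\dots,\lambda^M$. The bookkeeping you flag as the main obstacle is handled correctly: since interlacing only couples adjacent rows, the only interaction between the top $k$ paths and the rest on $\llbracket 0,b\rrbracket$ is $B_k(j-1)\geq g(j)$, and the seam condition $\lambda^b\preceq\lambda^{b+1}$ sees only $(\vec y,g(b))$ and the summed-out rows, so the normalization constant indeed depends only on $(\vec y,g)$. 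What your route buys is self-containedness — it does not need the finiteness of the ensemble, and it effectively re-proves the content of Remark \ref{rem:HSIGP3} rather than citing it — at the cost of the extra marginalization argument that the paper's shortcut avoids. (Both proofs share the implicit requirement $c\in(q,q^{-1})$ so that $q_i\in[0,1)$ as Definition \ref{def:HSIGP} demands; this is satisfied in the regime where the lemma is applied.)
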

\begin{proof} Since $s_{\lambda}(x_1, \dots, x_n) = 0$ if $\lambda_{n+1} > 0$, we see from (\ref{eq:def of Pfaffian Schur}) that $\mathbb{P}$-a.s. $\lambda^j_{k} = 0$ for $j \in \llbracket 0, M \rrbracket$ and $k \geq N+1$. Fixing $M+1$ partitions $\mu^0, \dots, \mu^M$ with $\mu^M_{N+1} = 0$, we see from (\ref{def: skew Schur}), (\ref{def:DefTau}) and (\ref{eq:def of Pfaffian Schur}) that for any $k \geq N$ we have
\begin{equation*}
\begin{split}
&\mathbb{P}(L_i(j) = \mu_i^j \mbox{ for } i \in \llbracket 1, k \rrbracket, j \in \llbracket 0, M \rrbracket \vert L_i(M) = \mu^M_i \mbox{ for } i \in \llbracket 1, k \rrbracket) \\
& \propto c^{\sum_{i \geq 1} \mu^0_{2i - 1} - \mu^0_{2i}} \cdot \prod_{j = 1}^M {\bf 1}\{ \mu^{j-1} \preceq \mu^j \}  q^{|\mu^j| - |\mu^{j-1}|} = \prod_{i \geq 1} c^{(-1)^{i-1}\mu^0_{i}} q^{\mu_i^M - \mu_i^0} \cdot \prod_{j = 1}^M {\bf 1}\{ \mu^{j-1} \preceq \mu^j \},
\end{split}
\end{equation*}
Comparing the last equation with (\ref{eq:BackGeomRW}), we see that the conditional law of $\{L_i\}_{i = 1}^k$, given $L_i(M) = \mu^M_i \mbox{ for } i \in \llbracket 1, k \rrbracket$ is $\mathbb{P}_{\ice, \operatorname{Geom}}^{M, \vec{y}, \vec{q}, g}$ as in Definition \ref{def:interlaceGeom}, where $\vec{q} = (q_1, \dots, q_k)$ with $q_i = c^{(-1)^{i}} \cdot q$, $g(r) = 0$, and $\vec{y} = (y_1, \dots, y_k)$ with $y_i = \mu^M_i$. Here, we implicitly used that $c \in (q,q^{-1})$ so that $q_i \in (0,1)$. As $\mathbb{P}_{\ice, \operatorname{Geom}}^{M, \vec{y}, \vec{q}, g}$ satisfies the half-space interlacing Gibbs property, cf. Remark \ref{rem:HSIGP3}, we conclude the same for $\mathfrak{L}$. 
\end{proof}

%
%
\subsection{Proof of Theorem \ref{thm:MainThm1}}\label{Section5.4} In this section we state the main result we prove about the Pfaffian Schur processes from Definition \ref{def: Pfaffian Schur} -- this is Proposition \ref{prop: FinDimConv} below. Afterwards we use that result together with Theorem \ref{thm:main thm tightness} to prove Theorem \ref{thm:MainThm1}.

We summarize how we scale parameters in the following definition.
\begin{definition}\label{def:ParScale} Fix $q \in (0,1)$ and set
\begin{equation}\label{def:SigmaQ and FQ}
\sigma_q = \frac{q^{1/3} (1 + q)^{1/3}}{1- q} \mbox{ and } f_q = \frac{q^{1/3}}{2 (1 + q)^{2/3}}.
\end{equation}
We fix $\varpi \in \mathbb{R}$ and let $c_N \in (q, q^{-1})$ be a sequence such that $c_N = 1 - \varpi \sigma_q^{-1} N^{-1/3} + o(N^{-1/3})$. We further fix $m \in \mathbb{N}$, $t_1, \dots ,t_m \in [0, \infty) $ with $t_1 < \cdots < t_m$ and set $\mathcal{T} = \{t_1, \dots, t_m\}$. We let $\mu_{\mathcal{T}}$ be the counting measure on $\mathcal{T}$ and $\lambda$ be the Lebesgue measure on $\mathbb{R}$. We also define for $t \in \mathcal{T}$ the quantity $T_t = T_t(N) = \lfloor t N^{2/3} \rfloor$ and the lattice $\Lambda_{t}(N) = a_{t}(N) \cdot \mathbb{Z} + b_t(N)$, where 
\begin{equation}\label{eq:Lattices}
a_t(N) = \sigma_q^{-1} N^{-1/3} \mbox{ and } b_t(N) =  \sigma_q^{-1} N^{-1/3}\cdot \left( - \frac{2q N}{1-q} - \frac{q T_t(N)}{1-q}\right).
\end{equation}

We let $\mathbb{P}_N$ be the Pfaffian Schur process from Definition \ref{def: Pfaffian Schur} with parameters $a_i = b_j = q$ for $i \in \llbracket 1, M \rrbracket$, $j \in \llbracket 1, N \rrbracket$, and $c = c_N$, where $M(N)$ is sufficiently large so that $T_{t_m}(N) = \lfloor t_m N^{2/3} \rfloor \leq M$. If $(\lambda^0, \dots, \lambda^M)$ have law $\mathbb{P}_N$, we define the random variables
\begin{equation}\label{eq:DefXi's}
X_i^{j,N} = \sigma_q^{-1} N^{-1/3} \cdot \left( \lambda_i^{T_{t_j}(N)} - \frac{2q N}{1-q} - \frac{q T_{t_j}(N)}{1-q} - i \right) \mbox{ for } i \in \mathbb{N} \mbox{ and } j \in \llbracket 1, m \rrbracket.
\end{equation}
\end{definition}

With the above notation in place we can state our main result about the Pfaffian Schur process, whose proof is given in Section \ref{Section6.4}.
\begin{proposition}\label{prop: FinDimConv} Assume the same notation as in Definition \ref{def:ParScale}. Then, the sequence of random vectors $(X_i^{j,N}: i \geq 1, j \in \llbracket 1, m \rrbracket)$ converges in the finite-dimensional sense to a vector $(X_i^{j,\infty}: i \geq 1, j \in \llbracket 1, m \rrbracket)$. Moreover, the random measure
\begin{equation}\label{eq:Point process formed}
\hat{M}^{\infty}(\omega, A) = \sum_{i \geq 1} \sum_{j =1}^m {\bf 1}\{(t_j, X_i^{j, \infty}(\omega))\in A)\} 
\end{equation}
is a Pfaffian point process on $\mathbb{R}^2$ with reference measure $\mu_{\mathcal{T}} \times \lambda$ and correlation kernel 
\begin{equation}\label{eq:KX}
\hat{K}^{\infty}(s,x;t,y) = \kcr(f_q s, x + f_q^2 s^2; f_q t, y + f_q^2 t^2),
\end{equation}
where we recall that $\kcr$ is as in Definition \ref{def:kcr}.
\end{proposition}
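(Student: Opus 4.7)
The plan is to follow the three-step reduction described in Section \ref{Section1.3}: establish (A) vague convergence of the rescaled point processes $\hat{M}^N$ formed by $\{X_i^{j,N}\}$ to a Pfaffian point process on $\mathbb{R}^2$ with reference measure $\mu_{\mathcal{T}} \times \lambda$ and kernel $\hat{K}^\infty$; (B) one-point tightness from above of $(X_1^{j,N})^+$ for each $j$; and (C) that the slice $\hat{M}^\infty(\{t_j\} \times \mathbb{R})$ contains infinitely many atoms almost surely. With these three ingredients in hand, Lemma \ref{lem:LemmaSlice} reduces the problem to the $m$ marginal processes, and since $\lambda_1^{T_{t_j}} - 1 \geq \lambda_2^{T_{t_j}} - 2 \geq \cdots$ the sequence $X_i^{j,N}$ is monotone in $i$; the hypotheses of Lemmas \ref{lem:technical lemma fdd 1} and \ref{lem:technical lemma fdd 2} are thereby met, and together with the joint Pfaffian convergence they upgrade the vague convergence to the required finite-dimensional convergence, with the limiting point process being Pfaffian with kernel $\hat{K}^\infty$ by Skorokhod coupling.

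For (A), I would begin with Lemma \ref{lem:PSP as PPP}, which presents the discrete process formed by $\lambda_i^{T_{t_j}(N)} - i$ as a Pfaffian point process with kernel $\kgeo$; the affine rescaling (\ref{eq:DefXi's}) together with parts (5) and (6) of Proposition \ref{prop:basic properties Pfaffian point process} gives a rescaled kernel $\hat{K}^N$ for $\hat{M}^N$, and a gauge transformation from part (4) absorbing the boundary prefactors would put it in a form ready for steepest descent. Deforming the circles $C_r$ in $\kgeo$ to contours of the form $\mathcal{C}^{\pi/3}_{\alpha}$ and $\mathcal{C}^{2\pi/3}_{\beta}$ passing through the double critical point $z = w = 1$ of the exponent (which, after Taylor expansion under the scaling (\ref{def:SigmaQ and FQ}), becomes cubic at the correct $N^{-1/3}$ speed) yields pointwise convergence of $\hat{K}^N$ to $\hat{K}^\infty$ on an open set $U \subset \mathbb{R}^2$ of full Lebesgue measure, together with local uniform bounds of the form (\ref{eq:limitProp2}); Proposition \ref{prop: conv of Pfaffian point processes 1} then delivers the vague convergence. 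For (B), combining Proposition \ref{prop:Last Particle Cdf} with Lemma \ref{lem:HadamardPf} and refined upper-tail bounds $|\hat{K}^N_{ij}(t_j, x; t_j, y)| \leq C e^{-a x - a y + b(x \vee y)}$ with $a > b$ (obtained by a sharper tracking of the exponent along the steepest-descent contours) shows $\mathbb{P}(X_1^{j,N} > t)$ is uniformly small for large $t$.

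The main obstacle is (C). In the full-space setting the analogue is classical via the Painlev\'e II representation of $F_2$, but in our Pfaffian setup no such closed form for the limiting Fredholm Pfaffian is available, so I plan to exploit the Gibbs structure. Specifically, Lemma \ref{lem:SchurGibbs} endows the pre-limit ensembles with the half-space interlacing Gibbs property, and combined with the monotone coupling of Lemma \ref{lem:monotone coupling} this would yield, for each fixed $a \in \mathbb{Z}_{\geq 0}$, that $t \mapsto \mathbb{P}(\hat{M}^N(\{t\} \times \mathbb{R}) \geq a)$ is decreasing; by the vague convergence established in (A) together with Lemma \ref{lem:technical lemma fdd 1} (whose other hypotheses follow from (B)), this monotonicity persists in the limit. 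A separate steepest-descent computation, performed with the saddle shifted in accordance with $t \to \infty$, would identify the large-$t$ limit of $\hat{K}^\infty(t, \cdot\,; t, \cdot)$ with the classical Airy kernel $K^{\mathrm{Airy}}$ from (\ref{S1AiryKer}); since the Airy point process has almost surely infinitely many atoms, the monotonicity back-propagates to every finite $t_j \in \mathcal{T}$, giving $\hat{M}^\infty(\{t_j\} \times \mathbb{R}) = \infty$ almost surely. At this stage, assembling (A)--(C) as sketched in the first paragraph and invoking Lemma \ref{lem:technical lemma fdd 2} to rule out loss of mass at infinity completes the proof.
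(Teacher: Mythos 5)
Your overall architecture is the same as the paper's: the proof is assembled from exactly the three ingredients you list (kernel convergence via steepest descent feeding into Proposition \ref{prop: conv of Pfaffian point processes 1}; upper-tail bounds via Lemma \ref{lem:HadamardPf} and Proposition \ref{prop:Last Particle Cdf}; and an infinitely-many-atoms statement obtained by coupling finite time with ``time infinity'' through the Gibbs property and the Airy limit of the kernel), and the final upgrade from point-process convergence to finite-dimensional convergence is the same reduction the paper performs by citing the relevant propositions of \cite{dimitrov2024airy}.

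There is, however, one step in your sketch of ingredient (C) that as written would fail. You claim that the interlacing Gibbs property and the monotone coupling of Lemma \ref{lem:monotone coupling} yield that $t\mapsto\mathbb{P}(\hat{M}^N(\{t\}\times\mathbb{R})\geq a)$ is decreasing in the pre-limit, and that this ``persists in the limit'' via Lemma \ref{lem:technical lemma fdd 1}. But the pre-limit slice measures have $\hat{M}^N(\{t\}\times\mathbb{R})=\infty$ almost surely (every part $\lambda_i=0$ with $i$ large contributes an atom), so the asserted pre-limit monotonicity is vacuously true and carries no information; moreover Lemma \ref{lem:technical lemma fdd 1} only controls half-lines $[x,\infty)$, not all of $\mathbb{R}$, so a whole-line statement cannot be transported through it. What is actually needed --- and what the paper supplies in Lemma \ref{lem:InfiniteAtoms} via Lemma \ref{lem:lower bound on curve canal} --- is a \emph{spatially localized} comparison: conditionally on the configuration at time $T_{s_{n+1}}$, the $a$-th curve at the earlier time $T_{s_n}$ drops by at most $R\,N^{1/3}$ (in unscaled units) with probability $\geq 1-\varepsilon$, so that $\mathbb{P}(M^{s_n,N}([x_0-R,\infty))\geq a)\geq(1-\varepsilon)\,\mathbb{P}(M^{s_{n+1},N}([x_0,\infty))\geq a)$. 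One then passes each half-line probability to the limit with Lemma \ref{lem:technical lemma fdd 1} and only afterwards lets $x_0\to-\infty$ and $\varepsilon\to 0$ to get the stochastic monotonicity of the limiting atom counts required by Lemma \ref{lem:technical lemma fdd 2}. Your sketch omits this quantitative ``no low minima'' control entirely, and the monotone coupling alone (which compares two ensembles with ordered boundary data, not atom counts of one ensemble at two times) does not produce it.
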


We end this section by giving the proof of Theorem \ref{thm:MainThm1}.
\begin{proof}[Proof of Theorem \ref{thm:MainThm1}] For clarity we split the proof into three steps. In the first step we construct a certain sequence of line ensembles related to the Pfaffian Schur processes in Definition \ref{def:ParScale}, and we show that it is tight by verifying the conditions of Theorem \ref{thm:main thm tightness}. In the second step we show that in fact these ensembles converge to a limiting line ensemble $\mathcal{L}^{\infty}$, which satisfies similar conditions to those of $\mathcal{L}$ from Theorem \ref{thm:MainThm1}. In the last step we check that a suitable change of variables transforms $\mathcal{L}^{\infty}$ into $\mathcal{L}$. We assume throughout the notation in Definition \ref{def:ParScale} and suppose that $M = N$.\\

{\bf \raggedleft Step 1.} We define the sequence of $\mathbb{N}$-indexed geometric line ensembles $\mathfrak{L}^N = \{L_i^N\}_{i \geq 1}$ on $\mathbb{Z}$ via 
\begin{equation}\label{TQ1}
L_i^N(s) = \begin{cases} \lambda_i^{ s} -\mathsf{C}_N &\mbox{ if } s \in \llbracket 0, N \rrbracket  \\ \lambda_i^{0} - \mathsf{C}_N  &\mbox{ if } s < 0 \\ \lambda_i^{N} - \mathsf{C}_N &\mbox{ if } s > N,
\end{cases}
\end{equation}
where $\mathsf{C}_N = \lfloor \frac{2qN}{1-q} \rfloor$. In words, $\mathfrak{L}^N$ is just a translation of $\{\lambda^j_i: i \in \mathbb{N}, j \in \llbracket 0, N \rrbracket \}$ with a constant extension outside of the integer interval $\llbracket 0, N \rrbracket$. We also define $\mathcal{L}^N = \{\mathcal{L}_i^N\}_{i \geq 1}$ via
\begin{equation}\label{S62E2}
\mathcal{L}_i^N(s) = \sigma^{-1} N^{-1/3} \cdot \left( L_i^N(s N^{2/3}) - u s N^{2/3}\right), \mbox{ where $u = \frac{q}{1-q}$ and $\sigma = \frac{q^{1/2}}{1-q}$.}
\end{equation}

From Proposition \ref{prop: FinDimConv} we know that the random variables $\sigma^{-1}N^{-1/3}(L_i^N(\lfloor tN^{2/3}\rfloor)-utN^{2/3})$ converge for each $t \geq 0$ and so are in particular tight. In addition, we have from Lemma \ref{lem:SchurGibbs} that the restriction of $\mathfrak{L}^N$ to $\llbracket 0, N\rrbracket$ satisfies the half-space interlacing Gibbs property with parameters $$q^N_i = c_N^{(-1)^{i}} \cdot q = q - (-1)^{i} q \varpi \sigma_q^{-1} \cdot  N^{-1/3} + o(N^{-1/3})\mbox{ for $i \in \mathbb{N}$.}$$
The last two observations show that the conditions of Theorem \ref{thm:main thm tightness} are satisfied with $p = q$, $K = \infty$, $n = N$, $d_n = N^{2/3}$, $T_n = N$, $\beta = \infty$, $\Lambda = [0, \infty)$
and 
\begin{equation}\label{drifts}
\mu_i = (-1)^{i} \cdot \varpi \cdot \frac{q^{1/2}  \sigma_q^{-1}}{1- q} =   (-1)^{i} \cdot \varpi \cdot \frac{q^{1/6} }{(1 + q)^{1/3}} = (-1)^{i} \cdot \varpi  \cdot (2f_q)^{1/2}.
\end{equation}
We conclude that the sequence of line ensembles $\mathcal{L}^N\vert_{[0,\infty)}$ is tight and also any subsequential limit satisfies the half-space Brownian Gibbs property with parameters $\{\mu_i\}_{i \geq 1}$ as in (\ref{drifts}).\\

{\bf \raggedleft Step 2.} Let $\mathcal{L}^{\infty}$ be any subsequential limit of $\mathcal{L}^N\vert_{[0,\infty)}$. From Proposition \ref{prop: FinDimConv} we know that $\{\mathcal{L}^{\infty}_{i}(t_j): i \geq 1, j \in \llbracket 1, m \rrbracket\}$ has the same distribution as $\{\alpha^{-1} X_i^{j,\infty}: i \geq 1, j \in \llbracket 1, m \rrbracket\}$, where $\alpha = \sigma/\sigma_q = (2f_q)^{1/2}$. In particular, we see that any two subsequential limits of $\mathcal{L}^N\vert_{[0,\infty)}$ have the same finite-dimensional distributions and so they have the same law, see \cite[Lemma 3.1]{DimMat}. We conclude that $\mathcal{L}^{\infty}$ is the unique subsequential limit of $\mathcal{L}^N\vert_{[0,\infty)}$ and the latter converge weakly to it. In addition, we have from Proposition \ref{prop: FinDimConv} and parts (5) and (6) of Proposition \ref{prop:basic properties Pfaffian point process} that 
\begin{equation}\label{PPL}
M^{\mathcal{L}^{\infty}}(\omega, A) = \sum_{i \geq 1} \sum_{j = 1}^m {\bf 1}\{\mathcal{L}^{\infty}_i(t_j) \in A \}
\end{equation}
is a Pfaffian point process on $\mathbb{R}^2$ with reference measure $\mu_{\mathcal{T}} \times \lambda $ and correlation kernel 
\begin{equation}\label{KernelLInf}
\begin{bmatrix}
    \alpha \kcr_{11}(f_q s, \alpha x + f_q^2 s^2; f_q t, \alpha y + f_q^2 t^2) & \alpha \kcr_{12}(f_q s, \alpha x + f_q^2 s^2; f_q t, \alpha y + f_q^2 t^2)\\
    \alpha \kcr_{21}(f_q s, \alpha x + f_q^2 s^2; f_q t, \alpha y + f_q^2 t^2) & \alpha \kcr_{22}(f_q s, \alpha x + f_q^2 s^2; f_q t, \alpha y + f_q^2 t^2) 
\end{bmatrix}.
\end{equation}

{\bf \raggedleft Step 3.} In this final step we construct $\mathcal{L}$ and $\hsa$ from $\mathcal{L}^{\infty}$ and show that they satisfy the conditions of the theorem. For $i \geq 1$ and $t \in [0, \infty)$, we define 
\begin{equation}\label{eq:COVLE}
\mathcal{L}_i(t) = 2^{-1/2}\alpha \mathcal{L}^{\infty}_i(f_q^{-1} t) \mbox{ and } \hsa_i(t) = \alpha \mathcal{L}^{\infty}_i(f_q^{-1} t) +  t^2
\end{equation}
As mentioned in Step 1, we know that $\mathcal{L}^{\infty}$ satisfies the half-space Brownian Gibbs property. This implies in particular that $\mathcal{L}^{\infty}$ is non-intersecting, which together with (\ref{eq:COVLE}) shows (\ref{Eq.OrdHSA}).

From part (5) of Proposition \ref{prop:basic properties Pfaffian point process} with $\phi(s,x) = (f_qs, \alpha x+ f_q^2 s^2)$ and part (6) of the same proposition with $c_1 = c_2 = \alpha^{-1/2}$ we have that the measure in (\ref{eq:HSA point process}) is indeed a Pfaffian point process on $\mathbb{R}^2$, with correlation kernel $\kcr$ and reference measure $\mu_{\mathsf{S}} \times \lambda$. From (\ref{eq:COVLE}), we also see that (\ref{eq:Parabolic HSA}) holds. Lastly, since $\mathcal{L}^{\infty}$ satisfies the half-space Brownian Gibbs property with parameters $\mu_i$ as in (\ref{drifts}), we see that $2^{-1/2}\alpha \mathcal{L}^{\infty}_i(f_q^{-1} t) = f_q^{1/2}\mathcal{L}^{\infty}_i(f_q^{-1} t) $ satisfies the half-space Brownian Gibbs property with parameters $\mu_i = (-1)^{i} \cdot \varpi  \cdot (2f_q)^{1/2} \cdot f_q^{-1/2} = (-1)^{i} \sqrt{2} \cdot \varpi$. The latter follows from the fact that if $\cev{B}(t)$ is a reverse Brownian motion on $[0,b]$ with drift $\mu$, then $\cev{B}'(s) = f_q^{1/2}B(f_q^{-1}s)$ is a reverse Brownian motion on $[0, f_q b]$ with drift $f_q^{-1/2} \mu$.
\end{proof}

%
%
\section{Finite-dimensional convergence}\label{Section6} The goal of this section is to prove Proposition \ref{prop: FinDimConv}. In Section \ref{Section6.1} we derive a formula for the correlation kernel $K^N$ of the point process formed by $\{(t_j, X_i^{j,N}): i \geq 1, j \in \llbracket 1, m\rrbracket \}$, which is suitable for asymptotic analysis -- this is Lemma \ref{lem:PrelimitKernel}. In Section \ref{Section6.2} we state two asymptotic results for the kernels $K^N$, these are Lemmas \ref{lem:kernelLimits} and \ref{lem:kernelUpperTail}, whose proofs are contained in Section \ref{Section7}, and deduce a few statements from them. In Section \ref{Section6.3} we use the results from Section \ref{Section6.2} to show that the weak limits of the point processes on $\mathbb{R}$ formed by $\{X_i^{j,N}\}_{ i \geq 1}$ contain infinitely many atoms almost surely -- this is Lemma \ref{lem:InfiniteAtoms}. Finally, in Section \ref{Section6.4} we give the proof of Proposition \ref{prop: FinDimConv}.

%
%
\subsection{Alternative kernel}\label{Section6.1} Using the change of variables formula from part (5) in Proposition \ref{prop:basic properties Pfaffian point process} and Lemma \ref{lem:PSP as PPP}, one has that the point process formed by $\{(t_j, X_i^{j,N}): i \geq 1, j \in \llbracket 1, m\rrbracket \}$ as in Proposition \ref{prop: FinDimConv} is Pfaffian. The goal of this section is to find a form for its correlation kernel that is suitable for asymptotic analysis. The idea is to start from the formulas in Lemma \ref{lem:PSP as PPP}, deform the contours appropriately, and apply a gauge transformation as in part (4) in Proposition \ref{prop:basic properties Pfaffian point process}. 

We specify the types of contours we use in the following definition.
\begin{definition}\label{def:contours} Fix $a \in \mathbb{R}$ and $N \in \mathbb{N}$. We define the contours $$\gamma^+_N(a,0) = \{1 + aN^{-1/3} + |s|N^{-1/3} e^{\mathrm{sgn}(s)\im(\pi/3)}: s \in [-N^{1/4}, N^{1/4}] \}, \mbox{and }$$
$$\gamma^-_N(a,0) = \{1 + aN^{-1/3} + |s|N^{-1/3} e^{\mathrm{sgn}(s) \im(2\pi/3)}: s \in [-N^{1/4}, N^{1/4}] \},$$
both oriented in the direction of increasing imaginary part. We further let $\gamma^+_N(a,1)$ be the arc of the $0$-centered circle that connects the points $1 + aN^{-1/3} + N^{-1/12} e^{\im(\pi/3)}$ to $1 + aN^{-1/3} + N^{-1/12} e^{-\im(\pi/3)}$, and $\gamma^-_N(a,1)$ be the arc of the $0$-centered circle that connects the points $1 + aN^{-1/3} + N^{-1/12} e^{\im(2\pi/3)}$ to $1 + aN^{-1/3} + N^{-1/12} e^{-2\im(\pi/3)}$. Both $\gamma^+(a,1)$ and $\gamma^-(a,1)$ are oriented counter-clockwise. Finally, we set $\gamma_N^+(a) = \gamma_N^+(a,0) \cup \gamma_N^+(a,1)$ and $\gamma_N^-(a) = \gamma_N^-(a,0) \cup \gamma_N^-(a,1)$. See Figure \ref{S61} for an illustration of these contours. We also recall from Section \ref{Section5.3} that $C_r$ is the positively oriented zero-centered circle of radius $r > 0$.
\end{definition}

\begin{figure}[ht]
\scalebox{0.75}{
    \centering
     \begin{tikzpicture}[scale=2.7]

        \draw[->, thick, gray] (-1.4,0)--(1.8,0) node[right]{$\Real$};
        \draw[->, thick, gray] (0,-1.4)--(0,1.5) node[above]{$\Imag$};
        \def\radA{1.344} 
        \def\radB{0.690} 

        \draw[->,thick][black] (1.1,0) -- (1.2,0.17);
        \draw[-,thick][black] (1.2,0.17) -- (1.3,0.34);
        \draw[-,thick][black] (1.1,0) -- (1.2,-0.17);
        \draw[->,thick][black] (1.3,-0.34) -- (1.2,-0.17);
        \draw[->,thick][black] (1.3,0.34) arc (14.66:180:\radA);
        \draw[-,thick][black] (1.3,0.34) arc (14.66:360 - 14.66:\radA);

        \draw[->,thick][black] (0.8,0) -- (0.7,0.17);
        \draw[-,thick][black] (0.7,0.17) -- (0.6,0.34);
        \draw[-,thick][black] (0.8,0) -- (0.7,-0.17);
        \draw[->,thick][black] (0.6,-0.34) -- (0.7,-0.17);        
        \draw[->,thick][black] (0.6,0.34) arc (29.54:180:\radB);
        \draw[-,thick][black] (0.6,0.34) arc (29.54:360 - 29.54:\radB);

        \draw[black, fill = black] (0.9,0) circle (0.02);
        \draw (0.9,-0.1) node{$1$}; 
        \draw[black, fill = black] (1,0) circle (0.02);
        \draw (1,0.1) node{$c_N$};
        \draw[black, fill = black] (1.6,0) circle (0.02);
        \draw (1.6,-0.1) node{$q^{-1}$};
       
        \draw (-1.4,1.2) node{$\gamma_{N}^{+}(\ap^q_3, 1)$};
        \draw[->, >=stealth'] ( - 1.1, 1.2)  to[bend left] ( -0.9, 1);
        \draw (1.6,0.6) node{$\gamma_{N}^{+}(\ap^q_3, 0)$};
        \draw[->, >=stealth'] ( 1.5, 0.5)  to[bend left] ( 1.25, 0.23);
        \draw (-0.7,0.7) node{$\gamma_{N}^{-}(-\ap^q_2, 1)$};
        \draw[->, >=stealth'] ( -0.65, 0.6)  to[bend right] ( -0.48,0.5);
        \draw (0.35,-0.115) node{$\gamma_{N}^{-}(-\ap^q_2, 0)$};
        \draw[->, >=stealth'] ( 0.3, -0.03)  to[bend left] (0.65,0.27);

        \draw (1.1,-1.3) node{$1+ \ap_3^q N^{-1/3}$};
        \draw[-,very thin, dashed][gray] (1.1,-1.1) -- (1.1, 0);
        \draw (0.8,1.4) node{$1 - \ap^q_2 N^{-1/3}$};
        \draw[-,very thin, dashed][gray] (0.8,1.2) -- (0.8,0);

        \draw[-,very thin, dashed][gray] (0.6,0.34) -- (2.1,0.34);
        \draw[-,very thin, dashed][gray] (0.6,-0.3) -- (2.1,-0.34);
        \draw[->,very thin][black] (2.1,0) -- (2.1, -0.34);
        \draw[->,very thin][black] (2.1,0) -- (2.1, 0.34);
        \draw (2.5,0) node{$\sqrt{3}N^{-1/12}$};

    \end{tikzpicture} 
}
\caption{The figure depicts the contours $\gamma^+_N(\ap^q_3) = \gamma^+_N(\ap^q_3, 0) \cup \gamma^+_N(\ap^q_3,1)$ and $\gamma^-_N(-\ap^q_2) = \gamma^-_N(-\ap^q_2, 0) \cup \gamma^-_N(-\ap^q_2,1)$ from Lemma \ref{lem:PrelimitKernel}.} 
    \label{S61}
\end{figure}
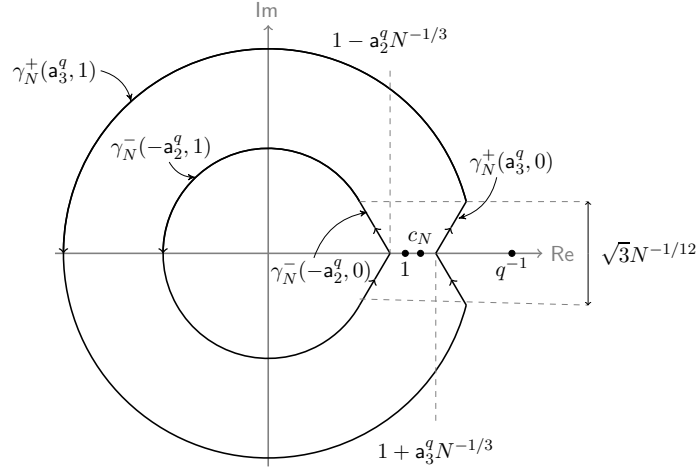

The next definition summarizes some of the functions and parameters we use.
\begin{definition}\label{def:funct} Fix $q \in (0,1)$. For $z \in \mathbb{C} \setminus \{0,q , q^{-1}\}$ we introduce the functions 
\begin{equation}\label{eq:DefS}
\begin{split}
&S(z) = \log (1 - q/z) - \log(1 - qz) - \frac{2q}{1-q} \cdot \log z \mbox{ and }\\
&G(z) = \log (1 - q/z) - \log (z) \cdot \frac{q}{1-q} - \log (1- q),
\end{split}
\end{equation}
where we take the principal branch of the logarithm. 

If $\varpi \in \mathbb{R}$, we fix the following real parameters
\begin{equation}\label{eq:PivotIneq}
\ap_i = |\varpi| + 3i \mbox{ and } \ap_i^q = \sigma_q^{-1} \cdot \ap_i \mbox{ for } i = 1,2,3.
\end{equation}
Recalling from Definition \ref{def:ParScale} that $c_N = 1 - \varpi \sigma_q^{-1} N^{-1/3} + o(N^{-1/3})$, we see that there exists $N_0 \in \mathbb{N}$ such that for $N \geq N_0$ we have
\begin{enumerate}
\item $\gamma_N^+(\ap_{3}^q)$ encloses the unit circle $C_1$;
\item $\gamma_N^-(\ap_{1}^q)$ encloses $c_N, c_N^{-1}$ and $q$, and $\gamma_N^+(\ap_{3}^q)$ encloses $\gamma_N^-(\ap_{1}^q)$;
\item $\gamma_N^{-}(-\ap_{2}^q)$ is contained in the unit circle $C_1$, and excludes the points $c_N, c_N^{-1}$;
\item if $w \in \gamma_N^{-}(-\ap_2^q)$, then $w^{-1}$ is outside of $\gamma_N^-(-\ap_2^q)$.
\end{enumerate}
\end{definition}

With the above notation in place we can state the main result of this section.
\begin{lemma}\label{lem:PrelimitKernel} Assume the same notation as in Definitions \ref{def:ParScale}, \ref{def:contours} and \ref{def:funct}. Let $M^N$ be the point process on $\mathbb{R}^2$, formed by $\{(t_j, X_i^{j,N}): i \geq 1, j \in \llbracket 1, m\rrbracket \}$. Then, for $N \geq N_0$ the $M^N$ is a Pfaffian point process with reference measure $\mu_{\mathcal{T},\nu(N)}$ and correlation kernel $K^N$ that are defined as follows. 

The measure $\mu_{\mathcal{T},\nu(N)}$ is as in Definition \ref{def:setup slices} for $\nu(N) = (\nu_{t_1}(N), \dots, \nu_{t_m}(N))$, where $\nu_{t}(N)$ is $\sigma_q^{-1} N^{-1/3}$ times the counting measure on $\Lambda_{t}(N)$. 

The correlation kernel $K^N: (\mathcal{T} \times \mathbb{R}) \times (\mathcal{T} \times \mathbb{R}) \rightarrow\operatorname{Mat}_2(\mathbb{C})$ takes the form
\begin{equation}\label{eq:S6Kdecomp}
\begin{split}
&K^N(s,x; t,y) = \begin{bmatrix}
    K^N_{11}(s,x;t,y) & K^N_{12}(s,x;t,y)\\
    K^N_{21}(s,x;t,y) & K^N_{22}(s,x;t,y) 
\end{bmatrix} \\
&= \begin{bmatrix}
    I^N_{11}(s,x;t,y) & I^N_{12}(s,x;t,y) + R^N_{12}(s,x;t,y) \\
    -I^N_{12}(t,y;s,x) - R^N_{12}(t,y;s,x) & I^N_{22}(s,x;t,y) + R^N_{22}(s,x;t,y) 
\end{bmatrix} ,
\end{split}
\end{equation}
where $I^N(s,x;t,y), R^N(s,x;t,y)$ are defined as follows. The kernels $I^N_{ij}$ are given by
\begin{equation}\label{eq:DefIN11}
\begin{split}
&I^N_{11}(s,x;t,y) = \frac{1}{(2\pi \im)^{2}}\oint_{\gamma_N^+(\ap^q_3)} dz \oint_{\gamma_N^+(\ap^q_{3})} dw F_{11}^N(z,w) H_{11}^N(z,w) \mbox{, where }\\
& F^N_{11}(z,w) = e^{NS(z) + NS(w)} \cdot e^{T_s G(z) + T_t G(w)} \cdot e^{- \sigma_q x N^{1/3} \log (z) - \sigma_q y N^{1/3} \log(w)  }, \\
&H^N_{11}(z,w) = 4\sigma_q^2 N^{2/3} \cdot  \frac{z-w}{(z^{2}-1)(w^{2}-1)(zw-1)} \cdot ( 1 - c_N/z) (1 - c_N/w);
\end{split}
\end{equation}
\begin{equation}\label{eq:DefIN12}
\begin{split}
&I^N_{12}(s,x;t,y) = \frac{1}{(2\pi \im)^{2}}\oint_{\gamma_N^+(\ap^q_{3})} dz \oint_{\gamma_N^-(\ap^q_{1})} dw F_{12}^N(z,w) H_{12}^N(z,w) \mbox{, where }\\
& F^N_{12}(z,w) = e^{NS(z) - NS(w)} \cdot e^{T_s G(z) - T_t G(w)} \cdot e^{- \sigma_q x N^{1/3} \log (z) + \sigma_q y N^{1/3} \log(w)  }, \\
&H^N_{12}(z,w) =  \sigma_q N^{1/3} \cdot \frac{zw - 1}{z (z-w)(z^2 - 1)} \cdot \frac{z - c_N}{w - c_N};
\end{split}
\end{equation}
\begin{equation}\label{eq:DefIN22}
\begin{split}
&I^N_{22}(s,x;t,y) = \frac{1}{(2\pi \im)^{2}}\oint_{\gamma_N^-(-\ap^q_{2})} dz \oint_{\gamma_N^-(-\ap^q_{2})} dw F_{22}^N(z,w) H_{22}^N(z,w) \mbox{, where }\\
& F^N_{22}(z,w) = e^{-NS(z) - NS(w)} \cdot e^{-T_s G(z) - T_t G(w)} \cdot e^{ \sigma_q x N^{1/3} \log (z) + \sigma_q y N^{1/3} \log(w)  }, \\
&H^N_{22}(z,w) =  \frac{1}{4} \cdot \frac{z-w}{zw - 1} \cdot \frac{1}{(z- c_N)(w - c_N)}.
\end{split}
\end{equation}
The kernels $R^N_{ij}$ are given by
\begin{equation}\label{eq:DefRN12}
\begin{split}
&R^N_{12}(s,x;t,y) = \frac{-{\bf 1}\{s < t \} \cdot \sigma_q N^{1/3} }{2 \pi \im} \oint_{\gamma_{N}^+(\ap^q_{1})}dz e^{(T_s - T_t) G(z)} \cdot e^{ (\sigma_q y N^{1/3}  - \sigma_q x N^{1/3} - 1) \log (z)};
\end{split}
\end{equation}
\begin{equation}\label{eq: DefRN22}
\begin{split}
&R^N_{22}(s,x;t,y) = \frac{1}{2\pi \im} \oint_{\gamma^-_N(\ap^q_{1})} dz \frac{F_{22}^N(z,c_N)}{4(c_N z - 1)} - \frac{1}{2\pi \im} \oint_{\gamma^-_N(-\ap^q_{2})} dw \frac{F_{22}^N(c_N,w)}{4(c_N  w - 1)} \\
& + \frac{1}{2\pi \im}\oint_{\gamma^-_N(-\ap^q_{2})} dw \cdot \frac{(1-w^2)}{4(1-c_N w)(w-c_N)} \cdot e^{ ( \sigma_q y N^{1/3} - \sigma_q x N^{1/3} - 1) \log (w) - T_s G(w^{-1}) - T_t G(w)}.
\end{split}
\end{equation}
\end{lemma}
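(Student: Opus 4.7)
The strategy is to start from Lemma \ref{lem:PSP as PPP} applied with $M_j = T_{t_j}(N)$, which gives that the point process $\mathfrak{S}(\lambda) = \sum_{i \geq 1}\sum_{j=1}^{m} {\bf 1}\{(j, \lambda_i^{T_{t_j}}-i) \in \cdot\}$ on $\llbracket 1,m\rrbracket \times \mathbb{Z}$ is Pfaffian with kernel $\kgeo$ and reference measure $\mu_{\llbracket1,m\rrbracket} \times \mu_\mathbb{Z}$. I would then transport this structure to $M^N$ using the bijection
\[ \phi(j, n) = \bigl(t_j,\; \sigma_q^{-1} N^{-1/3}(n - \tfrac{2qN}{1-q} - \tfrac{qT_{t_j}}{1-q})\bigr), \]
which maps $(j, \lambda_i^{T_{t_j}}-i) \mapsto (t_j, X_i^{j,N})$. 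Applying part (5) of Proposition \ref{prop:basic properties Pfaffian point process} (pushforward), part (6) (scaling of reference measure with $c_1 c_2 = \sigma_q N^{1/3}$, e.g.\ $c_1 = 2\sigma_q N^{1/3}$, $c_2 = 1/2$), and part (4) (a gauge with $f(s,x) = (1-q)^{-T_s}$) produces an equivalent Pfaffian point process on $\mathbb{R}^2$ with reference measure $\mu_{\mathcal{T},\nu(N)}$.

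The next step is a direct algebraic check that, under these three transformations, the kernel takes the form of double contour integrals of $F^N_{ij}(z,w) \cdot H^N_{ij}(z,w)$ with the \emph{original} contours $C_{r_1}, C_{r^z_{12}}, C_{r^w_{12}}, C_{r_2}$. This reduces to verifying the identities
\[
e^{NS(z) + T_s G(z) - \sigma_q x N^{1/3}\log z} = (1-q/z)^{N+T_s}(1-qz)^{-N}z^{i-\lambda_i^{T_s}}(1-q)^{-T_s}
\]
(and its analogue for $e^{-NS-T_tG}$) upon substituting the integer values of $\lambda_i^{T_s}-i$ corresponding to lattice points of $\Lambda_{t_j}(N)$; the extra prefactors $4\sigma_q^2 N^{2/3}$, $\sigma_q N^{1/3}$, and $1/4$ appearing in $H^N_{11}, H^N_{12}, H^N_{22}$ are precisely accounted for by the combination of the rescaling factor and the gauge.

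The heart of the proof is then the contour deformation from $C_{\cdot}$ to $\gamma^{\pm}_N(\cdot)$, carried out case by case:
\begin{itemize}
\item For $K^N_{11}$: deform both circles $C_{r_1}$ outward to $\gamma_N^+(\ap^q_3)$. Using clauses (1) and (2) of Definition \ref{def:funct}, both deformations stay inside $\{|z|<q^{-1}\}$ and never cross the poles at $\pm 1, 0, w^{-1}$, producing $I^N_{11}$ with no residue.
\item For $K^N_{12}$: deform $z$ to $\gamma_N^+(\ap^q_3)$ and $w$ to $\gamma_N^-(\ap^q_1)$. For $s\geq t$, the original order $r_{12}^w < r_{12}^z$ already makes the $z$-contour enclose the $w$-contour, and no pole is crossed. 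For $s<t$, the original $z$ lies \emph{inside} the $w$-contour, and the deformation crosses the pole of $H^N_{12}$ at $w=z$. A direct residue computation at $w=z$ and then contour identification (the resulting one-dimensional integrand has poles only at $z=0$ and $z=q$, both enclosed by $\gamma_N^+(\ap^q_1)$ as well as by $\gamma_N^-(\ap^q_1)$) yields precisely $R^N_{12}$.
\item For $K^N_{22}$: deform $w$ first from $C_{r_2}$ to $\gamma_N^-(-\ap^q_2)$ (for $z$ fixed on $C_{r_2}$, the pole $w=1/z$ sits inside both contours, so only $w=c_N$ is crossed), then deform $z$ for each resulting piece. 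For the double integral, the $z$-deformation to $\gamma_N^-(-\ap^q_2)$ crosses $z=c_N$ and $z=1/w$ (now with $w\in\gamma_N^-(-\ap^q_2)$, so $|1/w|>1$), while for the single integral coming from $\text{Res}_{w=c_N}$, the $z$-deformation to $\gamma_N^-(\ap^q_1)$ crosses no poles. The three resulting terms match the three terms of $R^N_{22}$, where the crucial simplification for the last term is the identity $S(z^{-1}) = -S(z)$, which causes the $e^{\pm NS}$ factors to cancel when evaluated at $z=1/w$.
\end{itemize}

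The main obstacle is the bookkeeping for $K^N_{22}$: tracking the correct order of the two successive contour deformations, verifying that no unintended poles are crossed by appealing to clauses (1)--(4) of Definition \ref{def:funct}, and carefully computing the residues of $H^N_{22}$ at $w=c_N$, $z=c_N$, and $z=1/w$ with the correct signs. The residue at $z=1/w$ is the trickiest because it requires the identity $S(1/w)=-S(w)$ and the rewriting $\frac{1}{(1/w-c_N)(w-c_N)} = \frac{w}{(1-c_Nw)(w-c_N)}$ to produce the exact integrand appearing in the third term of \eqref{eq: DefRN22}.
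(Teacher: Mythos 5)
Your proposal is correct and follows essentially the same route as the paper's proof: the same three-part transformation (pushforward via $\phi_N$, gauge $f(s,x)=(1-q)^{-T_s}$, rescaling with $c_1c_2=\sigma_q N^{1/3}$), the same exponential identities matching $e^{\pm NS \pm T G}$ with the products $(1-q/z)^{\cdot}(1-qz)^{\cdot}z^{\cdot}$, and the same case-by-case contour deformations with residues at $w=z$ (for $s<t$), $w=c_N$, $z=c_N$, and $z=w^{-1}$ producing $R^N_{12}$ and the three terms of $R^N_{22}$. The cancellation you identify via $S(w^{-1})=-S(w)$ in the $z=w^{-1}$ residue is exactly the mechanism behind the third term of \eqref{eq: DefRN22}.
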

\begin{proof} Let $f: \mathbb{R} \rightarrow \mathbb{R}$ be a piece-wise linear increasing bijection such that $f(i) = t_i$ for $i \in \llbracket 1, m \rrbracket$. Define $\phi_N: \mathbb{R}^2 \rightarrow \mathbb{R}^2$ through 
$$\phi_N(s, x) = \left(f(s), \sigma_q^{-1} N^{-1/3} \cdot \left( x- \frac{2qN}{1-q} - \frac{q \lfloor f(s) N^{2/3} \rfloor}{1-q} \right) \right),$$   
and observe that $M^N = \mathfrak{S}(\lambda) \phi_N^{-1}$, where $\mathfrak{S}(\lambda)$ is as in Lemma \ref{lem:PSP as PPP}. It follows from Lemma \ref{lem:PSP as PPP} and Proposition \ref{prop:basic properties Pfaffian point process} part (5) with the above $\phi_N$, part (4) with $f(s,x) = (1-q)^{-T_s}$ and part (6) with $c_1 = 2\sigma_q N^{1/3}$ and $c_2 = 1/2$ that $M^N$ is a Pfaffian point process with reference measure $\mu_{\mathcal{T},\nu(N)}$ and correlation kernel $\tilde{K}^N: (\mathcal{T} \times \mathbb{R}) \times (\mathcal{T} \times \mathbb{R}) \rightarrow\operatorname{Mat}_2(\mathbb{C})$, given by
\begin{equation*}
\tilde{K}^N(s,x;t,y) = \begin{bmatrix} 4\sigma_q^2 N^{2/3} (1-q)^{-T_s - T_t} \kgeo_{11}(\tilde{s},\tilde{x}; \tilde{t},\tilde{y}) &  \sigma_q N^{1/3} (1-q)^{-T_s + T_t} \kgeo_{12}(\tilde{s},\tilde{x}; \tilde{t},\tilde{y}) \\ \sigma_q N^{1/3} (1-q)^{T_s - T_t} \kgeo_{21}(\tilde{s},\tilde{x}; \tilde{t},\tilde{y}) & (1/4) (1-q)^{T_s + T_t} \kgeo_{22}(\tilde{s},\tilde{x}; \tilde{t},\tilde{y}) \end{bmatrix},
\end{equation*}
where $\kgeo$ is as in Lemma \ref{lem:PSP as PPP} with $c = c_N$, $\tilde{s} = T_s = \lfloor s N^{2/3} \rfloor$, $\tilde{t} = T_t = \lfloor t N^{2/3} \rfloor$ and
\begin{equation*}
\tilde{x} = \frac{2qN}{1-q} + \frac{q T_s }{1-q} + \sigma_q N^{1/3} x, \hspace{2mm}\tilde{y} = \frac{2qN}{1-q} + \frac{q T_t }{1-q} + \sigma_q N^{1/3} y.
\end{equation*}
All that remains is to show that $\tilde{K}^N$ agrees with $K^N$ as in the statement of the lemma.\\

We note that the have the following identities
\begin{equation}\label{eq:ChangeOfVar}
\begin{split}
&z^{\mp \tilde{x}} (1-q/z)^{\pm (T_s +N)}(1-qz)^{\mp N} (1-q)^{\mp T_s} = e^{\pm NS(z) \pm T_s G(z) \mp \sigma_q x N^{1/3} \log(z) }, \\
&w^{\mp \tilde{y}} (1-q/w)^{\pm (T_t +N)}(1-qw)^{\mp N} (1-q)^{\mp T_t} = e^{ \pm NS(w) \pm T_t G(w) \mp \sigma_q y N^{1/3} \log(w)}.
\end{split}
\end{equation}

{\bf \raggedleft Matching $K^N_{11}$.} If $N \geq N_0$, We may deform both contours $C_{r_1}$ in the definition of $\kgeo_{11}$ in Lemma \ref{lem:PSP as PPP} to $\gamma_N^+(\ap_3^q)$ without crossing any of the poles of the integrand and hence without affecting the value of the integral by Cauchy's theorem. The reason we do not cross any poles is because $\gamma_N^+(\ap_3^q)$ encloses the unit circle $C_1$, see point (1) below (\ref{eq:PivotIneq}). After we perform the deformation, apply (\ref{eq:ChangeOfVar}) and multiply by $4\sigma_q^2 N^{2/3} (1-q)^{-T_s - T_t} $ we obtain $\tilde{K}^N_{11}( s,x; t,y) = I^N_{11}(s,x; t,y)$.\\

{\bf \raggedleft Matching  $K^N_{12}$ and $K^N_{21}$.} Since $\tilde{K}^N$ and $K^N$ are both skew-symmetric it suffices to match $K^N_{12}$. If $s < t$ we deform $C_{r_{12}^w}$ inside the contour $C_{r_{12}^z}$ and in the process of deformation we pick up a residue from the simple pole at $w = z$. We thus obtain the formula 
\begin{equation}\label{eq:K12Res}
\begin{split}
&\kgeo_{12}(\tilde{s},\tilde{x}; \tilde{t},\tilde{y}) = \frac{1}{(2\pi \im)^{2}}\oint_{C_{r^z_{12}}} d  z \oint_{C_{r^w_{12}}}  dw  \frac{zw-1}{z(z-w)(z^{2}-1)} \cdot \frac{z-c_N}{w-c_N} \cdot  z^{-\tilde{x} } w^{\tilde{y} }  \\
&  \times     (1-q/z)^{T_s+N} (1-q/w)^{-T_t-N}(1-qz)^{-N}   (1-qw)^{N} \\
& + \frac{-{\bf 1}\{s < t\} }{2\pi \im} \oint_{C_{r^z_{12}}} dz   (1-q/z)^{T_s} (1-q/z)^{-T_t}z^{\tilde{y}-\tilde{x} - 1}  ,
\end{split}
\end{equation}
where $\max(1, c_N) < r_{12}^w < r_{12}^z < q^{-1}$. We now deform $C_{r^z_{12}}$ and $C_{r^w_{12}}$ in the first line of (\ref{eq:K12Res}) to $\gamma_N^{+}(\ap_3^q)$ and $\gamma_N^-(\ap_1^q)$, respectively. Note that from the second point below (\ref{eq:PivotIneq}) we do not cross any poles in the process of deformation if $N \geq N_0$. After the deformation we apply (\ref{eq:ChangeOfVar}) and recognize the first two lines in (\ref{eq:K12Res}), multiplied by $ \sigma_q N^{1/3} (1-q)^{-T_s + T_t} $, as $I^N_{12}(s,x; t,y)$. We can also deform the contour $C_{r^z_{12}}$ on the third line in (\ref{eq:K12Res}) to $\gamma_{N}^+(\ap_1^q)$ without crossing any poles and the resulting expression, multiplied by $ \sigma_q N^{1/3} (1-q)^{-T_s + T_t}$, agrees with $R^N_{12}(s,x;t,y)$.\\

{\bf \raggedleft Matching  $K^N_{22}$.} Starting from the formula for $\kgeo_{22}$ in Lemma \ref{lem:PSP as PPP} with $r_2 $ large (say $r_2 \geq 2$), we deform the $w$ contour to $\gamma_{N}^-(-\ap_2^q)$. In the process of deformation we pick up a residue from the simple pole at $w = c_N$, see point (3) below (\ref{eq:PivotIneq}). We thus obtain the formula
\begin{equation}\label{eq:K22Res1}
\begin{split}
&\kgeo_{22}(\tilde{s},\tilde{x}; \tilde{t},\tilde{y}) = \frac{1}{(2\pi \im)^{2}}\oint_{C_{r_2}} d  z \oint_{\gamma_N^-(-\ap_2^q)}  dw \frac{z-w}{zw-1} \cdot \frac{1}{(z-c_N)(w-c_N)} \cdot z^{\tilde{x}}w^{\tilde{y}} \\
&\times  (1-q/z)^{-T_s-N}(1-q/w)^{-T_t-N} (1-qz)^N (1-qw)^N \\
& + \frac{1}{2\pi \im} \oint_{C_{r_2}} d  z  \frac{1}{zc_N-1} \cdot  z^{\tilde{x}}c_N^{\tilde{y}}  \cdot  (1-q/z)^{-T_s-N}(1-q/c_N)^{-T_t-N} (1-qz)^N (1-qc_N)^N.
\end{split}
\end{equation}
We may now deform the contour $C_{r_2}$ on the third line of (\ref{eq:K22Res1}) to $\gamma_N^-(\ap_1^q)$ without crossing any poles, since by point (2) below (\ref{eq:PivotIneq}) we have that $\gamma_N^-(\ap_1^q)$ encloses $c_N^{-1}$. Using (\ref{eq:ChangeOfVar}) we obtain
\begin{equation}\label{eq:K22Match1}
(1/4)(1-q)^{T_s + T_t} \times [\mbox{line 3 in (\ref{eq:K22Res1})}] = \frac{1}{2\pi \im} \oint_{\gamma^-_N(\ap^q_{1})} dz \frac{F_{22}^N(z,c_N)}{4(c_N z - 1)}.
\end{equation}

We next deform $C_{r_2}$ in the first line of (\ref{eq:K22Res1}) to $\gamma_N^-(-\ap_2^q)$. In the process of deformation we cross the simple poles at $z = c_N$ and $z = w^{-1}$, see points (3) and (4). We conclude
\begin{equation}\label{eq:K22Res2}
\begin{split}
&[\mbox{lines 1 and 2 in (\ref{eq:K22Res1})}] = \frac{1}{(2\pi \im)^{2}} \oint_{\gamma_N^-(-\ap_2^q)} d  z \oint_{\gamma_N^-(-\ap_2^q)}  dw \frac{z-w}{zw-1}\cdot \frac{1}{(z-c_N)(w-c_N)} \cdot z^{\tilde{x}}w^{\tilde{y}} \\
&\times  (1-q/z)^{-T_s-N}(1-q/w)^{-T_t-N} (1-qz)^N (1-qw)^N \\
& -  \frac{1}{2\pi \im} \oint_{\gamma_N^-(-\ap_2^q)}  dw \frac{c_N^{\tilde{x}}w^{\tilde{y}}}{c_Nw-1} \cdot  (1-q/c_N)^{-T_s-N}(1-q/w)^{-T_t-N} (1-qc_N)^N (1-qw)^N \\
&  + \frac{1}{2\pi \im}  \oint_{\gamma_N^-(-\ap_2^q)}  dw \frac{(1-w^2)}{(1-c_Nw)(w-c_N)} \cdot  w^{\tilde{y} -\tilde{x} - 1} \cdot  (1-wq)^{-T_s}(1-q/w)^{-T_t}.
\end{split}
\end{equation}
Using (\ref{eq:ChangeOfVar}) we obtain
\begin{equation}\label{eq:K22Res3}
\begin{split}
&(1/4)(1-q)^{T_s + T_t} \times [\mbox{lines 1 and 2 in (\ref{eq:K22Res2})}] = I_{22}^N(s,x;t,y), \\
&(1/4)(1-q)^{T_s + T_t} \times [\mbox{line 3 in (\ref{eq:K22Res2})}] = - \frac{1}{2\pi \im} \oint_{\gamma^-_N(-\ap^q_{2})} dw \frac{F_{22}^N(c_N,w)}{4(c_N  w - 1)}, \\
&(1/4)(1-q)^{T_s + T_t} \times [\mbox{line 4 in (\ref{eq:K22Res2})}] = \frac{1}{2\pi \im}\oint_{\gamma^-_N(-\ap^q_{2})} dw  \frac{(1-w^2)}{4(1-c_N w)(w-c_N)}  \\
&\times e^{ ( \sigma_q y N^{1/3} - \sigma_q x N^{1/3} - 1) \log (w) - T_s G(w^{-1}) - T_t G(w)}.
\end{split}
\end{equation}
Combining (\ref{eq:K22Res1}), (\ref{eq:K22Match1}), (\ref{eq:K22Res2}) and (\ref{eq:K22Res3}) we conclude $\tilde{K}^N_{22}(s,x;t,y) = K^N_{22}(s,x;t,y)$.
\end{proof}

%
%
\subsection{Point process convergence}\label{Section6.2} In this section we state two technical lemmas regarding the asymtptotic behavior of $K^N$ from Lemma \ref{lem:PrelimitKernel}. The first, Lemma \ref{lem:kernelLimits} below, shows that these kernels have a pointwise limit and its proof is given in Section \ref{Section7.2}. The second, Lemma \ref{lem:kernelUpperTail} below, obtains upper tail estimates for the kernels $K^N$ and its proof is given in Section \ref{Section7.3}. After these two results we prove two additional lemmas used in the proof of Proposition \ref{prop: FinDimConv} in Section \ref{Section6.4}.

\begin{lemma}\label{lem:kernelLimits} Assume the same notation as in Lemma \ref{lem:PrelimitKernel}, and recall the contours $\mathcal{C}_z^{\varphi}$ from Definition \ref{S1Contours}. Fix $x_{\infty},y_{\infty} \in \mathbb{R}$, $s,t \in \mathcal{T}$ and sequences $x_N \in \Lambda_s(N), y_N \in \Lambda_{t}(N)$ such that $\lim_N x_N = x_{\infty}$ and $\lim_N y_N = y_{\infty}$. Then, we have that the following limits all hold.
\begin{equation}\label{eq:I11Lim}
\begin{split}
&\lim_N I^N_{11}(s,x_N;t,y_N) = I^{\infty}_{11}(s,x_{\infty};t,y_{\infty}) \mbox{, where } I^{\infty}_{11}(s,x;t,y) \\
& = \frac{1}{(2\pi \im)^{2}}\int_{\mathcal{C}^{\pi/3}_{\ap_{3}}} dz\int_{\mathcal{C}^{\pi/3}_{\ap_{3}}} dw e^{z^3/3 + w^3/3 - f_q s z^2 - f_q t w^2 - xz - yw  } \frac{(z-w) (z + \varpi) (w + \varpi)}{zw (z + w)},
\end{split}
\end{equation}
\begin{equation}\label{eq:I12Lim}
\begin{split}
&\lim_N I^N_{12}(s,x_N;t,y_N) = I^{\infty}_{12}(s,x_{\infty};t,y_{\infty}) , \mbox{ where }  I^{\infty}_{12}(s,x;t,y) \\
&= \frac{1}{(2\pi \im)^{2}}\int_{\mathcal{C}^{\pi/3}_{\ap_{3}}} dz\int_{\mathcal{C}^{2\pi/3}_{\ap_{1}}} dw e^{z^3/3 - w^3/3 - f_q s z^2 + f_q t w^2 - xz + yw  } \frac{(z+w)(z+\varpi)}{2z(z-w)(w + \varpi)}.
\end{split}
\end{equation}
\begin{equation}\label{eq:I22Lim}
\begin{split}
&\lim_N I^N_{22}(s,x_N;t,y_N) =  I^{\infty}_{22}(s,x_{\infty};t,y_{\infty}) , \mbox{ where }  I^{\infty}_{22}(s,x;t,y)  \\
& = \frac{1}{(2\pi \im)^{2}}\int_{\mathcal{C}^{2\pi/3}_{-\ap_{2}}} dz\int_{\mathcal{C}^{2\pi/3}_{-\ap_{2}}} dw e^{-z^3/3 - w^3/3 + f_q s z^2 + f_q t w^2 + xz + y w  } \frac{(z-w)}{4(z+w)(z+ \varpi)(w+\varpi)},
\end{split}
\end{equation}
\begin{equation}\label{eq:R12Lim}
\begin{split}
\lim_N R^N_{12}(s,x_N;t,y_N) = R^{\infty}_{12}(s,x_{\infty};t,y_{\infty}) ,\mbox{ where } R^{\infty}_{12}(s,x;t,y) =\frac{- {\bf 1}\{s < t\}}{\sqrt{4\pi f_q (t-s)}} \cdot e^{-\frac{(y - x)^2}{4 f_q (t-s)}},
\end{split}
\end{equation}
In addition, if $y_{\infty} \neq x_{\infty}$ we have 
\begin{equation}\label{eq:R22Lim}
\begin{split}
\lim_N R^N_{22}(s,x_N;t,y_N) =  R^{\infty}_{22}(s,x_{\infty};t,y_{\infty}) ,
\end{split}
\end{equation}
where $R^{\infty}_{22}(s,x;t,y)$ is a skew-symmetric kernel such that for $y > x$ we have
\begin{equation}\label{eq:R22LimDef}
\begin{split}
&R^{\infty}_{22}(s,x;t,y)= \frac{1}{2\pi \im} \int_{\mathcal{C}^{2\pi/3}_{\ap_{1}}} dz \frac{e^{-z^3/3 + \varpi^3/3 + f_q s z^2 + f_q t \varpi^2 + xz - y \varpi  }}{4(z - \varpi)}\\
& - \frac{1}{2\pi \im} \int_{\mathcal{C}^{2\pi/3}_{-\ap_{2}}}\hspace{-1.5mm} dw\frac{e^{\varpi^3/3 - w^3/3 + f_q s \varpi^2 + f_q t w^2 - x\varpi + y w  }}{4(w - \varpi)} +  \frac{{\bf 1}\{s + t > 0\} }{2\pi \im} \int_{\mathcal{C}^{2\pi/3}_{-\ap_{2}}} \hspace{-1.5mm}dw  \frac{we^{  f_q (s + t) w^2  + w (y-x)  }}{2(w- \varpi)(w + \varpi) }.
\end{split}
\end{equation}
\end{lemma}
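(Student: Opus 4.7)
The plan is to prove Lemma \ref{lem:kernelLimits} by applying the method of steepest descent at the common critical point $z=w=1$ of the phase functions $S$ and $G$. A direct computation will yield
\[
S'(1)=S''(1)=0,\quad S'''(1)=\frac{2q(1+q)}{(1-q)^3}=2\sigma_q^3,\quad G(1)=G'(1)=0,\quad G''(1)=-\frac{q}{(1-q)^2}=-2f_q\sigma_q^2.
\]
Accordingly, after the substitution $z=1+\sigma_q^{-1}N^{-1/3}\tilde z$ (and similarly for $w$), uniformly for $\tilde z$ in compact sets,
\[
NS(z)=\tilde z^3/3+O(N^{-1/3}),\quad T_sG(z)=-f_q s\,\tilde z^2+O(N^{-1/3}),\quad -\sigma_q x_N N^{1/3}\log z=-x_\infty\tilde z+O(N^{-1/3}).
\]
This reparametrization sends the straight parts $\gamma^{\pm}_N(\ap^q_i,0)$ to compact pieces of $\mathcal{C}^{\pi/3}_{\ap_i}$ or $\mathcal{C}^{2\pi/3}_{\ap_i}$ (using $\sigma_q\ap^q_i=\ap_i$), which exhaust the full limiting contours as $N\to\infty$. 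The Jacobian $dz=\sigma_q^{-1}N^{-1/3}d\tilde z$ will precisely balance the positive powers of $N$ in the prefactors $H^N_{ij}$. For $H^N_{11}$, the factors $(z^2-1)(w^2-1)(zw-1)\sim 4\sigma_q^{-3}N^{-1}\tilde z\tilde w(\tilde z+\tilde w)$ and $(1-c_N/z)(1-c_N/w)\sim\sigma_q^{-2}N^{-2/3}(\tilde z+\varpi)(\tilde w+\varpi)$ will combine with $4\sigma_q^2N^{2/3}$ and the Jacobians to yield precisely the rational factor $(\tilde z-\tilde w)(\tilde z+\varpi)(\tilde w+\varpi)/[\tilde z\tilde w(\tilde z+\tilde w)]$ of (\ref{eq:I11Lim}); analogous algebra will match the prefactors of $I^N_{12}, I^N_{22}$ and the three terms of $R^N_{22}$. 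In the latter, the key observation is that $c_N=1-\varpi\sigma_q^{-1}N^{-1/3}+o(N^{-1/3})$ corresponds to the point $\tilde c=-\varpi$ in the rescaled variable, which accounts for the poles at $\pm\varpi$ and the values $\varpi^3/3,\,\varpi^2$ appearing in (\ref{eq:R22LimDef}).

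To upgrade these pointwise limits to integral limits, I will establish two global descent bounds. First, on the straight portions $\gamma^{+}_N(a,0)$ (resp.\ $\gamma^{-}_N(a,0)$), I will show $\Real S(z)$ (resp.\ $-\Real S(z)$) is bounded above by $-c|\sigma_q N^{1/3}(z-1)|^3/N$ uniformly in $N$, which after rescaling yields the integrable dominating function $e^{-c|\tilde z|^3}$. Second, on the arcs $\gamma^{\pm}_N(a,1)$, which lie at distance of order $N^{-1/12}$ from $z=1$, I will show $\pm\Real S(z)$ attains maximum at most $-cN^{3/4}$ with the favorable sign, making the arc contributions $O(e^{-cN^{3/4}})$ and hence negligible. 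Analogous bounds for $G$ will follow from its analyticity near the unit circle (excluding $q, q^{-1}$). Together these will justify dominated convergence on the straight parts and discarding the arcs. For $R^N_{12}$, the single-contour integral will then reduce to a Gaussian over $\mathcal{C}^{\pi/3}_{\ap_1}$; after completing the square and deforming to a vertical line this will evaluate to the kernel in (\ref{eq:R12Lim}). For the three terms of $R^N_{22}$, the first two contain $F^N_{22}(\cdot,c_N)$ or $F^N_{22}(c_N,\cdot)$ and will limit, via $\tilde c=-\varpi$, to the single integrals in (\ref{eq:R22LimDef}) with pole at $\tilde z-\varpi$ (resp.\ $\tilde w-\varpi$). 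The third term contains $G(w^{-1})+G(w)$; since $(w^{-1}-1)^2=(w-1)^2+O(N^{-1})$, at leading order $G(w^{-1})\approx G(w)$ near $w=1$, which will yield the combined exponent $f_q(s+t)\tilde w^2$ and the final contour integral of (\ref{eq:R22LimDef}).

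The hard part will be the global descent estimate: while the local cubic behavior of $S$ at $z=1$ is immediate from Taylor expansion, quantitatively establishing that $\Real S(z)$ has the correct sign across the \emph{entire} contours $\gamma^{\pm}_N(a)$, and controlling the remainder in $NS(z)=\tilde z^3/3+\cdots$ uniformly on the arcs (where $|z-1|\sim N^{-1/12}$ and the naive Taylor remainder blows up), will require explicit higher-order estimates separated into contributions from the straight piece and from the arc. A secondary delicacy concerns $R^N_{22}$: the rescaled contour $\mathcal{C}^{2\pi/3}_{-\ap_2}$ sits strictly to the left of both poles $\pm\varpi$, so the integral in the third line of (\ref{eq:R22LimDef}) converges only when the linear factor $e^{(y_\infty-x_\infty)\tilde w}$ decays along the $2\pi/3$ rays; this is precisely why the lemma requires $y_\infty\ne x_\infty$ and defines $R^{\infty}_{22}$ off the diagonal by the skew-symmetry $R^\infty_{22}(s,x;t,y)=-R^\infty_{22}(t,y;s,x)$, and checking that the pre-limit integrand produces exactly this extension (rather than, say, a shifted contour picking up the residue at $\tilde w=\varpi$) will require careful bookkeeping of which poles are enclosed by $\gamma^{-}_N(-\ap^q_2)$ for finite $N$.
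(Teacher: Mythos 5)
Your proposal is correct and follows essentially the same route as the paper's proof (Section \ref{Section7.2}): truncation of the arcs $\gamma_N^{\pm}(a,1)$ using the $e^{-cN^{3/4}}$ decay of $e^{\pm N\Real S}$, rescaling $z=1+\sigma_q^{-1}N^{-1/3}\tilde z$ with the same Taylor data for $S$ and $G$, dominated convergence with cubic-decay majorants, Gaussian evaluation of $R_{12}^N$, and the identification $c_N\leftrightarrow\tilde c=-\varpi$ together with the Cauchy-theorem vanishing of the third $R_{22}^N$ term when $s=t=0$ and $\tilde y_N-\tilde x_N-1\geq 0$, which is exactly where the hypothesis $y_\infty\neq x_\infty$ enters. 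The two difficulties you flag are resolved in the paper by, respectively, the monotonicity of $\Real S(re^{\pm\im\theta})$ in $\theta$ (which extends the endpoint Taylor bound over the whole arcs) and the pole bookkeeping for $\gamma_N^-(-\ap_2^q)$ that you anticipate.
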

\begin{remark}\label{rem:PtLimit} We mention that Lemma \ref{lem:kernelLimits} and its proof are similar to \cite[Proposition 4.5]{BBNV}, where the authors worked with a different Pfaffian Schur process, resulting in different integrands than ours. One of the things we do differently is we work with considerably simpler contours, which allow us to obtain fairly detailed estimates for the kernels $K^N$ that are uniform in $N$. The latter are useful for getting the upper tail estimates in Lemma \ref{lem:kernelUpperTail} below.
\end{remark}

\begin{lemma}\label{lem:kernelUpperTail}Assume the same notation as in Lemma \ref{lem:PrelimitKernel} and fix $t \in [0,\infty)$, $A > 0$. Then, we can find $N_1 \in \mathbb{N}$, depending on $q, \varpi, t, A$ and the sequence $c_N$, and a constant $D$, depending on $q, \varpi, t, A$, such that for $N \geq N_1$ and $x_N, y_N \in \Lambda_t(N)$ with $x_N, y_N \geq - A$ we have
\begin{equation}\label{eq:kernelUpperTail}
\begin{split}
\left|K_{11}^N(t,x_N;t,y_N) \right|& \leq D e^{-ax_N - ay_N}, \hspace{2mm}\left|K_{12}^N(t,x_N;t,y_N) \right| \leq D e^{-ax_N + by_N}, \\
&\hspace{2mm} \left|K_{22}^N(t,x_N;t,y_N) \right| \leq D e^{bx_N + by_N},
\end{split}
\end{equation}
where $a = \ap_3 - 1 = |\varpi| + 8$ and $b =  \ap_1 + 1 = |\varpi| + 4$.
\end{lemma}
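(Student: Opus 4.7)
My plan is to establish the three entry-wise bounds in \eqref{eq:kernelUpperTail} by separately estimating each piece of the decomposition $K^N = I^N + R^N$ from Lemma~\ref{lem:PrelimitKernel}. A useful simplification is that since $s = t$ in the statement, the indicator ${\bf 1}\{s < t\}$ in \eqref{eq:DefRN12} forces $R_{12}^N(t, x_N; t, y_N) \equiv 0$, so it suffices to bound $I_{11}^N$, $I_{12}^N$, $I_{22}^N$, and $R_{22}^N$. The decay mechanism is geometric: on the outer contour $\gamma_N^+(\ap^q_3)$ one has $|z| \geq 1 + \ap^q_3 N^{-1/3} + o(N^{-1/3})$ near the saddle $z = 1$, so
\[
\bigl|z^{-\sigma_q x N^{1/3}}\bigr| = e^{-\sigma_q x N^{1/3} \log|z|} \leq e^{-\ap_3 x + o(1)};
\]
analogously $\gamma_N^-(-\ap^q_2)$ gives $|z^{\sigma_q x N^{1/3}}| \leq e^{-\ap_2 x + o(1)}$, and the straight part of $\gamma_N^-(\ap^q_1)$ gives $|w^{\sigma_q y N^{1/3}}| \leq e^{\ap_1 y + o(1)}$ at the saddle. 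The $\pm 1$ shifts in $a = \ap_3 - 1$, $b = \ap_1 + 1$ provide slack to absorb $O(1)$ prefactor losses, and the constraint $x_N, y_N \geq -A$ permits absorbing any bounded multiplicative error into $D$.

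For each $I_{ij}^N$ I would change variables to $\tilde z = \sigma_q (z - 1) N^{1/3}$, under which the exponent $NS(z) + T_t G(z) - \sigma_q x N^{1/3} \log z$ converges to the Airy-type cubic $\tilde z^3/3 - f_q t \tilde z^2 - x \tilde z$ driving Lemma~\ref{lem:kernelLimits}; the prefactor $H_{ij}^N$ together with the Jacobian from $dz$ stays of order $1$. On the straight part $\gamma_N^\pm(\cdot, 0)$ (length $O(N^{1/4})$ in $\tilde z$), $\Re \tilde z$ is constrained as above, and $\Re(\tilde z^3)/3$ tends to $-\infty$ as $|\tilde z|$ grows along the contour, yielding Gaussian-type decay in $\Im \tilde z$ that makes the integral uniformly convergent in $N$. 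On the arc part $\gamma_N^\pm(\cdot, 1)$ the factor $|e^{N S(z)}|$ is super-polynomially small, of order $e^{-c N^{3/4}}$, and dominates every other factor, including the worst-case growth $e^{\sigma_q |x| N^{1/4}}$ incurred when $x \in [-A, 0)$. Combining these observations, one obtains $|I_{11}^N| \leq D e^{-\ap_3 (x+y)}$, $|I_{12}^N| \leq D e^{-\ap_3 x + \ap_1 y}$, and $|I_{22}^N| \leq D e^{-\ap_2 (x+y)}$, all stronger than what \eqref{eq:kernelUpperTail} requires once constants are absorbed via $x, y \geq -A$.

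The piece $R_{22}^N$ in \eqref{eq: DefRN22} splits into three contour integrals. The first two are effectively one-dimensional analogues of $I_{22}^N$ with one variable frozen at $z = c_N \to 1$, and the same steepest-descent estimate yields $\leq C e^{\ap_1 x - \varpi y + O(1)}$ and $\leq C e^{-\varpi x - \ap_2 y + O(1)}$ respectively, both dominated by $D e^{b(x+y)}$. The main obstacle is the third integral, which lives on $\gamma_N^-(-\ap^q_2)$ and whose exponent contains the combination $-T_s G(w^{-1}) - T_t G(w)$, requiring simultaneous control of $G$ at $w$ (inside the unit disk) and at $w^{-1}$ (outside it). The key analytic identity to verify is the Taylor expansion $G(w) + G(w^{-1}) = -2 f_q \sigma_q^2 (w - 1)^2 + O((w-1)^3)$ at $w = 1$, which in rescaled coordinates produces the quadratic $f_q (s + t) \tilde w^2$ appearing in the limit~\eqref{eq:R22LimDef}; combined with $|w^{\sigma_q (y - x) N^{1/3}}| \leq e^{-\ap_2 (y - x) + o(1)}$ on this contour, one arrives at a bound of the form $\leq C e^{-\ap_2 (y - x) + O(1)}$, which fits into $D e^{b(x+y)}$. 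The hardest technical step is the uniform control of $-T_s G(w^{-1}) - T_t G(w)$ on the arc portion $\gamma_N^-(-\ap^q_2, 1)$, where $w$ and $w^{-1}$ both lie near but not on the unit circle; this reduces to a direct maximum-modulus estimate on a compact annulus bounded away from the singularities $\{0, q, q^{-1}\}$ of $G$.
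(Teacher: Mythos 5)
Your strategy for $I_{11}^N$, $I_{12}^N$, $I_{22}^N$ and for the first two integrals in $R^N_{22}$ is essentially the paper's: kill the arcs $\gamma_N^{\pm}(\cdot,1)$ using $|e^{\pm NS}|\leq e^{-cN^{3/4}}$, rescale $\tilde z=\sigma_q(z-1)N^{1/3}$ on the straight parts, and read off the exponential factors in $x,y$ from the position of the contours relative to the unit circle, with the cubic providing integrability. One quantitative caveat: the claim $|z^{-\sigma_q xN^{1/3}}|\leq e^{-\ap_3 x+o(1)}$ is not uniform over all $x\geq -A$, because $\log|z|\geq \ap_3^qN^{-1/3}-O(N^{-2/3})$ only yields $e^{-(\ap_3-O(N^{-1/3}))x}$ and the error times $x$ is unbounded; the ``$-1$'' in $a=\ap_3-1$ exists precisely to absorb this loss, which is \emph{linear} in $x$, not an $O(1)$ prefactor. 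So your stated intermediate bounds $|I_{11}^N|\leq De^{-\ap_3(x+y)}$, etc., are too strong as written, though the correct weaker versions still imply \eqref{eq:kernelUpperTail}.

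The genuine gap is in the third integral of $R_{22}^N$. First, the bound $|w^{\sigma_q(y-x)N^{1/3}}|\leq e^{-\ap_2(y-x)+o(1)}$ on $\gamma_N^-(-\ap_2^q)$ requires $y\geq x$ (the contour lies inside the unit circle, so for $y<x$ this factor grows), and even granting it, $e^{\ap_2(x-y)}$ does not fit under $De^{b(x+y)}$ when $x$ is large and $y$ is near $-A$, since $\ap_2>b$. The paper first invokes skew-symmetry of $R_{22}^N$ (legitimate because the target bound is symmetric in $x,y$) to reduce to $y_N\geq x_N$; you need this reduction. Second, and more seriously, your entire decay mechanism for this term --- the Gaussian factor $e^{2f_qt\,\Real(\tilde w^2)}$ extracted from $-T_sG(w^{-1})-T_tG(w)$ --- vanishes identically when $t=0$, which the lemma allows. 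In that case the rescaled integrand on the straight part decays only like $1/|\tilde w|$, so term-by-term absolute-value estimates give at best $O(\log N)$, not a constant independent of $N$. The paper handles $s=t=0$ by an exact evaluation: after reducing to $y_N\geq x_N$, the integrand $\frac{(1-w^2)w^{\tilde y_N-\tilde x_N-1}}{4(1-c_Nw)(w-c_N)}$ is analytic inside $\gamma_N^-(-\ap_2^q)$ except possibly at $w=0$, so by Cauchy's theorem the integral equals $0$ when $y_N>x_N$ and equals the residue $(1/4)c_N^{-2}$ when $y_N=x_N$. Some version of this analyticity/residue argument is indispensable; a maximum-modulus estimate on the arc does not address it.
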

\begin{remark}\label{rem:kernelUpperTail} We mention that the constants $a,b$ in Lemma \ref{lem:kernelUpperTail} are specific for our choice of contours in the definition of $K^N$. If one picks different contours, one could obtain (\ref{eq:kernelUpperTail}) for any $a > b > |\varpi|$ at the expense of having different constants $D$ and $N_1$.
\end{remark}

In the remainder of this section we prove two useful lemmas that follow from Lemmas \ref{lem:kernelLimits} and \ref{lem:kernelUpperTail}, and will be used in the proof of Proposition \ref{prop: FinDimConv}. We also require the following definition.
\begin{definition}\label{def:LimKernel} For $q \in (0,1)$, we define the kernel $K^{\infty}: ([0, \infty) \times \mathbb{R})^2 \rightarrow\operatorname{Mat}_2(\mathbb{C})$ via
\begin{equation}\label{eq:LimKernel}
\begin{split}
&K^{\infty}(s,x;t,y) = \begin{bmatrix}
    K^{\infty}_{11}(s,x;t,y) & K^{\infty}_{12}(s,x;t,y)\\
    K^{\infty}_{21}(s,x;t,y) & K^{\infty}_{22}(s,x;t,y) 
\end{bmatrix} \\
&= \begin{bmatrix}
    I^{\infty}_{11}(s,x;t,y) & I^{\infty}_{12}(s,x;t,y) + R^{\infty}_{12}(s,x;t,y) \\
    -I^{\infty}_{12}(t,y;s,x) - R^{\infty}_{12}(t,y;s,x) & I^{\infty}_{22}(s,x;t,y) + R^{\infty}_{22}(s,x;t,y) 
\end{bmatrix} ,
\end{split}
\end{equation}
where $I_{11}^{\infty}, I_{12}^{\infty}, I_{22}^{\infty}, R_{12}^{\infty}$ and $R_{22}^{\infty}$ are as in Lemma \ref{lem:kernelLimits}. We also define for $t \in [0, \infty)$ the kernel $K^{t, \infty}: \mathbb{R}^2 \rightarrow  \operatorname{Mat}_2(\mathbb{C})$ via $K^{t,\infty}(x,y) = K^{\infty}(t,x;t,y)$.
\end{definition}

With the above results in place we can show that the point processes $M^N$ from Lemma \ref{lem:PrelimitKernel} converge weakly -- this is the content of the following lemma.
\begin{lemma}\label{lem:WeakConvPP} Assume the same notation as in Lemma \ref{lem:PrelimitKernel}. Then, $M^N$ converge weakly to $M^{\infty}$, which is a Pfaffian point process on $\mathbb{R}^2$ with reference measure $\mu_{\mathcal{T}} \times \lambda$ and correlation kernel $K^{\infty}$ as in Definition \ref{def:LimKernel}. If we define the random measure on $\mathbb{R}$
$$M^{t_j,N}(A) = M^N(\{t_j\} \times A) = \sum_{ i\geq 1} {\bf 1}\{X_i^{j,N} \in A\},$$
then $M^{t_j,N}$ is a Pfaffian point process with correlation kernel $K^{t_j,N}(x,y ) = K^{N}(t_j,x; t_j,y)$ and reference measure $\nu_{t_j}(N)$. In addition $M^{t_j, N}$ converge weakly to a Pfaffian point process $M^{t_j,\infty}$ that has reference measure $\lambda$ and correlation kernel $K^{t_j, \infty}$ as in Definition \ref{def:LimKernel}.
\end{lemma}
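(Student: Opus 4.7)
The plan is to apply Proposition \ref{prop: conv of Pfaffian point processes 1} to obtain the weak convergence $M^N \Rightarrow M^\infty$. By Lemma \ref{lem:PrelimitKernel}, $M^N$ is a Pfaffian point process with reference measure $\mu_{\mathcal{T}, \nu(N)}$ of exactly the lattice form required by the proposition, with $a_{t}(N) = \sigma_q^{-1} N^{-1/3} \to 0$ and $b_t(N)$ as in (\ref{eq:Lattices}). Since the counting measure on $\Lambda_t(N)$ (multiplied by $\sigma_q^{-1}N^{-1/3}$) converges vaguely to Lebesgue measure, we get $\mu_{\mathcal{T}, \nu(N)} \to \mu_{\mathcal{T}} \times \lambda$. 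I take the open set $U = \{(x,y) \in \mathbb{R}^2 : x \neq y\}$, whose complement (the diagonal) has zero Lebesgue measure; the choice is dictated by the fact that the limit (\ref{eq:R22Lim}) in Lemma \ref{lem:kernelLimits} is established only for $x_\infty \neq y_\infty$, while the limits (\ref{eq:I11Lim})--(\ref{eq:R12Lim}) hold for all $(x_\infty, y_\infty) \in \mathbb{R}^2$.

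To check the pointwise convergence hypothesis (\ref{eq:limitProp}), fix $s,t \in \mathcal{T}$, $(x,y) \in U$, and sequences $x_N \in \Lambda_s(N)$, $y_N \in \Lambda_t(N)$ with $x_N \to x$, $y_N \to y$. Combining the five limits of Lemma \ref{lem:kernelLimits} according to the decomposition (\ref{eq:S6Kdecomp}) yields $K^N_{ij}(s,x_N; t,y_N) \to K^\infty_{ij}(s,x;t,y)$ for each $i,j \in \{1,2\}$, with $K^\infty$ as in Definition \ref{def:LimKernel}; the restriction $x \neq y$ is precisely what is needed to use (\ref{eq:R22Lim}). To check the boundedness hypothesis (\ref{eq:limitProp2}), fix $A > 0$ and $s,t \in \mathcal{T}$. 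When $s = t$, Lemma \ref{lem:kernelUpperTail} directly gives for all sufficiently large $N$ and all $x_N, y_N \in [-A, A] \cap \Lambda_t(N)$ the bounds $|K^N_{ij}(t,x_N;t,y_N)| \leq D e^{(|a|+|b|)A}$. When $s \neq t$, analogous uniform bounds must be extracted from the contour-deformation arguments underlying the proof of Lemma \ref{lem:kernelLimits} in Section \ref{Section7.2}: the same steepest descent estimates, now applied uniformly in $(x,y)$ over the compact set $[-A,A]^2$, yield integrand bounds that translate into the required uniform bound on $K^N_{ij}(s,\cdot;t,\cdot)$. Together these verify the hypotheses of Proposition \ref{prop: conv of Pfaffian point processes 1}, giving the existence of a Pfaffian point process $M^\infty$ with kernel $K^\infty$ and reference measure $\mu_{\mathcal{T}} \times \lambda$, such that $M^N \Rightarrow M^\infty$.

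For the second claim, the Pfaffian structure of $M^{t_j, N}$ with kernel $K^{t_j, N}$ and reference measure $\nu_{t_j}(N)$, as well as that of $M^{t_j, \infty}$ with kernel $K^{t_j, \infty}$ and reference measure $\lambda$, follows immediately from Lemma \ref{lem:LemmaSlice} applied to $M^N$ and $M^\infty$. For the weak convergence $M^{t_j, N} \Rightarrow M^{t_j, \infty}$, I re-apply Proposition \ref{prop: conv of Pfaffian point processes 1}, now on $E = \mathbb{R}$ with the single time $\{t_j\}$; the input data are the same restrictions of the limits in Lemma \ref{lem:kernelLimits} to $s = t = t_j$ (giving (\ref{eq:limitProp})) and the bounds from Lemma \ref{lem:kernelUpperTail} directly at $s = t = t_j$ (giving (\ref{eq:limitProp2})). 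The main obstacle is obtaining the uniform bound (\ref{eq:limitProp2}) in the case $s \neq t$, as Lemma \ref{lem:kernelUpperTail} is stated only for $s = t$; however, the arguments in Section \ref{Section7} are structured so that the steepest descent estimates apply with the same contours for $s \neq t$, and give the uniform boundedness on compact rectangles with constants depending only on $q$, $\varpi$, $s$, $t$, $A$ and the sequence $c_N$.
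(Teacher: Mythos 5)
Your proposal is correct and follows essentially the same route as the paper: Lemma \ref{lem:PrelimitKernel} for the Pfaffian structure of $M^N$, Proposition \ref{prop: conv of Pfaffian point processes 1} with $U=\{(x,y)\in\mathbb{R}^2:x\neq y\}$, Lemma \ref{lem:kernelLimits} for (\ref{eq:limitProp}), Lemma \ref{lem:kernelUpperTail} for (\ref{eq:limitProp2}), and Lemma \ref{lem:LemmaSlice} together with the identification $\{t_j\}\times\mathbb{R}\cong\mathbb{R}$ for the single-time statements. Your extra care about the case $s\neq t$ in (\ref{eq:limitProp2}) is warranted --- the paper cites only Lemma \ref{lem:kernelUpperTail}, which is stated for $s=t$ --- and the uniform bounds you need for $s\neq t$ are indeed supplied by the truncation and dominating-function estimates (\ref{eq:I11Trunc})--(\ref{eq:R22Trunc2}) and (\ref{eq:DomI11})--(\ref{eq:DomS1}) in Section \ref{Section7.2}, which hold uniformly for $x,y\in[-A,A]$ and arbitrary $s,t\in\mathcal{T}$.
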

\begin{proof} The fact that $M^{t_j,N}$ is a Pfaffian point process follows from Lemma \ref{lem:LemmaSlice}. The weak convergence of $M^N$ follows by applying Proposition \ref{prop: conv of Pfaffian point processes 1} with $a_t(N), b_t(N)$ as in (\ref{eq:Lattices}). Indeed, by Lemma \ref{lem:PrelimitKernel} the $M^N$ are Pfaffian point processes with kernels $K^N$ as in (\ref{eq:S6Kdecomp}). The kernels $K^N$ satisfy (\ref{eq:limitProp}) with $K = K^{\infty}$ as in (\ref{eq:LimKernel}) and $U = \{(x,y) \in \mathbb{R}^2: x\neq y \}$ from Lemma \ref{lem:kernelLimits}, and they satisfy (\ref{eq:limitProp2}) from Lemma \ref{lem:kernelUpperTail}. The fact that $M^{t_j, N}$ converge weakly analogously follows by applying Lemma \ref{lem:LemmaSlice} with $\mathcal{T} = \{t_j\}$, Lemma \ref{lem:LemmaSlice} and using the natural isomorphism $\{t_j\} \times \mathbb{R} \cong \mathbb{R}$. 
\end{proof}

We end this section by showing that for each $j \in \llbracket 1, m \rrbracket$, the sequence $(X^{j,N}_1)^+ = \max(0, X_1^{j,N})$ from Definition \ref{def:ParScale} is tight. The latter will be used to verify one of the conditions in Lemma \ref{lem:technical lemma fdd 1} in the following section.
\begin{lemma}\label{lem:TightFromAbove} Assume the same notation as in Lemma \ref{lem:PrelimitKernel}. Then, for each $j \in \llbracket 1, m \rrbracket$, we have that the sequence $(X^{j,N}_1)^+ = \max(0, X_1^{j,N})$ is tight.
\end{lemma}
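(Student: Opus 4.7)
The approach is a first-moment bound using the upper-tail kernel estimates from Lemma \ref{lem:kernelUpperTail}. Since $(X_1^{j,N})^+ \geq 0$, tightness from below is automatic, and it suffices to show that $\mathbb{P}(X_1^{j,N} > t)$ can be made small, uniformly in $N$, by taking $t$ sufficiently large. Because each $\lambda^j$ is a partition, the random variables satisfy $X_1^{j,N} > X_2^{j,N} > \cdots$, so $X_1^{j,N}$ is the maximum atom of the point process $M^{t_j,N}$ of Lemma \ref{lem:WeakConvPP}. Hence by Markov's inequality,
\begin{equation*}
\mathbb{P}(X_1^{j,N} > t) = \mathbb{P}(M^{t_j,N}((t,\infty)) \geq 1) \leq \mathbb{E}[M^{t_j,N}((t,\infty))] = \int_{(t,\infty)} \rho^{t_j,N}_1(x)\, \nu_{t_j}(N)(dx).
\end{equation*}

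For a Pfaffian point process the first correlation function is $\rho_1^{t_j,N}(x) = \operatorname{Pf} K^{t_j,N}(x,x)$; the skew-symmetry condition in Definition \ref{def:def of Pfaffian point process} forces $K^{t_j,N}_{11}(x,x) = K^{t_j,N}_{22}(x,x) = 0$, so this Pfaffian of the diagonal $2\times 2$ block collapses to $K^{t_j,N}_{12}(x,x)$. Applying Lemma \ref{lem:kernelUpperTail} at the time $t_j \in [0,\infty)$ with $A = 1$ yields constants $N_1 \in \mathbb{N}$ and $D > 0$ such that
\begin{equation*}
|K^N_{12}(t_j,x;t_j,x)| \leq D e^{-ax + bx} = D e^{-4x} \quad \text{for all } N \geq N_1,\ x \in \Lambda_{t_j}(N),\ x \geq -1.
\end{equation*}
Substituting this bound into the preceding display and recognizing the resulting sum as an upper Riemann sum on the lattice $\Lambda_{t_j}(N)$ (which has spacing $\sigma_q^{-1} N^{-1/3} \to 0$), one obtains $\mathbb{P}(X_1^{j,N} > t) \leq C e^{-4t}$ for all $N \geq N_1$ and $t \geq 0$, with $C > 0$ independent of $N$.

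This establishes tightness from above for $N \geq N_1$, and the finitely many remaining $(X_1^{j,N})^+$ with $N < N_1$ are automatically tight. There is no genuine obstacle in the argument: the uniform exponential upper-tail decay of $K^N_{12}(t_j,x;t_j,x)$ in $x$ is exactly what Lemma \ref{lem:kernelUpperTail} provides, and the remainder is a first-moment computation combined with a routine Riemann-sum estimate.
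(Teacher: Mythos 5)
Your proof is correct, and it takes a genuinely lighter route than the paper. The paper bounds the \emph{entire} Fredholm Pfaffian series: it combines the Hadamard-type bound of Lemma \ref{lem:HadamardPf} with all three estimates of Lemma \ref{lem:kernelUpperTail} to show that $\sum_{n\ge1}\frac{1}{n!}\int_{(u,\infty)^n}|\operatorname{Pf}[K^{t_j,N}]|\,\nu^n$ converges and is $O(e^{-u(a-b)})$, then invokes Proposition \ref{prop:Last Particle Cdf} to convert this into a bound on $\mathbb{P}(X_1^{j,N}>u)$. You instead observe that $X_1^{j,N}$ is the top atom of $M^{t_j,N}$, apply Markov's inequality to $M^{t_j,N}((t,\infty))$, and use only the one-point correlation function $\rho_1(x)=K^{t_j,N}_{12}(x,x)$ (the diagonal Pfaffian collapsing correctly by skew-symmetry), so that only the $K_{12}$ estimate of Lemma \ref{lem:kernelUpperTail} is needed, with the same Riemann-sum comparison on the lattice $\Lambda_{t_j}(N)$ and the same resulting decay rate $e^{-(a-b)t}=e^{-4t}$. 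What the paper's heavier computation buys is verification of hypothesis (\ref{eq: finite series correlation}) and hence access to the exact last-particle cdf formula; for the purposes of this lemma alone that is not needed, and your first-moment argument is a clean shortcut. Your handling of the finitely many $N<N_1$ is also fine.
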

\begin{proof} Fix $j \in \llbracket 1, m \rrbracket$, $K^{t_j,N}(x,y) = K^{N}(t_j,x; t_j,y)$,  $A > 0$, $u \geq -A$, $\Delta = \sigma_q^{-1} N^{-1/3}$ and $\Lambda = \Lambda_{t_j}(N) \cap (u, \infty)$. From Lemmas \ref{lem:HadamardPf} and \ref{lem:kernelUpperTail} there is a constant $D > 0$ such that for all large $N$
\begin{equation}\label{eq:upperT}
\begin{split}
&\sum_{n = 1}^{\infty} \frac{1}{n!}\sum_{x_1, \dots, x_n \in \Lambda} \Delta^n \left| \operatorname{Pf} [K^{t_j,N}(x_i,x_j)]_{i,j = 1}^n \right| \leq \sum_{n = 1}^{\infty} \frac{1}{n!} \sum_{x_1, \dots, x_n \in \Lambda} \Delta^n D^n (2n)^{n/2} \prod_{i = 1}^n e^{-(a-b) x_i}\\
&\leq \sum_{n = 1}^{\infty} \frac{D^n (2n)^{n/2} e^{|a-b| n \Delta}}{n!}  \sum_{x_1, \dots, x_n \in \Lambda} \prod_{i = 1}^n\int_{x_i}^{x_i + \Delta} e^{-(a- b)y_i} dy_i\\
& \leq \sum_{n = 1}^{\infty} \frac{D^n (2n)^{n/2} e^{|a-b| n \Delta}}{n!} \prod_{i = 1}^n \int_{u}^{\infty} e^{-(a- b)y_i} dy_i =   \sum_{n = 1}^{\infty} \frac{D^n (2n)^{n/2} e^{|a-b| n \Delta} e^{-nu(a- b)}}{n!(a-b)^n}  < \infty,
\end{split}
\end{equation}
where in going from the first to the second line we used that for any $c,x \in \mathbb{R}$ we have
\begin{equation*}
\Delta \cdot e^{cx} \leq e^{|c| \Delta} \cdot \int_{x}^{x + \Delta} e^{cy} dy.
\end{equation*}
Proposition \ref{prop:Last Particle Cdf}, the fact that $M^{t_j,N}$ is Pfaffian from Lemma \ref{lem:WeakConvPP}, and (\ref{eq:upperT}), imply for all $u \geq 0$ and large $N$ so that $\Delta |a-b| \leq 1$:
$$\mathbb{P}(X_1^{j,N} \geq u) \leq \sum_{n = 1}^{\infty} \frac{D^n (2n)^{n/2} e^{|a-b| n \Delta} e^{-nu(a- b)}}{n!(a-b)^n} \leq e^{-u(a-b)}\sum_{n = 1}^{\infty} \frac{D^n (2n)^{n/2} e^{n}}{n!(a-b)^n} \leq \cb e^{-u(a-b)},$$ 
where $\cb$ is a large constant that does not depend on $N$. The last inequality proves the lemma.
\end{proof}

%
%
\subsection{Infinite atoms in the limit}\label{Section6.3} The goal of this section is to establish the following result.

\begin{lemma}\label{lem:InfiniteAtoms} Let $M^{t,\infty}$ be as in Lemma \ref{lem:WeakConvPP}. Then, $\mathbb{P}(M^{t,\infty}(\mathbb{R}) = \infty) = 1$.
\end{lemma}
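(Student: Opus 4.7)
The strategy is the one sketched in Section \ref{Section1.3}: compare $M^{s,\infty}$ at large $s$ with the Airy point process, which is known to have infinitely many atoms almost surely, and use a stochastic monotonicity derived from the half-space interlacing Gibbs property to transfer this ``infinite atoms'' property to any finite $s$. The final step is to apply Lemma \ref{lem:technical lemma fdd 2}.

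\textbf{Step 1 (convergence to Airy as $s \to \infty$).} I would show that along a sequence $s_n \uparrow \infty$, the point processes $M^{s_n,\infty}$, translated by an $s_n$-dependent shift that cancels the parabolic drift in $\hsa_i = \sqrt{2}\mathcal{L}_i + s^2$, converge weakly in $(S,\mathcal{S})$ to the Airy point process $\mathsf{A}$. The convergence would be established by a saddle-point analysis of the integral formulas for $I^{\infty}_{11}, I^{\infty}_{12}, I^{\infty}_{22}, R^{\infty}_{12}, R^{\infty}_{22}$ from Lemma \ref{lem:kernelLimits} in the regime $s=t \to \infty$, where the relevant critical points sit at $z,w \approx f_q s$. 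After deforming contours through these saddles and exploiting the gauge freedom of Proposition \ref{prop:basic properties Pfaffian point process}(4), the boundary contribution $R^{\infty}_{22}$ becomes exponentially suppressed while the remaining integrals reproduce the extended Airy kernel $K^{\mathrm{Airy}}$ in (\ref{S1AiryKer}). Combined with Lemma \ref{lem:determinantal point process as Pfaffian} and the convergence criterion Proposition \ref{prop: conv of Pfaffian point processes 1}, this yields the desired weak convergence, and the hypothesis $\mathbb{P}(\mathsf{A}(\mathbb{R}) = \infty) = 1$ is the classical infiniteness of the Airy point process.

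\textbf{Step 2 (stochastic monotonicity).} I would prove that $s \mapsto M^{s,\infty}(\mathbb{R})$ is stochastically non-increasing: for $0 \leq s_1 \leq s_2$ and $a \in \mathbb{Z}_{\geq 0}$, $\mathbb{P}(M^{s_1,\infty}(\mathbb{R}) \geq a) \geq \mathbb{P}(M^{s_2,\infty}(\mathbb{R}) \geq a)$. The argument works at the prelimit level. The Pfaffian-Schur line ensemble $\mathfrak{L}^N$ from the proof of Theorem \ref{thm:MainThm1} satisfies the half-space interlacing Gibbs property by Lemma \ref{lem:SchurGibbs}, so conditioning on the values at time $s_2 d_n$ and on the $(k{+}1)$-st curve realizes the top $k$ curves on $\llbracket 0, s_2 d_n \rrbracket$ as reverse interlacing geometric walks. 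Invoking the monotone coupling Lemma \ref{lem:monotone coupling}, I can couple two such systems with different terminal data so that the number of atoms of $M^{s,N}$ lying above any fixed threshold is dominated in the correct direction as $s$ increases. Sending $N \to \infty$ via the weak convergence of Lemma \ref{lem:WeakConvPP}, together with the upper-tail decay Lemma \ref{lem:kernelUpperTail} and the convergence of tail probabilities supplied by Lemma \ref{lem:technical lemma fdd 1}, transfers this monotonicity to $M^{s,\infty}$.

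\textbf{Conclusion and main obstacle.} Let $s_n \uparrow \infty$ with $s_1 = t$. Step 1 gives $M^{s_n,\infty} \Rightarrow \mathsf{A}$ (after a shift that preserves total mass), and Step 2 gives that $p_n^a := \mathbb{P}(M^{s_n,\infty}(\mathbb{R}) \geq a)$ is non-increasing in $n$. Lemma \ref{lem:technical lemma fdd 2} then yields $\mathbb{P}(M^{s_n,\infty}(\mathbb{R}) = \infty) = 1$ for all $n$, and specializing to $n=1$ gives the lemma. The main obstacle is Step 2: total atom count is not a continuous functional under vague convergence, so passing the discrete monotonicity through the $N\to\infty$ limit requires ruling out escape of atoms to $-\infty$, which is precisely where the upper-tail kernel bound of Lemma \ref{lem:kernelUpperTail} and the machinery of Lemma \ref{lem:technical lemma fdd 1} become essential.
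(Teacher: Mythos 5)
Your overall strategy coincides with the paper's: compare $M^{s,\infty}$ with the Airy point process as $s\to\infty$ via large-time asymptotics of the kernel (the paper's Lemma \ref{lem:ConvToAiryKernel} delivers exactly what your Step 1 describes, and the shift $\phi_n(x)=x+f_q^2s_n^2$ preserves total mass as you note), establish that $p_n^a=\mathbb{P}(M^{s_n,\infty}(\mathbb{R})\geq a)$ is non-increasing in $n$, and conclude with Lemma \ref{lem:technical lemma fdd 2}. Step 1 and the concluding step are sound and match the paper.

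The gap is in Step 2, and it is not located where you place it. You assert that the monotone coupling Lemma \ref{lem:monotone coupling} yields the prelimit domination, but that lemma compares two interlacing ensembles on the \emph{same} time interval with ordered boundary data; it does not by itself compare the particle configuration at time $\lfloor s_nN^{2/3}\rfloor$ with the one at time $\lfloor s_{n+1}N^{2/3}\rfloor$. What is actually needed is a quantitative statement of the following form: conditionally on the $a$-th particle at time $\lfloor s_{n+1}N^{2/3}\rfloor$ lying above a threshold, the $a$-th particle at the earlier time $\lfloor s_nN^{2/3}\rfloor$ lies above that threshold lowered by $O(N^{1/3})$ (an $O(1)$ loss after rescaling) with probability at least $1-\varepsilon$. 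The monotone coupling only reduces this, by pushing the floor to $-\infty$ and the terminal data down, to $a$ \emph{free} independent reverse geometric walks; one must then still control how far below their terminal values these free walks can dip over a window of length $O(N^{2/3})$, which requires the KMT-type strong coupling of Lemma \ref{prop:ThmA Shao}. The two ingredients together are packaged in the paper as the ``no low minima'' estimate, Lemma \ref{lem:lower bound on curve canal}, which is the actual crux of Step 2 and is absent from your sketch. By contrast, the $N\to\infty$ transition that you single out as the main obstacle is handled exactly as you propose: restrict to windows $[x_0,\infty)$, apply Lemmas \ref{lem:TightFromAbove} and \ref{lem:technical lemma fdd 1}, and then let $x_0\to-\infty$ and $\varepsilon\to0$; that part of your plan is correct.
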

\begin{remark}\label{rem:InfiniteAtoms} In words, Lemma \ref{lem:InfiniteAtoms} says that $M^{t,\infty}$ contains infinitely many atoms almost surely.
\end{remark}

In order to establish Lemma \ref{lem:InfiniteAtoms} we require the following statement, whose proof is given in Section \ref{Section7.4}.

\begin{lemma}\label{lem:ConvToAiryKernel} Let $K^{t,\infty}(x,y)$ be as in Definition \ref{def:LimKernel}. Then, the following limits all hold uniformly as $(x,y)$ vary over bounded sets in $\mathbb{R}^2$.
\begin{equation}\label{eq:Limkcr11}
\begin{split}
\lim_{t\rightarrow\infty} t e^{-\frac{2}{3}t^3} e^{t(x+y)} {K}_{11}^{f_q^{-1}t,\infty} (x-t^2,y-t^2)= \frac{1}{(2\pi \im)^{2}} \hspace{-1mm}\int_{\mathcal{C}^{\pi/3}_{1}} \hspace{-1mm}dz\int_{\mathcal{C}^{ \pi/3}_{1}} \hspace{-1mm} dw \frac{(z-w)}{2}  e^{z^3/3+w^3/3-zx-wy}.
\end{split}
\end{equation}
\begin{equation}\label{eq:Limkcr12}
\lim_{t \rightarrow \infty} e^{t(x-y)}{K}^{f_q^{-1}t, \infty}_{12}(x - t^2,y - t^2) = \frac{1}{(2\pi \im)^{2}}\int_{\mathcal{C}^{\pi/3}_{\ap_3}} dz\int_{\mathcal{C}^{2\pi/3}_{\ap_1}} dw \frac{e^{z^3/3 - w^3/3 - xz + yw } }{z-w}.
\end{equation}
\begin{equation}\label{eq:Limkcr22}
\lim_{t \rightarrow \infty} t^3 e^{\frac{2}{3}t^3} e^{-t(x+y)} {K}_{22}^{f_q^{-1}t,\infty} (x-t^2,y-t^2)= \frac{1}{(2\pi \im)^{2}}\hspace{-1mm}\int_{\mathcal{C}^{\pi/2}_{-1}} \hspace{-1mm}dz\int_{\mathcal{C}^{\pi/2}_{-1}} \hspace{-1mm}dw \frac{z-w}{8} e^{-z^3/3 - w^3/3 +x z + y w  }.
\end{equation}
\end{lemma}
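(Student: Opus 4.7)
The three limits will be proved by a common saddle-point-style substitution. Applying $z = t + \tilde z$ to the cubic exponent yields the algebraic identity
\begin{equation*}
-z^3/3 + tz^2 + (x - t^2)z = -\tilde z^3/3 + x\tilde z - t^3/3 + xt,
\end{equation*}
together with the analogue with reversed signs for $+z^3/3 - tz^2 - (x-t^2)z$. Substituted into the double integrals defining $I^{\infty}_{ij}(f_q^{-1}t, x-t^2; f_q^{-1}t, y-t^2)$ from Lemma \ref{lem:kernelLimits}, this change of variables (and its counterpart in $w$) moves the $t^3$-scale growth of the cubic exponents out of the integrand, where the leftover factors $e^{\pm t^3/3}, e^{\pm xt}, e^{\pm yt}$ are cancelled exactly by the scaling prefactors $t e^{-\frac{2}{3}t^3}e^{t(x+y)}$, $e^{t(x-y)}$, and $t^3 e^{\frac{2}{3}t^3}e^{-t(x+y)}$. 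What remains in the integrand is an Airy-type exponent $\pm\tilde z^3/3 \mp x\tilde z \pm \tilde w^3/3 \mp y\tilde w$ on shifted contours whose vertices drift to $\pm\infty$; these may be deformed to the target contours $\mathcal{C}^{\pi/3}_{1}$, $\mathcal{C}^{\pi/3}_{\ap_3}$, $\mathcal{C}^{2\pi/3}_{\ap_1}$, or $\mathcal{C}^{\pi/2}_{-1}$ without crossing the residual poles (at $\tilde z = -t$, $\tilde z+\tilde w=-2t$, $\tilde z = -t - \varpi$), all of which drift to $-\infty$ as $t\to\infty$.

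For $K_{11}^{t,\infty}$ and $K_{12}^{t,\infty}$ the argument is then essentially routine, since $K_{11}^{t,\infty} = I^{\infty}_{11}$ has no $R$-correction and $R^{\infty}_{12}$ vanishes at $s = t$ by the indicator $\mathbf{1}\{s<t\}$. The rational prefactor simplifies as $\frac{(z-w)(z+\varpi)(w+\varpi)}{zw(z+w)} \sim \frac{\tilde z - \tilde w}{2t}$ for $K_{11}$ and $\frac{(z+w)(z+\varpi)}{2z(z-w)(w+\varpi)} \sim \frac{1}{\tilde z - \tilde w}$ for $K_{12}$; combined with the $t$ (respectively $1$) scaling factor, these produce the stated $(z-w)/2$ and $1/(z-w)$ Airy integrands. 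Uniform convergence over bounded $(x,y)$ follows from dominated convergence, using the bound $|e^{\pm\tilde z^3/3}| \leq e^{-C|\tilde z|^3}$ that holds uniformly on the fixed Airy contours.

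The $K_{22}^{t,\infty} = I^{\infty}_{22} + R^{\infty}_{22}$ case is the main technical obstacle. The $I^{\infty}_{22}$ contribution is handled as above, with $\frac{z-w}{4(z+w)(z+\varpi)(w+\varpi)} \sim \frac{\tilde z - \tilde w}{8t^3}$ combining with the $t^3$ factor to give the $(z-w)/8$ integrand on $\mathcal{C}^{\pi/2}_{-1}$. The difficulty is that the three single-integral terms of $R^{\infty}_{22}$ each diverge super-exponentially after scaling by $t^3 e^{\frac{2}{3}t^3}$, so they must be shown to cancel collectively. The plan is to deform all three terms onto a common contour (for instance $\mathcal{C}^{2\pi/3}_{0}$), crossing the pole at $z = \varpi$ in the first term (via $\mathcal{C}^{2\pi/3}_{\ap_1} \to \mathcal{C}^{2\pi/3}_{-\ap_2} \to \mathcal{C}^{2\pi/3}_{0}$) and at $w = -\varpi$ in the third term (via $\mathcal{C}^{2\pi/3}_{-\ap_2} \to \mathcal{C}^{2\pi/3}_0$); the residues extracted are $\pm\frac{1}{4}e^{2t\varpi^2 + \varpi(x-y)}$ and cancel in pairs. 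The remaining integrals on $\mathcal{C}^{2\pi/3}_{0}$ are then estimated by steepest descent near $w = 0$, with the alternating structure $\text{first} - \text{second} + \text{third}$ providing further cancellation (each difference carries an extra factor of order $x - y$ via a Taylor expansion in the exponent), forcing the net contribution to be $o(t^{-3}e^{-\frac{2}{3}t^3})$ and hence negligible compared to $I^{\infty}_{22}$. Making all of this rigorous, including case analysis on the sign of $\varpi$ (which determines which poles lie in the swept region during each deformation) and on the ordering of $x$ and $y$, is the hard part; by contrast, once the $R^{\infty}_{22}$ contribution is shown to vanish, uniform convergence follows by the same dominated-convergence argument as in the $K_{11}, K_{12}$ cases.
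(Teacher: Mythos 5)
Your treatment of $K_{11}$ and $K_{12}$ is correct and is essentially the paper's argument: substitute $z\to z+t$, $w\to w+t$, observe that the surviving poles sit near $-t$ and are not swept when the shifted contours are returned to fixed ones, and conclude by dominated convergence. The gap is in the $K_{22}$ case, and it is not a matter of missing rigor but of a wrong cancellation mechanism. The three single-integral terms of $R^{\infty}_{22}$ do \emph{not} cancel among themselves, even to leading order. After multiplying by $t^3e^{\frac{2}{3}t^3}e^{-t(x+y)}$ and completing the cube, the first term equals $t^3e^{(\varpi+t)^3/3-y\varpi-yt}$ times a $z$-integral which (after deforming its contour to a fixed one, legitimate because the pole at $z-t=\varpi-t$ is not swept) is $O(1/t)$; so it is of order $e^{t^3/3+\varpi t^2-ty+O(t)}$. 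The second term is of order $e^{t^3/3+\varpi t^2-tx+O(t)}$, with $x$ and $y$ exchanged, and the third is of order $e^{\frac{2}{3}t^3-t(x+y)+O(t)}$. These are three different exponential scales with different dependence on $(x,y)$, so no alternating structure or Taylor expansion in $x-y$ can force their sum to be $o(1)$. In particular the scaled $R^{\infty}_{22}$ does not vanish, and correspondingly the scaled $I^{\infty}_{22}$ on its original contours $\mathcal{C}^{2\pi/3}_{-\ap_2}$ does not converge to the stated limit either (it diverges by exactly the same amount).

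The mechanism that actually works, and that the paper uses, is that the three single integrals are precisely the residues of the double integrand of $I^{\infty}_{22}$ at $z=-\varpi$, $w=-\varpi$ and $w=-z$: for instance, the residue at $w=-\varpi$ of $\frac{t^3(z-w)}{4(z+w)(z+\varpi)(w+\varpi)}e^{-(z-t)^3/3-(w-t)^3/3+x(z-t)+y(w-t)}$ reproduces the first single integral. One therefore must not estimate $I^{\infty}_{22}$ and $R^{\infty}_{22}$ separately; instead one first replaces all the $2\pi/3$ contours by $\pi/2$ contours with the same vertices (possible for $t>\ap_1$), then moves both contours of the double integral from vertex $-\ap_2$ to vertex $\ap_1$. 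The residues crossed in this deformation cancel the three $R^{\infty}_{22}$ terms exactly, leaving a single double integral over $\mathcal{C}^{\pi/2}_{\ap_1}\times\mathcal{C}^{\pi/2}_{\ap_1}$ that passes through the critical point after the shift $z\to z+t$, $w\to w+t$ and converges to the right-hand side of (\ref{eq:Limkcr22}) by dominated convergence. Your proposed deformation of the single integrals onto $\mathcal{C}^{2\pi/3}_{0}$ and the residue bookkeeping at $z=\varpi$ and $w=-\varpi$ (those two residues are indeed both $\frac{1}{4}e^{2t\varpi^2+\varpi(x-y)}$) only rearranges terms inside $R^{\infty}_{22}$ and cannot produce the required cancellation against $I^{\infty}_{22}$.
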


In the remainder of this section we prove Lemma \ref{lem:InfiniteAtoms}.
\begin{proof}[Proof of Lemma \ref{lem:InfiniteAtoms}] Fix $t \in [0, \infty)$, and let $s_n = t + n$ for $n \in \mathbb{Z}_{\geq 0}$. In addition, define $\phi_n(x) = x + f_q^2 s_n^2$. In what follows we seek to apply Lemma \ref{lem:technical lemma fdd 2} to the sequence $\{M^{s_n,\infty} \phi_n^{-1}\}_{ n \geq 0}$, and for clarity we split the proof into three steps.\\

{\bf \raggedleft Step 1.} We claim that for any fixed $a, n \in \mathbb{Z}_{\geq 0}$ we have that
\begin{equation}\label{UOP1}
p_n^a \geq p_{n+1}^a, \mbox{ where } p_n^a := \mathbb{P}(M^{s_n, \infty} \phi_n^{-1}(\mathbb{R}) \geq a) = \mathbb{P}(M^{s_n, \infty}(\mathbb{R}) \geq a).
\end{equation}
We establish (\ref{UOP1}) in the steps below. Here, we assume its validity and prove the lemma.\\

Note that from Lemma \ref{lem:WeakConvPP} and parts (4) and (5) of Proposition \ref{prop:basic properties Pfaffian point process} we know that $M^{s_n, \infty} \phi_n^{-1}$ is a Pfaffian point process on $\mathbb{R}$ with correlation kernel
$$\begin{bmatrix}
    f_n(x) f_n(y) K^{s_n,\infty}_{11}(x - f_q^2s_n^2, y- f_q^2s_n^2) & \frac{f_n(x)}{f_n(y)}K^{s_n, \infty}_{12}(x - f_q^2s_n^2, y- f_q^2s_n^2)\\
    \frac{f_n(y)}{f_n(x)} K^{t_n,\infty}_{21}(x- f_q^2s_n^2, y- f_q^2s_n^2) &  \frac{1}{f_n(x) f_n(y)}K^{t_n,\infty}_{22}(x- f_q^2s_n^2,y- f_q^2s_n^2) 
\end{bmatrix}, $$
where $f_n(x) = e^{f_q s_n x} \cdot e^{-f_q^3s_n^3/3}$. From Lemma \ref{lem:ConvToAiryKernel} we have that the above kernel converges uniformly over bounded sets to 
$$\begin{bmatrix}
    0 & K^{\mathrm{Airy}}(x,y)\\
    -K^{\mathrm{Airy}}(y,x) &  0
\end{bmatrix}, $$
where we used that the right side of (\ref{eq:Limkcr12}) is precisely the {\em Airy kernel}, i.e. the kernel from (\ref{S1AiryKer}) with $t_1 = t_2$. From Lemma \ref{lem:determinantal point process as Pfaffian} and Proposition \ref{prop: conv of Pfaffian point processes 0} we conclude that $M^{s_n, \infty} \phi_n^{-1}$ converge to the Airy point process $\mathcal{A}$ as $n \rightarrow \infty$. It is well-known that the Airy point process has infinitely many atoms almost surely, i.e.
\begin{equation}\label{S72E2}
\mathbb{P}(\mathcal{A}(\mathbb{R}) = \infty) = 1,
\end{equation}
see e.g. \cite[Equation (7.11)]{dimitrov2024airy}. Combining the above observations with (\ref{UOP1}) allows us to conclude that $M^{s_n, \infty} \phi_n^{-1}$ satisfy the conditions of Lemma \ref{lem:technical lemma fdd 2}, and so $\mathbb{P}(M^{t, \infty}(\mathbb{R}) = \infty) = \mathbb{P}(M^{s_0, \infty}\phi_0^{-1}(\mathbb{R}) = \infty) = 1$ as desired.\\

{\bf \raggedleft Step 2.} In the sequel we fix $n \in \mathbb{Z}_{\geq 0}$ and proceed to show (\ref{UOP1}). Since $p_n^0 = 1$, we may assume $a \geq 1$. Finally, as (\ref{UOP1}) clearly holds if $p^a_{n+1} = 0$, we may also assume that $p^a_{n+1} > 0$.\\

We first observe from Lemma \ref{lem:technical lemma fdd 1} that for each $x \in \mathbb{R}$ and $u, v \in \mathbb{Z}_{\geq 0}$ we have
\begin{equation}\label{UOP2}
\lim_{N \rightarrow \infty} \mathbb{P}( M^{s_v, N}([x, \infty)) \leq u) = \mathbb{P}( M^{s_v, \infty}([x, \infty)) \leq u) .
\end{equation}
Indeed, condition (1) of the lemma is satisfied by Lemma \ref{lem:TightFromAbove} , condition (2) by Lemma \ref{lem:WeakConvPP} and condition (3) by the fact that for each $x \in \mathbb{R}$ 
$$\mathbb{E}[M^{s_v,\infty}(\{x\})] = \int_{\{x\}} K^{s_v,\infty}_{12}(y,y) \lambda(dy) = 0,$$
where we used the definition of correlation functions (\ref{eq:RND determinantal}) and that $\lambda$ is the Lebesgue measure.\\

Let $\varepsilon \in (0,1/2)$ be given. Using that $p^a_{n+1} > 0$ and the monotone convergence theorem, we can find $x_0 \in \mathbb{R}$ such that
\begin{equation}\label{UOP3}
\mathbb{P}(M^{s_{n+1}, \infty}([x_0,\infty)) \geq a) \geq (1- \varepsilon)p^a_{n+1} .
\end{equation}
In addition, using (\ref{UOP2}) we have for all large $N$ that
\begin{equation}\label{UOP4}
\mathbb{P}(M^{s_{n+1}, N}([x_0,\infty)) \geq a) \geq (1- 2\varepsilon)p^a_{n+1} .
\end{equation}
We now claim that we can find a constant $R \geq 0$ sufficiently large, depending on $q, \varpi, t, n, \varepsilon$, such that for all large $N$
\begin{equation}\label{UOP5}
\mathbb{P}\left(M^{s_{n}, N}([x_0 -R ,\infty)) \geq a \Big\vert M^{s_{n+1}, N}([x_0  ,\infty)) \geq a \right) \geq 1- \varepsilon .
\end{equation}
We will establish (\ref{UOP5}) in the next step. Here, we assume its validity and prove (\ref{UOP1}).\\

Combining (\ref{UOP4}) and (\ref{UOP5}) we conclude for all large $N$
\begin{equation*}
\begin{split}
\mathbb{P}(M^{s_{n}, N}([x_0 -R ,\infty)) \geq a)  \geq (1- \varepsilon)  (1- 2\varepsilon)p^a_{n+1}.
\end{split}
\end{equation*}
Taking the $N \rightarrow \infty$ limit above, and using (\ref{UOP2}) we conclude
\begin{equation*}
\begin{split}
&p_n^a = \mathbb{P}(M^{s_{n}, \infty}(\mathbb{R}) \geq a) \geq \mathbb{P}(M^{s_{n}, \infty}([x_0 -R ,\infty)) \geq a) \\
&= \lim_{N \rightarrow \infty}  \mathbb{P}(M^{s_{n}, N}([x_0 -R ,\infty)) \geq a) \geq (1- \varepsilon) (1- 2\varepsilon)p^a_{n+1}.
\end{split}
\end{equation*}
Letting $\varepsilon \rightarrow 0$ in the last expression gives (\ref{UOP1}).\\

{\bf \raggedleft Step 3.} In this final step we show (\ref{UOP5}), which by the definition of $M^{s_{n}, N}$ is equivalent to 
\begin{equation}\label{UOP6}
\mathbb{P}\left( \lambda_a^{T_{s_n}} \geq A_N(s_n, x_0 -R  ) \Big\vert  \lambda_a^{T_{s_{n+1}}} \geq A_N(s_{n+1}, x_0 )  \right)  \geq 1- \varepsilon ,
\end{equation}
where  we recall that $T_s = T_s(N) = \lfloor s N^{2/3} \rfloor$ and 
$$A_N(s,x) = \sigma_q N^{1/3} x_0 + \frac{2qN}{1-q} + \frac{qT_s}{1-q} + a.$$
The way we prove (\ref{UOP6}) is by applying appropriately Lemma \ref{lem:lower bound on curve canal} as we explain below.

From Lemma \ref{lem:SchurGibbs} we know that the line ensemble $\mathfrak{L}^N = \{L^N_i \}_{i  \geq 1}$ defined by $L_i^N(s) = \lambda_i^{s}$ for $i \geq 1$ and $s \in \llbracket 0, M \rrbracket$ with $(\lambda^1, \dots, \lambda^M)$ having law $\mathbb{P}_N$ as in Definition \ref{def:ParScale} satisfies the half-space interlacing Gibbs property with parameters $q_i = q_i^N = c_N^{(-1)^{i}} \cdot q = q + O(N^{-1/3})$, where the constant in the big $O$ notation depends on $\varpi$ and $q$ alone. In particular, we see that if we denote 
$$\mathcal{F}_{n,N} = \sigma \left( \{ L_i^N(T_{s_{n+1}})   \mbox{ for } i \in \llbracket 1,a \rrbracket \mbox{ and } L^N_{a+1}(s) \mbox{ for }s \in \llbracket 0, T_{s_{n+1}}  \rrbracket \} \right),$$
then we have for any $x_{i}(s) \in \mathbb{R}$ that $\mathbb{P}$-almost surely
\begin{equation}\label{UOP7}
\begin{split}
&\mathbb{P}\left( L_i^N(s) = x_{i}(s) \mbox{ for }i \in \llbracket 1, a \rrbracket, s \in \llbracket 0, T_{s_{n+1}} \rrbracket \vert \mathcal{F}_{n,N}  \right)  \\
&= \mathbb{P}_{\ice, \operatorname{Geom}}^{T_{s_{n+1}}, \vec{y}, \vec{q}^N_a, g} \left(Q_i(s) = x_{i}(s) \mbox{ for }i \in \llbracket 1, a \rrbracket, s \in \llbracket 0, T_{s_{n+1}}  \llbracket \right),
\end{split}
\end{equation}
 where $\vec{q}^N_a = (q_1^N, \dots, q_a^N)$, $\vec{y} = (y_1, \dots, y_a) = (L_1^N(T_{s_{n+1}}), \dots L_a^N(T_{s_{n+1}}))$ and $g(s) = L^N_{a+1}(s)$ for $s \in \llbracket 0, T_{s_{n+1}} \rrbracket$. 

We now let $\delta \in (0,1/2)$ be small enough and $N_1$ be large enough (depending on $q, \varpi, t, n$) so that $T_{s_{n+1}} \geq \mathsf{T}_3$ from Lemma \ref{lem:lower bound on curve canal} and $q_i^N \in [\delta, 1- \delta]$ for $N \geq N_1$. Applying Lemma \ref{lem:lower bound on curve canal} with $T = T_{s_{n+1}}$, $\varepsilon, \delta$ as in the present setting, $k = a$, $q_i = q_i^N$ and $t = T_{s_n}$, we conclude that we can find a large enough $M$, depending on $q, \varpi, t, n, \varepsilon$, such that
\begin{equation}\label{UOP8}
\begin{split}
\mathbb{P}_{\ice, \operatorname{Geom}}^{T_{s_{n+1}}, \vec{y}, \vec{q}^N_a, g}\left(Q_a(T_{s_n})\geq y_a + \frac{q (T_{s_n}-T_{s_{n+1}})}{1 - q} -M N^{1/3} \right)\geq 1-\varepsilon.
\end{split}
\end{equation}
We mention that in the last inequality we also used $q_i^N = q + O(N^{-1/3})$ and $T_{s_{n+1}} =  \lfloor (t+n) N^{2/3}  \rfloor $. 

If $E_{n, N} = \{ \lambda_a^{T_{s_{n+1}}} \geq A_N(s_{n+1}, x_0 ) \}$ and $R = \sigma_q^{-1} M$, we get the following tower of inequalities
\begin{equation}\label{UOP9}
\begin{split}
&\mathbb{P} \left( \lambda_a^{T_{s_n}} \geq A_N(s_n, x_0 -R  ) \cap E_{n, N}  \right) =\mathbb{E} \left[ {\bf 1}_{E_{n, N}} \cdot \mathbb{E}  \left[ {\bf 1}\{ \lambda_a^{T_{s_n}} \geq A_N(s_n, x_0 -R  ) \} \vert \mathcal{F}_{n,N} \right] \right] \\
& = \mathbb{E} \left[ {\bf 1}_{E_{n, N}} \cdot \mathbb{P}_{\ice, \operatorname{Geom}}^{T_{s_{n+1}}, \vec{y}, \vec{q}^N_a, g}\left(Q_a(T_{s_n}) \geq  A_N(s_n, x_0 -R  )  \right) \right] \\
& \geq  \mathbb{E} \left[ {\bf 1}_{E_{n, N}} \cdot \mathbb{P}_{\ice, \operatorname{Geom}}^{T_{s_{n+1}}, \vec{y}, \vec{q}^N_a, g}\left(Q_a(T_{s_n}) \geq y_a + A_N(s_n, x_0 -R  ) - A_N(s_{n+1}, x_0 )   \right) \right] \\
& =  \mathbb{E} \left[ {\bf 1}_{E_{n, N}} \cdot \mathbb{P}_{\ice, \operatorname{Geom}}^{T_{s_{n+1}}, \vec{y}, \vec{q}^N_a, g}\left(Q_a(T_{s_n}) \geq y_a  + \frac{q (T_{s_n}-T_{s_{n+1}})}{1 - q} - \sigma_q RN^{1/3}  \right) \right] \\
&\geq (1-\varepsilon) \cdot \mathbb{P}(E_{n,N}).
\end{split}
\end{equation}
We mention that the equality on the first line follows from the tower property for conditional expectation, and $E_{n, N}  \in \mathcal{F}_{n,N} $, while in going from the first to the second line we used (\ref{UOP7}). The inequality on the third line follows from the fact that on $E_{n,N}$ we have $y_a = L_a(T_{s_{n+1}}) \geq A_N(s_{n+1}, x_0 ) $. In going from the third to the fourth line we used the formula for $A_{N}(s,x)$ and the inequality on the fifth line used $R = \sigma_q^{-1} M$ and (\ref{UOP8}).

Equation (\ref{UOP9}) now implies (\ref{UOP6}), which concludes the proof of the lemma.
\end{proof}

%
%
\subsection{Proof of Proposition \ref{prop: FinDimConv}}\label{Section6.4} We first seek to apply \cite[Proposition 2.21]{dimitrov2024airy} in our present setup. The measures $M^{t_j, N}$ from Lemma \ref{lem:WeakConvPP} converge weakly to $M^{t_j, \infty}$, which verifies condition (1) in \cite[Proposition 2.21]{dimitrov2024airy}. In addition, by Lemma \ref{lem:InfiniteAtoms} we have $\mathbb{P}(M^{t_j, \infty}(\mathbb{R}) = \infty) = 1$, which verifies condition (2) in \cite[Proposition 2.21]{dimitrov2024airy}. Lastly, from Lemma \ref{lem:TightFromAbove}, we have 
$$\lim_{a \rightarrow \infty} \limsup_{N \rightarrow \infty} \mathbb{P}(X_1^{j,N} \geq a) = 0,$$
verifying condition (3) in \cite[Proposition 2.21]{dimitrov2024airy}. From \cite[Proposition 2.21]{dimitrov2024airy} we conclude that for each $i \in \mathbb{N}$ and $j \in \llbracket 1, m \rrbracket$ the sequence of random variables $\{X_i^{j, N}\}_{N \geq 1}$ is tight.

Combining the latter with Lemma \ref{lem:WeakConvPP}, we see that the conditions of \cite[Proposition 2.19]{dimitrov2024airy} are satisfied by $M^N$, from which we conclude that $(X_i^{j,N}: i \geq 1, j \in \llbracket 1, m \rrbracket)$ converges in the finite-dimensional sense to a vector $(X_i^{j,\infty}: i \geq 1, j \in \llbracket 1, m \rrbracket)$, and moreover the random measure $\hat{M}^{\infty}$ from (\ref{eq:Point process formed}) is a Pfaffian point process on $\mathbb{R}^2$ with reference measure $\mu_{\mathcal{T}} \times \lambda$ and correlation kernel $K^{\infty}$ as in (\ref{eq:LimKernel}). From part (4) of Proposition \ref{prop:basic properties Pfaffian point process} we also have that $\hat{M}^{\infty}$ is a Pfaffian point process with correlation kernel
$$\begin{bmatrix}
    f(s,x) f(t,y) K^{\infty}_{11}(s,x;t,y) & \frac{f(s,x)}{f(t,y)}K^{\infty}_{12}(s,x;t,y)\\
    \frac{f(t,y)}{f(s,x)}K^{\infty}_{21}(s,x;t,y) &  \frac{1}{f(s,x) f(t,y)} K^{\infty}_{22}(s,x;t,y) 
\end{bmatrix} , \mbox{ where } f(s,x) = e^{ -\frac{s^3 f_q^3}{3} + (x + f_q^2s^2) f_qs }.$$
We observe that the latter kernel agrees with $\hat{K}^{\infty}(s,x;t,y)$ from (\ref{eq:KX}) after a straightforward (albeit tedious) computation. To see that the two kernels match one needs to apply the gauge transformation above, and change the integration variables to $\tilde{z} = z +s$ and $\tilde{w} = w+ t$ in (\ref{S1DefIcross}).

%
%
\section{Asymptotic analysis}\label{Section7} The goal of this section is to prove Lemmas  \ref{lem:kernelLimits}, \ref{lem:kernelUpperTail} and \ref{lem:ConvToAiryKernel} from Section \ref{Section6}. We accomplish this in Sections \ref{Section7.2}, \ref{Section7.3} and \ref{Section7.4} below after we derive some preliminary estimates in Section \ref{Section7.1}.

%
%
\subsection{Preliminary estimates}\label{Section7.1} In this section we derive certain estimates for the integrands in $K^N$ from Lemma \ref{lem:PrelimitKernel}, which will be useful for its proof. As there are several kinds of functions that appear in those integrands, we split our discussion into several parts. Throughout the section we fix $A > 0$, $B = \ap_3^q = \sigma_q^{-1} (|\varpi| + 9)$ and assume the same notation as in Definitions \ref{def:contours} and \ref{def:funct}. In the section there will be explicit constants and ones contained in big $O$ notations, which unless otherwise specified depend on $\varpi, q$ alone. In addition, certain inequalities will hold provided that $N$ is sufficiently large, depending on $\varpi, q$, but possible additional parameters that will be listed.

%
%
\subsubsection{Estimates for $S(z)$}\label{Section7.1.1} From \cite[Lemma 5.2]{dimitrov2024airy} we have the following result.
\begin{lemma}\label{TaylorS} There exist $\delta_0 \in (0, 1/2)$ and $C_0 > 0$ (depending on $q$) such that 
\begin{equation}\label{EqTayS}
\left| S(z) - \sigma_q^3 (z-1)^3/3 \right| \leq C_0 \cdot |z-1|^4 \mbox{ if } |z-1| \leq \delta_0.
\end{equation}
Furthermore, we have the following inequalities
\begin{equation}\label{RealS}
\begin{split}
&\frac{d}{d\theta} \Real S(r e^{ \pm \im \theta}) > 0 \mbox{ for } \theta \in (0, \pi) \mbox{ and } r \in (0,1) \\
& \frac{d}{d\theta} \Real S(r e^{ \pm \im \theta}) < 0 \mbox{ for } \theta \in (0, \pi) \mbox{ and } r > 1.
\end{split}
\end{equation}
\end{lemma}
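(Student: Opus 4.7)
The plan is to prove both statements by direct computation; both reduce to elementary manipulations of rational and logarithmic functions, so no deep estimate is required.

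For (\ref{EqTayS}), I would first compute the derivatives of $S$ at $z=1$. From
\[
S'(z) = \frac{q}{z(z-q)} + \frac{q}{1-qz} - \frac{2q}{(1-q)z},
\]
a short calculation shows $S(1) = S'(1) = S''(1) = 0$, while $S'''(1) = 2\sigma_q^3 = \frac{2q(1+q)}{(1-q)^3}$. Since the singularities of $S$ lie only at $0, q, q^{-1}$, the function $S$ is holomorphic on the disk $|z-1| < \min(1, q^{-1}-1)$. Taylor's theorem with integral remainder then yields (\ref{EqTayS}) for any $\delta_0$ strictly less than half the distance from $1$ to the nearest singularity, with $C_0 = \tfrac{1}{24}\sup_{|z-1|\leq \delta_0}|S^{(4)}(z)|$, a quantity depending only on $q$.

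For (\ref{RealS}), I would use the chain-rule identity
\[
\frac{d}{d\theta}\Real S(re^{\pm \im \theta}) = \mp \Imag\bigl[re^{\pm \im \theta}\, S'(re^{\pm \im \theta})\bigr],
\]
together with the partial-fraction rewriting
\[
zS'(z) = \frac{q}{z-q} + \frac{qz}{1-qz} - \frac{2q}{1-q}.
\]
The constant term is real, and computing the imaginary parts of the two remaining fractions with $z = re^{\im \theta}$ yields
\[
\Imag\bigl[zS'(z)\bigr] = qr\sin\theta \left( \frac{1}{|1-qz|^2} - \frac{1}{|z-q|^2}\right).
\]
The decisive algebraic observation is
\[
|z-q|^2 - |1-qz|^2 = (r^2-1)(1-q^2),
\]
which shows the bracketed factor has the sign of $1-r^2$. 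Tracking the sign of $\sin\theta$ on $(0,\pi)$ together with the $\mp$ from the chain rule, and using the symmetry $\Imag[\overline{z}\,S'(\overline{z})] = -\Imag[zS'(z)]$ to extract the case $z = re^{-\im\theta}$ from the case $z = re^{\im\theta}$, then gives both inequalities in (\ref{RealS}).

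I do not expect any substantive obstacle: both parts are essentially computational, with no contour deformation or delicate estimate needed. The only step requiring genuine care will be sign-bookkeeping in the second part, specifically the interaction between the $\pm$ inside $re^{\pm \im\theta}$ (which flips the sign of $\Imag(z)$) and the $\mp$ coming from differentiating $\Real S$ along the argument.
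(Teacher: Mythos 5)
Your proposal is correct, and the computations check out: $S(1)=S'(1)=S''(1)=0$, $S'''(1)=\frac{2q(1+q)}{(1-q)^3}=2\sigma_q^3$, the identity $\Imag[zS'(z)]=qr\sin\theta\bigl(|1-qz|^{-2}-|z-q|^{-2}\bigr)$, and the key algebraic fact $|z-q|^2-|1-qz|^2=(r^2-1)(1-q^2)$ are all verified, and the sign bookkeeping (including the conjugation symmetry for $z=re^{-\im\theta}$) is handled correctly. The paper itself offers no argument here — it simply cites \cite[Lemma 5.2]{dimitrov2024airy} — so your self-contained derivation is if anything more informative, and it follows the standard route one would expect that reference to take. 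One small correction: the principal branch of $\log(1-q/z)$ has its cut along $(0,q]$, so the nearest obstruction to $z=1$ is the branch point at $z=q$ at distance $1-q$; the disk of holomorphy is $|z-1|<\min(1-q,\,q^{-1}-1)=1-q$, not $\min(1,q^{-1}-1)$ (the latter always contains $z=q$). This does not affect your conclusion, since your prescription $\delta_0$ less than half the distance to the nearest singularity, with the singularities correctly listed as $0,q,q^{-1}$, yields $\delta_0<(1-q)/2$, for which the Taylor remainder bound with $C_0=\frac{1}{24}\sup_{|w-1|\leq\delta_0}|S^{(4)}(w)|$ is valid.
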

We now use Lemma \ref{EqTayS} to derive several estimates for $S(z)$. \\

Let $b \in [-B, B]$ and $z \in \gamma_N^+(b,0)$. Using that $z -1 = bN^{-1/3} + |s|e^{\pm \pi \im/3}$ for $s \in [0, N^{-1/12}]$, and (\ref{EqTayS}) we conclude for some positive $a_S, b_S > 0$ and all large $N$
\begin{equation}\label{eq:SReal0}
\begin{split}
&\Real S(z) \leq \Real \left( \sigma_q^3(bN^{-1/3} + |s|e^{\pm \pi \im/3})^3 /3 \right) + 2 C_0 N^{-1/12} \cdot|z-1|^3  \\
&\leq a_S N^{-1} + b_S N^{-1/3} \cdot  |z-1|^2 - (\sigma_q^3/6) \cdot |z-1|^3.
\end{split}
\end{equation}
One analogously shows, for possibly bigger $a_S,b_S$ that if $w \in \gamma_N^-(b,0)$, then
\begin{equation}\label{eq:SReal1}
\Real S(w) \geq - a_S N^{-1} - b_S N^{-1/3} \cdot  |w-1|^2 + (\sigma_q^3/6) \cdot |w-1|^3.
\end{equation}
Using (\ref{RealS}), (\ref{eq:SReal0}) and (\ref{eq:SReal1}), we also see that for all large $N$
\begin{equation}\label{eq:SReal2}
\begin{split}
&\Real S(z) \leq \Real S(1 + b N^{-1/3} + N^{-1/12} e^{ \pi \im /3}) \leq - (\sigma_q^3/12) N^{-1/4} \mbox{ for } z \in \gamma_N^+(b,1), \mbox{ and }\\
&\Real S(w) \geq \Real S(1 + b N^{-1/3} + N^{-1/12} e^{ 2\pi \im /3}) \geq  (\sigma_q^3/12) N^{-1/4} \mbox{ for } w \in \gamma_N^-(b,1).
\end{split}
\end{equation}

%
%
\subsubsection{Estimates for $G(z)$}\label{Section7.1.2} We have the following result.
\begin{lemma}\label{TaylorG} There exist $\delta_0 \in (0, 1/2)$ and $C_0 > 0$ (depending on $q$) such that 
\begin{equation}\label{EqTayG}
\left| G(z^{\pm 1}) + \frac{q}{2(1-q)^2} \cdot (z-1)^2 \right| \leq C_0 \cdot |z-1|^3 \mbox{ if } |z-1| \leq \delta_0.
\end{equation}
Furthermore, we have the following inequalities
\begin{equation}\label{RealG}
\begin{split}
&\frac{d}{d\theta} \Real G(r e^{\pm \im \theta}) > 0 \mbox{ for } \theta \in (0, \pi) \mbox{ and } r \in (q, \infty).
\end{split}
\end{equation}
\end{lemma}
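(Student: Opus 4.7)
The plan is to mimic the proof of Lemma \ref{TaylorS}: derive the Taylor bound from an explicit computation of the first few derivatives of $G$ at $z=1$, and obtain the monotonicity in $\theta$ by writing the logarithmic derivative $zG'(z)$ in closed form and then taking imaginary parts.

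For the first claim, I would compute
\[
G'(z) = \frac{q/z^2}{1 - q/z} - \frac{q}{(1-q)z} = \frac{q(1-z)}{(1-q)\,z(z-q)},
\]
from which $G(1) = 0$, $G'(1) = 0$, and one further differentiation (or a local expansion in $w = z-1$) gives $G''(1) = -q/(1-q)^2$. The integral form of Taylor's remainder then yields the estimate $|G(z) + \tfrac{q}{2(1-q)^2}(z-1)^2| \le C_0|z-1|^3$ for any $\delta_0 < \min(1/2,(1-q)/2)$, with $C_0$ any uniform bound on $|G'''(z)|/6$ on the disc $\{|z-1|\le\delta_0\}$; such a bound exists because shrinking $\delta_0$ keeps the disc away from both singularities of $G$, at $z=0$ and $z=q$. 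The analogous bound for $G(z^{-1})$ follows either by applying the same Taylor-remainder argument to the holomorphic function $z \mapsto G(1/z)$ (which, as one checks by summing the standard expansions of $\log(1-qz)$ and $\tfrac{q}{1-q}\log z$ in $G(1/z) = \log(1-qz) + \tfrac{q}{1-q}\log z - \log(1-q)$, also vanishes to order $2$ at $z=1$ with the \emph{same} quadratic coefficient $-q/(2(1-q)^2)$), or by changing variables $z \mapsto 1/z$ in the bound for $G$ and absorbing the resulting factors into an enlarged $C_0$.

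For the second claim, I would use $\frac{d}{d\theta}\Real G(re^{\pm\im\theta}) = \mp\Imag(zG'(z))$ with $z = re^{\pm\im\theta}$. Using the closed form $zG'(z) = q(1-z)/[(1-q)(z-q)]$ and rationalizing $(1-z)/(z-q)$ by multiplying by $\overline{z-q}$, a short calculation gives
\[
\Imag\!\left(\frac{1-z}{z-q}\right) = \frac{(q-1)\,\Imag(z)}{|z-q|^2}, \qquad\text{hence}\qquad \Imag(zG'(z)) = -\frac{q\,\Imag(z)}{|z-q|^2}.
\]
Therefore
\[
\frac{d}{d\theta}\Real G(re^{\pm\im\theta}) \;=\; \pm\,\frac{q\,\Imag(z)}{|z-q|^2} \;=\; \frac{q\,r\sin\theta}{|z-q|^2},
\]
which is strictly positive for $\theta \in (0,\pi)$ since $r>q$ guarantees $z \ne q$ for all such $\theta$ and $\sin\theta > 0$ on the open interval.

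I do not anticipate any serious obstacle: the argument is essentially mechanical and, in fact, simpler than the corresponding estimate for $S$ in Lemma \ref{TaylorS}, since $G$ is only logarithmic in $z$ and has singularities only at $0$ and $q$. The only point requiring mild care is to choose $\delta_0$ small enough to keep the disc $\{|z-1|\le\delta_0\}$ away from both singularities, so that a uniform $C_0$ exists; the hypothesis $r>q$ plays the parallel role in the monotonicity statement of keeping the contour away from the pole of $G'$ at $z=q$.
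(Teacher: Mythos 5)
Your proposal is correct and follows essentially the same route as the paper: the Taylor bound comes from checking $G(1)=0$, $\frac{d}{dz}G(z^{\pm1})|_{z=1}=0$ and $\frac{d^2}{dz^2}G(z^{\pm1})|_{z=1}=-q/(1-q)^2$ together with a uniform bound on the third derivative near $z=1$, and the monotonicity reduces to the identity $\frac{d}{d\theta}\Real G(re^{\pm\im\theta})=\frac{(q/r)\sin\theta}{1+(q/r)^2-2(q/r)\cos\theta}$, which your $\Imag(zG'(z))$ computation reproduces exactly (the paper differentiates $\Real G=\log|1-q/z|-\cdots$ directly in $\theta$, which sidesteps the branch-cut bookkeeping your complex-derivative route implicitly requires, but on $\theta\in(0,\pi)$ both are valid).
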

\begin{proof} One directly computes 
$$G(1) = 0, \hspace{2mm} \frac{d}{dz} G(z^{\pm 1})\vert_{z = 1} = 0, \mbox{ and } \frac{d^2}{dz^2} G(z^{\pm 1})\vert_{z = 1} = \frac{-q}{(1-q)^2},$$
which implies (\ref{EqTayG}). Observe that 
$$\Real G(r e^{\pm \im \theta}) = (1/2) \log ( 1 +(q/r)^2 - 2(q/r) \cos(\theta))  - \log r \cdot \frac{q}{1-q} - \log (1-q),$$
and so
$$\frac{d}{d\theta} \Real G(r e^{\pm \im \theta}) = \frac{(q/r) \sin(\theta) }{1 + (q/r)^2 - 2(q/r) \cos(\theta)},$$
which gives (\ref{RealG}).
\end{proof}
We now use Lemma \ref{TaylorG} to derive several estimates for $G(z)$. \\

Let $b \in [-B, B]$ and $z \in \gamma_N^{\pm}(b,0)$. Using that $z -1 = bN^{-1/3} + |s|e^{ \alpha \im}$ for $s \in [0, N^{-1/12}]$ and $\alpha \in \{\pm \pi/3,  \pm 2\pi/3\}$, and (\ref{EqTayG}), we conclude for some positive $a_G, b_G > 0$ and all large $N$
\begin{equation}\label{eq:GReal0}
\begin{split}
&\Real G(z^{\pm 1}) \geq \Real \left( - \frac{q}{2(1-q)^2} \cdot (bN^{-1/3} + |s|e^{ \alpha \im})^2 \right) - 2C_0 N^{-1/12} \cdot|z-1|^2  \\
&\geq - a_G N^{-2/3} - b_G N^{-1/3} \cdot  |z-1| + \frac{q}{8(1-q)^2} \cdot |z-1|^2.
\end{split}
\end{equation}
For possibly bigger $a_G,b_G$ we also have from (\ref{EqTayG}) for $z \in \gamma^{\pm}_N(b,0)$
\begin{equation}\label{eq:GReal1}
|\Real G(z^{\pm 1})|  \leq | G(z)| \leq a_G N^{-2/3} + b_G N^{-1/3} \cdot  |z-1| + \frac{q}{(1-q)^2} \cdot |z-1|^2.
\end{equation}
Using (\ref{EqTayG}), (\ref{RealG}) and (\ref{eq:GReal0}), we also see that for all large $N$
\begin{equation}\label{eq:GReal2}
\begin{split}
&\Real G(z^{\pm 1}) \geq \Real G\left( (1 + b N^{-1/3} + N^{-1/12} e^{ (\pi/2 \pm \pi/6) \cdot \im })^{\pm 1} \right) \geq \frac{q \cdot N^{-1/6}  }{16(1-q)^2} \mbox{ for } z \in \gamma_N^{\pm}(b,1), \\
& \mbox{and } |G(z^{\pm 1})| = O(1) \mbox{ for } z \in \gamma_N^{\pm}(b).
\end{split}
\end{equation}

%
%
\subsubsection{Estimates for $z^x$ and $w^y$}\label{Section7.1.3} In the definition of $K^N$ in Lemma \ref{lem:PrelimitKernel} we have expressions of the form $\pm \sigma_q x N^{1/3} \log(z)$ and $\pm \sigma_q y N^{1/3} \log(w)$, which correspond to $z^x$ and $w^y$ in $\kgeo$ from Lemma \ref{lem:PSP as PPP}. Here, we obtain various estimates for these terms when $x,y \geq - A$. We mention that for the purposes of Lemma \ref{lem:kernelLimits} it suffices to obtain estimates when $x,y \in [-A,A]$; however, for the upper-tail estimates in Lemma \ref{lem:kernelUpperTail} we will actually need bounds when $x,y \geq - A$, and so we take care of these two cases together.

Let $b \in [-B,B]$. Using the Taylor expansion of the logarithm, and the fact that $\gamma^{\pm}_N(b)$ is $O(N^{-1/6})$ away from the unit circle, we have for all large $N$ and $x \in [-A,A]$
\begin{equation}\label{eq:Power1}
\begin{split}
&\left|  \sigma_q x N^{1/3} \log(z) \right| \leq 2 \sigma_q A N^{1/3} \cdot |z - 1| \mbox{ for } z \in \gamma^{\pm}_N(b, 0) \mbox{, and } \\
&\left| \sigma_q x N^{1/3} \log(z) \right| \leq N^{1/4} \mbox{ for } z \in \gamma^{\pm}_N(b, 1).
\end{split}
\end{equation}
We next note that for all large $N$ we have
\begin{equation*}
\Real \log (z) \geq  (1/3)N^{-1/6} \mbox{ for } z \in \gamma_N^{+}(b,1) \mbox{ and } \Real \log(w) \leq - (1/3) N^{-1/6} \mbox{ for } w \in \gamma_N^{-}(b,1).
\end{equation*}
Combining the latter with the second line in (\ref{eq:Power1}), we conclude for all large $N$ 
\begin{equation}\label{eq:Power2}
\begin{split}
&\left| e^{- \sigma_q x N^{1/3} \log (z)} \right| \leq \exp \left( 2N^{1/4} - (1/3) \sigma_q N^{1/6} \cdot |x| \right) \mbox{ if } z \in \gamma_N^{+}(b,1) \mbox{ and } x \geq -A,\\
&\left| e^{\sigma_q y N^{1/3} \log (w)} \right| \leq \exp \left( 2N^{1/4} - (1/3) \sigma_q N^{1/6} \cdot |y|  \right) \mbox{ if } w \in \gamma_N^{-}(b,1) \mbox{ and } y \geq -A,\\
\end{split}
\end{equation}
Finally, we observe that for all large $N$, the point $1 + b N^{-1/3}$ is furthest in $\gamma_N^-(b,0)$ from the origin, and the closest in $\gamma_N^+(b,0)$ to the origin. The latter implies for all large $N$, $z \in \gamma_N^+(b,0)$, $w \in \gamma_N^-(b,0)$ and $x,y \geq 0$ that
\begin{equation*}
\begin{split}
&\left| e^{- \sigma_q x N^{1/3} \log (z)} \right| = e^{- \sigma_q x N^{1/3} \log |z|}  \leq e^{-\sigma_q x N^{1/3} \log(1 + b N^{-1/3})} \leq e^{-(b \sigma_q -1) \cdot x},  \\
& \left| e^{ \sigma_q y N^{1/3} \log (w)} \right| =  e^{ \sigma_q y N^{1/3} \log |w|}  = e^{\sigma_q y N^{1/3} \log(1 + b N^{-1/3})} \leq e^{(b \sigma_q + 1) \cdot y}.
\end{split}
\end{equation*}
Combining the latter with the first line in (\ref{eq:Power1}), we conclude for all large $N$, $z \in \gamma_N^+(b,0)$, $w \in \gamma_N^-(b,0)$ and $x,y \geq -A$ that 
\begin{equation}\label{eq:Power3}
\begin{split}
&\left| e^{- \sigma_q x N^{1/3} \log (z)} \right| \leq \exp \left( 2 \sigma_q AN^{1/3} \cdot |z-1| -(b \sigma_q -1) \cdot x +  A |b \sigma_q -1| \right),\\
&\left| e^{\sigma_q y N^{1/3} \log (w)} \right| \leq \exp \left( 2 \sigma_q AN^{1/3} \cdot|w-1| +(b \sigma_q +1) \cdot y +  A |b \sigma_q + 1| \right).
\end{split}
\end{equation}

%
%
\subsection{Pointwise kernel convergence}\label{Section7.2} In this section we give the proof of Lemma \ref{lem:kernelLimits}.  For clarity we split the proof into four steps. In the first step we show that we can truncate the contours $\gamma^{\pm}_N(b)$ in the definitions of $I_{ij}^N$ and $R_{ij}^N$ to $\gamma^{\pm}_N(b,0)$ and estimate the effect of this truncation, which will be asymptotically negligible. The reason is that if $z \in \gamma_N^+(b,1)$ or $w \in \gamma_N^-(b,1)$, we have from (\ref{eq:SReal2}) 
$$\left| e^{NS(z)} \right| = e^{N \Real S(z)} \leq e^{-(\sigma_q^3/12) N^{3/4}} \mbox{ and } \left| e^{-NS(w)} \right| = e^{N \Real S(w)} \leq e^{-(\sigma_q^3/12) N^{3/4}},$$
while the other factors of the integrands are bounded by $\exp( \cb N^{2/3})$ for some large but fixed $\cb$. In the second step we prove (\ref{eq:I11Lim}), (\ref{eq:I12Lim}) and (\ref{eq:I22Lim}), and in the third we establish (\ref{eq:R12Lim}). Here, after we have truncated the contours we apply a change of variables that brings the contours $\gamma_{N}^{\pm}(b,0)$ to $\mathcal{C}_{\sigma_q^{-1}b}^{\pi/2 \mp \pi/6}$. Under this change of variables the integrands will have a clear pointwise limit, and we will be able to obtain estimates that allow us to apply the dominated convergence theorem and conclude the integral limits. In the fourth and final step we show (\ref{eq:R22Lim}). The first two terms in the formula for $R^N_{22}$ in (\ref{eq: DefRN22}) can be handled using similar arguments to those in Step 2. The third term in (\ref{eq: DefRN22}) needs to be handled specially when $s = t =0$, and can be taken care of like $R^N_{12}$ in Step 3 when $s + t > 0$. It is for this third term that we assumed $y_{\infty} \neq x_{\infty}$.

Throughout the proof we fix $A > 1$ such that $x_{\infty},y_{\infty} \in [- A + 1, A - 1]$, and assume that $N$ is large enough so that $x_N, y_N \in [-A,A]$. In addition, we write $\cb$ to mean a large positive generic constant that depends on $q, \varpi, A$, and $\mathcal{T} = \{t_1, \dots, t_m\}$. The values of these constants will change from line to line. The inequalities in the proof will hold provided that $N$ is sufficiently large depending on $ q, \varpi, A, \mathcal{T}$ and the sequence $c_N$ from Definition \ref{def:ParScale}, and we will not mention this dependence further. Finally, certain estimates will be made for $x,y \geq - A$ (as opposed to $x, y \in [-A, A]$) in order to be used later in the proof of Lemma \ref{lem:kernelUpperTail}.\\

{\bf \raggedleft Step 1. Truncation.} From the definition of $\gamma_N^{\pm}(a)$, we have for all large $N$
\begin{equation}\label{Q1}
\begin{split}
&\left| H_{11}^N(z,w) \right| \leq \cb \cdot N \mbox{ if } z,w \in \gamma_N^+(\ap_3^q), \\
& \left| H_{12}^N(z,w) \right| \leq \cb \cdot N^{2/3} \mbox{ if } z \in \gamma_N^+(\ap_3^q), w \in \gamma_N^{-}(\ap_1^q), \\
&\left| H_{22}^N(z,w) \right| \leq \cb \cdot N^{1/3} \mbox{ if } z,w \in \gamma_N^+(-\ap_2^q).
\end{split}
\end{equation}
Furthermore, we have for all large $N$ 
\begin{equation}\label{Q2}
\begin{split}
&\left| \frac{1}{4(c_N - z)}\right| \leq \cb \cdot N^{1/3} \mbox{ if } z \in \gamma_N^+(\ap_1^q) \cup \gamma_N^-(-\ap_2^q), \hspace{2mm}  \\
& \left| \frac{(1 - w^2)}{4 ( 1 - c_N w) (w - c_N)}\right| \leq \cb \cdot N^{2/3} \mbox{ if } w \in \gamma_N^+(\ap_3^q), w \in \gamma_N^{-}(-\ap_2^q).
\end{split}
\end{equation}

Combining (\ref{Q1}) with (\ref{eq:SReal0}), (\ref{eq:SReal1}), (\ref{eq:SReal2}), the second line in (\ref{eq:GReal2}), (\ref{eq:Power2}) and (\ref{eq:Power3}) we conclude that for $x,y \geq -A$, $s, t \in \mathcal{T}$, and all large $N$ we have
\begin{equation}\label{eq:I11Trunc}
\begin{split}
&\left| I^N_{11}(s,x; t, y) -  \frac{1}{(2\pi \im)^{2}}\int_{\gamma_N^+(\ap^q_3, 0)} dz \int_{\gamma_N^+(\ap^q_{3},0)} dw F_{11}^N(z,w) H_{11}^N(z,w) \right| \\
&\leq e^{\cb N^{2/3} - (\sigma_q^3/12) N^{3/4} } \cdot e^{-(\ap_3-1) x - (\ap_3-1) y },
\end{split}
\end{equation}
\begin{equation}\label{eq:I12Trunc}
\begin{split}
&\left| I^N_{12}(s,x; t, y) -  \frac{1}{(2\pi \im)^{2}}\int_{\gamma_N^+(\ap^q_3, 0)} dz \int_{\gamma_N^-(\ap^q_{1},0)} dw F_{12}^N(z,w) H_{12}^N(z,w) \right| \\
&\leq e^{\cb N^{2/3} - (\sigma_q^3/12) N^{3/4} } \cdot e^{-(\ap_3-1) x + (\ap_1+1) y },
\end{split}
\end{equation}
\begin{equation}\label{eq:I22Trunc}
\begin{split}
&\left| I^N_{22}(s,x; t, y) -  \frac{1}{(2\pi \im)^{2}}\int_{\gamma_N^+(-\ap^q_2, 0)} dz \int_{\gamma_N^-(-\ap^q_{2},0)} dw F_{22}^N(z,w) H_{22}^N(z,w) \right| \\
&\leq e^{\cb N^{2/3} - (\sigma_q^3/12) N^{3/4} } \cdot e^{(-\ap_2 + 1) x + (-\ap_2 + 1) y }.
\end{split}
\end{equation}

We next have from the first line in (\ref{eq:GReal2}) and (\ref{eq:Power1}) that for $x, y\in [-A, A]$, $s, t \in \mathcal{T}$
\begin{equation}\label{eq:R12Trunc}
\begin{split}
&\left| R^N_{12}(s,x; t, y) - \frac{-{\bf 1}\{s < t \} \cdot \sigma_q N^{1/3} }{2 \pi \im} \int_{\gamma_{N}^+(\ap^q_{1},0)}dz e^{(T_s - T_t) G(z)} \cdot e^{ (\sigma_q y N^{1/3}  - \sigma_q x N^{1/3} - 1) \log (z)} \right| \\
&\leq \cb \cdot  \exp \left(2N^{1/4}  - \frac{q \cdot N^{1/2} \cdot (t -s ) }{16(1-q)^2}  \right).
\end{split}
\end{equation}

Using $c_N = 1 - \varpi \sigma_q^{-1} N^{-1/3} + o(N^{-1/3})$, with (\ref{EqTayS}) and (\ref{EqTayG}), and the second line in (\ref{eq:Power3}) with $w = c_N$ and $b = N^{1/3}(c_N-1)$, we get for $x,y \geq -A$ and all large $N$
\begin{equation}\label{Q3}
\begin{split}
&\left| e^{-N S(c_N) - T_s G(c_N) + \sigma_q x N^{1/3} \log (c_N)} \right| \leq \cb \cdot e^{(N^{1/3}(c_N-1)\sigma_q + 1) \cdot x}, \\
&\left| e^{-N S(c_N) - T_t G(c_N) + \sigma_q y N^{1/3} \log (c_N)} \right| \leq \cb \cdot e^{(N^{1/3}(c_N-1)\sigma_q + 1) \cdot y}.
\end{split}
\end{equation}
Combining (\ref{eq:SReal2}), the first line in (\ref{eq:GReal2}), the second line in (\ref{eq:Power2}), the first line in (\ref{Q2}) and (\ref{Q3}), we obtain for $x,y \geq - A$
\begin{equation}\label{eq:R22Trunc1}
\begin{split}
&\left|  \frac{1}{2\pi \im} \int_{\gamma^-_N(\ap^q_{1},1)} dz \frac{F_{22}^N(z,c_N)}{4(c_N z - 1)} \right| \leq \cb \cdot e^{2N^{1/4} - (\sigma_q^3/12)N^{3/4} } \cdot e^{(N^{1/3}(c_N-1)\sigma_q + 1) \cdot y} \\
&\left|\frac{1}{2\pi \im} \int_{\gamma^-_N(-\ap^q_{2},1)} dw \frac{F_{22}^N(c_N,w)}{4(c_N  w - 1)} \right| \leq \cb \cdot e^{2N^{1/4} - (\sigma_q^3/12)N^{3/4} } \cdot e^{(N^{1/3}(c_N-1)\sigma_q + 1) \cdot x}.
\end{split}
\end{equation}
Finally, if $s + t > 0$, we have from the first line in (\ref{eq:GReal2}), the second line in (\ref{eq:Power1}) and the second line in (\ref{Q2}) that for $x,y \in [-A,A]$ 
\begin{equation}\label{eq:R22Trunc2}
\begin{split}
&\left| \frac{1}{2\pi \im}\int_{\gamma^-_N(-\ap^q_{2},1)} dw \cdot \frac{(1-w^2)}{4(1-c_N w)(w-c_N)} \cdot e^{ ( \sigma_q y N^{1/3} - \sigma_q x N^{1/3} - 1) \log (w) - T_s G(w^{-1}) - T_t G(w)} \right|    \\
& \leq \exp \left( \cb \cdot N^{1/4}  - \frac{q \cdot N^{1/2} \cdot (t + s ) }{16(1-q)^2}  \right).
\end{split}
\end{equation}

{\bf \raggedleft Step 2. The limits of $I^N_{11}, I^N_{12}$ and $I^N_{22}$.} We proceed to change variables $z = 1 + \sigma_q^{-1} N^{-1/3} \tilde{z}$ and $w = 1 + \sigma_q^{-1} N^{-1/3} \tilde{w}$, and note that by the definition of $\gamma_N^+(\ap_3^q)$, we have that 
\begin{equation}\label{COV1}
\begin{split}
&\frac{1}{(2\pi \im)^{2}}\int_{\gamma_N^+(\ap^q_3, 0)} dz \int_{\gamma_N^+(\ap^q_{3},0)} dw F_{11}^N(z,w) H_{11}^N(z,w) =  \frac{1}{(2\pi \im)^{2}}\int_{\mathcal{C}_{\ap_3}^{\pi/3}} d\tilde{z} \int_{\mathcal{C}_{\ap_3}^{\pi/3}} d\tilde{w} \\
& {\bf 1}\{|\Imag (\tilde{z})|, |\Imag (\tilde{w})| \leq (\sqrt{3}/2) \sigma_q^{-1} N^{1/4} \} \cdot \sigma_q^{-2} N^{-2/3} F_{11}^N(z,w) H_{11}^N(z,w).
\end{split}
\end{equation}
Using the definitions of $F^N_{11}$ and $H^N_{11}$ from (\ref{eq:DefIN11}) with $x = x_N$, $y = y_N$, the fact that $c_N = 1 - \varpi \sigma_q^{-1} N^{-1/3} + o(N^{-1/3})$ and the Taylor expansions of $S,G$ from (\ref{TaylorS}) and (\ref{TaylorG}), we see that
\begin{equation}\label{COV2}
\begin{split}
&\lim_N F^N_{11}(z ,w) = e^{\tilde{z}^3/3 + \tilde{w}^3/3 - \frac{q \sigma_q^{-2} s}{2(1-q)^2} \cdot \tilde{z}^2 - \frac{q \sigma_q^{-2} t}{2(1-q)^2} \cdot \tilde{w}^2 - x_{\infty} \tilde{z} - y_{\infty} \tilde{w} }, \\
&\lim_N \sigma_q^{-2} N^{-2/3} H^N_{11}(1 + \sigma_q^{-1} N^{-1/3} \tilde{z} ,1 + \sigma_q^{-1} N^{-1/3} \tilde{w}) = \frac{(\tilde{z} - \tilde{w})(\tilde{z} + \varpi) (\tilde{w}+ \varpi)}{\tilde{z} \tilde{w} (\tilde{z} + \tilde{w})}.
\end{split}
\end{equation}
In what follows we find bounds for the two functions in (\ref{COV2}) that would allow us to apply the dominated convergence theorem.

From (\ref{eq:SReal0}), (\ref{eq:GReal1}) and the first line in (\ref{eq:Power3}) we have for $x, y \geq - A$, $\tilde{z}, \tilde{w} \in \mathcal{C}_{\ap_3}^{\pi/3}$ with $|\Imag (\tilde{z})|, |\Imag (\tilde{w})| \leq (\sqrt{3}/2) \sigma_q^{-1} N^{1/4}$ and all large $N$
\begin{equation}\label{eq:DomI11}
\begin{split}
&\left|F^N_{11}(z ,w) \cdot \sigma_q^{-2} N^{-2/3} H^N_{11}(z,w)\right| \\
& \leq \exp \left( \cb \cdot (1 + |\tilde{z}|^2 + |\tilde{w}|^2) - (1/6)|\tilde{w}|^3 - (1/6) |\tilde{z}|^3 \right) \cdot \exp \left( - (\ap_3-1) x - (\ap_3 -1) y \right).
\end{split}
\end{equation}
Using (\ref{eq:I11Trunc}), (\ref{COV1}) and (\ref{COV2}), and the dominated convergence theorem with dominating function as in the second line of (\ref{eq:DomI11}) with ``$\exp \left( - (\ap_3-1) x - (\ap_3 -1) y \right)$'' dropped (it gets absorbed into the $\cb$ when $x = x_N, y = y_N$ are in $[-A,A]$), we conclude (\ref{eq:I11Lim}). We mention that to identify the limit in (\ref{COV2}) with the one in (\ref{eq:I11Lim}) one needs to remove the tildes, and use $f_q = \frac{q \sigma_q^{-2} }{2(1-q)^2}$.\\

The analysis of $I_{12}^N$ and $I_{22}^N$ is quite similar so we will be brief. We apply the same change of variables $z = 1 + \sigma_q^{-1} N^{-1/3} \tilde{z}$ and $w = 1 + \sigma_q^{-1} N^{-1/3} \tilde{w}$ and note
\begin{equation}\label{COV3}
\begin{split}
&\lim_N \sigma_q^{-2} N^{-2/3} F_{12}^N(z,w) H_{12}^N(z,w) =  \frac{e^{\tilde{z}^3/3 - \tilde{w}^3/3 - f_q s \tilde{z}^2 + f_q t \tilde{w}^2 - x_{\infty} \tilde{z} + y_{\infty} \tilde{w}  }  \cdot (\tilde{z}+\tilde{w})(\tilde{z}+\varpi)}{2\tilde{z}(\tilde{z}-\tilde{w})(\tilde{w} + \varpi)} \\
& \lim_N \sigma_q^{-2} N^{-2/3} F_{22}^N(z,w) H_{22}^N(z,w) =   \frac{e^{-\tilde{z}^3/3 - \tilde{w}^3/3 - f_q s \tilde{z}^2 - f_q t \tilde{w}^2 + x_{\infty} \tilde{z} + y_{\infty} \tilde{w}  } \cdot (\tilde{z}-\tilde{w})}{4(\tilde{z}+\tilde{w})(\tilde{z}+ \varpi)(\tilde{w}+\varpi)},
\end{split}
\end{equation}
From (\ref{eq:SReal1}), (\ref{eq:SReal2}), (\ref{eq:GReal1}) and (\ref{eq:Power3}) we also have the following bounds when $x, y \geq - A$, $|\Imag (\tilde{z})|, |\Imag (\tilde{w})| \leq (\sqrt{3}/2) \sigma_q^{-1} N^{1/4}$ and all large $N$.
\begin{equation}\label{eq:DomI12}
\begin{split}
&\mbox{ For $\tilde{z} \in \mathcal{C}_{\ap_3}^{\pi/3}$, $\tilde{w} \in \mathcal{C}_{\ap_1}^{2\pi/3}$ we have } \left|F^N_{12}(z ,w) \cdot \sigma_q^{-2} N^{-2/3} H^N_{12}(z ,w)\right| \\
& \leq \exp \left( \cb \cdot (1 + |\tilde{z}|^2 + |\tilde{w}|^2) - (1/6)|\tilde{w}|^3 - (1/6) |\tilde{z}|^3 \right) \cdot \exp \left( - (\ap_3-1) x + (\ap_1 + 1) y \right).
\end{split}
\end{equation}
\begin{equation}\label{eq:DomI22}
\begin{split}
&\mbox{ For $\tilde{z} \in \mathcal{C}_{-\ap_2}^{2\pi/3}$, $\tilde{w} \in \mathcal{C}_{-\ap_2}^{2\pi/3}$ we have } \left|F^N_{22}(z ,w) \cdot \sigma_q^{-2} N^{-2/3} H^N_{22}(z ,w)\right| \\
& \leq \exp \left( \cb \cdot (1 + |\tilde{z}|^2 + |\tilde{w}|^2) - (1/6)|\tilde{w}|^3 - (1/6) |\tilde{z}|^3 \right) \cdot \exp \left(  (-\ap_2 + 1) x + (-\ap_2 + 1) y \right).
\end{split}
\end{equation}
From (\ref{eq:I12Trunc}), the first line in (\ref{COV3}), and the dominated convergence theorem with dominating function as in (\ref{eq:DomI12}) with ``$\exp \left( - (\ap_3-1) x + (\ap_1 + 1) y \right)$'' dropped we obtain (\ref{eq:I12Lim}). Also, from (\ref{eq:I22Trunc}), the second line in (\ref{COV3}), and the dominated convergence theorem with dominating function as in (\ref{eq:DomI22}) with ``$\exp \left( (-\ap_2 + 1) x + (-\ap_2 + 1) y \right)$'' dropped we obtain (\ref{eq:I22Lim}). \\

{\bf \raggedleft Step 3. The limit of $R^N_{12}$.} Equation (\ref{eq:R12Lim}) is trivial when $s \geq t$ and so we assume $s < t$ in the remainder of this step. We again change variables $z = 1 + \sigma_q^{-1} N^{-1/3} \tilde{z}$, and note that by the definition of $\gamma_N^+(\ap_1^q)$, we have that 
\begin{equation}\label{COV4}
\begin{split}
& \frac{-  \sigma_q N^{1/3} }{2 \pi \im} \int_{\gamma_{N}^+(\ap^q_{1},0)}dz e^{(T_s - T_t) G(z)} \cdot e^{ (\sigma_q y_N N^{1/3}  - \sigma_q x_N N^{1/3} - 1) \log (z)}\\
&=  \frac{-1}{2\pi \im}\int_{\mathcal{C}_{\ap_1}^{\pi/3}} d\tilde{z} {\bf 1}\{|\Imag (\tilde{z})| \leq (\sqrt{3}/2) \sigma_q^{-1} N^{1/4} \} \cdot e^{(T_s - T_t) G(z)} \cdot e^{ (\sigma_q y_N N^{1/3}  - \sigma_q x_N N^{1/3} - 1) \log (z)}.
\end{split}
\end{equation}
Using the Taylor expansion of $G$ from (\ref{TaylorG}) and $f_q = \frac{q \sigma_q^{-2} }{2(1-q)^2}$ we get
\begin{equation}\label{COV5}
\begin{split}
\lim_N e^{(T_s - T_t) G(z)} \cdot e^{ (\sigma_q y_N N^{1/3}  - \sigma_q x_N N^{1/3} - 1) \log (z)} = e^{f_q(t-s) z^2 + z (y_{\infty} - x_{\infty})}.
\end{split}
\end{equation}
In addition, from (\ref{eq:GReal0}) and the first line in (\ref{eq:Power1}) we have 
\begin{equation}\label{eq:DomR12}
\begin{split}
&\left|e^{(T_s - T_t) G(z)} \cdot e^{ (\sigma_q y_N N^{1/3}  - \sigma_q x_N N^{1/3} - 1) \log (z)}  \right| \leq \exp \left( \cb \cdot (|\tilde{z}| + 1) - \frac{q(t-s)}{8(1-q)^2} |\tilde{z}|^2 \right).
\end{split}
\end{equation}
Combining (\ref{eq:R12Trunc}), (\ref{COV4}), (\ref{COV5}) and the dominated convergence theorem with dominating function as in (\ref{eq:DomR12}) we conclude 
\begin{equation}\label{eq:R12Lim2}
\begin{split}
&\lim_{N} R_{12}(s,x_N; t, y_N) = \frac{-{\bf 1}\{s < t \} }{2\pi \im}\int_{\mathcal{C}^{\pi/3}_{\ap_{1}}} d\tilde{z} e^{f_q(t-s)\tilde{z}^2 + (y_{\infty} - x_{\infty}) \tilde{z}}.
\end{split}
\end{equation}
At this point we can deform $\mathcal{C}^{\pi/3}_{\ap_{1}}$ to $\im \mathbb{R}$, change variables $\tilde{z} = iz$, at which point we recognize the right side of (\ref{eq:R12Lim2}) as the characteristic function of a Gaussian random variable, which precisely evaluates to the right side of (\ref{eq:R12Lim}).\\

{\bf \raggedleft Step 4. The limit of $R_{22}^N$.} In this final step we establish (\ref{eq:R22Lim}). Since both $R^N_{22}$ and $R^{\infty}_{22}$ are skew-symmetric it suffices to prove the statement when $y_{\infty} > x_{\infty}$, which we assume in the sequel. We analyze the limits of the three integrals in (\ref{eq: DefRN22}) one at a time. As done previously, we change variables $z = 1 + \sigma_q^{-1} N^{-1/3} \tilde{z}$, $w = 1 + \sigma_q^{-1} N^{-1/3} \tilde{w}$ and observe 
\begin{equation}\label{COV6}
\begin{split}
& \int_{\gamma^-_N(\ap^q_{1},0)} dz \frac{F_{22}^N(z,c_N) }{4(c_N z - 1)}  = \frac{1}{2\pi \im} \int_{\mathcal{C}^{2\pi/3}_{\ap^q_{1}}} d\tilde{z}\frac{F_{22}^N(z,c_N) {\bf 1}\{|\Imag (\tilde{z})| \leq (\sqrt{3}/2) \sigma_q^{-1} N^{1/4} \}}{4 \sigma_q N^{1/3} (c_N z - 1)}, \\
&\int_{\gamma^-_N(-\ap^q_{2},0)} dw \frac{F_{22}^N(c_N,w)  }{4(c_N  w - 1)} =  \frac{1}{2\pi \im} \int_{\mathcal{C}^{2\pi/3}_{-\ap^q_{2}}} d \tilde{w} \frac{F_{22}^N(c_N,w) {\bf 1}\{|\Imag (\tilde{w})| \leq (\sqrt{3}/2) \sigma_q^{-1} N^{1/4} \} }{4\sigma_q N^{1/3} (c_N  w - 1)}  \\
&\int_{\gamma^-_N(-\ap^q_{2},0)} dw \cdot \frac{(1-w^2)e^{ ( \sigma_q y N^{1/3} - \sigma_q x N^{1/3} - 1) \log (w) - T_s G(w^{-1}) - T_t G(w)}}{4(1-c_N w)(w-c_N)} = \int_{\mathcal{C}^{2\pi/3}_{-\ap^q_{2}}}d\tilde{w}   \\
& \frac{{\bf 1}\{|\Imag (\tilde{w})| \leq (\sqrt{3}/2) \sigma_q^{-1} N^{1/4} \} (1-w^2) e^{ ( \sigma_q y N^{1/3} - \sigma_q x N^{1/3} - 1) \log (w) - T_s G(w^{-1}) - T_t G(w)} }{4\sigma_q N^{1/3} (1-c_N w)(w-c_N)}.
\end{split}
\end{equation}
Using the definition of $F^N_{22}$ from (\ref{eq:DefIN22}) with $x = x_N$, $y = y_N$, the fact that $c_N = 1 - \varpi \sigma_q^{-1} N^{-1/3} + o(N^{-1/3})$ and the Taylor expansions of $S,G$ from (\ref{TaylorS}) and (\ref{TaylorG}), we see that
\begin{equation}\label{COV7}
\begin{split}
&\lim_N  \frac{F_{22}^N(z,c_N)}{4 \sigma_q N^{1/3} (c_N z - 1)}  =  \frac{e^{-\tilde{z}^3/3 + \varpi^3/3 - f_q s \tilde{z}^2 - f_q t \varpi^2 + x_{\infty}\tilde{z} - y_{\infty}\varpi  }}{4(\tilde{z} - \varpi)} \\
&\lim_N  \frac{F_{22}^N(c_N,w)}{4 \sigma_q N^{1/3} (c_N w - 1)}  =   \frac{e^{\varpi^3/3 - \tilde{w}^3/3 - f_q s \varpi^2 - f_q t \tilde{w}^2 - x_{\infty}\varpi + y_{\infty}\tilde{w}  }}{4(\tilde{w} - \varpi)}\\
& \lim_N \frac{(1-w^2)e^{ ( \sigma_q y_N N^{1/3} - \sigma_q x_N N^{1/3} - 1) \log (w) - T_s G(w^{-1}) - T_t G(w)}}{4\sigma_q N^{1/3} (1-c_N w)(w-c_N)} = \frac{\tilde{w} e^{ \tilde{w}(y_{\infty} - x_{\infty}) - f_q (s + t)\tilde{w}^2 } }{2(\tilde{w} - \varpi)(\tilde{w} + \varpi)}.
\end{split}
\end{equation}
From (\ref{eq:SReal1}), (\ref{eq:GReal1}), the second line in (\ref{eq:Power3}) and (\ref{Q3}) we have for $x, y \geq - A$, $\tilde{z} \in \mathcal{C}^{2\pi/3}_{\ap_{1}}$, $\tilde{w} \in \mathcal{C}^{2\pi/3}_{-\ap_{2}}$ with $|\Imag (\tilde{z})|, |\Imag (\tilde{w})| \leq (\sqrt{3}/2) \sigma_q^{-1} N^{1/4}$, and all large $N$
\begin{equation}\label{eq:DomF22}
\begin{split}
&\left| \frac{F_{22}^N(z,c_N)}{4 \sigma_q N^{1/3} (c_N z - 1)} \right| \leq \exp \left( \cb \cdot (1 + |\tilde{z}|^2 ) - (1/6) |\tilde{z}|^3 \right) \cdot e^{ (\ap_1 + 1) x + (N^{1/3}(c_N-1)\sigma_q + 1) y },\\
&\left| \frac{F_{22}^N(c_N,w)}{4 \sigma_q N^{1/3} (c_N w - 1)} \right| \leq \exp \left( \cb \cdot (1 + |\tilde{w}|^2 ) - (1/6) |\tilde{w}|^3 \right) \cdot e^{ (N^{1/3}(c_N-1)\sigma_q + 1) x + (-\ap_2 + 1) y }.
\end{split}
\end{equation}

From (\ref{eq:R22Trunc1}), the first two lines in (\ref{COV7}), and the dominated convergence theorem with dominating functions as in (\ref{eq:DomF22}) with ``$\exp \left( (\ap_1 + 1) x + (N^{1/3}(c_N-1)\sigma_q + 1) y \right)$'' dropped from the first line and ``$\exp \left( (N^{1/3}(c_N-1)\sigma_q + 1) x + (-\ap_2 + 1) y \right)$'' dropped from the second, we obtain 
\begin{equation}\label{eq:F22Lim}
\begin{split}
&\lim_N \frac{1}{2\pi \im} \oint_{\gamma^-_N(\ap^q_{1})} dz \frac{F_{22}^N(z,c_N)}{4(c_N z - 1)} = \frac{1}{2\pi \im} \int_{\mathcal{C}^{2\pi/3}_{\ap_{1}}} d\tilde{z} \frac{e^{-\tilde{z}^3/3 + \varpi^3/3 - f_q s \tilde{z}^2 - f_q t \varpi^2 + x_{\infty}\tilde{z} - y_{\infty} \varpi  }}{4(\tilde{z} - \varpi)},\\
&\lim_N \frac{1}{2\pi \im} \oint_{\gamma^-_N(-\ap^q_{2})} dw \frac{F_{22}^N(c_N,w)}{4(c_N  w - 1)} = \frac{1}{2\pi \im} \int_{\mathcal{C}^{2\pi/3}_{-\ap_{2}}} d\tilde{w}\frac{e^{\varpi^3/3 - \tilde{w}^3/3 - f_q s \varpi^2 - f_q t \tilde{w}^2 - x_{\infty}\varpi + y_{\infty} \tilde{w}  }}{4(\tilde{w} - \varpi)}.
\end{split}
\end{equation}

If $s + t > 0$, we have from (\ref{eq:GReal0}) and the first line in (\ref{eq:Power1}) that for $x,y \in [- A, A]$, $\tilde{w} \in \mathcal{C}^{2\pi/3}_{-\ap_{2}}$ with $|\Imag (\tilde{w})| \leq (\sqrt{3}/2) \sigma_q^{-1} N^{1/4}$
\begin{equation}\label{eq:DomS1}
\begin{split}
&\left|  \frac{(1-w^2)e^{ ( \sigma_q y N^{1/3} - \sigma_q x N^{1/3} - 1) \log (w) - T_s G(w^{-1}) - T_t G(w)}}{4\sigma_q N^{1/3} (1-c_N w)(w-c_N)}  \right| \leq \exp \left( \cb (1 + |\tilde{w}| ) - \frac{q (t+s)|\tilde{w}|^2}{8 (1 -q)^2}   \right).
\end{split}
\end{equation}
From (\ref{eq:R22Trunc2}), the third line in (\ref{COV7}), and the dominated convergence theorem with dominating functions as in (\ref{eq:DomS1}) we obtain when $s + t > 0$
\begin{equation}\label{eq:SLim}
\begin{split}
&\lim_N  \frac{1}{2\pi \im}\oint_{\gamma^-_N(-\ap^q_{2})} dw  \frac{(1-w^2)}{4(1-c_N w)(w-c_N)} \cdot e^{ ( \sigma_q y_N N^{1/3} - \sigma_q x_N N^{1/3} - 1) \log (w) - T_s G(w^{-1}) - T_t G(w)} \\
& = \frac{{\bf 1}\{s + t > 0\} }{2\pi \im} \int_{\mathcal{C}^{2\pi/3}_{-\ap_{2}}} dw e^{ - f_q (s + t) w^2  + w (y_{\infty}-x_{\infty})  } \cdot \frac{w}{2(w- \varpi)(w + \varpi) }.
\end{split}
\end{equation}
If we can show that (\ref{eq:SLim}) holds when $s + t = 0$ (i.e. $s = t = 0$), then the definition of $R^N_{22}$ from (\ref{eq: DefRN22}),  (\ref{eq:F22Lim}) and (\ref{eq:SLim}) would imply (\ref{eq:R22Lim}) and conclude the proof of the lemma. \\

In the remainder of this step we establish (\ref{eq:SLim}) when $s = t = 0$, and it is here that we will finally use that $y_{\infty} > x_{\infty}$. If we set 
\begin{equation}\label{FG1}
\tilde{x}_N = \frac{2qN}{1-q} + \sigma_q N^{1/3} x_N \mbox{ and } \tilde{y}_N = \frac{2qN}{1-q} + \sigma_q N^{1/3} y_N,
\end{equation}
we obtain
\begin{equation}\label{FG2}
\begin{split}
&\frac{1}{ 2\pi \im}\oint_{\gamma^-_N(-\ap^q_{2})} dw  \frac{(1-w^2)e^{ ( \sigma_q y_N N^{1/3} - \sigma_q x_N N^{1/3} - 1) \log (w) - T_s G(w^{-1}) - T_t G(w)}}{4(1-c_N w)(w-c_N)} \\
& = \frac{1}{ 2\pi \im}\oint_{\gamma^-_N(-\ap^q_{2})} dw \frac{(1-w^2) w^{\tilde{y}_N - \tilde{x}_N - 1}}{4(1-c_N w)(w-c_N)}.
\end{split}
\end{equation}
We note that if $y_{\infty} > x_{\infty}$, then for all large $N$ we have $\tilde{y}_N - \tilde{x}_N - 1 \geq 0 $ and the above integral is equal to zero by Cauchy's theorem. This establishes (\ref{eq:SLim}) when $s + t = 0$.

%
%
\subsection{Upper tail estimates}\label{Section7.3} In this section we give the proof of Lemma \ref{lem:kernelUpperTail}, where we recall that $a = \ap_3-1$ and $b = \ap_1 + 1$. We have already done most of the work in the proof of Lemma \ref{lem:kernelLimits} and we just need to collect some of the estimates there. We adopt the convention from the proof of Lemma \ref{lem:kernelLimits} that $\cb$ stands for a large positive generic constant that depends on $q, \varpi, A, t$, whose value changes from line to line. In addition, all statements are for $N$ sufficiently large depending on $q, \varpi, A, t$ but also the sequence $c_N$. We do not mention this further. Below we write $|d\tilde{z}|, |d\tilde{w}|$ for integration with respect to arc-length.\\

Starting with $K^N_{11}$, we have from (\ref{eq:I11Trunc}) and (\ref{eq:DomI11}) that for all large $N$ and $x_N, y_N \geq -A$
\begin{equation*}
\begin{split}
&\left|K_{11}^N(t, x_N; t, y_N)\right| \leq  \exp \left( - (\ap_3-1) x_N - (\ap_3 -1) y_N \right) \\
& \times \left( 1 + \int_{\mathcal{C}^{\pi/3}_{\ap_3}} |d\tilde{z}| \int_{\mathcal{C}^{\pi/3}_{\ap_3}} |d\tilde{w}| \exp \left( \cb \cdot (1 + |\tilde{z}|^2 + |\tilde{w}|^2) - (1/6)|\tilde{w}|^3 - (1/6) |\tilde{z}|^3 \right) \right).
\end{split}
\end{equation*}
As the above integral is finite we obtain the first inequality in (\ref{eq:kernelUpperTail}). We now consider $K^N_{12}$ and note $K^N_{12}(t,x;t,y) = I_{12}^N(t,x; t,y)$. From (\ref{eq:I12Trunc}) and (\ref{eq:DomI12}) we have for all large $N$ and $x_N, y_N \geq -A$
\begin{equation*}
\begin{split}
&\left|K_{12}^N(t, x_N; t, y_N)\right| \leq \exp \left( - (\ap_3-1) x_N + (\ap_1 +1) y_N \right) \\
& \times \left( 1 + \int_{\mathcal{C}^{\pi/3}_{\ap_3}} |d\tilde{z}| \int_{\mathcal{C}^{2\pi/3}_{\ap_1}} |d\tilde{w}| \exp \left( \cb \cdot (1 + |\tilde{z}|^2 + |\tilde{w}|^2) - (1/6)|\tilde{w}|^3 - (1/6) |\tilde{z}|^3 \right) \right),
\end{split}
\end{equation*}
which gives the second inequality in (\ref{eq:kernelUpperTail}). \\

In the remainder we focus on $K^N_{22}$ and show that we can find a large enough $D$ such that 
\begin{equation}\label{UT1}
|I^N_{22}(t,x_N;t,y_N)| \leq D e^{(\ap_1 + 1)(x_N + y_N)} \mbox{ and } |R^N_{22}(t,x_N;t,y_N)| \leq D e^{(\ap_1 + 1)(x_N + y_N)},
\end{equation}
which if true would prove the third inequality in (\ref{eq:kernelUpperTail}) and hence the lemma. Since the kernels $I_{22}^N, R_{22}^N$ are skew-symmetric, it suffices to show (\ref{UT1}) under the assumption that $y_N \geq x_N$ (we only need this for the integral in the second line of (\ref{eq: DefRN22}) where  $R_{22}^N$ is defined).

From (\ref{eq:I22Trunc}) and (\ref{eq:DomI22}) we have for all large $N$ and $x_N, y_N \geq -A$
\begin{equation*}
\begin{split}
&\left|I_{22}^N(t, x_N; t, y_N)\right| \leq \exp \left( (-\ap_2 + 1) x_N + (-\ap_2 + 1) y_N \right) \\
& \times \left( 1 + \int_{\mathcal{C}^{2\pi/3}_{-\ap_2}} |d\tilde{z}| \int_{\mathcal{C}^{2\pi/3}_{-\ap_2}} |d\tilde{w}| \exp \left( \cb \cdot (1 + |\tilde{z}|^2 + |\tilde{w}|^2) - (1/6)|\tilde{w}|^3 - (1/6) |\tilde{z}|^3 \right) \right),
\end{split}
\end{equation*}
which implies the first inequality in (\ref{UT1}). 

We also have from (\ref{eq:R22Trunc1}) and (\ref{eq:DomF22}) that 
\begin{equation}\label{UT2}
\begin{split}
&\left|\frac{1}{2\pi \im} \oint_{\gamma^-_N(\ap^q_{1})} dz \frac{F_{22}^N(z,c_N)}{4(c_N z - 1)} \right| \leq e^{(N^{1/3}(c_N-1)\sigma_q + 1) \cdot y_N} + e^{(\ap_1 + 1) x_N + (N^{1/3}(c_N-1)\sigma_q + 1) \cdot y_N} \\
&\times  \int_{\mathcal{C}^{2\pi/3}_{\ap_1}} |d\tilde{z}| \exp \left( \cb \cdot (1 + |\tilde{z}|^2) - (1/6) |\tilde{z}|^3 \right),
\end{split}
\end{equation}
\begin{equation}\label{UT3}
\begin{split}
&\left|\frac{1}{2\pi \im} \oint_{\gamma^-_N(-\ap^q_{2})} dw \frac{F_{22}^N(c_N,w)}{4(c_N w - 1)} \right| \leq e^{(N^{1/3}(c_N-1)\sigma_q + 1) \cdot x_N} + e^{(N^{1/3}(c_N-1)\sigma_q + 1) \cdot x_N + (-\ap_2 + 1) y_N } \\
&\times  \int_{\mathcal{C}^{2\pi/3}_{-\ap_2}} |d\tilde{w}| \exp \left( \cb \cdot (1 + |\tilde{w}|^2) - (1/6) |\tilde{w}|^3 \right).
\end{split}
\end{equation}

Recall from point (3) below (\ref{eq:PivotIneq}) that $\gamma_N^-(-\ap_2^q)$ is contained in the unit circle $C_1$, and since $y_N \geq x_N$, we have for $w \in \gamma_N^-(-\ap_2^q)$
\begin{equation}\label{UT4}
\begin{split}
&\left|e^{ ( \sigma_q y_N N^{1/3} - \sigma_q x_N N^{1/3}) \log (w)} \right| \leq 1.
\end{split}
\end{equation}
If $s + t > 0$, we conclude from (\ref{eq:GReal0}), (\ref{eq:GReal1}), the third line in (\ref{COV6}) and (\ref{UT4}) that
\begin{equation}\label{UT5}
\begin{split}
& \left|  \frac{1}{2\pi \im}\oint_{\gamma^-_N(-\ap^q_{2})} dw  \frac{(1-w^2) e^{ ( \sigma_q y_N N^{1/3} - \sigma_q x_N N^{1/3} - 1) \log (w) - T_s G(w^{-1}) - T_t G(w)} }{4(1-c_N w)(w-c_N)}   \right| \\
& \leq \cb \cdot e^{ - \frac{q N^{1/2} }{16(1-q)^2} } +  \int_{\mathcal{C}^{2\pi/3}_{-\ap_2}} |d\tilde{w}| \exp \left( \cb \cdot (1 + |\tilde{w}|)  - \frac{q (t + s) |\tilde{w}|^2 }{8(1-q)^2}   \right).
\end{split}
\end{equation}
If $s + t = 0$, then from (\ref{FG2}) we conclude that the left side of (\ref{UT5}) equals zero if $y_N > x_N$ and if $y_N = x_N$ it is equal to the absolute value of the residue of (\ref{FG2}) at $w = 0$, namely $(1/4) c_N^{-2}$. Combining the latter with (\ref{UT5}) we conclude 
\begin{equation}\label{UT6}
\begin{split}
& \left|  \frac{1}{2\pi \im}\oint_{\gamma^-_N(-\ap^q_{2})} dw \frac{(1-w^2) e^{ ( \sigma_q y_N N^{1/3} - \sigma_q x_N N^{1/3} - 1) \log (w) - T_s G(w^{-1}) - T_t G(w)}}{4(1-c_N w)(w-c_N)}  \right|\leq \cb.
\end{split}
\end{equation}
We now observe that the right sides of (\ref{UT2}), (\ref{UT3}) and (\ref{UT6}) are all bounded by $D e^{(\ap_1 + 1)(x_N + y_N)}$, uniformly for all large $N$ and $y_N \geq x_N \geq - A$, provided we pick $D$ large enough depending on $q , \varpi, t, A$. This proves the second inequality in (\ref{UT1}).

%
%
\subsection{Large time limits of $K^{\infty}$}\label{Section7.4} In this section we give the proof of Lemma \ref{lem:ConvToAiryKernel}. For clarity we split the proof into three steps.\\

{\bf \raggedleft Step 1.} In this step we show (\ref{eq:Limkcr11}). From Definition \ref{def:LimKernel}, see (\ref{eq:I11Lim}), we have
\begin{equation}\label{eq:AB1}
    \begin{split}
        &t e^{-\frac{2}{3}t^3} e^{t(x+y)}  K_{11}^{f_q^{-1}t, \infty} (x-t^2,y-t^2) \\
        &= \frac{1}{(2\pi \im)^{2}}\int_{\mathcal{C}^{\pi/3}_{\ap_{3}}} dz\int_{\mathcal{C}^{\pi/3}_{\ap_{3}}} dw e^{(z-t)^3/3 + (w-t)^3/3  - x(z-t) - y(w-t)  } \frac{t(z-w) (z + \varpi) (w + \varpi)}{zw (z + w)} \\
        &= \frac{1}{(2\pi \im)^{2}}\int_{\mathcal{C}^{\pi/3}_{1}} dz\int_{\mathcal{C}^{\pi/3}_{1}} dw e^{z^3/3 + w^3/3  - xz - yw  } \frac{t(z-w) (z + t + \varpi) (w + t + \varpi)}{(z+t)(w+ t) (z + w + 2t)},
    \end{split}
\end{equation}
where in the last line we changed variables $z \rightarrow z+t$, $w \rightarrow w+t$ and we shifted the contours without affecting the value of the integral by Cauchy's theorem. We mention that the deformation of the contours near infinity is justified, since the cubic terms in the exponential provide sufficient decay. Specifically, if $z = a + re^{\pm \pi \im /3}$ with $a \in [-A,A]$ and $r \geq 0$, then there is a constant $C> 0$, depending on $A$, such that
$$|e^{z^3/3}| \leq e^{C(1 + |z|^2)-|z|^3/3}.$$

We can now take the limit as $t \rightarrow \infty$ in (\ref{eq:AB1}) and conclude (\ref{eq:Limkcr11}) by the dominated convergence theorem with dominating function
$$(1 + |\varpi|)^2 \cdot |z-w| e^{-|z|^3/3 - |w|^3/3  + A|z| + A|w| },$$
where $A > 0$ is large enough so that $A \geq |x|, |y|$ and we used that for large $t$
$$\left| \frac{ t(z + t + \varpi) (w + t + \varpi)}{(z+t)(w+ t)(z+w +2t) } \right| \leq  (1 + |\varpi|)^2 .$$

{\bf \raggedleft Step 2.} In this step we show (\ref{eq:Limkcr12}). From Definition \ref{def:LimKernel}, see (\ref{eq:I12Lim}) and (\ref{eq:R12Lim}), we have
\begin{equation}\label{eq:AB2}
    \begin{split}
        & e^{t(x-y)} \cdot K_{12}^{f_q^{-1}t, \infty} (x-t^2,y-t^2) \\
        & = \frac{1}{(2\pi \im)^{2}}\int_{\mathcal{C}^{\pi/3}_{\ap_{3}}} dz\int_{\mathcal{C}^{2\pi/3}_{\ap_{1}}} dw e^{(z-t)^3/3 - (w-t)^3/3 - x(z-t) + y(w-t) } \frac{(z+w)(z+\varpi)}{2z(z-w)(w + \varpi)} \\
        & = \frac{1}{(2\pi \im)^{2}}\int_{\mathcal{C}^{\pi/3}_{\ap_3}} dz\int_{\mathcal{C}^{2\pi/3}_{\ap_1}} dw e^{z^3/3 - w^3/3 - xz + yw } \frac{(z+w + 2t)(z+\varpi + t)}{(2z + 2t)(z-w)(w + \varpi + t)},
    \end{split}
\end{equation}
where in the last line we changed variables $z \rightarrow z+t$, $w \rightarrow w+t$ and we shifted the contours without affecting the value of the integral by Cauchy's theorem.

We can now take the limit as $t \rightarrow \infty$ in (\ref{eq:AB2}) and conclude (\ref{eq:Limkcr12}) by the dominated convergence theorem with dominating function
$$(|z| + |w| + 1)^2 e^{-|z|^3/3 - |w|^3/3  + A|z| + A|w|},$$
where $A > 0$ is large enough so that $A \geq |x|, |y|$ and we used that for large $t$
$$\left| \frac{(z+w + 2t)(z+\varpi + t)}{(2z + 2t)(z-w)(w + \varpi + t)} \right| \leq (1/6)(|z| + |w| + 1) \cdot \left| \frac{z+\varpi + t}{w + \varpi + t}  \right| \leq (|z| + |w| + 1)^2.$$

{\bf \raggedleft Step 3.} In this step we show (\ref{eq:Limkcr22}). From (\ref{eq:I22Lim}) and (\ref{eq:R22Lim}) we have for $ t > 0$
\begin{equation}\label{eq:AB3}
\begin{split}
& t^3 e^{-t(x+y)} e^{\frac{2}{3} t^3}  \cdot K_{22}^{f_q^{-1}t, \infty} (x-t^2,y-t^2) \\
& = \frac{1}{(2\pi \im)^{2}}\int_{\mathcal{C}^{2\pi/3}_{-\ap_{2}}} dz\int_{\mathcal{C}^{2\pi/3}_{-\ap_{2}}} dw e^{-(z-t)^3/3 - (w-t)^3/3 +x (z-t) + y (w-t)  } \frac{t^3(z-w)}{4(z+w)(z+ \varpi)(w+\varpi)} \\
& +\frac{t^3}{2\pi \im} \int_{\mathcal{C}^{2\pi/3}_{\ap_{1}}} dz \frac{e^{-(z-t)^3/3 - (-\varpi-t)^3/3 + x(z-t) + y(-\varpi -t)}}{4(z - \varpi)}\\
& - \frac{t^3}{2\pi \im} \int_{\mathcal{C}^{2\pi/3}_{-\ap_{2}}} dw \frac{e^{ - (-\varpi-t)^3/3 -(w-t)^3/3 +x (-\varpi -t ) + y(w- t)}}{4(w - \varpi)} \\
& +  \frac{t^3 e^{-t(x+y)} e^{\frac{2}{3} t^3} }{2\pi \im} \int_{\mathcal{C}^{2\pi/3}_{-\ap_{2}}} dw  \frac{we^{  2t w^2  + w (y-x)  }}{2(w- \varpi)(w + \varpi)}.
\end{split}
\end{equation}
If $t > \ap_1$, then we may deform $\mathcal{C}^{2\pi/3}_{-\ap_{2}}$ and $\mathcal{C}^{2\pi/3}_{\ap_{1}}$ above to $\mathcal{C}^{\pi/2}_{-\ap_{2}}$ and $\mathcal{C}^{\pi/2}_{\ap_{1}}$, respectively, without affecting the value of the integrals by Cauchy's theorem. We require that $t > \ap_1$ to ensure that the integrand has enough decay in the sectors $\Arg(z), \Arg(w) \in [\pi/2, 3\pi/4] \cup [7\pi/4, 3\pi/2]$ to perform the deformation near infinity. We can then deform both contours on the second line of (\ref{eq:AB3}) to $\mathcal{C}^{\pi/2}_{\ap_{1}}$. When we deform the $z$-contour we cross the simple pole at $z = -\varpi$, which cancels with the third line in (\ref{eq:AB3}). Afterwards, when we deform the $w$ contour, we cross the simple pole at $w = -\varpi$, which cancels with the fourth line of (\ref{eq:AB3}), and the simple pole at $w = -z$, which cancels with the fifth line in (\ref{eq:AB3}). Overall, we conclude that for $t > \ap_1$ we have
\begin{equation}\label{eq:AB4}
\begin{split}
& t^3 e^{-t(x+y)} e^{\frac{2}{3} t^3} \cdot K_{22}^{f_q^{-1}t, \infty} (x-t^2,y-t^2) \\
& = \frac{1}{(2\pi \im)^{2}}\int_{\mathcal{C}^{\pi/2}_{\ap_{1}}} dz\int_{\mathcal{C}^{\pi/2}_{\ap_{1}}} dw e^{-(z-t)^3/3 - (w-t)^3/3 +x (z-t) + y (w-t)  } \frac{t^3(z-w)}{4(z+w)(z+ \varpi)(w+\varpi)} \\
& = \frac{1}{(2\pi \im)^{2}}\int_{\mathcal{C}^{\pi/2}_{-1}} dz\int_{\mathcal{C}^{\pi/2}_{-1}} dw e^{-z^3/3 - w^3/3 +x z + y w  } \frac{t^3(z-w)}{4(z+w + 2t)(z+ \varpi + t)(w+\varpi + t)},
\end{split}
\end{equation}
where in the last line we changed variables $z \rightarrow z+t$, $w \rightarrow w+t$ and we shifted the contours without affecting the value of the integral by Cauchy's theorem.

We can now take the limit as $t \rightarrow \infty$ in (\ref{eq:AB4}) and conclude (\ref{eq:Limkcr22}) by the dominated convergence theorem with dominating function
$$|z-w| e^{-|z|^3/3 - |w|^3/3  + A|z| + A|w|},$$
where $A > 0$ is large enough so that $A \geq |x|, |y|$ and hence for large $t$
$$\left| \frac{t^3}{4(z+w + 2t)(z+ \varpi + t)(w+\varpi + t)} \right| \leq 1.$$

\begin{appendix}
%
%
\section{Results for line ensembles} \label{SectionA} In this section we give the proofs of various results from Section \ref{Section2} after introducing some notation and statements in Section \ref{SectionA1}. We continue with the same notation as in Section \ref{Section2}.

%
%
\subsection{Auxiliary results}\label{SectionA1} We start with the following version of the monotone class theorem \cite[Theorem 5.2.2]{Durrett}, which we already used in Section \ref{Section4.3} of the main text, and also need below.
\begin{lemma}\label{MCA} Fix a measurable space $(\Omega, \mathcal{F})$ and suppose that $\{X_i\}_{i \in I}$ is an arbitrary collection of random variables defined on this space. Suppose that $\mathcal{H}$ is a family of bounded measurable functions $h: \Omega \rightarrow \mathbb{R}$ such that 
\begin{enumerate}
\item $\mathcal{H}$ contains all functions of the form $\prod_{r = 1}^n f_{r}(X_{i_r})$, where $f_r: \mathbb{R} \rightarrow \mathbb{R}$ are bounded continuous functions and $i_r \in I$ for $r \in \llbracket 1, n\rrbracket$;
\item if $f,g \in \mathcal{H}$ and $c \in \mathbb{R}$, then $cf + g \in \mathcal{H}$;
\item if $f_n \geq 0$, $f_n \in \mathcal{H}$ and $f_n \uparrow f$ for a bounded function $f$, then $f \in \mathcal{H}$.
\end{enumerate}
Then, $\mathcal{H}$ contains all bounded measurable functions with respect to $\sigma\left\{ X_i: i \in I\right\}$.
\end{lemma}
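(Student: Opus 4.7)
The plan is to deduce the conclusion from the functional monotone class theorem: any vector space $\mathcal{H}$ of bounded real-valued functions that contains the constant $1$, is closed under bounded monotone increasing limits of nonnegative sequences, and contains $\mathbf{1}_A$ for every $A$ in some $\pi$-system $\mathcal{A}$, automatically contains every bounded $\sigma(\mathcal{A})$-measurable function (see \cite[Theorem 5.2.2]{Durrett}). In our situation the vector-space and monotone-limit closure properties are precisely hypotheses (2) and (3), so the work reduces to exhibiting a $\pi$-system $\mathcal{A}$ generating $\sigma(X_i: i\in I)$ whose indicators all lie in $\mathcal{H}$.

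First I would take $\mathcal{A}$ to be the collection of finite intersections of the form $\{X_{i_1} > a_1, \dots, X_{i_n} > a_n\}$ for $n \in \mathbb{N}$, $i_r \in I$, $a_r \in \mathbb{R}$, together with $\Omega$ itself. This is plainly a $\pi$-system, and by a standard argument $\sigma(\mathcal{A}) = \sigma(X_i : i \in I)$. Also, the constant function $1 = \mathbf{1}_\Omega$ belongs to $\mathcal{H}$ by applying hypothesis (1) with $n=1$ and $f_1 \equiv 1$.

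The main (and really only non-routine) step is to approximate $\mathbf{1}_{(a,\infty)}$ by bounded continuous functions in a monotonically increasing fashion. For each $k\in\mathbb{N}$ set
\[
h_{a,k}(x) = \begin{cases} 0, & x \leq a, \\ k(x-a), & a < x < a + 1/k, \\ 1, & x \geq a + 1/k. \end{cases}
\]
Each $h_{a,k}$ is continuous and bounded by $1$, with $h_{a,k}(x) = \min(k(x-a)^+, 1)$, which is nondecreasing in $k$ for every fixed $x$, and converges pointwise to $\mathbf{1}_{(a,\infty)}(x)$ as $k\to\infty$. For an arbitrary $A = \{X_{i_1} > a_1, \dots, X_{i_n} > a_n\}\in\mathcal{A}$ the product $\prod_{r=1}^n h_{a_r,k}(X_{i_r})$ lies in $\mathcal{H}$ by hypothesis (1); these products are nonnegative, uniformly bounded by $1$, and increase pointwise (as each factor does and all factors are nonnegative) to $\prod_{r=1}^n \mathbf{1}_{(a_r,\infty)}(X_{i_r}) = \mathbf{1}_A$ as $k\to\infty$. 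Hypothesis (3) then yields $\mathbf{1}_A \in \mathcal{H}$.

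With the indicators of $\mathcal{A}$ verified to lie in $\mathcal{H}$, the functional monotone class theorem applies and gives $\mathcal{H} \supseteq \{\text{bounded }\sigma(\mathcal{A})\text{-measurable functions}\} = \{\text{bounded }\sigma(X_i : i \in I)\text{-measurable functions}\}$, as desired. The hard part of the argument, if one can call it that, is simply ensuring that the one-sided continuous approximation can be chosen to be monotone increasing (so that hypothesis (3), rather than a decreasing-limit condition, suffices); everything else is bookkeeping and a direct invocation of the classical theorem.
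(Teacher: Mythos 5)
Your proof is correct and follows essentially the same route as the paper's: the same piecewise-linear continuous approximations $\min(k(x-a)^+,1)\uparrow\mathbf{1}_{(a,\infty)}$, the same $\pi$-system of sets $\{X_{i_1}>a_1,\dots,X_{i_n}>a_n\}$, and the same invocation of the functional monotone class theorem \cite[Theorem 5.2.2]{Durrett}. Your write-up is slightly more careful in spelling out the monotonicity of the product approximants and the membership of the constant function, but the argument is identical.
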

\begin{proof} The argument is quite standard, so we will be brief. Fix $n \in \mathbb{N}$ and $i_1, \dots, i_n \in I$. Define 
$$\chi_N(x,u) = \begin{cases} 0, &x < u \\ N(x-u), & x \in [u,u + 1/N], \\ 1, & x > u + 1/N. \end{cases}$$
From condition (1) we have $\prod_{r = 1}^n \chi_N(X_{i_r}, a_r) \in \mathcal{H}$ and so by condition (3) $\prod_{r = 1}^n {\bf 1}\{X_{i_r} > a_r \} \in \mathcal{H}$ for any $a_r \in \mathbb{R}$. Sets of the form $\{ X_{i_r} > a_r \mbox{ for }r \in \llbracket 1, n \rrbracket \}$ form a $\pi$-system that generates $\sigma\left\{ X_i: i \in I\right\}$, hence the result follows from  \cite[Theorem 5.2.2]{Durrett}.
\end{proof}

We next give a simple connection between Brownian bridges and reverse Brownian motions, which will be used in the proofs of Lemmas \ref{lem: BB touch intersect} and \ref{lem: BB positive measure} in Sections \ref{SectionA2} and \ref{SectionA3}, respectively. If $W_t$ denotes a standard one-dimensional Brownian motion, then the process
$$\hat{B}^{\operatorname{std}}(t) =  W_t - t W_1, \hspace{5mm} 0 \leq t \leq 1,$$
is called a {\em standard Brownian bridge}. Given $a, b,x,y \in \mathbb{R}$ with $a < b$, we define a random element in $(C([a,b]), \mathcal{C})$ through
\begin{equation}\label{BBDef}
\hat{B}(t) = (b-a)^{1/2} \cdot \hat{B}^{\operatorname{std}} \left( \frac{t - a}{b-a} \right) + \left(\frac{b-t}{b-a} \right) \cdot x + \left( \frac{t- a}{b-a}\right) \cdot y, 
\end{equation}
and refer to it as a {\em Brownian bridge from $\hat{B}(a) = x$ to $\hat{B}(b) = y$.} Given $k \in \mathbb{N}$ and $\vec{x}, \vec{y} \in \mathbb{R}^k$, we let $\mathbb{P}^{a,b, \vec{x},\vec{y}}_{\operatorname{free}}$ denote the law of $k$ independent Brownian bridges $\{\hat{B}_i: [a,b] \rightarrow \mathbb{R} \}_{i = 1}^k$ from $\hat{B}_i(a) = x_i$ to $\hat{B}_i(b) = y_i$, and write $\mathbb{E}^{a,b, \vec{x},\vec{y}}_{\operatorname{free}}$ for the expectation with respect to this measure.

Let $y, \mu \in \mathbb{R}$, $b\in(0,\infty)$ and $\cev{B}(t)$ be the reverse Brownian motion from (\ref{eq:RevDefBrownianMotionDrift}), i.e.
\begin{equation}\label{eq:revBM again}
\cev{B}(t)=y+W_{b-t}+\mu(b-t)\mbox{ for }0\leq t\leq b.
\end{equation}
We consider the spaces
\[
C_{0,0}([0,b])=\{g\in C([0,b]):g(0)=g(b)=0\} \quad\mbox{and}\quad
C_{y}([0,b])=\{g\in C([0,b]):g(b)=y\},
\]
equipped with the uniform topology, and define a map $H_y:C_{0,0}([0,b])\times\mathbb{R}\rightarrow C_{y}([0,b])$ by
\begin{equation}\label{eq:def of homeomorphism F}
H_y(g,z)(t)= y + g(b-t) + z \cdot (b-t) \mbox{ for }0\leq t\leq b.
\end{equation}
Endowing $C_{0,0}([0,b])\times\mathbb{R}$ with the product topology, we see that that $H_y$ is a homeomorphism between topological spaces with inverse map
$$H_y^{-1}(h) = (g,z) \mbox{, with } z = b^{-1}(h(0)-y) \mbox{ and } g(t) = h(b-t) - y - t b^{-1}(h(0)-y) \mbox{ for } 0 \leq t \leq b.$$
We next observe that if $\hat{B}$ is a Brownian bridge from $\hat{B}(0) = 0$ to $\hat{B}(b) = 0$, and $X$ is an independent normal variable with $\mathbb{E}[X] = \mu$ and $\operatorname{Var}(X) = b^{-1}$, then  $\cev{B}' = H_{y}(\hat{B}, X)$ has the same law as $\cev{B}$. To see the latter, note that by construction $\cev{B}'$ is a continuous Gaussian processes with 
\begin{equation}\label{matchMeanCov}
\begin{split}
&\mathbb{E}[\cev{B}'(t)] = y + \mu(b-t) \mbox{ and } \operatorname{Cov}(\cev{B}'(s), \cev{B}'(t)) = \min(b-t, b-s), 
\end{split}
\end{equation}
which agrees with (\ref{eq:revBM again}).

%
%
\subsection{Proof of Lemma \ref{lem: BB touch intersect}}\label{SectionA2} We continue with the same notation as in Section \ref{SectionA1}. Let $\hat{B}$ be a Brownian bridge from $\hat{B}(0) = 0 $ to $\hat{B}(b) = 0$, $X$ an independent normal variable with $\mathbb{E}[X] =\mu$ and $\operatorname{Var}(X) = b^{-1}$. From our discussion above (\ref{matchMeanCov}), we have that 
\begin{equation*}
\begin{split}
&\mathbb{P}(D \cap C^c) = \mathbb{P}(\hat{D} \cap \hat{C}^c), \mbox{ where } \hat{D} = \{y + \hat{B}(b-t) + X (b-t) = f(t) \mbox{ for some } t\in [0,b]\}, \mbox{ and } \\
&\hat{C} = \{ y + \hat{B}(b-t) + X (b-t) < f(t) \mbox{ for some $t \in [0,b]$ } \}.
\end{split}
\end{equation*}
From \cite[Corollary 2.9]{CorHamA} we have $\mathbb{P}(\hat{D} \cap \hat{C}^c |X = x) = 0$ for all $x \in \mathbb{R}$, which implies the lemma.

%
%
\subsection{Proof of Lemma \ref{lem: BB positive measure}}\label{SectionA3} We continue with the same notation as in Section \ref{SectionA1}. Let $\hat{B}$ be a Brownian bridge from $\hat{B}(0) = 0 $ to $\hat{B}(b) = 0$, $X$ an independent normal variable with $\mathbb{E}[X] =\mu$ and $\operatorname{Var}(X) = b^{-1}$. From our discussion above (\ref{matchMeanCov}), we have that 
$$\mathbb{P}(\cev{B} \in U) = \mathbb{P}\left((\hat{B}, X) \in H_y^{-1}(U \cap C_y([0,b]))\right).$$
Since $U\cap C_y([0,b])$ is a non-empty open subset of $C_y([0,b])$, and $H_y$ is a homeomorphism, we know that $H_y^{-1}(U\cap C_y([0,b]))$ is an open subset of $C_{0,0}([0,b])\times\mathbb{R}$, which must contains a set of the form $U_0\times I_0$, where $U_0$ is a non-empty open subset of $C_{0,0}([0,b])$ and $I_0$ is a non-empty open interval in $\mathbb{R}$. The latter shows that 
$$\mathbb{P}(\cev{B} \in U)  = \mathbb{P}\left((\hat{B}, X) \in H_y^{-1}(U \cap C_y([0,b]))\right) \geq \mathbb{P}(\hat{B} \in U_0) \cdot \mathbb{P}(X \in I_0) > 0,$$
where in the last inequality we used that $\mathbb{P}(\hat{B} \in U_0) > 0 $ from \cite[Corollary 2.10]{CorHamA}.

%
%
\subsection{Well-posedness of the half-space Brownian Gibbs property}\label{SectionA35} 
We continue with the same notation as in Section \ref{SectionA1}. In this section we establish the following lemma, which explains why equation (\ref{eq:HSBGP}) makes sense, cf. Remark \ref{RemMeas}.

I moved Lemma \ref{LemmaMeasExp} to this section.
\begin{lemma}\label{LemmaMeasExp} Assume the same notation as in Definition \ref{def: avoidBLE} and suppose that $F : C( \llbracket 1, k \rrbracket \times [0,b]) \rightarrow \mathbb{R}$ is a bounded Borel-measurable function. Let 
\begin{equation*}
\begin{split}
S_{\mathsf{b}} &= \{ (\vec{y}, g) \in \weyl_k \times C([0,b]): \mbox{ $g(b) < y_k$}\} \mbox{ and }S =\weyl_k ,
\end{split}
\end{equation*}
where $S_{\mathsf{b}}, S$ are endowed with the subspace topology coming from the product topology and corresponding Borel $\sigma$-algebra. Then, the functions $G_F: S \rightarrow \mathbb{R}$ and $G_F^{\mathsf{b}}: S_{\mathsf{b}} \rightarrow \mathbb{R}$, given by
\begin{equation}\label{MeasExpFun}
\begin{split}
G^{\mathsf{b}}_F(\vec{y},g) &= \eabm^{b, \vec{y}, \vec{\mu}, g}[F(\mathcal{Q})], \hspace{7mm} G_F( \vec{y}) = \eabm^{b, \vec{y}, \vec{\mu}, -\infty}[F(\mathcal{Q})],
\end{split}
\end{equation}
are bounded and measurable.
\end{lemma}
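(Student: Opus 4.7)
Boundedness is immediate from $|G_F|, |G_F^{\mathsf{b}}| \leq \|F\|_\infty$, so the content is measurability. To handle $G_F^{\mathsf{b}}$, I would write it as a ratio $G_F^{\mathsf{b}}(\vec{y}, g) = N_F(\vec{y}, g)/D(\vec{y}, g)$, where
\[
N_F(\vec{y}, g) := \efbm^{b, \vec{y}, \vec{\mu}}\bigl[F(\cev{B}_1, \dots, \cev{B}_k) \mathbf{1}_{E_{\operatorname{avoid}}}\bigr], \qquad D(\vec{y}, g) := \pfbm^{b, \vec{y}, \vec{\mu}}(E_{\operatorname{avoid}}),
\]
with $E_{\operatorname{avoid}}$ the event from Definition \ref{def: avoidBLE} depending on $g$; Remark \ref{rem:WellDAvoid} gives $D > 0$ on $S_{\mathsf{b}}$. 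The same recipe with $g \equiv -\infty$ covers $G_F$. Thus it suffices to establish Borel measurability of $N_F$ and $D$ on $S_{\mathsf{b}}$ (respectively $S$).

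The main trick is to transfer the dependence on $\vec{y}$ and $\vec{\mu}$ from the measure into the integrand via the decomposition leading to (\ref{matchMeanCov}). On a common probability space $(\tilde{\Omega}, \tilde{\mathcal{F}}, \tilde{\mathbb{P}})$ supporting independent standard Brownian bridges $\hat{B}_1, \dots, \hat{B}_k$ and independent Gaussians $X_1, \dots, X_k$ with means $\mu_i$ and variance $b^{-1}$, the vector $(H_{y_1}(\hat{B}_1, X_1), \dots, H_{y_k}(\hat{B}_k, X_k))$ has law $\pfbm^{b, \vec{y}, \vec{\mu}}$, giving
\[
N_F(\vec{y}, g) = \tilde{\mathbb{E}}\Bigl[F\bigl(H_{y_1}(\hat{B}_1, X_1), \dots, H_{y_k}(\hat{B}_k, X_k)\bigr) \cdot \mathbf{1}_{\tilde{E}(\vec{y}, g)}\Bigr],
\]
where $\tilde{E}(\vec{y}, g)$ is the avoidance event expressed in terms of the new variables and the outer measure $\tilde{\mathbb{P}}$ no longer depends on $(\vec{y}, g)$. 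Since $H_y$ is continuous in all three arguments, the map $(\vec{y}, \omega) \mapsto (H_{y_1}(\hat{B}_1(\omega), X_1(\omega)), \dots, H_{y_k}(\hat{B}_k(\omega), X_k(\omega)))$ from $\weyl_k \times \tilde{\Omega}$ into $C(\llbracket 1, k \rrbracket \times [0, b])$ is jointly measurable, and composing with the Borel $F$ preserves measurability.

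To handle the indicator, I would use continuity of Brownian sample paths and of $g$ to rewrite $\tilde{E}(\vec{y}, g)$ as a countable intersection over $t \in \mathbb{Q} \cap [0, b]$ of events of the form $\{H_{y_i}(\hat{B}_i(\omega), X_i(\omega))(t) > H_{y_{i+1}}(\hat{B}_{i+1}(\omega), X_{i+1}(\omega))(t)\}$ and $\{H_{y_k}(\hat{B}_k(\omega), X_k(\omega))(t) > g(t)\}$. Each such event is Borel in $(\vec{y}, g, \omega) \in S_{\mathsf{b}} \times \tilde{\Omega}$ because the pointwise evaluation maps $g \mapsto g(t)$ and $(y, \hat{g}, z) \mapsto H_y(\hat{g}, z)(t)$ are continuous. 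Fubini--Tonelli then yields the desired Borel measurability of $N_F$ and $D$. The primary technical obstacle is precisely this joint measurability of the avoidance event in $(\vec{y}, g, \omega)$, for which the countable reduction via continuity is the crucial ingredient; the unconditional case $G_F$ is analogous and simpler since there is no $g$-dependence to track.
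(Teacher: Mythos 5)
Your proposal is correct in substance but follows a genuinely different route from the paper. The paper fixes independent Brownian motions $W^1,\dots,W^k$ on one probability space, sets $\mathcal{Q}^{\vec{y}}_i(t)=y_i+W^i_{b-t}+\mu_i(b-t)$, and proves that for bounded \emph{continuous} $F$ the map $(\vec{y},g)\mapsto G^{\mathsf{b}}_F(\vec{y},g)$ is \emph{continuous}: along a convergent sequence $(\vec{y}_n,g_n)\to(\vec{y},g)$ the avoidance indicators converge almost surely because, by Lemma \ref{lem: BB touch intersect}, the "touch without crossing" event is null; bounded convergence then gives continuity of numerator and denominator, and the extension to all bounded Borel $F$ is done by the monotone class argument of Lemma \ref{MCA}. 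You instead establish joint measurability of the integrand in $(\vec{y},g,\omega)$ and apply Fubini--Tonelli, which handles all bounded Borel $F$ in one pass and dispenses with both the monotone class step and (almost) with Lemma \ref{lem: BB touch intersect}; the trade-off is that you only get measurability where the paper's route also yields continuity for continuous $F$. One step of yours needs repair: the avoidance event is \emph{not} equal to the countable intersection over $t\in\mathbb{Q}\cap[0,b]$ of the strict pointwise inequalities — that intersection only forces the non-strict inequalities at irrational times (e.g.\ two paths could touch at a single irrational point). Either express the event directly as $\{\min_{t\in[0,b]}(\cev{B}_i(t)-\cev{B}_{i+1}(t))>0\}$ for each $i$, using that the minimum is a continuous functional of the paths and of $g$, so the event is open hence jointly Borel; or keep the rational intersection and note that for each fixed $(\vec{y},g)$ it differs from $E_{\operatorname{avoid}}$ by a $\tilde{\mathbb{P}}$-null set by Lemma \ref{lem: BB touch intersect}, so the resulting function of $(\vec{y},g)$ is unchanged. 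With that one-line fix the argument is complete.
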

\begin{proof} By Definition \ref{def: avoidBLE} we have 
\begin{equation}\label{eq:in proof measurability}
G^{\mathsf{b}}_F(\vec{y},g) =\frac{\efbm^{b, \vec{y}, \vec{\mu}}\left[F(\mathcal{Q})\cdot\mathbf{1}_{E_{\operatorname{avoid}}^{[0,b],\infty,g}}(\mathcal{Q}) \right]}{\efbm^{b, \vec{y}, \vec{\mu}}\left[\mathbf{1}_{E_{\operatorname{avoid}}^{[0,b],\infty,g}}(\mathcal{Q}) \right]}, \mbox{ and }G_F( \vec{y})  = \frac{\efbm^{b, \vec{y}, \vec{\mu}}\left[F(\mathcal{Q})\cdot\mathbf{1}_{E_{\operatorname{avoid}}^{[0,b],\infty,-\infty}}(\mathcal{Q})\right]}{\efbm^{b, \vec{y}, \vec{\mu}}\left[\mathbf{1}_{E_{\operatorname{avoid}}^{[0,b],\infty,-\infty}}(\mathcal{Q})\right]},
\end{equation}
where 
\begin{equation}\label{A1AvoidSet}
E^{S,f,g}_{\operatorname{avoid}}  = \left\{ f(r) > \mathcal{Q}_1(r) > \mathcal{Q}_2(r) > \cdots > \mathcal{Q}_k(r) > g(r) \mbox{ for all $r \in S$} \right\}.
\end{equation}
Recall from Remark \ref{rem:WellDAvoid} that the denominators in (\ref{eq:in proof measurability}) are positive. \\

Let $W^1,\dots,W^k$ be independent standard Brownian motions, defined on a probability space $(\Omega,\mathcal{F},\mathbb{P})$. We define the random line ensembles $\mathcal{Q}^{\vec{y}}=(\mathcal{Q}_1^{\vec{y}},\dots,\mathcal{Q}_k^{\vec{y}})$ in $C(\llbracket1,k\rrbracket\times[0,b])$
by $\mathcal{Q}_i^{\vec{y}}(t)=y_i+W_{b-t}^i+\mu_i(b-t)$ for $i\in\llbracket1,k\rrbracket$, and observe that
\begin{equation}\label{eq:vgerwaarf}
\efbm^{b, \vec{y}, \vec{\mu}}\left[F(\mathcal{Q})\cdot\mathbf{1}_{E_{\operatorname{avoid}}^{[0,b],\infty,g}}(\mathcal{Q}) \right]=\mathbb{E}\left[F(\mathcal{Q}^{\vec{y}})\cdot\mathbf{1}_{E_{\operatorname{avoid}}^{[0,b],\infty,g}}(\mathcal{Q}^{\vec{y}})\right].
\end{equation}
We seek to show that if $F : C( \llbracket 1, k \rrbracket \times [0,b]) \rightarrow \mathbb{R}$ is a bounded continuous function, and $(\vec{y}_n, g_n)\rightarrow(\vec{y} , g )$ in $S_{\mathsf{b}}$, then
\begin{equation}\label{eq:vearbatr}
\mathbb{E}\left[F(\mathcal{Q}^{\vec{y}_n})\cdot\mathbf{1}_{E_{\operatorname{avoid}}^{[0,b],\infty,g_n}}(\mathcal{Q}^{\vec{y}_n})\right]\rightarrow\mathbb{E}\left[F(\mathcal{Q}^{\vec{y}})\cdot\mathbf{1}_{E_{\operatorname{avoid}}^{[0,b],\infty,g}}(\mathcal{Q}^{\vec{y}})\right] \mbox{ as }n\rightarrow\infty.
\end{equation}
Using Lemma \ref{lem: BB touch intersect} we have $\mathbb{P}\left(\mathcal{Q}^{\vec{y}}\in \bar{E}_{\operatorname{avoid}}^{[0,b],\infty,g}\setminus E_{\operatorname{avoid}}^{[0,b],\infty,g} \right)=0$, where 
$$\bar{E}_{\operatorname{avoid}}^{[0,b],\infty,g}=\left\{\mathcal{Q}_1 (t)\geq\mathcal{Q}_2(t)\geq\cdots\geq\mathcal{Q}_k(t)\geq g(t)\mbox{ for all } t\in[0,b] \right\}.$$
From the last observation, the continuity of $F$, and $(\vec{y}_n, g_n)\rightarrow(\vec{y} , g )$ in $S_{\mathsf{b}}$, we conclude that $\mathbb{P}$-a.s. 
$$\lim_n F(\mathcal{Q}^{\vec{y}_n})\cdot\mathbf{1}_{E_{\operatorname{avoid}}^{[0,b],\infty,g_n}}(\mathcal{Q}^{\vec{y}_n}) =  F(\mathcal{Q}^{\vec{y}})\cdot\mathbf{1}_{E_{\operatorname{avoid}}^{[0,b],\infty,g}}(\mathcal{Q}^{\vec{y}}),$$
which implies (\ref{eq:vearbatr}) by the bounded convergence theorem.

Combining (\ref{eq:in proof measurability}), (\ref{eq:vgerwaarf}) and (\ref{eq:vearbatr}), we see that if $F : C( \llbracket 1, k \rrbracket \times [0,b]) \rightarrow \mathbb{R}$ is bounded and continuous, then $G^{\mathsf{b}}_F$ is a bounded continuous function on $S_{\mathsf{b}}$ (as the ratio of two continuous functions with a positive denominator). One analogously shows that $G_F$ is bounded and continuous on $\weyl_k$. If we now let $\mathcal{H}$ denote the class of bounded measurable functions $F$, such that $G^{\mathsf{b}}_F$ and $G_F$ are measurable, we see that $\mathcal{H}$ contains all bounded continuous $F$, which by the monotone class argument in Lemma \ref{MCA} implies that $\mathcal{H}$ contains all bounded measurable functions as desired.
\end{proof}

%
%
\subsection{Proof of Lemma \ref{LemmaConsistent}}\label{SectionA4} We first recall a bit of notation from \cite{DimMat}. The following definition, which is \cite[Definition 2.4]{DimMat}, introduces the notion of an $(f,g)$-avoiding Brownian line ensemble.
\begin{definition}\label{DefAvoidingLaw}
Let $k \in \mathbb{N}$ and $\weyl_k$ denote the open Weyl chamber in $\mathbb{R}^k$ from (\ref{DefWeyl}). Let $\vec{x}, \vec{y} \in \weyl_k$, $a,b \in \mathbb{R}$ with $a < b$, and $f: [a,b] \rightarrow (-\infty, \infty]$ and $g: [a,b] \rightarrow [-\infty, \infty)$ be two continuous functions. The latter means that either $f: [a,b] \rightarrow \mathbb{R}$ is continuous or $f = \infty$ everywhere, and similarly for $g$. We also assume that $f(t) > g(t)$ for all $t \in[a,b]$, $f(a) > x_1, f(b) > y_1$ and $g(a) < x_k, g(b) < y_k.$

With the above data we define the {\em $(f,g)$-avoiding Brownian line ensemble on the interval $[a,b]$ with entrance data $\vec{x}$ and exit data $\vec{y}$} to be the $\Sigma$-indexed line ensemble $\mathcal{Q}$ with $\Sigma = \llbracket 1, k\rrbracket$ on $\Lambda = [a,b]$ and with the law of $\mathcal{Q}$ equal to $\mathbb{P}^{a,b, \vec{x},\vec{y}}_{\operatorname{free}}$ (the law of $k$ independent Brownian bridges $\{\hat{B}_i: [a,b] \rightarrow \mathbb{R} \}_{i = 1}^k$ from $\hat{B}_i(a) = x_i$ to $\hat{B}_i(b) = y_i$), conditioned on the event $E^{[a,b],f,g}_{\operatorname{avoid}}$ from (\ref{A1AvoidSet}). We denote the probability distribution of $\mathcal{Q}$ as $\mathbb{P}_{\operatorname{avoid}}^{a,b, \vec{x}, \vec{y}, f, g}$ and write $\mathbb{E}_{\operatorname{avoid}}^{a,b, \vec{x}, \vec{y}, f, g}$ for the expectation with respect to this measure.
\end{definition}

The following definition introduces the partial Brownian Gibbs property from \cite[Definition 2.7]{DimMat}.
\begin{definition}\label{DefPBGP}
Fix a set $\Sigma = \llbracket 1 , N \rrbracket$ with $N \in \mathbb{N}$ or $N  = \infty$ and an interval $\Lambda \subseteq \mathbb{R}$.  A $\Sigma$-indexed line ensemble $\mathcal{L}$ on $\Lambda$ is said to satisfy the {\em partial Brownian Gibbs property} if and only if it is non-intersecting and for any finite $K = \{k_1, k_1 + 1, \dots, k_2 \} \subset \Sigma$ with $k_2 \leq N - 1$, $[a,b] \subset \Lambda$ and any bounded Borel-measurable function $F: C(K \times [a,b]) \rightarrow \mathbb{R}$ we have $\mathbb{P}$-almost surely
\begin{equation}\label{PBGPTower}
\mathbb{E} \left[ F(\mathcal{L}|_{K \times [a,b]}) {\big \vert} \mathcal{F}_{\operatorname{ext}} (K \times (a,b))  \right] =\mathbb{E}_{\operatorname{avoid}}^{a,b, \vec{x}, \vec{y}, f, g} \bigl[ F(\mathcal{Q}) \bigr],
\end{equation}
where $\mathcal{L}|_{K \times [a,b]}$ is the restriction of $\mathcal{L}$ to $K \times [a,b]$, $\vec{x} = (\mathcal{L}_{k_1}(a), \dots, \mathcal{L}_{k_2}(a))$, $\vec{y} = (\mathcal{L}_{k_1}(b), \dots, \mathcal{L}_{k_2}(b))$, $f = \mathcal{L}_{k_1 - 1}[a,b]$ (with the convention that $f = \infty$ if $k_1 = 1$), $g = \mathcal{L}_{k_2 +1}[a,b]$, and 
$$\mathcal{F}_{\operatorname{ext}} (K \times (a,b)) = \sigma \left \{ \mathcal{L}_i(s): (i,s) \in D_{K,a,b}^c \right\},$$
with $D_{K,a,b} = K \times (a,b)$ and $D_{K,a,b}^c = (\Sigma \times \Lambda) \setminus D_{K,a,b}$. On the right side of (\ref{PBGPTower}) we have that $\mathcal{Q}$ has law $\mathbb{P}_{\operatorname{avoid}}^{a,b, \vec{x}, \vec{y}, f, g}$, and we think of $\mathcal{Q} = (\mathcal{Q}_1, \dots, \mathcal{Q}_{k_2 - k_1 + 1})$ as being in $C(K \times [a,b])$ as opposed to $C(\llbracket 1, k_2 - k_1 + 1 \rrbracket \times [a,b])$ by re-indexing the curves.
\end{definition}

We next establish the following useful lemma, which will be used in the proof of Lemma \ref{LemmaConsistent}. In plain words, the lemma says that the $g$-avoiding reverse Brownian line ensembles from Definition \ref{def: avoidBLE} satisfy both the half-space Brownian Gibbs property and the partial Brownian Gibbs property.

\begin{lemma}\label{LGPAll} Assume the same notation as in Definition \ref{def: avoidBLE} and suppose $\mathcal{Q} = \{\mathcal{Q}_i\}_{i = 1}^k$ is a $\llbracket 1, k \rrbracket$-indexed line ensemble on $[0,b]$ with law $\pabm^{b, \vec{y}, \vec{\mu}, g}$, where $g \in C([0,b])$. Suppose $\mathcal{L} = \{\mathcal{L}_i\}_{i = 1}^{k+1}$ is the $\llbracket 1, k+1\rrbracket$-indexed line ensemble on $[0,b]$ with $\mathcal{L}_i = \mathcal{Q}_i$ for $i \in \llbracket 1, k \rrbracket$ and $\mathcal{L}_{k+1} = g$. Then, $\mathcal{L}$ satisfies the half-space Brownian Gibbs property from Definition \ref{def:BGP} with the same $\mu_1, \dots, \mu_k$ and arbitrary $\mu_{k+1} \in \mathbb{R}$. In addition, $\mathcal{L}$ satisfies the partial Brownian Gibbs property from Definition \ref{DefPBGP}. 
\end{lemma}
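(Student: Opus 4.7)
The plan is to verify both Gibbs properties by a direct computation that starts from the free reverse Brownian measure $\pfbm^{b, \vec{y}, \vec{\mu}}$ and combines two ingredients. The first is the Radon--Nikodym identity
\[
\frac{d\pabm^{b, \vec{y}, \vec{\mu}, g}}{d\pfbm^{b, \vec{y}, \vec{\mu}}} = \frac{\mathbf{1}_{E_{\operatorname{avoid}}}}{\pfbm^{b, \vec{y}, \vec{\mu}}(E_{\operatorname{avoid}})},
\]
whose denominator is positive by Lemma \ref{lem: BB positive measure}. This reduces any conditional law under $\pabm$ to the corresponding conditional law under $\pfbm$, re-weighted by $\mathbf{1}_{E_{\operatorname{avoid}}}$. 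The second ingredient is the Markov property of a reverse Brownian motion with drift: conditioning on its value at a single time produces, on the complementary (past) interval, another reverse Brownian motion with the same drift and the specified endpoint, while conditioning on two endpoint values absorbs the drift entirely and yields an ordinary Brownian bridge.

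For the half-space Brownian Gibbs property I would fix $k' \in \llbracket 1, k \rrbracket$, $b' \in (0, b]$, and write $V$ for the $\mathcal{F}_{\operatorname{ext}}(\llbracket 1,k'\rrbracket \times [0,b'))$-measurable data. The Markov property and independence of coordinates under $\pfbm$ give that, conditional on $V$, the law of $(\mathcal{Q}_i|_{[0,b']})_{i=1}^{k'}$ is $\pfbm^{b',\vec{y}',\vec{\mu}_{k'}}$ with $\vec{y}' = (\mathcal{L}_1(b'), \dots, \mathcal{L}_{k'}(b'))$. Factoring $\mathbf{1}_{E_{\operatorname{avoid}}} = \mathbf{1}_{E_{[b',b]}} \cdot \mathbf{1}_{E_{[0,b']}}$, which holds almost surely, and peeling off from $\mathbf{1}_{E_{[0,b']}}$ the $V$-measurable factors coming from the coordinates $i > k'$ and from the ordering $\mathcal{Q}_k > g$, what remains to re-weight by is exactly $\mathbf{1}\{\mathcal{Q}_1 > \cdots > \mathcal{Q}_{k'} > g' \text{ on } [0,b']\}$ with $g' = \mathcal{L}_{k'+1}[0,b']$. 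By Definition \ref{def: avoidBLE} this produces the measure $\pabm^{b',\vec{y}',\vec{\mu}_{k'}, g'}$, which is precisely the right-hand side of (\ref{eq:HSBGP}). The partial Brownian Gibbs property follows from the same template applied to a block $K = \llbracket k_1, k_2\rrbracket$ and a subinterval $[a,b']\subseteq[0,b]$: invoking the Markov property now at both endpoints $a$ and $b'$ converts the free conditional law into $\mathbb{P}^{a, b', \vec{x}, \vec{y}}_{\operatorname{free}}$ (the drifts vanish in the two-sided conditioning), and the same factorization of $\mathbf{1}_{E_{\operatorname{avoid}}}$ produces $\mathbb{P}_{\operatorname{avoid}}^{a, b', \vec{x}, \vec{y}, f, g}$ from Definition \ref{DefAvoidingLaw}, with $\vec{x}, \vec{y}, f, g$ read off from $\mathcal{L}$ in the natural way (using the convention $f=\infty$ when $k_1=1$).

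The hard part of the argument is purely bookkeeping: one has to ensure that the pointwise identifications of the conditional law, obtained for each realization of $V$, can be inserted back into the definition of conditional expectation. This requires the maps $V \mapsto \eabm^{b', \vec{y}', \vec{\mu}_{k'}, g'}[F(\mathcal{Q})]$ and its partial-BGP analogue to be measurable, which is exactly the content of Lemma \ref{LemmaMeasExp} (extended to the two-sided bridge case by the same proof). One small additional point is that the factorization of $\mathbf{1}_{E_{\operatorname{avoid}}}$ across $t=b'$, and across $t=a$ in the partial setting, relies on strict inequalities at those times; these hold on a set of full $\pabm$-measure by Lemma \ref{lem: BB touch intersect}, which allows the exceptional null set to be discarded harmlessly.
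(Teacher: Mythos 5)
Your argument is correct, but it is a genuinely different route from the one the paper takes. The paper proves Lemma \ref{LGPAll} by discretization: it realizes $\pabm^{b,\vec{y},\vec{\mu},g}$ as the weak limit (via Lemma \ref{lem:RW}) of reverse $g$-interlacing geometric line ensembles, exploits the exact Gibbs relations these satisfy by construction (Definition \ref{def:interlaceGeom} and Lemma \ref{LemmaConsistentGeom}), and transfers them to the limit using Skorohod representation, bounded convergence, and the monotone class argument of Lemma \ref{MCA}; the partial Brownian Gibbs property is obtained the same way using the two-sided discrete convergence from \cite[Lemma 2.14]{dimitrov2024tightness}. You instead work directly in the continuum: Radon--Nikodym density of $\pabm$ against $\pfbm$, the Markov property of the independent reverse Brownian motions (one-sided for the half-space property, two-sided --- killing the drift --- for the partial one), exact factorization of the avoidance indicator across $t=b'$ (resp.\ $t=a$ and $t=b'$), and the measurability supplied by Lemma \ref{LemmaMeasExp} to recognize the resulting expression as a conditional expectation. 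Your route is more self-contained and closer to the classical Corwin--Hammond verification; the paper's route reuses discrete machinery it needs anyway and sidesteps the continuum bookkeeping by letting the exact discrete identities do the work. Two small remarks on your writeup: the factorization $\mathbf{1}_{E_{\operatorname{avoid}}}=\mathbf{1}_{E_{[b',b]}}\mathbf{1}_{E_{[0,b']}}$ is exact, not merely almost sure, and the strict inequalities at $b'$ needed for $\vec{y}'\in\weyl_{k'}$ and $g'(b')<y'_{k'}$ are built into $E_{\operatorname{avoid}}$ itself rather than requiring Lemma \ref{lem: BB touch intersect}; and for the partial property you do need the bridge-case analogue of Lemma \ref{LemmaMeasExp} (available via \cite[Definition 2.4]{DimMat} and the same continuity argument), which you correctly flag.
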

\begin{proof} The idea of the proof is to appropriately apply Lemma \ref{lem:RW} and obtain the two Brownian Gibbs properties as diffuse scaling limits of the interlacing Gibbs property. For clarity, we split the proof into two steps.\\

{\bf \raggedleft Step 1.} In this step we show that $\mathcal{L}$ satisfies the half-space Brownian Gibbs property. Fix $p \in (0,1)$, $u = p/(1-p)$, $\sigma = \sqrt{p}/(1-p)$ and the sequences $B_n = n$, $d_n = n/b$, $g_n = g$, $\vec{Y}^{n} \in \mathfrak{W}_k$, where 
$$Y_i^n = \lfloor \sigma d_n^{1/2} y_i+ u  B_n \rfloor \mbox{ for } i \in \llbracket 1, k \rrbracket,$$
$G_n(t) = \sigma d_n^{1/2} \cdot g(t/d_n) + u t$, and $\vec{q}^{\,n} = (q_1^n, \dots, q_k^n) \in (0,1)^k$, such that 
$$q_i^n = p - \mu_{i} \sqrt{p} (1-p) \cdot d_n^{-1/2} + o(d_n^{-1/2}) \mbox{ for } i \in \llbracket 1, k \rrbracket.$$
From Lemma \ref{lem:RW}, we know that for all large $n$ the laws $\mathbb{P}_{\ice, \operatorname{Geom}}^{B_n, \vec{Y}^n, \vec{q}^{\,n}, G_n}$ are well defined. Moreover, if $\mathfrak{Q}^n = \{Q_i^n\}_{i = 1}^k$ has law $\mathbb{P}_{\ice, \operatorname{Geom}}^{B_n, \vec{Y}^n, \vec{q}^{\,n}, G_n}$, then the $\llbracket 1,k  \rrbracket$-indexed line ensembles $\mathcal{Q}^n$ on $[0,b]$, defined by
\begin{equation}\label{YQ1}
\mathcal{Q}_i^n(t) = \sigma^{-1} d_n^{-1/2} \cdot \left( Q^n_i(t d_n) - utd_n\right) \mbox{ for }  t \in [0,b] \mbox{ and } i \in \llbracket 1, k \rrbracket,
\end{equation}
converge weakly to $\pabm^{b, \vec{y}, \vec{\mu}, g}$ from Definition \ref{def: avoidBLE} as $n \rightarrow \infty$.

We consider $\mathcal{L}^n = \{\mathcal{L}^n_i\}_{i = 1}^{k+1}$ with $\mathcal{L}^n_i = \mathcal{Q}_i^n$ for $i \in \llbracket 1, k \rrbracket$ and $\mathcal{L}^n_{k+1} = g$, and note that $\mathcal{L}^n \Rightarrow \mathcal{L}^{\infty}$, where $\mathcal{L}^{\infty}$ has the same law as $\mathcal{L}$ in the statement of the present lemma. By Skorohod's Representation Theorem, see \cite[Theorem 6.7]{Billing}, we may assume that the sequence $\{\mathcal{L}^n\}_{n \geq 1}$ and $\mathcal{L}^{\infty}$ are all defined on the same probability space $(\Omega, \mathcal{F}, \mathbb{P})$ and $\lim_n \mathcal{L}^n(\omega) = \mathcal{L}^{\infty}(\omega)$ for each $\omega \in \Omega$.

Fix $v \in \llbracket 1, k \rrbracket$, $m \in \mathbb{N}$, $n_1, \dots, n_m \in \llbracket 1 , k+1 \rrbracket$, $w, t_1, \dots, t_m \in [0,b]$ and bounded continuous $h_1, \dots, h_m : \mathbb{R} \rightarrow \mathbb{R}$. Define $R = \{i \in \llbracket 1, m \rrbracket: n_i \in \llbracket 1 , v \rrbracket, t_i \in [0,w]\}$. We claim that 
\begin{equation}\label{YQ2}
\mathbb{E}\left[ \prod_{i = 1}^m h_i(\mathcal{L}^{\infty}_{n_i}(t_i)) \right] = \mathbb{E}\left[ \prod_{i \not \in R} h_i(\mathcal{L}^{\infty}_{n_i}(t_i))  \cdot \mathbb{E}_{\operatorname{avoid}}^{w,\vec{y}\,', \vec{\mu}_{v},g'} \left[ \prod_{i  \in R} h_i(\mathcal{Q}_{n_i}(t_i))   \right] \right], 
\end{equation}
where $\vec{y}\,' = (\mathcal{L}^{\infty}_{1} (w), \dots, \mathcal{L}^{\infty}_{v} (w))$, $\vec{\mu}_v = (\mu_1, \dots, \mu_v)$, $g' = \mathcal{L}_{v+1}^{\infty}[0,w]$. Note that by the monotone class argument in Lemma \ref{MCA} and (\ref{YQ2}) we would conclude for any bounded Borel-measurable $F: C(\llbracket 1, v \rrbracket \times [0,w]) \rightarrow \mathbb{R}$ and any bounded $\mathcal{F}_{\operatorname{ext}} (\llbracket 1,v\rrbracket \times [0,w))$-measurable $Y$ (this $\sigma$-algebra was introduced in Definition \ref{def:BGP}) that
\begin{equation}\label{YQ3}
\mathbb{E}\left[ F\left( \mathcal{L}^{\infty} \vert_{\llbracket 1, v \rrbracket \times [0,w]} \right) \cdot Y \right] = \mathbb{E}\left[ \mathbb{E}_{\operatorname{avoid}}^{w,\vec{y}\,', \vec{\mu}_{v}, g'}  \left[ F(\mathcal{Q}) \right]\cdot Y \right].
\end{equation}
Equation (\ref{YQ3}) shows that $\mathcal{L}^{\infty}$ (and hence $\mathcal{L}$) satisfies the half-space Brownian Gibbs property. We have thus reduced our proof to establishing (\ref{YQ2}).\\

We set $W_n = \lceil w d_n \rceil$, and observe directly from the definition of $\mathbb{P}_{\ice, \operatorname{Geom}}^{B_n, \vec{Y}^n, \vec{q}^{\,n}, G_n}$ in Definition \ref{def:interlaceGeom}, that for all large $n$
\begin{equation}\label{YQ4}
\mathbb{E}\left[ \prod_{i = 1}^m h_i(\mathcal{L}^{n}_{n_i}(t_i)) \right] = \mathbb{E}\left[ \prod_{i \not \in R} h_i(\mathcal{L}^{n}_{n_i}(t_i))  \cdot \mathbb{E}_{\ice, \operatorname{Geom}}^{W_n,\vec{Z}^n, \vec{q}^{\,n}_v, G_n'} \left[ \prod_{i  \in R} h_i(\mathcal{Q}'_{n_i}(t_i))   \right] \right], 
\end{equation}
where $\vec{Z}^n = (Q_1^n(W_n), \dots, Q_v^n(W_n))$, $\vec{q}_v^{\,n} = (q_1^n, \dots, q_v^n)$, $G_n' = Q^n_{v+1}\llbracket 0 , W_n \rrbracket$ (with the convention $Q^n_{k+1} = G_n$), and $\mathcal{Q}_i'(t) = \sigma^{-1}d_n^{-1/2} (Q_i'(td_n) - utd_n)$ for $t \in [0,w]$ with $\mathfrak{Q}' = \{Q_i'\}_{i = 1}^v$ having law $\mathbb{P}_{\ice, \operatorname{Geom}}^{W_n,\vec{Z}^n, \vec{q}^{\,n}_v, G_n'}$. 
We now deduce (\ref{YQ2}) by taking the $n \rightarrow \infty$ limit of (\ref{YQ4}) and using the bounded convergence theorem. We mention that the convergence of $h_i(\mathcal{L}^{n}_{n_i}(t_i))$ to $h_i(\mathcal{L}^{\infty}_{n_i}(t_i))$ uses the convergence of $\mathcal{L}^n$ to $\mathcal{L}^{\infty}$ and the continuity of $h_i$. In addition, the convergence of the expectations on the right side of (\ref{YQ4}) to the ones on the right of (\ref{YQ2}) follows from a second application of Lemma \ref{lem:RW} with $B_n = W_n$.\\

{\bf \raggedleft Step 2.} In this step we show that $\mathcal{L}$ satisfies the partial Brownian Gibbs property. We adopt the same notation as in the first two paragraphs of Step 1. 

Fix $1 \leq k_1 \leq k_2 \leq k$, an interval $[c,d] \subseteq [0,b]$, $m \in \mathbb{N}$, $n_1, \dots, n_m \in \llbracket 1 , k+1 \rrbracket$, $t_1, \dots, t_m \in [0,b]$ and bounded continuous $h_1, \dots, h_m : \mathbb{R} \rightarrow \mathbb{R}$. Define $R = \{i \in \llbracket 1, m \rrbracket: n_i \in \llbracket k_1 , k_2 \rrbracket, t_i \in [c,d]\}$. We claim that 
\begin{equation}\label{YR2}
\mathbb{E}\left[ \prod_{i = 1}^m h_i(\mathcal{L}^{\infty}_{n_i}(t_i)) \right] = \mathbb{E}\left[ \prod_{i \not \in R} h_i(\mathcal{L}^{\infty}_{n_i}(t_i))  \cdot \mathbb{E}_{\operatorname{avoid}}^{c,d, \vec{x}\,',\vec{y}\,', f',g'} \left[ \prod_{i  \in R} h_i(\mathcal{Q}_{n_i - k_1 + 1}(t_i))   \right] \right], 
\end{equation}
where $\vec{x}\,' = (\mathcal{L}^{\infty}_{k_1} (c), \dots, \mathcal{L}^{\infty}_{k_2} (c))$, $\vec{y}\,' = (\mathcal{L}^{\infty}_{k_1} (d), \dots, \mathcal{L}^{\infty}_{k_2} (d))$, $f'=\mathcal{L}^{\infty}_{k_1 - 1}[c,d]$ with $f' = \infty$ when $k_1 = 1$,  $g' = \mathcal{L}^{\infty}_{k_2+1}[c,d]$, and we recall that $\mathbb{E}_{\operatorname{avoid}}^{c,d, \vec{x}\,',\vec{y}\,', f',g'}$ is as in Definition \ref{DefAvoidingLaw}. By the monotone class argument in Lemma \ref{MCA} and (\ref{YR2}) we would conclude for any bounded Borel-measurable $F: C(\llbracket k_1, k_2 \rrbracket \times [c,d]) \rightarrow \mathbb{R}$ and any bounded $\mathcal{F}_{\operatorname{ext}} (\llbracket k_1,k_2\rrbracket \times (c,d))$-measurable $Y$ (this $\sigma$-algebra was introduced in Definition \ref{DefPBGP}) that
\begin{equation}\label{YR3}
\mathbb{E}\left[ F\left( \mathcal{L}^{\infty} \vert_{\llbracket k_1, k_2 \rrbracket \times [c,d]} \right) \cdot Y \right] = \mathbb{E}\left[ \mathbb{E}_{\operatorname{avoid}}^{c,d, \vec{x}\,',\vec{y}\,', f',g'} \left[ F(\mathcal{Q}) \right]\cdot Y \right].
\end{equation}
Equation (\ref{YR3}) shows that $\mathcal{L}^{\infty}$ (and hence $\mathcal{L}$) satisfies the partial Brownian Gibbs property from Definition \ref{DefPBGP}. We have thus reduced our proof to establishing (\ref{YR2}).\\

We set $C_n = \lfloor c d_n \rfloor$, $D_n = \lceil d d_n \rceil$, and observe directly from the definition of $\mathbb{P}_{\ice, \operatorname{Geom}}^{B_n, \vec{Y}^n, \vec{q}^{\,n}, G_n}$ in Definition \ref{def:interlaceGeom}, that for all large $n$
\begin{equation}\label{YR4}
\mathbb{E}\left[ \prod_{i = 1}^m h_i(\mathcal{L}^{n}_{n_i}(t_i)) \right] = \mathbb{E}\left[ \prod_{i \not \in R} h_i(\mathcal{L}^{n}_{n_i}(t_i))  \cdot \mathbb{E}_{\ice, \operatorname{Geom}}^{C_n, D_n, \vec{X}^n, \vec{Z}^n, F_n, G_n'} \left[ \prod_{i  \in R} h_i(\mathcal{Q}'_{n_i - k_1 + 1}(t_i))   \right] \right], 
\end{equation}
where $\vec{X}^n = (Q_{k_1}^n(C_n), \dots, Q_{k_2}^n(C_n))$ $\vec{Z}^n = (Q_{k_1}^n(D_n), \dots, Q_{k_2}^n(D_n))$), $F_n = Q_{k_1-1}^n \llbracket C_n, D_n \rrbracket$ (with the convention that $Q^n_{0} = \infty$, and $G_n' = Q^n_{k_2+1}\llbracket C_n , D_n \rrbracket$ (with the convention $Q^n_{k+1} = G_n$). In addition, $\mathcal{Q}_i'(t) = \sigma^{-1}d_n^{-1/2} (Q_i'(td_n) - utd_n)$ for $t \in [c,d]$ with $\mathfrak{Q}' = \{Q_i'\}_{i = 1}^{k_2 - k_1  +1}$ having law $\mathbb{P}_{\ice, \operatorname{Geom}}^{C_n, D_n, \vec{X}^n, \vec{Z}^n, F_n, G_n'}$, where we recall that the latter was defined in Lemma \ref{LemmaConsistentGeom}.

We now deduce (\ref{YR2}) by taking the $n \rightarrow \infty$ limit of (\ref{YR4}) and using the bounded convergence theorem. The convergence of $h_i(\mathcal{L}^{n}_{n_i}(t_i))$ to $h_i(\mathcal{L}^{\infty}_{n_i}(t_i))$ is the same as in Step 1, while the convergence of the expectations on the right side of (\ref{YR4}) to the ones on the right of (\ref{YR2}) follows from Lemma \cite[Lemma 2.16]{dimitrov2024tightness} with $d_n$ as in the present setup, $p$ in that paper is set to $p/(1-p)$, $a = c$, $b = d$, $f = \mathcal{L}^{\infty}_{k_1-1}[c,d]$ (with the convention that $\mathcal{L}_0^{\infty} = \infty$), $g = \mathcal{L}^{\infty}_{k_2+1}[c,d]$, $\vec{x} = (\mathcal{L}^{\infty}_{k_1}(c), \dots, \mathcal{L}^{\infty}_{k_2}(c))$, $\vec{y} = (\mathcal{L}^{\infty}_{k_1}(d), \dots, \mathcal{L}^{\infty}_{k_2}(d))$.
\end{proof}

In the remainder of this section we present the proof of Lemma \ref{LemmaConsistent}.
\begin{proof}[Proof of Lemma \ref{LemmaConsistent}] Fix $1 \leq k_1 \leq k_2 \leq N-1$, and $[a,b] \subset \Lambda$. We seek to show that for any bounded Borel-measurable $F: C(\llbracket k_1, k_2 \rrbracket \times [a,b]) \rightarrow \mathbb{R}$ and any bounded $\mathcal{F}_{\operatorname{ext}} (\llbracket k_1,k_2\rrbracket \times (a,b))$-measurable $Y$ (this $\sigma$-algebra was introduced in Definition \ref{DefPBGP}) we have 
\begin{equation}\label{YS1}
\mathbb{E}\left[ F\left( \mathcal{L} \vert_{\llbracket k_1, k_2 \rrbracket \times [a,b]} \right) \cdot Y \right] = \mathbb{E}\left[ \mathbb{E}_{\operatorname{avoid}}^{a,b, \vec{x},\vec{y}, f,g}\left[ F(\mathcal{Q}) \right]\cdot Y \right],
\end{equation}
where $\vec{x} = (\mathcal{L}_{k_1}(a), \dots, \mathcal{L}_{k_2}(a))$, $\vec{y} = (\mathcal{L}_{k_1}(b), \dots, \mathcal{L}_{k_2}(b))$, $f = \mathcal{L}_{k_1-1}[a,b]$ (with the convention $\mathcal{L}_0[a,b] = \infty$), $g = \mathcal{L}_{k_2 + 1}[a,b]$. By the monotone class argument in Lemma \ref{MCA}, to prove (\ref{YS1}) it suffices to show that for any $m \in \mathbb{N}$, $n_1, \dots, n_m \in \Sigma$, $t_1, \dots, t_m \in \Lambda$ and bounded continuous $h_1, \dots, h_m : \mathbb{R} \rightarrow \mathbb{R}$
\begin{equation}\label{YS2}
\mathbb{E}\left[ \prod_{i = 1}^m h_i(\mathcal{L}_{n_i}(t_i))\right] = \mathbb{E}\left[ \prod_{i \not \in R} h_i(\mathcal{L}_{n_i}(t_i))  \cdot \mathbb{E}_{\operatorname{avoid}}^{a,b, \vec{x},\vec{y}, f,g} \left[ \prod_{i  \in R} h_i(\mathcal{Q}_{n_i-k_1+1}(t_i))   \right] \right], 
\end{equation}
where $R = \{i \in \llbracket 1, m \rrbracket: n_i \in \llbracket k_1 , k_2 \rrbracket, t_i \in [a,b]\}$. We have thus reduced our proof to showing (\ref{YS2}).\\

Since $\mathcal{L}$ satisfies the half-space Brownian Gibbs property, we have for any bounded Borel-measurable $G_1: C(\llbracket 1, k_2 \rrbracket \times [0,b]) \rightarrow \mathbb{R}$ and bounded $\mathcal{F}_{\operatorname{ext}} (\llbracket 1,k_2\rrbracket \times [0,b))$-measurable $Y$ (recall that this $\sigma$-algebra was given in Definition \ref{def:BGP})  
\begin{equation*}
\mathbb{E}\left[ G_1\left( \mathcal{L} \vert_{\llbracket 1, k_2 \rrbracket \times [0,b]} \right) \cdot Y \right] = \mathbb{E}\left[ \mathbb{E}_{\operatorname{avoid}}^{b,\vec{y}\,', \vec{\mu}_{k_2}, g'}  \left[ G_1(\mathcal{Q}_1, \dots, \mathcal{Q}_{k_2}) \right]\cdot Y \right],
\end{equation*}
where $\vec{y}\,' = (\mathcal{L}_{1}(b),\dots, \mathcal{L}_{k_2}(b))$, $\vec{\mu}_{k_2} = (\mu_1, \dots, \mu_{k_2})$, $g' = \mathcal{L}_{k_2+1}[0,b]$. Using the last equation and the monotone class argument in Lemma \ref{MCA}, we see that for any bounded Borel-measurable $G: C(\llbracket 1, k_2 + 1 \rrbracket \times [0,b]) \rightarrow \mathbb{R}$ and bounded $\mathcal{F}_{\operatorname{ext}} (\llbracket 1,k_2\rrbracket \times [0,b))$-measurable $Y$
\begin{equation}\label{YS3}
\mathbb{E}\left[ G\left( \mathcal{L}\vert_{\llbracket 1, k_2 + 1 \rrbracket \times [0,b]} \right) \cdot Y \right] = \mathbb{E}\left[ \mathbb{E}_{\operatorname{avoid}}^{b,\vec{y}\,', \vec{\mu}_{k_2}, g'}  \left[ G(\mathcal{Q}_1, \dots, \mathcal{Q}_{k_2}, g') \right]\cdot Y \right].
\end{equation}

We set $R' = \{i \in \llbracket 1, m \rrbracket: n_i \in \llbracket 1 , k_2 \rrbracket, t_i \in [0,b]\}$, and apply (\ref{YS3}) with 
$$G(\mathcal{Q}_{1}, \mathcal{Q}_2, \dots, \mathcal{Q}_{k_2+1}) =  \prod_{i  \in R'} h_i(\mathcal{Q}_{n_i}(t_i)) \mbox{, and } Y =  \prod_{i \not \in R'} h_i(\mathcal{L}_{n_i}(t_i)) $$
to get 
\begin{equation}\label{YS4}
\mathbb{E}\left[ \prod_{i = 1}^m h_i(\mathcal{L}_{n_i}(t_i)) \right] = \mathbb{E}\left[ \prod_{i \not \in R'} h_i(\mathcal{L}_{n_i}(t_i))  \cdot \mathbb{E}_{\operatorname{avoid}}^{b,\vec{y}\,', \vec{\mu}_{k_2},g'} \left[ \prod_{i  \in R'} h_i(\mathcal{Q}_{n_i}(t_i))   \right] \right].
\end{equation}
From Lemma \ref{LGPAll} we know that $\mathbb{P}_{\operatorname{avoid}}^{b,\vec{y}\,', \vec{\mu}_{k_2},g'}$ satisfies the partial Brownian Gibbs property, and so 
\begin{equation}\label{YS5}
\begin{split}
&\mathbb{E}_{\operatorname{avoid}}^{b,\vec{y}\,', \vec{\mu}_{k_2},g'} \left[ \prod_{i  \in R'} h_i(\mathcal{Q}_{n_i}(t_i))   \right] = \\
&\mathbb{E}_{\operatorname{avoid}}^{b,\vec{y}\,', \vec{\mu}_{k_2},g'} \left[ \prod_{i  \in R' \setminus R} h_i(\mathcal{Q}_{n_i}(t_i)) \cdot  \mathbb{E}_{\operatorname{avoid}}^{a,b, \vec{x}\,'',\vec{y}\,'', f'',g''} \left[ \prod_{i  \in R} h_i(\mathcal{Q}''_{n_i -k_1 + 1}(t_i)) \right] \right],
\end{split}
\end{equation}
where $R=\{i\in\llbracket1,m\rrbracket: n_i\in\llbracket k_1,k_2\rrbracket, t_i\in[a,b]\}$, and we have that $\vec{x}\,'' = (\mathcal{Q}_{k_1}(a), \dots, \mathcal{Q}_{k_2}(a))$, $\vec{y}\,'' = (\mathcal{Q}_{k_1}(b), \dots, \mathcal{Q}_{k_2}(b))$, $f'' = \mathcal{Q}_{k_1-1}[a,b]$ (with the convention $\mathcal{Q}_0 = \infty$), $g'' = \mathcal{Q}_{k_2 + 1}[a,b]$. Substituting (\ref{YS5}) into (\ref{YS4}) and applying (\ref{YS3}) with $Y$ as above and
$$G(\mathcal{Q}_{1}, \mathcal{Q}_2, \dots, \mathcal{Q}_{k_2+1}) =  \prod_{i  \in R' \setminus R} h_i(\mathcal{Q}_{n_i}(t_i)) \cdot  \mathbb{E}_{\operatorname{avoid}}^{a,b, \vec{x}\,'',\vec{y}\,'', f'',g''} \left[ \prod_{i  \in R} h_i(\mathcal{Q}''_{n_i -k_1 + 1}(t_i)) \right],$$
we arrive at (\ref{YS2}).
\end{proof}

%
%
\subsection{Proof of Lemma \ref{lem:monotone coupling}}\label{SectionA5}
Our proof is very similar to that of \cite[Lemma 3.1]{S}, which in turn uses ideas from \cite[Section 6]{CorHamA} and \cite[Section 6]{Wu23}.
Throughout the proof we write $\Omega^{\mathrm{b}}$ for $\Omega_{\ice}(T,\vec{y}\,^{\mathrm{b}},g^{\mathrm{b}})$ and $\Omega^{\mathrm{t}}$ for $\Omega_{\ice}(T,\vec{y}\,^{\mathrm{t}},g^{\mathrm{t}})$. 
We split the proof into two steps. In the first step, under the extra assumption $g^{\mathrm{b}}(0)\in\mathbb{Z}$, we construct two Markov chains, ordered with respect to one another, which have invariant measures $\mathbb{P}_{\ice,\operatorname{Geom} }^{ T, \vec{y}\,^{\mathrm{b}},\vec{q}, g^{\mathrm{b}}}$ and $\mathbb{P}_{\ice,\operatorname{Geom} }^{ T, \vec{y}\,^{\mathrm{t}},\vec{q}, g^{\mathrm{t}}}$. We also show that their laws converge weakly to these two measures. In the second step we use Skorohod's Representation Theorem to construct the monotone coupling in the statement of the lemma, first when $g^{\mathrm{b}}(0)\in\mathbb{Z}$ and then for general $g^{\mathrm{b}}$ by taking a weak limit of our monotone couplings for $g^{\mathrm{b},M}:=\max(-M,g^{\mathrm{b}})$ and  $g^{\mathrm{t},M}:=\max(-M,g^{\mathrm{t}})$ as $M\rightarrow\infty$.\\
	
\noindent\textbf{Step 1.} In this step we assume $g^{\mathrm{b}}(0)\in\mathbb{Z}$, so that $g^{\mathrm{b}}$ and $g^{\mathrm{t}}$ are increasing functions on $\llbracket 0, T \rrbracket$ taking values in $\mathbb{Z}$. We construct a Markov chain $(X^n,Y^n)_{n\geq 0}$ on $\Omega^{\mathrm{b}}\times\Omega^{\mathrm{t}}$ with the following properties:
\begin{enumerate}[label=(\arabic*)]
\item $(X^n)_{n\geq 0}$ and $(Y^n)_{n\geq 0}$ are both Markov chains with respect to their own filtrations;
\item $X_i^n(t) \leq Y_i^n(t)$ for all $t\in\llbracket 0,T\rrbracket$ and $i\in\llbracket 1,k\rrbracket$.	
\item $(X^n)$ is irreducible and aperiodic, with invariant measure  $\mathbb{P}_{\ice,\operatorname{Geom} }^{ T, \vec{y}\,^{\mathrm{b}},\vec{q}, g^{\mathrm{b}}}$;
\item $(Y^n)$ is irreducible and aperiodic, with invariant measure  $\mathbb{P}_{\ice,\operatorname{Geom} }^{ T, \vec{y}\,^{\mathrm{t}},\vec{q}, g^{\mathrm{t}}}$;
\end{enumerate}

We first define the initial distribution  
$$X^0_i(t) = y^{\mathrm{b}}_i, \quad Y^0_i(t) = y^{\mathrm{t}}_i\quad\mbox{for }t\in\llbracket0,T\rrbracket,\mbox{ }i\in\llbracket1,k\rrbracket.$$
It is easy to observe that $X^0 \in \Omega^{\mathrm{b}}$, $Y^0 \in \Omega^{\mathrm{t}}$ and $X_i^0(t) \leq Y_i^0(t)$ for all $t\in\llbracket 0,T\rrbracket$ and $i\in\llbracket 1,k\rrbracket$.	

We next define the dynamics of $(X^n,Y^n)_{n\geq0}$. We let 
$$U^{(i,t,\zeta)}\quad\mbox{for}\quad(i,t,\zeta)\in \llbracket 1,k \rrbracket \times \llbracket 0,T\rrbracket \times \{-1,1\}$$
be a family of i.i.d. random variables that are uniformly distributed on $[0,1]$. At time $n$, we first sample uniformly the triplet $(i,t,\zeta)$ in the finite set $\llbracket 1,k \rrbracket \times \llbracket 0,T\rrbracket \times \{-1,1\}$. We update $X^n$ to $X^{n+1}$ as follows. Define a candidate $\tilde{X}^n$ for $X^{n+1}$ by 
$$\tilde{X}^n_j(s) =\begin{cases} 
    \displaystyle X^n_j(s)+\zeta\quad &\mbox{if }j=i\mbox{ and }s=t,\\
    \displaystyle X^n_j(s)\quad &\mbox{otherwise}.
\end{cases}$$
In the case that $\tilde{X}^n \in \Omega^{\mathrm{b}}$ and
\begin{equation}\label{eq:condition update X}
R^{(i,t,\zeta)}_{X^n} := \frac{\mathbb{P}^{T, \vec{y}\,^{\mathrm{b}},\vec{q}}_{ \operatorname{Geom}}(\tilde{X}^n)}{\mathbb{P}^{T, \vec{y}\,^{\mathrm{b}},\vec{q}}_{ \operatorname{Geom}}(X^n)} \geq U^{(i,t,\zeta)},
\end{equation}
we set $X^{n+1} = \tilde{X}^n$. Otherwise we leave $X^{n+1} = X^n$ unchanged. We update $Y^n$ to $Y^{n+1}$ according to the analogous rule, with $R^{(i,t,\zeta)}_{X^n}$ replaced by the corresponding quantity $R^{(i,t,\zeta)}_{Y^n}$. 

It is easy to see that $(X^n,Y^n)_{n\geq0}$ defined above is indeed a Markov chain on the state space $\Omega^{\mathrm{b}}\times\Omega^{\mathrm{t}}$. Also, since $X^{n+1}$ depends only on $X^n$, we have that $(X^n)_{n\geq0}$ is a Markov chain with respect to its own filtration, and likewise for $(Y^n)_{n\geq0}$. This proves property (1) above.

Next, we show that $X_i^n \leq Y_i^n$ on $\llbracket 0,T\rrbracket$ for all $i\in\llbracket 1,k\rrbracket$ and $n\geq 0$. When $n=0$ this follows from our choice of $X^0$ and $Y^0$. Note that the update rule cannot move curves of $X^n$ and $Y^n$ in opposite directions, hence the only way the ordering can be violated at time $n+1$ is if we sample $(i,t,\zeta)$ and $X_i^n(t) = Y_i^n(t)$. Observe that if $X^n,\tilde{X}^n\in\Omega^{\mathrm{b}}$, we have $R^{(i,t,\zeta)}_{X^n}\geq1$ except for the special case $t=0$ and $\zeta=-1$, for which $R^{(i,t,\zeta)}_{X^n}=q_i$. An analogous statement holds for $Y^n$. In view of our update rule, the ordering is preserved at time $n+1$. This completes the proof of property (2).

To see that $(X^n)$ is irreducible (and likewise for $(Y^n)$), consider any $W \in \Omega^{\mathrm{b}}$. We define a procedure to update all the paths in $X^0$ downwards to $W$. Starting with the bottom path $X_k^0$, we move it downwards to $W_k$ from left to right on $\llbracket 0, T \rrbracket$. Specifically, in each step we move $X_k(t)$ downwards by $1$ until it reaches $W_k(t)$, starting from $t = 0$ to $t = T$. Next, we move the $(k-1)$-th path to $W_{k-1}$, and finally, we move the first path to $W_1$. By our order of updates, one can see that the interlacing property gets preserved. This procedure terminates after finitely many steps, and the probability of each step is positive; hence, the probability of reaching $W$ is positive.

To see that the Markov chains $(X^n)$ and $(Y^n)$ are aperiodic, observe that with positive probability, the triplet $(i, t, \zeta)$ is sampled as $(k, T, 1)$, and this update leaves both $X^n$ and $Y^n$ unchanged.

We next show that $(X^n)$ has invariant measure $\mathbb{P}_{\ice,\operatorname{Geom} }^{ T, \vec{y}\,^{\mathrm{b}},\vec{q}, g^{\mathrm{b}}} $, and a similar proof works for $(Y^n)$. For any $\omega,\tau\in\Omega^{\mathrm{b}}$, observe that $\mathbb{P}(X^{n+1} = \omega \,|\, X^n = \tau) > 0$ if and only if $\mathbb{P}(X^{n+1} = \tau \,|\, X^n = \omega) > 0$, since both inequalities are equivalent to $\omega$ and $\tau$ being differ only at one point $(i,t)$ by $1$. By the update rule \eqref{eq:condition update X}, one of these two probabilities must equal to $1$, hence 
$$\frac{\mathbb{P}(X^{n+1} = \omega \,|\, X^n = \tau)}{\mathbb{P}(X^{n+1} = \tau \,|\, X^n = \omega)} = \frac{\mathbb{P}^{T, \vec{y}\,^{\mathrm{b}},\vec{q}}_{ \operatorname{Geom}}(\omega)}{\mathbb{P}^{T, \vec{y}\,^{\mathrm{b}},\vec{q}}_{ \operatorname{Geom}}(\tau)}.$$
From the latter we get
\begin{equation*}
\begin{split}
&\sum_{\tau\in\Omega^{\mathrm{b}}} \mathbb{P}_{\ice,\operatorname{Geom} }^{ T, \vec{y}\,^{\mathrm{b}},\vec{q}, g^{\mathrm{b}}} (\tau)\cdot\mathbb{P}(X^{n+1} = \omega \,|\, X^n = \tau) 
\\
& =  \sum_{\tau\in\Omega^{\mathrm{b}}} \mathbb{P}_{\ice,\operatorname{Geom} }^{ T, \vec{y}\,^{\mathrm{b}},\vec{q}, g^{\mathrm{b}}} (\tau)\cdot \frac{\mathbb{P}^{T, \vec{y}\,^{\mathrm{b}},\vec{q}}_{ \operatorname{Geom}}(\omega)}{\mathbb{P}^{T, \vec{y}\,^{\mathrm{b}},\vec{q}}_{ \operatorname{Geom}}(\tau)}\cdot\mathbb{P}(X^{n+1} = \tau \,|\, X^n = \omega)\\
& =  \mathbb{P}_{\ice,\operatorname{Geom} }^{ T, \vec{y}\,^{\mathrm{b}},\vec{q}, g^{\mathrm{b}}} (\omega) \sum_{\tau\in\Omega^{\mathrm{b}}}\mathbb{P}(X^{n+1} = \tau \,|\, X^n = \omega) = \mathbb{P}_{\ice,\operatorname{Geom} }^{ T, \vec{y}\,^{\mathrm{b}},\vec{q}, g^{\mathrm{b}}} (\omega),
\end{split}
\end{equation*}
confirming that $(X_n)$ has invariant measure $\mathbb{P}_{\ice,\operatorname{Geom} }^{ T, \vec{y}\,^{\mathrm{b}},\vec{q}, g^{\mathrm{b}}} $. We have established properties (3) and (4).\\

\noindent\textbf{Step 2.} In this step we construct the probability space $(\Omega,\mathcal{F},\mathbb{P})$ and the random elements $\mathfrak{L}^{\mathrm{b}}$ and $\mathfrak{L}^{\mathrm{t}}$ as in the statement of the lemma. Assume first that $g^{\mathrm{b}}(0)\in\mathbb{Z}$. Using properties (3) and (4) in Step 1, and \cite[Theorem 1.8.3]{Norris}, we have
\begin{equation}\label{eq:monotone coupling weak convergence} X^n \Rightarrow \mathbb{P}_{\ice,\operatorname{Geom} }^{ T, \vec{y}\,^{\mathrm{b}}, \vec{q}, g^{\mathrm{b}} } \quad \mathrm{and} \quad Y^n \Rightarrow \mathbb{P}_{\ice,\operatorname{Geom} }^{ T, \vec{y}\,^{\mathrm{t}},\vec{q}, g^{\mathrm{t}}}.
\end{equation}
We conclude that $(X^n)$ and $(Y^n)$ are tight sequences, hence $(X^n,Y^n)$ is also tight, and there exists a sequence $n_m\uparrow \infty$ such that $(X^{n_m},Y^{n_m})$ converges weakly. By Skorohod's Representation Theorem, \cite[Theorem 6.7]{Billing}, there exists a probability space $(\Omega,\mathcal{F},\mathbb{P})$ supporting random elements $(\mathfrak{X}^{n_m}$, $\mathfrak{Y}^{n_m})$ and $(\mathfrak{L}^{\mathrm{b}},\mathfrak{L}^{\mathrm{t}})$ taking values in $\Omega^{\mathrm{b}}\times\Omega^{\mathrm{t}}$ respectively, such that
the law of $(\mathfrak{X}^{n_m},\mathfrak{Y}^{n_m})$ under $\mathbb{P}$ is the same as that of $(X^{n_m},Y^{n_m})$, and $(\mathfrak{X}^{n_m},\mathfrak{Y}^{n_m})$ converges $\mathbb{P}$-a.s. to $(\mathfrak{L}^{\mathrm{b}},\mathfrak{L}^{\mathrm{t}})$.
In view of \eqref{eq:monotone coupling weak convergence}, $\mathfrak{L}^{\mathrm{b}}$ has law $\mathbb{P}_{\ice,\operatorname{Geom} }^{ T, \vec{y}\,^{\mathrm{b}},\vec{q}, g^{\mathrm{b}}}$ and $\mathfrak{L}^{\mathrm{t}}$ has law $\mathbb{P}_{\ice,\operatorname{Geom} }^{ T, \vec{y}\,^{\mathrm{t}},\vec{q}, g^{\mathrm{t}}}$. Moreover,  property (2) in Step 1 implies  $\mathfrak{X}^{n_m}_i \leq \mathfrak{Y}^{n_m}_i$, $\mathbb{P}$-a.s., so $\mathfrak{L}^{\mathrm{b}}_i \leq \mathfrak{L}^{\mathrm{t}}_i$ for $i\in\llbracket 1,k\rrbracket$, $\mathbb{P}$-a.s, as desired.\\
 
In the remainder we assume $g^{\mathrm{b}}(0) = -\infty$. Let $\vec{y}\in\mathfrak{W}_k$ and $g:\llbracket0,T\rrbracket\rightarrow\mathbb{Z}\cup\{-\infty\}$ be an increasing function such that $g(T)\leq y_k$. For $M\in\mathbb{N}$, we consider the functions $g^M=\max(g,-M):\llbracket0,T\rrbracket\rightarrow\mathbb{Z}$ and observe that
\begin{equation}\label{eq:weak conv last step MCoupling}
    \mathbb{P}_{\ice,\operatorname{Geom}}^{T,\vec{y},\vec{q},g^M}\Rightarrow
    \mathbb{P}_{\ice,\operatorname{Geom}}^{T,\vec{y},\vec{q},g}\quad\mbox{as }M\rightarrow\infty.
\end{equation}
Indeed, if we fix $L^0_i \in \Omega(T, y_i)$ for $i \in \llbracket 1, k \rrbracket$, and $\mathfrak{L}^0 = \{L^0_i\}_{i = 1}^k$, we observe that 
\begin{equation*}
\begin{split}
&\lim_{M \rightarrow \infty} \mathbb{P}_{\ice,\operatorname{Geom}}^{T,\vec{y},\vec{q},g^M}(Q_i = L_i^0 \mbox{ for $i \in \llbracket 1, k \rrbracket$}) = \lim_{M \rightarrow \infty} \frac{\prod_{i = 1}^k q^{- L_i^0(0)} \cdot {\bf 1} \{\mathfrak{L}^0 \in \Omega_{\ice}(T, \vec{y}, g^M) \}}{\sum_{\mathfrak{L} \in \Omega_{\ice}(T, \vec{y}, g^M)} \prod_{i = 1}^k q^{- L_i(0)}}\\
& = \frac{\prod_{i = 1}^k q^{- L_i^0(0)} \cdot {\bf 1} \{\mathfrak{L}^0 \in \Omega_{\ice}(T, \vec{y}, g) \}}{\sum_{\mathfrak{L} \in \Omega_{\ice}(T, \vec{y}, g)} \prod_{i = 1}^k q^{- L_i(0)}} =  \mathbb{P}_{\ice,\operatorname{Geom}}^{T,\vec{y},\vec{q},g}(Q_i = L_i^0 \mbox{ for $i \in \llbracket 1, k \rrbracket$}),
\end{split}
\end{equation*}
where in going from the first to the second line we used the monotone convergence theorem.

Using (\ref{eq:weak conv last step MCoupling}), we have that as $M\rightarrow\infty$,
\begin{equation}\label{eq:bregregwe}
\mathbb{P}_{\ice,\operatorname{Geom}}^{T,\vec{y}\,^{\mathrm{b}},\vec{q},g^{\mathrm{b},M}}\Rightarrow
    \mathbb{P}_{\ice,\operatorname{Geom}}^{T,\vec{y}\,^{\mathrm{b}},\vec{q},g^{\mathrm{b}}}, \mbox{ and }
\mathbb{P}_{\ice,\operatorname{Geom}}^{T,\vec{y}\,^{\mathrm{t}},\vec{q},g^{\mathrm{t},M}}\Rightarrow
    \mathbb{P}_{\ice,\operatorname{Geom}}^{T,\vec{y}\,^{\mathrm{t}},\vec{q},g^{\mathrm{t}}},
\end{equation}
where $g^{\mathrm{b},M}=\max(g^{\mathrm{b}},-M)$ and $g^{\mathrm{t},M}=\max(g^{\mathrm{t}},-M)$ are $\mathbb{Z}$-valued functions. From the beginning of the step, there exist probability spaces $(\Omega^M,\mathcal{F}^M,\mathbb{P}^M)$ supporting $\mathfrak{L}^{\mathrm{b},M}$, $\mathfrak{L}^{\mathrm{t},M}$, such that 
$$\mbox{ $\mathfrak{L}^{\mathrm{b},M}\sim \mathbb{P}_{\ice,\operatorname{Geom}}^{T,\vec{y}\,^{\mathrm{b}},\vec{q},g^{\mathrm{b},M}}$,
$\mathfrak{L}^{\mathrm{t},M}\sim \mathbb{P}_{\ice,\operatorname{Geom}}^{T,\vec{y}\,^{\mathrm{t}},\vec{q},g^{\mathrm{t},M}}$
and $\mathbb{P}^M$-a.s. we have $\mathfrak{L}^{\mathrm{b},M}_i \leq \mathfrak{L}^{\mathrm{t},M}_i $ for all $i\in\llbracket 1,k\rrbracket$. }$$
From \eqref{eq:bregregwe} and Skorohod's representation theorem, we can find a subsequence $M_n \uparrow \infty$ and a probability space $(\Omega, \mathcal{F}, \mathbb{P})$ that supports a sequence of random elements $(\mathfrak{X}^{M_n}$, $\mathfrak{Y}^{M_n})$, that have the same laws as $(\mathfrak{L}^{\mathrm{b},M_n},\mathfrak{L}^{\mathrm{t},M_n})$,
and $(\mathfrak{L}^{\mathrm{b}},\mathfrak{L}^{\mathrm{t}})$, such that $(\mathfrak{X}^{M_n}$, $\mathfrak{Y}^{M_n})$ converges $\mathbb{P}$-a.s. to $(\mathfrak{L}^{\mathrm{b}},\mathfrak{L}^{\mathrm{t}})$. It follows from \eqref{eq:bregregwe} that 
$\mathfrak{L}^{\mathrm{b}}$ has law $\mathbb{P}_{\ice,\operatorname{Geom}}^{T,\vec{y}\,^{\mathrm{b}},\vec{q},g^{\mathrm{b}}}$ and 
 $\mathfrak{L}^{\mathrm{t}}$ has law $\mathbb{P}_{\ice,\operatorname{Geom}}^{T,\vec{y}\,^{\mathrm{t}},\vec{q},g^{\mathrm{t}}}$. Since $\mathfrak{L}^{\mathrm{b},M}_i \leq \mathfrak{L}^{\mathrm{t},M}_i $ for all $i\in\llbracket 1,k\rrbracket$, we conclude $\mathfrak{L}^{\mathrm{b}}_i\leq\mathfrak{L}^{\mathrm{t}}_i$ for all $i\in\llbracket 1,k\rrbracket$, $\mathbb{P}$-a.s. as well. This suffices for the proof.

%
%
\subsection{Proof of Lemma \ref{lem:RW}}\label{SectionA6} The proof we present is similar to that of \cite[Lemma 2.16]{dimitrov2024tightness}, and for clarity is split into two steps.\\
	
\noindent\textbf{Step 1.} In this step we prove that if $\ell_i^n$ have laws $\mathbb{P}^{B_n, Y_i^n,q_i^n}_{\operatorname{Geom}}$ for $i\in\llbracket 1,k\rrbracket$, and are independent, then the $C([0,B_n/d_n])$-valued random elements $Z_i^n$, defined by
\[
Z_i^n(t) = \sigma^{-1}d_n^{-1/2}(\ell_i^n(td_n)-utd_n), \quad t\in[0,B_n/d_n],
\] 
converge weakly to $\mathbb{P}^{b,y_i,\mu_i}_{\operatorname{free}}$, where, by an abuse of notation, we use $Z_i^n$ to denote also the $C([0,b])$-valued random elements by restriction. By independence, we also get that $Z^n:=(Z_1^n,\dots,Z_k^n)$ converges weakly to $\mathbb{P}^{b,\vec{y},\vec{\mu}}_{\operatorname{free}}$, i.e., to $k$ independent reverse Brownian motions with drifts. 

In the sequel we drop the subscript $i$, and suppose that $\ell^n$ have laws $\mathbb{P}^{B_n, Y^n,q^n}_{\operatorname{Geom}}$ with 
\begin{equation}\label{A61}
q^n = p - \mu \sqrt{p}(1-p) d_n^{-1/2} + o(d_n^{-1/2}) \mbox{ and } \lim_n \sigma^{-1}d_n^{-1/2} (Y^n - uB_n) = y,
\end{equation}
and set $Z^n(t) = \sigma^{-1}d_n^{-1/2}(\ell^n(td_n)-utd_n)$. By the definition of the probability law $\mathbb{P}^{B_n, Y^n,q^n}_{\operatorname{Geom}}$, there exist i.i.d. geometric random variables $H_{j}^n$ for $j\in\llbracket 1,B_n\rrbracket$ with parameter $q^n$ such that
\[
\ell^n(s)=Y^n-\sum_{r=s+1}^{B_n}H_{r}^n\quad\mbox{for }s\in\llbracket0,B_n\rrbracket.
\]
Therefore, for $s\in\llbracket0,B_n\rrbracket$ we have
\begin{equation}\label{eq:interpolate Y}
\begin{split}
        &Z^n\left(sd_n^{-1}\right)= \sigma^{-1}d_n^{-1/2}(\ell^n(s) - us)  
        = \sigma^{-1}d_n^{-1/2}\left(Y^n-\sum_{r=s+1}^{B_n}H_{r}^n-us \right)\\
        &=\sigma^{-1}d_n^{-1/2}\left(Y^n-uB_n-\sum_{r=s+1}^{B_n}\mathring{H}_{r}^n+(B_n-s)\left(\frac{p}{1-p} -\frac{q^n}{1- q^n}\right)\right)\\
        &=y-\sigma^{-1}d_n^{-1/2}\left(\sum_{r=s+1}^{B_n}\mathring{H}_{r}^n\right)+ \mu\left(b-s/d_n\right) +o(1),
    \end{split}    
\end{equation}
where in going from the first to the second line, we set $\mathring{H}_r^n = H^n_r - \mathbb{E}[H^n_r] = H^n_r - \frac{q^n}{1 - q^n}$, and in the last equality we used (\ref{A61}).

We next use the strong approximation result to approximate the third line above by a reverse Brownian motion. Using Lemma \ref{prop:ThmA Shao}, there is a probability space that supports $H_{j}^n$ for $j\in\llbracket 1,B_n\rrbracket$ and a sequence of i.i.d. normal random variables $N_{j}^n$ for $j\in\llbracket 1,B_n\rrbracket$ with 
\begin{equation}\label{eq:mean and variance of H}
\mathbb{E}\left[N_{1}^n\right]=0,\quad
\operatorname{Var}\left(N_{1}^n\right)=\frac{q^n}{\left(1-q^n\right)^2},
\end{equation}
such that for all large $n$
\begin{equation}\label{A62}
\mathbb{E}\left[\exp\left(\lambda A\max_{s\in\llbracket0,B_n\rrbracket}\left|\sum_{r=s+1}^{B_n}\mathring{H}_{r}^n-\sum_{r=s+1}^{B_n}N_{r}^n\right|\right)\right]\leq1+\frac{\lambda B_n q^n}{\left(1-q^n\right)^2},
\end{equation}
where $\lambda,A>0$ are constants that depend on $p$. By possibly enlarging our probability space, we may assume that it supports a standard Brownian motion $W_t$, so that 
\begin{equation}\label{A63}
 \frac{ \sqrt{q^n} }{1 - q^n} \cdot \sigma^{-1} \cdot  W_{(B_n-s)/d_n}=- \sigma^{-1}d_n^{-1/2}\left(\sum_{r=s+1}^{B_n}N_{r}^n\right) \quad\mbox{for }s\in\llbracket0,B_n\rrbracket.
\end{equation}

Define the $C([0,B_n/n])$-valued random elements $V^n$ by
\begin{equation}\label{eq:interpolate V}
   V^n\left(sd_n^{-1}\right) = y-\sigma^{-1}d_n^{-1/2}\left(\sum_{r=s+1}^{B_n} N_{r}^n\right)+ \mu\left(b-s/d_n\right)  
\end{equation}
for $s\in\llbracket0,B_n\rrbracket$ and by linear interpolation otherwise. Using (\ref{eq:interpolate Y}), (\ref{A62}), (\ref{eq:interpolate V}) and Chebyshev's inequality we conclude
\begin{equation}\label{A64}
\sup_{t\in[0,b]}\left|Z^n(t)-V^n(t)\right|\Rightarrow 0 \quad\mbox{as }n\rightarrow\infty.
\end{equation}
On the other hand, using (\ref{A63}) and the uniform continuity of the sample paths of a Brownian motion we have almost surely
\begin{equation}\label{A65}
\sup_{t\in[0,b]}\left|V^n(t)- \left(y + W_{b-t} + \mu (b-t) \right)\right|\rightarrow 0 \quad\mbox{as }n\rightarrow\infty.
\end{equation}
Equations (\ref{A64}), (\ref{A65}) and the convergence together theorem, see \cite[Theorem 3.1]{Billing}, imply that $Z^n \Rightarrow \mathbb{P}^{b,y,\mu}_{\operatorname{free}}$ as desired. We mention that in \cite[Theorem 3.1]{Billing} one needs to set $X_n = V^n$ and $Y_n = Z^n$.\\

\noindent\textbf{Step 2.} In this step we finish the proof of the lemma. By \eqref{EdgeLim}, \eqref{SideLim}, the continuity of $g$, and the fact that $g(b) < y_k$, we conclude that there exists $N_0 \in \mathbb{N}$ such that for $n \geq N_0$ we have $G_n(s) \leq Y_k^n$ for $s \in \llbracket 0, B_n \rrbracket$. In particular, the collection of constant paths $\{L_i: \llbracket 0,B_n \rrbracket \rightarrow \mathbb{Z} \}_{i = 1}^k$, defined by $L_i(s)=Y_i^n$ for $s\in\llbracket0,B_n\rrbracket$ and $i\in\llbracket1,k\rrbracket$, belongs to $\Omega_{\ice}(B_n,\vec{Y}^n,G_n)$ and so $\mathbb{P}_{ \operatorname{Geom}}^{B_n, \vec{Y}^n,\vec{q}^n}$ is well-defined for all $n \geq N_0$.

 Write $\Sigma=\llbracket1,k\rrbracket$ and $\Lambda=[0,b]$. In view of \cite[Theorem 2.1]{Billing} we only need to show that for any bounded continuous function $F : C(\Sigma\times\Lambda)\to\mathbb{R}$, we have
\begin{equation}\label{eq:avoid weak}
	\lim_{n\to\infty} \ex[F(\mathcal{Q}^n)] = \ex[F(\mathcal{Q})].
\end{equation}
We define the functions $\chi,\chi^n:C(\Sigma\times\Lambda)\to\mathbb{R}$ by
\begin{align*}
\chi(\mathcal{L}) &= \mathbf{1}\{ \mathcal{L}_1(t) > \cdots > \mathcal{L}_k(t) > g(t) \mbox{ for } t\in\Lambda\},\\
\chi^n(\mathcal{L}) &= \mathbf{1}\left\{ \mathcal{L}_i\left((r-1)/d_n\right)\geq \mathcal{L}_{i+1}\left(r /d_n \right)+ u\sigma^{-1}d_n^{-1/2}  \mbox{ for }i\in\llbracket1,k\rrbracket\mbox{ and }r\in\llbracket1,B_n\rrbracket\right\},
\end{align*}
where in the definition of $\chi^n(\mathcal{L})$ we use the convention $\mathcal{L}_{k+1}(t)=g_n(t)$ for $t\in[0,B_n/d_n]$. 
Additionally, in the definition of $\chi^n(\mathcal{L})$ above, since $\mathcal{L}_{i}(B_n/d_n)$ are undefined if $B_n/d_n>b$ ($\mathcal{L}_i$ are continuous functions on $[0,b]$), we use the convention by linearity
\[
\mathcal{L}_{i}\left( B_n/d_n \right):=\frac{\mathcal{L}_i(b)-\mathcal{L}_i\left( (B_n-1)/d_n\right)(B_n-bd_n)}{bd_n-B_n+1},\quad i\in\llbracket1,k+1\rrbracket.
\]
By our definition of $Z^n$ in Step 1 and $N_0$ above, we have for $n\geq N_0$,
\begin{equation}\label{eq:avoid condition}
\ex[F(\mathcal{Q}^n)] = \frac{\ex[F(Z^n)\chi^n(Z^n)]}{\ex[\chi^n(Z^n)]}.
\end{equation}
If $\mathcal{L}$ has law $\mathbb{P}_{\operatorname{free}}^{b,\vec{y},\vec{\mu}}$, we also have
\begin{equation}\label{eq:backward Brownian avoid condition}
\ex[F(\mathcal{Q})] = \frac{\ex[F(\mathcal{L})\chi(\mathcal{L})]}{\ex[\chi(\mathcal{L})]}.
\end{equation}
In view of \eqref{eq:avoid condition} and \eqref{eq:backward Brownian avoid condition}, to prove the convergence \eqref{eq:avoid weak} we only need to show that for any bounded continuous function $F:C(\Sigma\times\Lambda)\to\mathbb{R}$, we have
\begin{equation}\label{eq:to prove end conv}
\lim_{n\to\infty}\ex[F(Z^n)\chi^n(Z^n)] = \ex[F(\mathcal{L})\chi(\mathcal{L})].
\end{equation}
By Step 1, we have $Z^n \Rightarrow \mathcal{L}$ as $n\to\infty$. In view of Skorohod's Representation Theorem, \cite[Theorem 6.7]{Billing}, there exists a probability space $(\Omega,\mathcal{F},\mathbb{P})$ supporting $C(\Sigma\times\Lambda)$-valued random elements $\mathcal{Z}^n$ with the same laws as $Z^n$, together with a $C(\Sigma\times\Lambda)$-valued random element $\mathcal{L}$ with law $\mathbb{P}^{b,\vec{y}, \vec{\mu}}_{\operatorname{free}}$, such that $\mathcal{Z}^n(\omega) \to \mathcal{L}(\omega)$ uniformly as $n\to\infty$ for all $\omega\in\Omega$. 

Consider the events
\begin{align*}
A &= \{\omega\in\Omega : \mathcal{L}_1(\omega) > \cdots > \mathcal{L}_k(\omega) > g \mbox{ on } [0,b]\},\\
C &= \{\omega\in\Omega : \mathcal{L}_i(\omega)(r) < \mathcal{L}_{i+1}(\omega)(r) \mbox{ for some } i\in\llbracket 1,k\rrbracket \mbox{ and } r\in[0,b]\},
\end{align*}
where in the definition of $C$ we use the convention $\mathcal{L}_{k+1} = g$. In view of the continuity of $F$ and that $g_n\rightarrow g$ uniformly on $[0,b]$, we have $F(\mathcal{Z}^n)\chi^n(\mathcal{Z}^n) \to F(\mathcal{L})$ on the event $A$, and $F(\mathcal{Z}^n)\chi^n(\mathcal{Z}^n)\to 0$ on the event $C$. 

We claim that
\begin{equation}\label{Eq.AC1}
\mathbb{P}(A\cup C) = 1.
\end{equation}
If true, then $\mathbb{P}$-a.s. we have $F(\mathcal{Z}^n)\chi^n(\mathcal{Z}^n) \to F(\mathcal{L})\chi(\mathcal{L})$. The bounded convergence theorem implies \eqref{eq:to prove end conv}, which completes the proof of \eqref{eq:avoid weak}.\\

Let us quickly verify (\ref{Eq.AC1}). By definition, we have $A^c \cap C^c \subseteq \cup_{j = 1}^{k}E_j$, where 
$$E_j = \{\omega\in\Omega : \mathcal{L}_j(\omega) \geq \mathcal{L}_{j+1}(\omega) \mbox{ on } [0,b] \mbox{ and  } \mathcal{L}_j(\omega)(r) = \mathcal{L}_{j+1}(\omega)(r) \mbox{ for some $r \in [0,b]$} \}.$$
If we condition on $\mathcal{L}_{j+1}$, we can apply Lemma \ref{lem: BB touch intersect} (with $\mathcal{L}_{j+1}$ playing the role of $f$) to conclude that $\mathbb{P}(E_j\vert \mathcal{L}_{j+1}) = 0$. By the tower property, $\mathbb{P}(E_j) = 0$, and so $\mathbb{P}(A^c \cap C^c) = 0$, implying (\ref{Eq.AC1}).

%
%
\section{Results for point processes} \label{SectionB} In this section we give the proofs of various results from Section \ref{Section5} after introducing some notation and statements in Section \ref{SectionB1}. We continue with the same notation as in Sections \ref{Section5.1} and \ref{Section5.2}.

%
%
\subsection{Auxiliary results}\label{SectionB1} We recall from Section \ref{Section5.1} that $(E,\mathcal{E}) = (\mathbb{R}^k, \mathcal{R}^k)$, $M$ is a point process on $(E,\mathcal{E})$, $M_n$ are as in \eqref{eq: Mn} and their means are $\mu_n$. From \cite[(2.3)]{dimitrov2024airy} we have for each $n_1, \dots, n_m \in \mathbb{N}$ and bounded pairwise disjoint $A_1, \dots, A_m \in \mathcal{E}$,
\begin{equation} \label{eq: factorial moment correlation function}
\mu_n(A_1^{n_1}\times\cdots\times A_m^{n_m})=\mathbb{E}\left[\prod_{j=1}^m\frac{M(A_j)!}{(M(A_j)-n_j)!}\right],
\end{equation}
where both sides are allowed to be infinite, and we use the convention that $\frac{1}{r!} = 0$ for $r < 0$. We mention that if $Z$ is a random variable taking value in $\mathbb{Z}_{\geq 0}$, then 
\[ 
\mathbb{E}\left[\frac{Z!}{(Z-n)!}\right]=\mathbb{E}\left[Z(Z-1)\cdots(Z-n+1)\right]
\]
is called the $n$-th {\em factorial moment}. In particular, \eqref{eq: factorial moment correlation function} shows that the joint (factorial) moments of random variables $M(A_1),\dots,M(A_m)$ are completely described by the mean measures $\mu_n$. If $M$ is a Pfaffian point process with correlation kernel $K$ and reference measure $\lambda$ as in Definition \ref{def:def of Pfaffian point process}, and $n = n_1 + \cdots +n_m$, then (\ref{eq: factorial moment correlation function}) shows that 
\begin{equation} \label{eq:FactPfaff}
\mathbb{E}\left[\prod_{j=1}^m\frac{M(A_j)!}{(M(A_j)-n_j)!}\right] = \int_{A_1^{n_1}\times\cdots\times A_m^{n_m}}\mathrm{Pf}\left[K(x_i,x_j) \right]_{i,j=1}^n \lambda^n(dx).
\end{equation}

We now seek to establish a converse to the above statement, for which we require a bit of notation. As in \cite{kallenberg2017random}, a family of sets $\mathcal{C}$ is called {\em dissecting} if
\begin{itemize}
    \item every open $G \subseteq E$ is a countable union of sets in $\mathcal{C}$,
    \item every bounded Borel $B \in \mathcal{E}$ is covered by finitely many sets in $\mathcal{C}$,
\end{itemize}
and it is a {\em semi-ring} if
\begin{itemize}
    \item it is closed under finite intersections,
    \item any proper difference between sets in $\mathcal{C}$ is a finite disjoint union of sets in $\mathcal{C}$.
\end{itemize}
For a countable set $T \subset \mathbb{R}$, we define
\begin{equation}\label{eq:rectangle}
\mathcal{I}_T = \{ (a_1, b_1] \times \cdots \times (a_k, b_k]: a_i, b_i \not \in T \mbox{ for } i = 1, \dots, k \},
\end{equation}
which is the set of rectangles in $E$ whose corner coordinates avoid the set $T$. One readily observes that $\mathcal{I}_T$ is a dissecting semi-ring.

\begin{lemma}
\label{lem:correlation function} Let $M$ be a point process on $E$, $\mathcal{I} \subset \mathcal{E}$ a dissecting semi-ring, $\lambda$ a locally finite measure on $(E,\mathcal{E})$, and $K:E^2\rightarrow\mathrm{Mat}_2(\mathbb{C})$ a skew-symmetric locally bounded measurable function. Suppose that for each $n_1, \dots, n_m \in \mathbb{N}$, $n = n_1 + \cdots + n_m$ and bounded pairwise disjoint $A_1, \dots, A_m \in \mathcal{I}$
\begin{equation}\label{RN3}
 \mathbb{E} \left[ \prod_{j = 1}^m \frac{M(A_j)!}{(M(A_j) - n_j)!} \right] = \int_{A_1^{n_1} \times \cdots \times A_m^{n_m}}  \operatorname{Pf} \left[ K(x_i, x_j) \right]_{i, j = 1}^n  \lambda^n(dx) < \infty.
\end{equation}
Then, $M$ is a Pfaffian point process with correlation kernel $K$ and reference measure $\lambda$.
\end{lemma}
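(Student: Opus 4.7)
The plan is to verify that $M$ admits an $n$-th correlation function equal to $\rho_n(x_1,\dots,x_n) = \operatorname{Pf}[K(x_i,x_j)]_{i,j=1}^n$ with respect to $\lambda$. In view of (\ref{eq: factorial moment correlation function}), hypothesis (\ref{RN3}) is equivalent to the identity
\begin{equation}\label{eq:planaim}
\mu_n(A_1^{n_1} \times \cdots \times A_m^{n_m}) = \int_{A_1^{n_1} \times \cdots \times A_m^{n_m}} \operatorname{Pf}[K(x_i,x_j)]_{i,j=1}^n \, \lambda^n(dx) < \infty
\end{equation}
for any pairwise disjoint $A_1,\dots,A_m \in \mathcal{I}$ with $n_1+\cdots+n_m = n$. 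Thus the task reduces to identifying $\mu_n$ with the measure on $\mathcal{E}^{\otimes n}$ having density $\operatorname{Pf}[K(x_i,x_j)]_{i,j=1}^n$ relative to $\lambda^n$.

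First I will upgrade (\ref{eq:planaim}) to arbitrary products $B_1 \times \cdots \times B_n$ with $B_i \in \mathcal{I}$ (not necessarily disjoint). Using the semi-ring property, the union $B_1 \cup \cdots \cup B_n$ decomposes into finitely many pairwise disjoint atoms $C_1,\dots,C_L \in \mathcal{I}$ such that each $B_i$ is a disjoint union of a subfamily of $\{C_\ell\}$; this yields $B_1 \times \cdots \times B_n = \bigsqcup_{(\ell_1,\dots,\ell_n)} C_{\ell_1} \times \cdots \times C_{\ell_n}$. On each atom-piece I will group equal factors into a product of distinct $C_{\tilde\ell_j}$'s (which are pairwise disjoint atoms) with multiplicities $k_j$, so that the built-in symmetry of $\mu_n$ reduces $\mu_n(C_{\ell_1} \times \cdots \times C_{\ell_n})$ to $\mu_n(C_{\tilde\ell_1}^{k_1} \times \cdots \times C_{\tilde\ell_r}^{k_r})$, to which (\ref{eq:planaim}) directly applies. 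On the integral side the same reordering gives the same integral by Fubini, using that $(x_1,\dots,x_n) \mapsto \operatorname{Pf}[K(x_i,x_j)]_{i,j=1}^n$ is symmetric; the symmetry holds because simultaneously permuting the $2 \times 2$ block rows and columns of a $2n \times 2n$ skew-symmetric matrix according to $\sigma \in S_n$ is implemented by the permutation $\tilde\sigma \in S_{2n}$ defined by $\tilde\sigma(2i-1)=2\sigma(i)-1$ and $\tilde\sigma(2i)=2\sigma(i)$, and every transposition in $\sigma$ lifts to a product of two disjoint transpositions in $\tilde\sigma$, so $\mathrm{sgn}(\tilde\sigma)=1$. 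Summation over the atom-products then delivers (\ref{eq:planaim}) for arbitrary $B_i \in \mathcal{I}$.

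The second step promotes the identity from $\mathcal{I}$-products to all of $\mathcal{E}^{\otimes n}$. The family $\{B_1 \times \cdots \times B_n : B_i \in \mathcal{I}\}$ is a $\pi$-system generating $\mathcal{E}^{\otimes n}$, since the dissecting property of $\mathcal{I}$ yields $\sigma(\mathcal{I})=\mathcal{E}$. Both $\mu_n$ and $\operatorname{Pf}[K(x_i,x_j)]_{i,j=1}^n \cdot \lambda^n$ are locally finite on $\mathcal{E}^{\otimes n}$ (for the former, every bounded Borel set is covered by finitely many $\mathcal{I}$-sets on which (\ref{RN3}) provides finiteness; for the latter, $K$ is locally bounded and $\lambda$ is locally finite), hence $\sigma$-finite. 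The standard uniqueness theorem for $\sigma$-finite measures agreeing on a generating $\pi$-system then forces $\mu_n = \operatorname{Pf}[K(x_i,x_j)]_{i,j=1}^n \cdot \lambda^n$ on $\mathcal{E}^{\otimes n}$, identifying the $n$-th correlation function as claimed and proving that $M$ is Pfaffian. The main obstacle I anticipate is the bookkeeping in the first step: reconciling the combinatorial decomposition into atoms with the two kinds of symmetry (of $\mu_n$ and of the Pfaffian integrand) so that atom-products match term by term, with the block-permutation invariance of the Pfaffian providing the crucial structural input.
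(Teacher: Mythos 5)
Your proof is correct, but it is genuinely different in character from the paper's: the paper disposes of this lemma in one line by invoking \cite[Lemma 2.8]{dimitrov2024airy} with $u_n(x_1,\dots,x_n)=\operatorname{Pf}[K(x_i,x_j)]_{i,j=1}^n$, whereas you supply a self-contained proof of essentially that cited lemma's content in this special case. Your three ingredients are all sound: the translation of (\ref{RN3}) into a statement about $\mu_n$ via (\ref{eq: factorial moment correlation function}); the semi-ring refinement of $B_1,\dots,B_n$ into common disjoint atoms combined with the symmetry of $\mu_n$ and the block-permutation invariance of the Pfaffian (your sign computation $\mathrm{sgn}(\tilde\sigma)=1$ is exactly what makes (\ref{eq:conjugation of Pfaffian}) give invariance rather than a sign); and the $\pi$-system uniqueness argument, where the dissecting property guarantees both that $\sigma(\mathcal{I})=\mathcal{E}$ and that $E$ is exhausted by countably many bounded $\mathcal{I}$-sets (intersect the countable $\mathcal{I}$-covers of open balls with those balls), giving the $\sigma$-finiteness needed for uniqueness. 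Two small points worth recording: first, since (\ref{RN3}) is only assumed for bounded sets, the $\pi$-system in your last step should be taken to consist of products of \emph{bounded} $\mathcal{I}$-sets (this still generates $\mathcal{E}^{\otimes n}$ and still admits the required exhaustion, so nothing is lost); second, the paper's one substantive remark -- that (\ref{RN3}) forces $\operatorname{Pf}[K(x_i,x_j)]_{i,j=1}^n$ to be $\lambda^n$-a.e. non-negative, so that it is a legitimate correlation function valued in $[0,\infty]$ -- is absent from your write-up, though it follows automatically from your final identity $\mu_n(A)=\int_A\operatorname{Pf}[K(x_i,x_j)]_{i,j=1}^n\,\lambda^n(dx)$ together with $\mu_n\geq 0$. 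What your approach buys is independence from the external reference at the cost of the combinatorial bookkeeping you anticipated; what the paper's approach buys is brevity and reuse of a general reconstruction principle for correlation functions from factorial moments.
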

\begin{proof} This is a special case of \cite[Lemma 2.8]{dimitrov2024airy}, applied to $u_n(x_1, \dots, x_n) = \operatorname{Pf} \left[ K(x_i, x_j) \right]_{i, j = 1}^n$. We mention that equation (\ref{RN3}) implies that $u_n$ is $\lambda^n$-a.e. non-negative.
\end{proof}

We end this section with a few known properties and estimates for Pfaffians. From \cite[(B.3)]{OQR17} 
\begin{equation}\label{eq:conjugation of Pfaffian}
    \mathrm{Pf}(RAR^T)=\det(R)\mathrm{Pf}(A),
\end{equation}
for any $2n\times 2n$ skew-symmetric matrix $A$ and $2n \times 2n$ matrix $R$. We also recall {\em Hadamard's inequality}, see \cite[Corollary 33.2.1.1.]{Prasolov}, which states that if $B$ is an $n \times n$ matrix with complex entries and columns $v_1, \dots, v_n$, then
\begin{equation}\label{eq: Hadamard}
 |\det B| \leq \prod_{i = 1}^n \|v_i\|,
\end{equation}
where $\|x\| = (|x_1|^2 + \cdots + |x_n|^2)^{1/2}$ for $x = (x_1, \dots, x_n) \in \mathbb{C}^n$. Combining Hadamard's inequality with $\mathrm{Pf}(A)^2 = \det(A)$, see (\ref{eq: det to Pfaf}), we conclude that if $|A_{ij}| \leq C$ for some constant $C \geq 0$, then
\begin{equation}\label{eq: HadamardPfaffian}
\left|\mathrm{Pf}(A) \right| \leq C^{n} \cdot (2n)^{n/2}.
\end{equation}

%
%
\subsection{Proof of Lemma \ref{lem:technical lemma fdd 1}}\label{SectionB2} To prove that for all $t\in\mathbb{R}$ and $a \in\mathbb{Z}_{\geq 0}$
\begin{equation}\label{eq:conv lem1 fdd again}
\lim_{N\rightarrow\infty}\mathbb{P}\left(M^N([t,\infty))\leq a \right)=\mathbb{P}\left(M^{\infty}([t,\infty))\leq a\right),
\end{equation}
we only need to show that for any strictly increasing sequence $N_m \in \mathbb{N}$, there exists a subsequence $N_{m_n}$ such that this convergence holds. Let us fix an arbitrary strictly increasing sequence $N_m$. Since $M^N$ converge weakly to $M^{\infty}$, the sequence $M^{N_m}$ is tight. Notice that $(X_1^{N_m})^+$ is also tight, hence $(M^{N_m},(X_1^{N_m})^+)$ is tight, and there exists a subsequence $N_{m_n}$ such that $(M^{N_{m_n}},(X_1^{N_{m_n}})^+)$ converge weakly. For simplicity of the notation, we replace $N_{m_n}$ with $N$, assume that $(M^N,(X_1^N)^+)$ converge weakly, and proceed to prove (\ref{eq:conv lem1 fdd again}) for fixed $t \in \mathbb{R}$ and $a \in \mathbb{Z}_{\geq 0}$. 

By the Skorohod representation theorem, see \cite[Theorem 6.7]{Billing}, one can define the random variables $(M^N,(X_1^N)^+)$ and $M^{\infty}$ on a richer probability space $(\Omega,\mathcal{F},\mathbb{P})$, such that $(M^N,(X_1^N)^+)$ converges to $(M^{\infty},X^{\infty})$ $\mathbb{P}$-a.s., where $X^{\infty}$ is a real-valued random variable on $(\Omega,\mathcal{F},\mathbb{P})$. We denote by $E_t$ the set of $\omega\in\Omega$ such that the following conditions hold:
\begin{enumerate}
    \item [$\bullet$] $\lim_N(X_1^N)^+(\omega) = X^{\infty}(\omega)$; 
    \item [$\bullet$] for each $N\in\mathbb{N}$, the support of the measure $M^N(\omega)$ is contained in $(-\infty,(X_1^N)^+(\omega)]$; 
    \item [$\bullet$] $\lim_N M^N(\omega) = M^{\infty}(\omega)$, i.e. the sequence of measures $M^N(\omega)$ converges vaguely to $M^{\infty}(\omega)$;
    \item [$\bullet$] $M^{\infty}(\omega)(\{t\})=0$.
\end{enumerate}
By our assumptions, $E_t$ is measurable and $\mathbb{P}(E_t)=1$. To prove \eqref{eq:conv lem1 fdd again}, it suffices to show that for any $t\in\mathbb{R}$ and $a\in\mathbb{Z}_{\geq0}$
\begin{equation}\label{eq:vsevferge}
\{\omega:M^N(\omega)([t,\infty))\leq a\}\cap E_t\rightarrow\{\omega:M^{\infty}(\omega)([t,\infty))\leq a\}\cap E_t\quad\mbox{ as }N\rightarrow\infty.
\end{equation}
We split the proof of this convergence (of sets) into two steps.\\
  
\noindent\textbf{Step 1.}  Suppose that $\omega\in E_t$ satisfies   $M^{\infty}(\omega)([t,\infty))>a$.
We aim to show that $M^{N}(\omega)([t,\infty))>a$ for sufficiently large $N$. 

In view of $M^{\infty}(\omega)([t,\infty))\in\mathbb{Z}_{\geq0}\cup\{\infty\}$, we have $M^{\infty}(\omega)([t,\infty))\geq a+1$.  
We next choose a sequence of functions $f_n\in C_c(\mathbb{R})$ satisfying $\mathbf{1}_{[t+2^{-n},t+n]}\leq f_n\leq \mathbf{1}_{[t+2^{-n-1},t+n+2^{-n}]}$. In particular, $f_1\leq f_2\leq\cdots$ and $f_n\uparrow\mathbf{1}_{(t,\infty)}$. By the monotone convergence theorem, we have $M^{\infty}(\omega)f_n\uparrow M^{\infty}(\omega)\mathbf{1}_{(t,\infty)}=M^{\infty}(\omega)((t,\infty))=M^{\infty}(\omega)([t,\infty))\geq a+1$. Therefore, there exists $n_0\in\mathbb{N}$ such that  $M^{\infty}(\omega)f_{n_0}\geq a+2/3$. Since $M^N(\omega)$ converges vaguely to $M^{\infty}(\omega)$, we have $M^N(\omega) f_{n_0} \rightarrow M^{\infty}(\omega) f_{n_0} \geq a+2/3$. Hence, there exists $N_0\in\mathbb{N}$ such that for all $N\geq N_0$, 
$$M^N(\omega)[t,\infty) \geq M^N(\omega)f_{n_0}\geq a+1/3>a.$$
\noindent\textbf{Step 2.} Suppose that $\omega\in E_t$ satisfies   $M^{\infty}(\omega)([t,\infty))\leq a$. We aim to show that $M^{N}(\omega)([t,\infty))\leq a$ for sufficiently large $N$. 

We choose a sequence of functions $f_n\in C_c(\mathbb{R})$ satisfying 
$$\mathbf{1}_{[t-2^{-n-1},X_{\infty}(\omega)+1+2^{-n-1}]}\leq f_n\leq \mathbf{1}_{[t-2^{-n},X_{\infty}(\omega)+1+2^{-n}]}.$$
In particular, $\mathbf{1}_{[t-1,X_{\infty}(\omega)+2]}\geq f_1\geq f_2\geq \cdots $ and $f_n\downarrow\mathbf{1}_{[t,X_{\infty}(\omega)+1]}$. As the measure $M^{\infty}(\omega)$ is locally bounded, we have $M^{\infty}(\omega)\mathbf{1}_{[t-1,X_{\infty}(\omega)+2]}<\infty$. By the dominated convergence theorem, we have
$M^{\infty}(\omega)f_n\rightarrow M^{\infty}(\omega)\mathbf{1}_{[t,X_{\infty}(\omega)+1]}\leq M^{\infty}(\omega)([t,\infty))\leq a$. Therefore, there exists $n_1\in\mathbb{N}$ such that $M^{\infty}(\omega)f_{n_1}\leq a+1/3$. Since $M^N(\omega)$ converges vaguely to $M^{\infty}(\omega)$, we have $M^N(\omega) f_{n_1} \rightarrow M^{\infty}(\omega) f_{n_1} \leq a+1/3$. Hence, there exists $N_1\in\mathbb{N}$ such that for all $N\geq N_1$, 
\begin{equation}\label{eq:vaervrvergte}
M^N(\omega)([t,X_{\infty}(\omega)+1])\leq M^N(\omega)\mathbf{1}_{[t-2^{-n_1-1},X_{\infty}(\omega)+1+2^{-n_1-1}]}\leq M^N(\omega)f_{n_1}\leq a+2/3.
\end{equation}
Note that $(X_1^N)^+(\omega)$ converges to $X^{\infty}(\omega)$, and so there exists $N_2\in\mathbb{N}$ such that for $N\geq N_2$, we have $(X_1^N)^+(\omega)\leq X^{\infty}(\omega)+1$. Combining the latter with the fact that the support of $M^N(\omega)$ is contained in $(-\infty,(X_1^N)^+(\omega)]$ and (\ref{eq:vaervrvergte}), we conclude that for any $N\geq\max(N_1,N_2)$
$$
M^N(\omega)([t,\infty))=M^N(\omega)([t,X_{\infty}(\omega)+1])\leq a+2/3.$$
Since $M^N(\omega)([t,\infty))\in\mathbb{Z}_{\geq0}\cup\{\infty\}$, we have $M^N(\omega)([t,\infty))\leq a$. This completes Step 2.

Combining Step 1 and Step 2 above, we conclude \eqref{eq:vsevferge} and hence the lemma.

%
%
\subsection{Proof of Lemma \ref{lem:technical lemma fdd 2}}\label{SectionB3}
We choose a sequence $f_n\in C_c(\mathbb{R})$, satisfying $0\leq f_1\leq f_2\leq\cdots$ and $f_n\uparrow\mathbf{1}_{\mathbb{R}}$. By the monotone convergence theorem,   we have $M^{\infty}(\omega)f_n\uparrow M^{\infty}(\omega)(\mathbb{R})=\infty$. Therefore, for any $a\in\mathbb{Z}_{\geq0}$ and $\varepsilon>0$, there exists $n_0\in\mathbb{N}$ such that $\mathbb{P}(M^{\infty}f_{n_0}\geq a+1)\geq1-\varepsilon$. Since the space $\mathcal{M}_{\mathbb{R}}$ of locally bounded measures on $\mathbb{R}$ is equipped with the vague topology, for any fixed $f\in C_c(\mathbb{R})$, the mapping $\mu \mapsto \mu f$ is continuous on $\mathcal{M}_{\mathbb{R}}$. By the continuous mapping theorem and our assumption that $M^N$ weakly converges to $M^{\infty}$, we conclude that $M^Nf_{n_0}$ converges weakly to $M^{\infty}f_{n_0}$. Hence, there exists $N_0\in\mathbb{N}$ such that for any $N\geq N_0$,
$$1-2\varepsilon\leq\mathbb{P}\left( M^Nf_{n_0}\geq a\right)\leq\mathbb{P}\left( M^N(\mathbb{R})\geq a\right)=p_N^a.$$
As $p_N^a$ is decreasing in $N$, we conclude $p_N^a\geq 1-2\varepsilon$ for all $N\in\mathbb{N}$. Taking $\varepsilon\rightarrow0+$ we conclude $p_N^a=1$ for each $N\in\mathbb{N}$ and $a\in\mathbb{Z}_{\geq0}$, which implies $\mathbb{P}(M^{N}(\mathbb{R}) = \infty) = 1$.

%
%
\subsection{Proof of Proposition \ref{prop:basic properties Pfaffian point process}}\label{SectionB4}
The proofs of parts (1), (2) and (5) are very similar to the proofs of the corresponding parts of \cite[Proposition 2.13]{dimitrov2024airy}, with the determinants replaced by Pfaffians, and so we omit them.

We now turn to part (3). Fix a bounded Borel set $B$. Using that $K$ and $\lambda$ are locally bounded, we have for some $C_1, C_2 > 0$
$$\lambda(B) \leq C_1 \mbox{ and } |K(x,y)| \leq C_2 \mbox{ for }x,y \in B.$$
From (\ref{eq: HadamardPfaffian}), we conclude there exists a constant $ C>0$, such that for $n\geq1$
\begin{equation}\label{eq:Hadamard for Pfaffian}
\int_{B^n}\left|\mathrm{Pf}[K(x_i, x_j)]_{i,j=1}^n\right|\lambda^n(dx)\leq \int_{B^n}C_2^n (2n)^{n/2}\lambda^n(dx) \leq C_1^n C_2^n (2n)^{n/2} \leq  C^n n^{n/2}.
\end{equation}
The result now follows from the last inequality, (\ref{eq:FactPfaff}) and \cite[Corollary 2.4]{dimitrov2024airy}.

For part (4), we use \eqref{eq:conjugation of Pfaffian} with 
$$A=[K(x_i,x_j)]_{i,j=1}^n \mbox{ and } R=\mathrm{diag}(f(x_1),1/f(x_1),\dots,f(x_n),1/f(x_n)) $$ 
and get $\mathrm{Pf}[K(x_i,x_j)]_{i,j=1}^n=\mathrm{Pf}[\tilde{K}(x_i,x_j)]_{i,j=1}^n$, which concludes the proof. 

For part (6) we use again \eqref{eq:conjugation of Pfaffian} with
$$A=[K(x_i,x_j)]_{i,j=1}^n \mbox{ and } R=\mathrm{diag}(c_1,c_2,\dots,c_1,c_2), $$ 
and obtain $\mathrm{Pf}[\tilde{K}(x_i,x_j)]_{i,j=1}^n=(c_1c_2)^n\mathrm{Pf}[K(x_i,x_j)]_{i,j=1}^n$. Using the latter, the definition of $\tilde{\lambda}$ and (\ref{eq:FactPfaff}) we conclude
$$\mathbb{E}\left[\prod_{i = 1}^m\frac{M(A_i)!}{(M(A_i)-n_i)!}\right]=\int_{A_1^{n_1}\times\cdots\times A_m^{n_m}}\mathrm{Pf}\left[\tilde{K}(x_i,x_j) \right]_{i,j=1}^n\tilde{\lambda}^n(dx),$$
for any pairwise disjoint bounded Borel sets  $A_1,\dots,A_m $ and $n_1, \dots, n_m \in \mathbb{N}$. We conclude the proof in view of Lemma \ref{lem:correlation function}, where $\mathcal{I}$ is the collection of bounded Borel sets in $E$.

%
%
\subsection{Proof of Proposition \ref{prop: conv of Pfaffian point processes 0}}\label{SectionB5}
The proof is very similar to \cite[Proposition 2.15]{dimitrov2024airy} and so we will be brief. Since $\lambda$ is the vague limit of $\lambda_n\in S$, we have $\lambda \in S$, i.e. $\lambda$ is locally bounded. For $i=1,\dots,k$ we let $\pi_i: E \rightarrow \mathbb{R}$ be given by $\pi_i(x) = x_i$ for $x=(x_1,\dots,x_k)\in E$ and
\[
H_i^n=\left\{y\in\mathbb{R}:\lambda\left(\pi_i^{-1}(y)\cap[-n, n ]^k\right)>1/n\right\}.
\]
Since $\lambda([-n,n]^k) < \infty$, each $H_i^n$ is finite for $i=1,\dots,k$ and $n\in\mathbb{N}$. Let $T=\cup_{i=1}^k\cup_{n\geq 1}H_i^n$ which is a countable subset of $\mathbb{R}$. For any $t\notin T$, we have $\lambda\left(\pi_i^{-1}(t)\right)=0$, since $\lambda\left(\pi_i^{-1}(t)\cap[-n,n]^k\right)\leq1/n$ for each $n\in\mathbb{N}$ and $i=1,\dots,k$. 
We next fix pairwise disjoint $A_1,\dots,A_m\in\mathcal{I}_T$ and $n_1,\dots,n_m\in\mathbb{N}$ with $n=n_1+\cdots+n_m$. Similarly to \cite[Equations (A.20), (A.22) and (A.23)]{dimitrov2024airy}, we have
\begin{equation}\label{GH1}
    \begin{split}
        \lim_{N\rightarrow\infty}\mathbb{E}\left[\prod_{i=1}^m\frac{M^N(A_i)!}{(M^N(A_i)-n_i)!}\right] 
        &=\lim_{N\rightarrow\infty}\int_{A_1^{n_1}\times\cdots\times A_m^{n_m}}\mathrm{Pf}\left[K^N(x_i, x_j)\right]_{i,j=1}^n\lambda_N^n(dx)\\
        &=\lim_{N\rightarrow\infty}\int_{A_1^{n_1}\times\cdots \times A_m^{n_m}}\mathrm{Pf}\left[K(x_i,x_j)\right]_{i,j = 1}^n\lambda_N^n(dx)\\
        &=\int_{A_1^{n_1}\times\cdots\times A_m^{n_m}}\mathrm{Pf}\left[K(x_i,x_j)\right]_{i,j=1}^n\lambda^n(dx).
    \end{split}
\end{equation}
The first equality in (\ref{GH1}) uses (\ref{eq:FactPfaff}) and that $M^N$ is Pfaffian with correlation kernel $K^N$ and reference measure $\lambda_N$. The second equality uses that $K^N(x,y)$ converge uniformly over bounded sets to $K(x,y)$, and that 
$$
\limsup_{N\rightarrow\infty}\lambda_N^n(A_1^{n_1}\times\cdots\times A_m^{n_m})\leq\prod_{i=1}^m\lambda(\overline{A_i})^{n_i}<\infty,$$
which comes from \cite[Lemma 4.1]{kallenberg2017random}. The third equality uses the continuity of $\mathrm{Pf}\left[K(x_i, x_j)\right]_{i,j = 1}^n$ (which was assumed in the proposition) combined with \cite[Lemma A.5 and Equation (A.24)]{dimitrov2024airy}. 

Equation (\ref{GH1}) verifies \cite[(2.6)]{dimitrov2024airy}. In addition,  since $K$ and $\lambda$ are locally bounded, for each $B\in\mathcal{I}_T$ there exists a constant $C\in(0,\infty)$ such that the bound \eqref{eq:Hadamard for Pfaffian} holds, which verifies \cite[(2.8)]{dimitrov2024airy}. From \cite[Proposition 2.2]{dimitrov2024airy} we conclude that there is a countable $T^{\infty}\subset\mathbb{R}$ with $T\subseteq T^{\infty}$, and a point process $M^{\infty}$, such that $M^N$ converge weakly to $M^{\infty}$ and for all pairwise disjoint $A_1,\dots,A_m\in\mathcal{I}_{T^{\infty}}$
$$
\mathbb{E}\left[\prod_{i=1}^m\frac{M^{\infty}(A_i)!}{(M^{\infty}(A_i)-n_i)!}\right]=\int_{A_1^{n_1}\times\cdots\times A_m^{n_m}}\mathrm{Pf}\left[K(x_i, x_j)\right]_{i,j=1}^n\lambda^n(dx).$$
From Lemma \ref{lem:correlation function} we conclude that $M^{\infty}$ satisfies the conditions of the proposition.

%
%
\subsection{Proof of Lemma \ref{lem:LemmaSlice}}\label{SectionB6}
Using the definition of $M^t$ and (\ref{eq:FactPfaff}), we have for all pairwise disjoint bounded Borel sets $A_1,\dots, A_m$ in $\mathbb{R}$ and $n_1,\dots,n_m\geq0$ with $n_1+\cdots+n_m=n$ that
$$\mathbb{E}\left[\prod_{j=1}^m\frac{M^t(A_j)!}{(M^t(A_j)-n_j)!}\right]=\int_{A_1^{n_1}\times\cdots\times A_m^{n_m}}\mathrm{Pf}\left[K(t,x_i;t,x_j)\right]_{i,j=1}^n  \nu_t^n(dx). $$
We conclude the proof by Lemma \ref{lem:correlation function}, where $\mathcal{I}$ is the collection of bounded Borel sets in $\mathbb{R}$.

%
%
\subsection{Proof of Proposition \ref{prop: conv of Pfaffian point processes 1}}\label{SectionB7}
The proof is similar to the proof of \cite[Proposition 2.18]{dimitrov2024airy}, although the assumptions are somewhat different. For example, we do not assume that the limiting kernel $K$ is continuous. For clarity we split the proof into two steps.\\

{\bf \raggedleft Step 1.} Let $\mathcal{I}$ be the set of all the rectangles in $E=\mathbb{R}^2$, i.e. $\mathcal{I}=\mathcal{I}_{T}$ as in \eqref{eq:rectangle} with $T=\emptyset$. Fix pairwise disjoint $A_1,\dots,A_m\in\mathcal{I}$ and $n_1,\dots,n_m\in\mathbb{N}$ with $n=n_1+\cdots+n_m$. We claim that 
\begin{equation}\label{eq:in proof B7}
\lim_{N\rightarrow\infty}\mathbb{E}\left[\prod_{i=1}^m\frac{M^N(A_i)!}{(M^N(A_i)-n_i)!}\right]=\int_{A_1^{n_1}\times\cdots\times A_m^{n_m}}\mathrm{Pf}\left[K(s_i,x_i; s_j, x_j)\right]_{i,j=1}^n(\mu_{\mathcal{T}}\times\lambda)^n(dsdx).
\end{equation}
We will prove \eqref{eq:in proof B7} in the second step. Here, we assume its validity and conclude the proof of the proposition.
Using the local boundedness of $K$ and $\lambda$ and (\ref{eq: HadamardPfaffian}), we conclude that for each $B\in\mathcal{I}$ there exists a constant $C\in(0, \infty)$ such that
\begin{equation*}
\left|\int_{B^n}\mathrm{Pf}\left[K(s_i,x_i;s_j,x_j)\right]_{i,j=1}^n(\mu_{\mathcal{T}}\times\lambda)^n(dsdx)\right|\leq C^n\cdot n^{n/2},
\end{equation*}
which shows
\begin{equation}\label{GK1}
\sum_{n = 1}^{\infty} \frac{1}{n!} \cdot \left|\int_{B^n}\mathrm{Pf}\left[K(s_i,x_i;s_j,x_j)\right]_{i,j=1}^n(\mu_{\mathcal{T}}\times\lambda)^n(dsdx)\right|\leq \sum_{n = 1}^{\infty} \frac{C^n\cdot n^{n/2}}{n!} < \infty.
\end{equation}
Equation (\ref{eq:in proof B7}) verifies \cite[(2.6)]{dimitrov2024airy}, and (\ref{GK1}) verifies \cite[(2.8)]{dimitrov2024airy} with $\epsilon = 1$. From \cite[Proposition 2.2]{dimitrov2024airy} we conclude that there exists a countable $T^{\infty}\subset\mathbb{R}$ and a point process $M^{\infty}$ such that $M^N$ converge weakly to $M^{\infty}$ and for all pairwise disjoint $A_1,\dots,A_m\in\mathcal{I}_{T^{\infty}}$
$$
\mathbb{E}\left[\prod_{i=1}^m\frac{M^{\infty}(A_i)!}{(M^{\infty}(A_i)-n_i)!}\right]=\int_{A_1^{n_1}\times\cdots\times A_m^{n_m}}\mathrm{Pf}\left[ K(s_i, x_i; s_j, x_j) \right]_{i,j=1}^n(\mu_{\mathcal{T}}\times\lambda)^n(dsdx). $$
From Lemma \ref{lem:correlation function} we conclude that $M^{\infty}$ satisfies the conditions of the proposition. \\

{\bf \raggedleft Step 2.} We know that $M^N$ is Pfaffian with correlation kernel $K^N$ and reference measure $\mu_{\mathcal{T},\nu(N)}$. 
For each $\vec{s}=(s_1,\dots,s_n)\in\mathcal{T}^n$ we write $\nu_{\vec{s}}(N)$ to denote the measure on $\mathbb{R}^n$ defined as the product of measures $\nu_{s_i}(N)$ on $\mathbb{R}$ for $i=1,\dots,n$. In view of (\ref{eq:FactPfaff}), to show \eqref{eq:in proof B7} it suffices to prove
\begin{equation}\label{eq:in proof B7 2}
\lim_{N\rightarrow\infty}\int_{I}\mathrm{Pf}\left[K^N(s_i, x_i; s_j, x_j)\right]_{i,j=1}^n\nu_{\vec{s}}(N)(dx)=\int_{I}\mathrm{Pf}\left[K(s_i, x_i; s_j, x_j)\right]_{i,j=1}^n\lambda^n(dx),
\end{equation}
where $I=(a_1, b_1]\times\cdots\times(a_n, b_n]$ and $\vec{s} \in \mathcal{T}^n$ are fixed.

For each $x\in\mathbb{R}^n$ and $N\in\mathbb{N}$, we write
$$f^N(x)=\mathbf{1}_I(x)\cdot\mathrm{Pf}\left[K^N(s_i, x_i; s_j, x_j)\right]_{i,j=1}^n \quad \mbox{and}\quad
f(x)=\mathbf{1}_I(x)\cdot\mathrm{Pf}\left[K(s_i, x_i; s_j, x_j)\right]_{i,j=1}^n.$$
For each $x\in\mathbb{R}^n$ and $N\in\mathbb{N}$, we also define $y^N(x)\in\mathbb{R}^n$ by
$$
y^N_i(x)=b(s_i,N)+a(s_i,N)\lfloor a(s_i,N)^{-1}(x_i-b(s_i,N))\rfloor,\quad i=1,\dots,n.$$
One can observe that \eqref{eq:in proof B7 2} is equivalent to 
\begin{equation}\label{eq:in proof B7 3}
   \lim_{N\rightarrow\infty} \int_{\mathbb{R}^n} f^N(y^N(x))\lambda^n(dx)=\int_{\mathbb{R}^n}f(x)\lambda^n(dx).
\end{equation}
Indeed, we have that $f^N(y^N(x))$ is a step function that is constant on rectangles of the form $[c_1, c_1 + a(s_1,N)) \times \cdots \times [c_n, c_n + a(s_n,N))$, where $c_i = b(s_i,N) + a(s_i,N) z_i$ and $z_i \in \mathbb{Z}$. Rewriting the integral of $f^N(y^N(x))$ over $\mathbb{R}^n$ as a sum, one arrives at the integral on the left of (\ref{eq:in proof B7 2}).

By our assumption $\lim_Na(s_i,N)=0$ for $i=1,\dots,n$ we have $y^N(x)\rightarrow x$ as $N\rightarrow\infty$ for each $x\in\mathbb{R}^n$.
From $y^N_i(x)\in a(s_i,N)\cdot\mathbb{Z}+b(s_i,N)$ and \eqref{eq:limitProp} we have that for $i,j\in\{1,\dots,n\}$
\[
\lim_{N \rightarrow \infty}K^N (s_i,y^N_i(x);s_j,y^N_j(x)) = K (s_i,x_i;s_j,x_j)\quad \mbox{when }(x_i,x_j)\in U.
\]
Taking Pfaffians on both sides, we have $f^N(y^N(x))\rightarrow f(x)$ as $N\rightarrow\infty$ for each $x\in\mathbb{R}^n$ in the subset
\[
\{x\in\mathbb{R}^n\setminus\partial I: (x_i,x_j)\in U \mbox{ for each }i,j\in\{1,\dots,n\}\}.
\]
Since $U^c$ has Lebesgue measure zero, the complement of the above set in $\mathbb{R}^n$ also has Lebesgue measure zero.
Using our assumption \eqref{eq:limitProp2}, the functions $|f^N(y^N(x))|$ can be uniformly (in $N$) bounded by a finite constant. Therefore, \eqref{eq:in proof B7 3} follows from the bounded convergence theorem.   
\end{appendix}

\bibliographystyle{amsplain} 
\bibliography{PD}

\end{document}